\newtheorem{theorem}{Theorem}[section]
\newtheorem{thmx}{Theorem}
\newtheorem{proposition}[theorem]{Proposition}
\newtheorem{lemma}[theorem]{Lemma}
\newtheorem{conjecture}[theorem]{Conjecture}
\newtheorem{corollary}[theorem]{Corollary}
\theoremstyle{remark}
\newtheorem{remark}[theorem]{Remark}
\newtheorem{warning}[theorem]{Warning}
\newtheorem{example}[theorem]{Example}
\theoremstyle{definition}
\newtheorem{definition}[theorem]{Definition}
\newtheorem{convention}[theorem]{Convention}
\renewcommand{\AA}{\mathbb{A}}
\newcommand{\Boxtimes}{\mathlarger{\mathlarger{\boxtimes}}}
\newcommand{\SSN}{\mathcal{SSN}}
\newcommand{\N}{\mathcal{N}}
\newcommand{\SN}{\mathcal{SN}}
\newcommand{\Sp}{\mathcal{S}}
\newcommand{\DD}{\mathbb{D}}
\newcommand{\kac}{\mathtt{a}}
\newcommand{\dbl}{(\!(}
\newcommand{\dbr}{)\!)}
\DeclareMathOperator{\B}{B\!}
\DeclareMathOperator{\Sh}{Sh}
\DeclareMathOperator{\UEA}{U}
\DeclareMathOperator{\rank}{rank}
\DeclareMathOperator{\BM}{BM}
\DeclareMathOperator{\TS}{\mathtt{TS}}
\newcommand{\dd}{\mathbf{d}}
\newcommand{\cdott}{\!\cdot\!}
\newcommand{\ee}{\mathbf{e}}
\newcommand{\p}{\mathbf{\mathtt{JH}}}
\newcommand{\EE}{\mathrm{E}}
\newcommand{\ff}{\mathbf{f}}
\DeclareMathOperator{\HN}{HN}
\DeclareMathOperator{\hp}{\mathtt{h}}
\newcommand{\LL}{\mathbb{L}}
\newcommand{\g}{\mathfrak{gl}}
\newcommand{\NN}{\mathbb{N}}
\newcommand{\QQ}{\mathbb{Q}}
\newcommand{\WW}{\mathcal{T}r(W)}
\newcommand{\WWW}{\mathfrak{Tr}(W)}
\newcommand{\Mst}{\mathfrak{M}}
\newcommand{\dvst}{\Lambda_\theta^{\zeta}}
\newcommand{\Msp}{\mathcal{M}}
\newcommand{\ICS}{\mathcal{IC}}
\newcommand{\DTS}{\mathcal{BPS}}
\newcommand{\phim}[1]{\phi^{\mon}_{#1}}
\newcommand{\phin}[1]{\phi_{#1}}
\newcommand{\BIG}[1]{\mathlarger{\mathlarger{\mathlarger{\mathlarger{#1}}}}}
\newcommand{\phip}[1]{\phi^{\mathfrak{p}}_{#1}}
\newcommand{\psip}[1]{\psi^{\mathfrak{p}}_{#1}}
\DeclareMathOperator{\sst}{-ss}
\DeclareMathOperator{\Log}{Log}
\DeclareMathOperator{\poi}{P}
\DeclareMathOperator{\nnilp}{\mathrm{-nilp}}
\DeclareMathOperator{\Dim}{Dim}
\DeclareMathOperator{\surj}{surj}
\DeclareMathOperator{\Hilb}{Hilb}
\DeclareMathOperator{\st}{-st}
\DeclareMathOperator{\tw}{tw}
\DeclareMathOperator{\reg}{reg}
\DeclareMathOperator{\crit}{crit}
\DeclareMathOperator{\Kk}{K_0}
\newcommand{\slope}{\varrho}
\DeclareMathOperator{\Hom}{Hom}
\DeclareMathOperator{\mon}{mon}
\DeclareMathOperator{\End}{End}
\DeclareMathOperator{\wt}{\chi_{wt}}
\DeclareMathOperator{\Exp}{Exp}
\DeclareMathOperator{\G}{GL}
\DeclareMathOperator{\MMHM}{MMHM}
\DeclareMathOperator{\MHS}{MHS}
\DeclareMathOperator{\Gr}{Gr}
\DeclareMathOperator{\Vect}{Vect}
\DeclareMathOperator{\Jor}{Jor}
\DeclareMathOperator{\Id}{Id}
\DeclareMathOperator{\rat}{rat}
\DeclareMathOperator{\lmod}{-mod}
\DeclareMathOperator{\Nak}{\mathbf{M}}
\DeclareMathOperator{\perv}{Perv}
\DeclareMathOperator{\Rep}{Rep}
\DeclareMathOperator{\supp}{supp}
\DeclareMathOperator{\Aut}{Aut}
\DeclareMathOperator{\codim}{codim}
\DeclareMathOperator{\MHM}{MHM}
\DeclareMathOperator{\DT}{BPS}
\DeclareMathOperator{\Sym}{Sym}
\DeclareMathOperator{\op}{op}
\DeclareMathOperator{\Spec}{Spec}
\DeclareMathOperator{\Gl}{GL}
\DeclareMathOperator{\Bl}{P}
\DeclareMathOperator{\PGL}{PGL}
\DeclareMathOperator{\id}{id}
\DeclareMathOperator{\Jac}{Jac}
\DeclareMathOperator{\Tr}{Tr}
\DeclareMathOperator{\pt}{pt}
\DeclareMathOperator{\Tot}{Tot}
\DeclareMathOperator{\cone}{cone}
\DeclareMathOperator{\vir}{vir}
\DeclareMathOperator{\Ob}{Ob}
\DeclareMathOperator{\Ho}{\mathcal{H}}
\DeclareMathOperator{\HO}{H}
\DeclareMathOperator{\Coha}{\mathcal{A}}
\newcommand{\Du}{\mathcal{D}^{\geq}}
\newcommand{\Dl}{\mathcal{D}^{\leq}}
\newcommand{\Db}{\mathcal{D}^{b}}
\newcommand{\Dub}{\mathcal{D}}
\newcommand{\sbrk}{\smallbreak}
\title[Integrality conjecture and cohomology of preprojective stacks]{The integrality conjecture and the cohomology of preprojective stacks}
\author{Ben Davison}
\begin{document}

\begin{abstract}
We study the Borel--Moore homology of stacks of representations of preprojective algebras $\Pi_Q$, via the study of the DT theory of the undeformed 3-Calabi--Yau completion $\Pi_Q[x]$.  Via a result on the supports of the BPS sheaves for $\Pi_Q[x]\lmod$, we prove purity of the BPS cohomology for the stack of $\Pi_Q[x]$-modules, and define BPS sheaves for stacks of $\Pi_Q$-modules.  These are mixed Hodge modules on the coarse moduli space of $\Pi_Q$-modules that control the Borel--Moore homology and geometric representation theory associated to these stacks.  We show that the hypercohomology of these objects is pure, and thus that the Borel--Moore homology of stacks of $\Pi_Q$-modules is also pure.  
\sbrk
We transport the cohomological wall-crossing and integrality theorems from DT theory to the category of $\Pi_Q$-modules.  Among these and other applications, we use our results to prove positivity of a number of ``restricted'' Kac polynomials, determine the critical cohomology of $\Hilb_n(\AA^3)$, and the Borel--Moore homology of genus one character stacks, as well as various applications to the cohomological Hall algebras associated to Borel--Moore homology of stacks of preprojective algebras, including the PBW theorem, and torsion-freeness.
\end{abstract}

\setcounter{tocdepth}{1}

\let\oldtocsection=\tocsection

\let\oldtocsubsection=\tocsubsection

\let\oldtocsubsubsection=\tocsubsubsection

\renewcommand{\tocsection}[2]{\hspace{0em}\oldtocsection{#1}{#2}}
\renewcommand{\tocsubsection}[2]{\hspace{3em}\oldtocsubsection{#1}{#2}}
\renewcommand{\tocsubsubsection}[2]{\hspace{2em}\oldtocsubsubsection{#1}{#2}}

\makeatletter
\patchcmd{\@tocline}
  {\hfil}
  {\leaders\hbox{\,.\,}\hfil}
  {}{}
\makeatother

\maketitle

\tableofcontents

\section{Introduction}
\subsection{Background}
This paper concerns the Borel--Moore homology of stacks of representations of preprojective algebras $\Pi_Q$, which play a prominent role in many branches of mathematics, and which we study through the prism of cohomological Donaldson--Thomas theory and BPS cohomology.  The Borel--Moore homology of stacks of preprojective algebras occurs as the underlying vector space of the cohomological Hall algebra containing all raising operators for the cohomology of Nakajima quiver varieties \cite{Nak94,Nak98}, which themselves can be presented as certain stacks of semistable representations of preprojective algebras.  More generally, stacks of representations of preprojective algebras model the local geometry of complex 2-Calabi--Yau categories possessing good moduli spaces \cite{2CYpurity}, for example coherent sheaves on K3 and Abelian surfaces, Higgs bundles on smooth projective curves, local systems on Riemann surfaces, and moduli of semistable objects in Kuznetsov components.
\smallbreak
Via dimensional reduction we study the Borel--Moore homology of the stack of $\Pi_Q$-modules by relating it to the BPS sheaves for the stack of objects in the 3-Calabi--Yau completion $\mathscr{C}_{\Pi_Q}$ (as defined by Keller in \cite{Kel09}) of the category of $\Pi_Q$-modules.  This paper is devoted to understanding the BPS sheaves (as defined in \cite{DaMe15b}) of the 3CY categories $\mathscr{C}_{\Pi_Q}$ formed this way.  By studying these BPS sheaves and the associated BPS cohomology, we prove a number of theorems regarding the Borel--Moore homology of stacks of $\Pi_Q$-representations, Nakajima quiver varieties, stacks of coherent sheaves on surfaces, as well as vanishing cycle cohomology of $\mathrm{Hilb}_n(\mathbb{A}^3)$, and vanishing cycle cohomology of stacks of objects in $\mathscr{C}_{\Pi_Q}$.

\subsection{The role of purity}
\label{TPT}
In Donaldson--Thomas theory, as well as many of the other subjects this paper touches on, we are typically interested in \textit{motivic} invariants.  See e.g. \cite{JS08,KS} and references therein for extensive background on motivic DT theory.  This just means that we are interested in invariants $\tilde{\chi}$ of objects in a triangulated category $\mathscr{D}$ that factor through the Grothendieck group of $\mathscr{D}$, i.e. if $V'\rightarrow V\rightarrow V''$ is a distinguished triangle in $\mathscr{D}$ then we require
\begin{equation}
\label{motInv}
\tilde{\chi}(V)=\tilde{\chi}(V')+\tilde{\chi}(V'').  
\end{equation}
Alternatively, by ``motivic'', people mean invariants of varieties $X$ such that if $U\subset X$ is open, with complement $Z$, then $\tilde{\chi}(X)=\tilde{\chi}(U)+\tilde{\chi}(Z)$ (the ``cut-and-paste relation'').  The link between the two meanings is provided by the distinguished triangle $\HO_c(U,\mathbb{Q})\rightarrow \HO_c(X,\mathbb{Q})\rightarrow \HO_c(Z,\mathbb{Q})$, so that a motivic invariant in the first sense induces one in the second sense.
\smallbreak
A very basic example of a motivic invariant is the \textit{Euler characteristic} of a complex of vector spaces $\chi(V)=\sum_{i\in\mathbb{Z}}(-1)^i\dim(V^i)$.  A basic example of a \textit{non}-motivic invariant is the Poincar\'e polynomial $P(V,q)=\sum_{i\in \mathbb{Z}}\dim(V^i)q^i$; since the connecting morphisms in a long exact sequence of vector spaces may be nonzero, the Poincar\'e polynomial may not satisfy \eqref{motInv}, for example
\[
P(\HO_c(\mathbb{A}^1,\mathbb{Q}),q)=q^2\neq P(\HO_c(\mathbb{C}^*,\mathbb{Q}),q)+P(\HO_c(\pt,\mathbb{Q}),q).
\]
\subsubsection{}
In \textit{refined} Donaldson--Thomas theory the invariant $\tilde{\chi}$ that we consider is a motivic invariant that is defined via Hodge theory.  Recall that a Hodge structure on a rational vector space $V$ is the data of an ascending weight filtration $W_{\bullet}V$, along with a descending Hodge filtration $F^{\bullet}V_{\mathbb{C}}$ of the complexification, such that the Hodge filtration induces a weight $n$ Hodge structure on the $n$th piece $\Gr^W_n(V)$ of the associated graded object with respect to the weight filtration.  
\sbrk
Given $\mathcal{L}$, a cohomologically graded mixed Hodge structure, one defines its Hodge series, E series, and weight series respectively:
\begin{align*}
\hp(\mathcal{L},x,y,z)=&\sum_{a,b,c\in\mathbb{Z}}\dim(\Gr_F^{b}(\Gr^W_{b+c}(\HO^a(\mathcal{L}))))x^by^cz^a\\
\EE(\mathcal{L},x,y)=&\hp(\mathcal{L},x,y,-1)\\
\wt(\mathcal{L},q^{1/2})=&\EE(\mathcal{L},q^{1/2},q^{1/2}).
\end{align*}
Since both the E series and weight series involve an alternating sum over cohomological degrees, it is easy to see that they are motivic invariants.
\smallbreak
Enumerative theories like DT theory, and point-counting over finite fields, provide powerful tools for determining motivic invariants.  However, from a naive point of view they may produce the ``wrong kind'' of invariants, since we are often most interested in the Poincar\'e polynomial of cohomologically graded vector spaces, or might even hope to describe the cohomology itself (rather than its class in some Grothendieck group).  We say that a cohomologically graded mixed Hodge structure $\mathcal{L}$ is \textit{pure} if its $a$th cohomologically graded piece is pure of weight $a$, i.e. if $\Gr^W_b\!\HO^a(\mathcal{L})=0$ for $b\neq a$.  Our interest in pure mixed Hodge structures comes from the fact that if $\mathcal{L}$ is pure then $P(\mathcal{L},q)=\wt(\mathcal{L},q)$.  Moreover, when the Borel--Moore homology of a stack is pure we have a much better chance of being able to actually calculate it, as we will demonstrate in this paper.

\subsection{The purity theorem}
\label{pthmsec}
Let $Q$ be a quiver with vertices $Q_0$ and arrows $Q_1$.  The quiver $\overline{Q}$, which is the double of $Q$, is obtained by adding an arrow $a^*$ for every arrow of $a$, with the reverse orientation.  Then the preprojective algebra is defined as the quotient of the free path algebra of $\overline{Q}$:
\[
\Pi_Q:=\mathbb{C}\overline{Q}/\left\langle\sum_{a \in Q_1} [a,a^*]\right\rangle.
\]
\sbrk
We define $\mathbb{N}:=\mathbb{Z}_{\geq 0}$.  Let $\dd\in\mathbb{N}^{Q_0}$ be a dimension vector for $\overline{Q}$.  Define
\[
X(\overline{Q})_{\dd}=\prod_{a\textrm{ an arrow of }\overline{Q}}\Hom(\mathbb{C}^{\dd_{\textrm{source}(a)}},\mathbb{C}^{\dd_{\textrm{target}(a)}}).
\]
This space is symplectic, via the natural isomorphism $X(\overline{Q})_{\dd}\cong \mathrm{T}^* (X(Q)_{\dd})$.  This symplectic manifold carries an action of the gauge group 
\[
\Gl_{\dd}:=\prod_{i\in Q_0}\Gl_{\dd_i}(\mathbb{C}), 
\]
with moment map
\begin{align*}
\mu_{Q,\dd}\colon &X(\overline{Q})_{\dd}\rightarrow\g_{\dd}:=\prod_{i\in Q_0} \mathfrak{gl}_{\dd_i}(\mathbb{C})\\
&\rho\mapsto \sum_{a \in Q_1} [\rho(a),\rho(a^*)].
\end{align*}
Here we identify $\mathfrak{gl}_{\dd_i}(\mathbb{C})$ with the dual vector space $\mathfrak{gl}_{\dd_i}(\mathbb{C})^{\vee}$ via the trace pairing.  The stack $\Mst(\Pi_Q)_{\dd}$ of $\Pi_Q$-representations with dimension vector $\dd$ is isomorphic to the stack-theoretic quotient $\mu_{Q,\dd}^{-1}(0)/\G_{\dd}$.  

\subsubsection{}
Our first main result is the following.

\begin{thmx}\label{purityThm}\cite[Conj.3.1]{Chicago3}
Fix a quiver $Q$, and a dimension vector $\dd\in\mathbb{N}^{Q_0}$.  Then the mixed Hodge structure on
\begin{equation}
\label{mmCo}
\HO^{\BM}(\Mst(\Pi_Q)_{\dd},\mathbb{Q})\coloneqq\HO_c(\Mst(\Pi_Q)_{\dd},\mathbb{Q})^{\vee}
\end{equation}
is pure, of Tate type.
\end{thmx}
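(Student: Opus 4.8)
The plan is to realize the compactly supported cohomology of $\Mst(\Pi_Q)_{\dd}$ as a summand of the vanishing cycle cohomology of a larger, smooth stack, where purity and Tate-ness can be established by global means. Concretely, $\Mst(\Pi_Q)_{\dd}=\mu_{Q,\dd}^{-1}(0)/\G_{\dd}$ is the critical locus of the function $\Tr(W_{Q})$ on the stack $\Mst(\widetilde{Q})_{\dd}$ of representations of the tripled quiver $\widetilde{Q}$ (adding a loop $\omega_i$ at each vertex), where $W_{Q}=\sum_{i}\omega_i\sum_{a\in Q_1}[a,a^*]$ is the canonical cubic potential. By the dimensional reduction isomorphism (Davison, Kinjo, et al.), there is a natural isomorphism
\[
\HO_c(\Mst(\Pi_Q)_{\dd},\mathbb{Q})\cong \HO_c\bigl(\Mst(\widetilde{Q})_{\dd},\phim{\Tr(W_Q)}\mathbb{Q}_{\vir}\bigr)
\]
up to a Tate twist. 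So it suffices to prove the purity and Tate-ness of the right-hand side. Since $\phi^{\mon}$ commutes with proper pushforward and everything is built from the monodromic mixed Hodge module structure on vanishing cycles of a polynomial on affine space, the Hodge structure is automatically monodromic mixed Hodge; the content is purity and triviality of the monodromy.

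The key steps, in order, are as follows. First I would reduce to the generating function / nilpotent setting: using the cohomological integrality theorem (the nc-DT analogue proved earlier in the paper for the category of all $\Pi_Q$-modules, equivalently $\CohaS$ for $\widetilde{Q},W_Q$), one writes the cohomology of the whole stack $\bigsqcup_{\dd}\Mst(\Pi_Q)_{\dd}$ as a free supersymmetric algebra generated by the $\DT$-sheaves $\DTS_{\dd}$, so that purity for all $\dd$ is equivalent to purity of each $\HO_c$ of the $\DT$ sheaf, and in fact to purity of the total Borel--Moore homology of the CoHA $\CohaS$, since $\Sym$ of a pure Tate complex is pure Tate. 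Second, I would prove purity of the CoHA of $(\widetilde{Q},W_Q)$ by a weight/support argument: using the dimensional reduction again fibrewise, together with the fact (established in the purity section of the paper) that the relevant moduli spaces $\Msp(\Pi_Q)_{\dd}$ of semisimple representations carry pure Tate cohomology — this is where one invokes the existence of a $\mathbb{C}^*$-action contracting $\Msp(\Pi_Q)_{\dd}$ onto its most degenerate point, or alternatively Crawley-Boevey--Van den Bergh type arguments and the fact that $\mu^{-1}(0)$ is a complete intersection with rational singularities. Third, I would propagate purity from the coarse space to the stack via the BPS/PBW decomposition and the spectral sequence computing $\HO_c$ of the stack from $\HO_c$ of the good moduli space with coefficients in the BPS sheaves, noting that each stage is a (shifted) pure Tate object and that the spectral sequence degenerates by weight reasons. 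Finally, monodromy triviality follows since $W_Q$ has a cosection / is equivariant for a scaling torus acting with all weights positive on the $\omega_i$ directions, forcing the vanishing cycle monodromy to be trivial; this also pins down Tate type.

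The main obstacle I anticipate is the second step: proving purity of the Borel--Moore homology of the CoHA, or equivalently the purity of $\HO_c$ of each $\DT$ sheaf — i.e. the "base case" of the integrality-bootstrapped argument. One cannot simply cite the smooth case because $\mu_{Q,\dd}^{-1}(0)$ is singular and its cohomology is not a priori pure; the workaround is to run everything through vanishing cycles for $W_Q$ on the smooth stack $\Mst(\widetilde{Q})_{\dd}$, where $\HO_c(\Mst(\widetilde{Q})_{\dd},\mathbb{Q})$ is pure Tate for trivial reasons (it is a quotient of affine space by $\G_{\dd}$), and then control what the vanishing cycle functor does to weights. The delicate point is that $\phi$ can lower weights by one, so one must show that the "wrong-weight" pieces cancel — this is exactly where the cohomological integrality isomorphism is indispensable, since it constrains the generating series enough (via the already-known purity in the rank-one / BPS pieces coming from Nakajima quiver varieties and the Kac polynomial positivity) that a purely numerical/weight-counting argument closes the gap. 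A secondary technical point will be handling the non-proper good moduli space map and ensuring the relevant support/cohomology dimension estimates (à la Davison--Meinhardt) hold integrally rather than just on Euler characteristics.
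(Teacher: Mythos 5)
Your outer skeleton is right: apply dimensional reduction to rewrite $\HO_c(\Mst(\Pi_Q)_\dd,\QQ)$ in terms of the vanishing cycle cohomology of $\Mst(\tilde Q)_\dd$, then invoke cohomological integrality to reduce everything to purity of the BPS sheaves, using that $\Sym$ of a pure Tate object is pure Tate. This matches the paper's proof up to and including Lemma \ref{lem2}. But the "second step'', which you yourself flag as the obstacle, is where the argument breaks, and none of the three mechanisms you offer for it works. A $\mathbb{C}^*$-contraction onto the cone point gives you nothing for \emph{compactly supported} cohomology — the paper explicitly warns about this in the paragraph following the statement of Theorem \ref{purityThm}, since $\HO_c$ is not homotopy-invariant and $\HO(\Mst(\Pi_Q)_\dd)\cong\HO(\pt/\G_\dd)$ is much simpler than the object you actually need. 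Rational singularities of $\mu^{-1}(0)$ likewise do not imply purity of $\HO_c$ of the stack quotient. And a "purely numerical / weight-counting argument'' informed by Kac positivity cannot close the gap: knowing the weight polynomial of a mixed Hodge structure (even knowing it has positive coefficients) does not determine that it is pure, since the polynomial only records an alternating sum; moreover, the paper presents Kac positivity as a \emph{corollary} of this purity theorem, so leaning on it here would be circular.

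What the paper actually does at this point is the content you are missing. First, there is a \emph{support lemma} (Lemma \ref{lem1}): every point in the support of $\DTS_{\tilde Q,\tilde W,\dd}$ corresponds to a semisimple module on which all the $\omega_i$ act by a single common scalar $\lambda$, essentially because $\sum_i\omega_i$ is central in $\Jac(\tilde Q,\tilde W)$. This pins the BPS sheaf to the image of $\mathbb{A}^1\times\Msp(\tilde Q)^{\omega\nnilp}$ under the translation action $z\cdot\rho(\omega_i)=\rho(\omega_i)+z\,\mathrm{id}$, giving $\DT_{\tilde Q,\tilde W}\cong\DT^{\omega\nnilp}_{\tilde Q,\tilde W}\otimes\LL$. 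This reduces purity of $\DT$ to purity of the $\omega$-nilpotent restriction. Second, the base case of purity is established by a \emph{different} dimensional reduction: instead of collapsing the $\omega$ directions (which recovers $\Mst(\Pi_Q)$), one collapses the $a^*$ directions, which recovers the stack of pairs $(\rho,f)$ with $\rho$ a $\mathbb{C} Q$-module and $f$ a nilpotent endomorphism of $\rho$. That stack admits a finite stratification by multipartitions (Jordan type of each $\rho(\omega_i)$), and each stratum is an affine space quotient by a connected group, so its $\HO_c$ is pure Tate; the long exact sequences of the stratification then split for weight reasons and give purity of the whole thing (Lemma \ref{lem3}). Without the support lemma you cannot transfer this to the un-restricted BPS sheaf, and without the second dimensional reduction you do not have a base case you can actually compute.
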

We prove a more general version of Theorem \ref{purityThm}, concerning Borel--Moore homology of stacks of semistable $\Pi_Q$-modules: see \S \ref{conseq}, and Theorem \ref{2dstab}.
\sbrk
In Theorem \ref{purityThm} the symbol $\vee$ denotes the dual in the category of cohomologically graded mixed Hodge structures.  Purity means that Deligne's mixed Hodge structure on each cohomologically graded piece $\HO^n_c(\Mst(\Pi_Q)_{\dd},\mathbb{Q})$ is pure of weight $n$, and the statement that a cohomologically graded mixed Hodge structure $\mathcal{L}$ is of Tate type is the statement that we can write
\[
\mathcal{L}=\bigoplus_{m,n\in\mathbb{Z}} \left(\LL^{m}[n]\right)^{\oplus a_{m,n}},
\]
for some set of numbers $a_{m,n}\in\mathbb{N}$, with 
\[
\LL:=\HO_c(\mathbb{A}^1,\mathbb{Q})
\]
given the usual weight 2 pure Hodge structure, concentrated in cohomological degree 2.  Purity is the further statement that $a_{m,n}=0$ for $n\neq 0$.
\subsubsection{} 
Theorem \ref{purityThm} concerns compactly supported cohomology.  Since $\mu_{Q,\dd}^{-1}(0)$ is a cone, and hence homotopic to a point, there is an isomorphism
\begin{equation}
\label{dudPure}
\HO(\Mst(\Pi_Q)_{\dd},\mathbb{Q})\cong\HO(\B\G_{\dd},\mathbb{Q})
\end{equation}
in usual singular cohomology, and it is known that the right hand side of (\ref{dudPure}) is pure \cite{Deligne74}.  On the other hand, compactly supported cohomology is not preserved by homotopy equivalence, and the highly singular nature of $\mu^{-1}_{Q,\dd}(0)/\G_{\dd}$ means that its compactly supported cohomology is a great deal more complicated than its cohomology.  In fact, purity requires an essentially new type of argument, requiring the full force of cohomological Donaldson--Thomas theory.  In particular, outside of finite type $Q$, there is no way known (to date) of proving this purity statement without invoking the cohomological integrality theorem for the Donaldson--Thomas theory of quivers with potential, along with dimensional reduction.
\subsubsection{Categorification}
If $\mathcal{L}$ is a cohomologically graded mixed Hodge structure that is pure, of Tate type, then its isomorphism class is determined by its \\Poincar\'e/Hodge/weight/E series.  The weight series of the mixed Hodge structures (\ref{mmCo}) are described in terms of the Kac polynomials \cite{kac83} of $Q$, via the results of \cite{Hua00} and \cite{Moz11}, and so Theorem \ref{purityThm} enables us to calculate the compactly supported cohomology $\HO_c(\Mst(\Pi_Q)_{\dd},\mathbb{Q})$ itself.  Purity enables us to go beyond motivic invariants and actually determine the compactly supported cohomology of the stack $\Mst(\Pi_Q)_{\dd}$, not just its class in some Grothendieck group.
\subsubsection{Okounkov's conjecture}
Theorem \ref{purityThm} is a singular stack-theoretic cousin of the result that the cohomology of Nakajima quiver varieties is pure, with Hodge polynomial expressible as a polynomial in $xyz^2$ (this can be obtained by combining the proof of \cite[Thm.1]{Hau10} with \cite[Thm.6.1.2(3)]{HRV08}).  In fact we recover this result (Corollary \ref{oriPure}).  The purity of Nakajima quiver varieties provides one of the main motivations for the purity statement in Theorem \ref{purityThm}. 
\sbrk
In a little more detail, it is conjectured that the cohomological Hall algebra $\mathcal{A}_{\Pi_Q}$ obtained by taking the direct sum of $\HO^{\BM}(\Mst(\Pi_Q)_{\dd},\mathbb{Q})$ across all dimension vectors $\dd$ is isomorphic to the positive half of the Yangian $\mathcal{Y}_{\mathtt{MO},Q}$ constructed by Maulik and Okounkov in \cite{MaOk12}.  This in turn would imply that the graded dimensions of $\mathfrak{g}_{\mathtt{MO},Q}$ are given by Kac polynomials, as conjectured by Okounkov.  Since the algebra $\mathcal{Y}_{\mathtt{MO},Q}$ is defined as a subalgebra of the endomorphism algebra of the cohomology of Nakajima quiver varieties, the purity of $\mathcal{Y}_{\mathtt{MO},Q}$ follows from purity for these quiver varieties.  Our purity theorem provides evidence towards the conjecture that $\mathcal{A}_{\Pi_Q}\cong \mathcal{Y}_{\mathtt{MO},Q}^+$.

\subsection{From Donaldson--Thomas theory to symplectic geometry}


Consider the following general setup, of which our situation with $X(\overline{Q})_{\dd}$ being acted on by $\Gl_{\dd}$ is a special case.  Let $X$ be a complex symplectic manifold, with the affine algebraic group $G$ acting on it via a Hamiltonian action, with ($G$-equivariant) moment map $\mu\colon X\rightarrow \mathfrak{g}^*$.  Then define the function
\begin{align}\label{gdef}
\overline{g}\colon&X\times\mathfrak{g}\rightarrow \mathbb{C}\\
&(x,\zeta)\mapsto \mu(x)(\zeta).\nonumber
\end{align}
This function is $G$-invariant, and so defines a function on the stack-theoretic quotient
\[
g\colon (X\times\mathfrak{g})/G\rightarrow \mathbb{C}.
\]
Via dimensional reduction \cite[Thm.A.1]{Chicago2} there is a natural isomorphism in compactly supported cohomology
\begin{equation}
\label{taste}
\HO_c(\mu^{-1}(0)/G,\mathbb{Q})\otimes\LL^{\dim(\mathfrak{g})}\cong \HO_c((X\times\mathfrak{g})/G,\phi_g\QQ)
\end{equation}
where $\phi_g\QQ$ is the mixed Hodge module complex of vanishing cycles for $g$.  This explains the appearance of vanishing cycles in what follows.

\smallbreak
 
Note that $\phi_g\QQ$ is supported on the critical locus of $g$.  A guiding principle for Donaldson--Thomas theory (e.g. as expressed in \cite{Casson}) is that a given moduli stack $\mathfrak{N}$ of coherent sheaves on a Calabi--Yau 3-fold can be locally expressed as the critical locus of a function $g$ on some smooth ambient stack $\Mst$.  Donaldson--Thomas invariants are then defined by taking invariants, factoring through the Grothendieck group of mixed Hodge structures, of 
\[
\HO_c(\Mst,\phi_g\QQ)=\HO_c(\crit(g),\phi_g\QQ)=\HO_c(\mathfrak{N},\phi_g\QQ).  
\]
The link between Donaldson--Thomas theory and symplectic geometry is completed by the observation of \cite[Sec.4.2]{ginz} (see also \cite{Moz11}) that associated to any quiver $Q$ there is a tripled quiver with potential $(\tilde{Q},\tilde{W})$ such that $(X(\overline{Q})_{\dd}\times\g_{\dd})/\Gl_{\dd}$ is identified with the smooth stack of $\dd$-dimensional representations of $\mathbb{C}\tilde{Q}$, and the critical locus of the function $\mathfrak{Tr}(\tilde{W})$ (which is the function $g$ from (\ref{gdef})) is exactly the substack of representations belonging to the category of representations of the Jacobi algebra\footnote{The definition of $\Jac(\tilde{Q},\tilde{W})$ is recalled in Section \ref{QandP}.} $\Jac(\tilde{Q},\tilde{W})$ associated to the pair $(\tilde{Q},\tilde{W})$.  

\smallbreak

Putting all of this together, the cohomological Donaldson--Thomas theory of $\Jac(\tilde{Q},\tilde{W})$ gives us a tool for understanding the compactly supported cohomology of $\Mst(\Pi_Q)$, i.e. there is an isomorphism of cohomologically graded mixed Hodge structures
\begin{equation}
\label{Ataste}
\HO_c\!\left(\Mst(\Pi_Q)_{\dd},\mathbb{Q}\right)\otimes \LL^{\dim(\Gl_{\dd})}\cong\HO_c\!\left(\Mst(\Jac(\tilde{Q},\tilde{W}))_{\dd},\phi_{\mathfrak{Tr}(\tilde{W})}\QQ\right).
\end{equation}
Cohomological DT theory enables us to prove powerful theorems regarding the right hand side of (\ref{Ataste}), which we use to deduce results regarding the left hand side.
\medbreak
\subsection{BPS sheaves and their supports}
We prove Theorem \ref{purityThm} via an analysis of BPS sheaves.  These were introduced in \cite{DaMe15b}, in the course of the proof of the relative cohomological integrality/PBW theorem for the critical cohomological Hall algebras introduced by Konstevich and Soibelman \cite{COHA}.  This theorem states that for a symmetric quiver $Q'$ with potential $W'$, and stability condition $\zeta$ the direct image of the mixed Hodge module of vanishing cycles for the function $\Tr(W')$ along the morphism $\p$ from the moduli stack of $\zeta$-semistable $\mathbb{C}Q'$-modules to the coarse moduli space is obtained by taking the free symmetric algebra generated by an explicitly defined mixed Hodge module $\DTS^{\zeta}_{Q',W'}$, called the \textit{BPS sheaf}, tensored with a half Tate twist of $\HO(\B\mathbb{C}^*,\mathbb{Q})$.  The \textit{BPS cohomology} $\DT^{\zeta}_{Q',W'}$ is defined to be the hypercohomology of this sheaf.
\sbrk
Although the direct image of the mixed Hodge module of vanishing cycles along $\p$ is concentrated in infinitely many cohomological degrees, this BPS sheaf is a genuine mixed Hodge module, i.e. its underlying complex of constructible sheaves is a perverse sheaf.  This is what we mean by ``integrality''.  As a consequence, for any dimension vector $\dd\in\mathbb{N}^{Q_0}$, the associated BPS cohomology $\DT^{\zeta}_{Q',W',\dd}$ lives in bounded degrees.

\subsubsection{} Unless the pair $Q',W'$ is quite special, it is difficult to actually determine $\DTS^{\zeta}_{Q',W'}$.  In particular, the hypercohomology of this sheaf can fail to be pure.  However, in this paper, we show that for the quiver $\tilde{Q}$ with potential $\tilde{W}$ appearing in the previous section, the situation is much more promising.  A key role is played by a support lemma, Lemma \ref{lem1}, which imposes strong restrictions on the support of the BPS sheaf in the case $(Q',W')=(\tilde{Q},\tilde{W})$ for $Q$ any quiver.
\begin{lemma}[Lemma \ref{lem1}]
\label{slemma}
Let $x$ be a point in $\mathcal{M}(\tilde{Q})^{\zeta\sst}_{\dd}$ corresponding to a $\mathbb{C}\tilde{Q}$-module $\rho$, and let $x$ lie in the support of $\DTS^{\zeta}_{\tilde{Q},\tilde{W},\dd}$.  Let $\Lambda$ be the set of generalised eigenvalues of the operators $\rho(\omega_i)$, with $i$ the vertices of $Q$.  Then $\Lambda$ contains only one element.
\end{lemma}
This is a crucial lemma on the way to proving purity of BPS cohomology for $\Jac(\tilde{Q},\tilde{W})$.  In combination with this purity result, the lemma also enables us to provide some of the first nontrivial calculations of BPS sheaves; see in particular \S \ref{Jordan}, lifting the work of Behrend, Bryan and Szendr\H{o}i on motivic degree zero invariants to the level of BPS sheaves.  This lemma is also one of the crucial ingredients in proving the purity of the BPS sheaves $\DTS^{\zeta}_{\tilde{Q},\tilde{W}}$ themselves, and the definition of the ``less perverse filtration'': see \cite{preproj3,2CYpurity} for developments in this direction.

\subsection{2d BPS sheaves}
Aside from purity of BPS cohomology, one of the main applications of the support lemma is that it enables us to define \textit{2d BPS sheaves}:
\begin{thmx}
\label{2dbpsthm}
Let $m\colon \AA^1\times\Msp(\Pi_Q)_{\dd}^{\zeta\sst}\rightarrow \Msp(\tilde{Q})_{\dd}^{\zeta\sst}$ be the morphism extending a $\Pi_Q$-module to a $\mathbb{C}\tilde{Q}$-module by letting each of the extra loops $\omega_i$ act via scalar multiplication by $z\in\AA^1$.  Then there is a Verdier self-dual mixed Hodge module
\[
\DTS^{\zeta}_{\Pi_Q,\dd}\in\Ob(\MHM(\Msp(\Pi_Q)_{\dd}^{\zeta\sst})),
\]
which we call the \textit{2d BPS sheaf}, such that 
\begin{equation}
\DTS^{\zeta}_{\tilde{Q},\tilde{W},\dd}\cong m_*\!\left(\ICS_{\AA^1}(\QQ)\boxtimes \DTS^{\zeta}_{\Pi_Q,\dd}\right).
\end{equation}
\end{thmx}
The pure intersection complex $\ICS_{\AA^1}(\QQ)$ is defined in \S \ref{iygtyf}.  The 2d BPS sheaves enjoy a number of properties: 
\begin{enumerate}
\item
They categorify the Kac polynomials; we elaborate upon this in \S \ref{AppKac}.  
\item
They are Verdier self-dual (see \S \ref{thbproof}), which we expect to have a role in producing geometric doubles of BPS Lie algebras.  
\item
Their hypercohomology carries a Lie algebra structure, the so-called \textit{BPS Lie algebra} $\mathfrak{g}_{\Pi_Q}$.  \item
They are \textit{pure} as mixed Hodge modules, enabling us to relate generators of $\mathfrak{g}_{\Pi_Q}$ to intersection cohomology.  
\end{enumerate}
These last two properties are explained and explored in the paper \cite{preproj3}, which is devoted to the further study of 2d BPS sheaves.
\subsection{Serre subcategories}
Working with the BPS sheaf $\DTS_{\Pi_Q}$, as opposed to its hypercohomology, enables us to calculate the compactly supported cohomology of substacks of the stack $\Mst(\Pi_Q)$ corresponding to Serre subcategories, even when these stacks are not pure, leading to e.g. applications for genus one character stacks.
\sbrk
A Serre subcategory $\mathcal{S}\subset \mathbb{C}\overline{Q}\lmod$ is a full subcategory such that for every short exact sequence 
\[
\xymatrix{
0\ar[r]&M'\ar[r]&M\ar[r]&M''\ar[r]&0
}
\]
of $\mathbb{C}\overline{Q}$-modules, $M$ is in $\mathcal{S}$ if and only if $M'$ and $M''$ are.  Note that a module $M$ is in $\mathcal{S}$ if and only if all of the subquotients in its Jordan-Holder filtration are in $\mathcal{S}$, or equivalently if its semisimplification is in $\mathcal{S}$.  So restricting attention to $\Mst(\mathbb{C}\overline{Q})^{\mathcal{S}}$, which is defined to be the substack of $\mathbb{C}\overline{Q}$-modules belonging to $\mathcal{S}$, is the same as restricting to the preimage of a particular subspace under the semisimplification map from the stack of $\mathbb{C}\overline{Q}$-modules to the coarse moduli space $\Msp(\overline{Q})$.  
\sbrk
Because many of our results can be stated in the category of mixed Hodge modules\footnote{In cohomological Donaldson--Thomas theory, this is what is meant by the ``relative'' in the relative integrality conjecture.} on $\Msp(\overline{Q})$, we can prove results on the Borel--Moore homology of $\Mst(\Pi_{Q})^{\mathcal{S}}$ via restriction functors and base change.

\smallbreak 

For example consider the quiver $\overline{Q_{\Jor}}$, with one vertex and two loops $X,Y$, and set $\mathcal{S}$ to be the category of representations for which the two loops $X$ and $Y$ are sent to invertible morphisms.  The resulting stack is the character stack for the genus one Riemann surface, and we use the above ideas to calculate its compactly supported cohomology, even though it is \textit{not} pure; see \S \ref{appsss} for details.
\subsection{Structural results}
We prove two general structural results regarding the compactly supported cohomology of stacks $\Msp(\Pi_Q)^{\mathcal{S}}$ for arbitrary finite quiver $Q$ and Serre subcategory $\mathcal{S}$, stated below as Theorems \ref{PoinThm} and \ref{2dint}.  The first is a kind of cohomological wall-crossing isomorphism:
\begin{thmx}
\label{PoinThm}
Let $Q$ be a quiver, let $\mathcal{S}\subset \mathbb{C}\overline{Q}\lmod$ be a Serre subcategory, let $\zeta\in\mathbb{H}_+^{Q_0}$ be a stability condition, and let $\slope $ be the slope function defined with respect to $\zeta$.  Then there is an isomorphism of $\mathbb{N}^{Q_0}$-graded mixed Hodge structures
\begin{align}\label{caf}
&\bigoplus_{\dd\in\mathbb{N}^{Q_0}}\HO_c\!\left(\Mst(\Pi_Q)^{\mathcal{S}}_{\dd},\mathbb{Q}\right)\otimes\LL^{(\dd,\dd)}\cong \\&\bigotimes_{\theta\in({}-\infty,\infty)}\left(\bigoplus_{\dd\in\mathbb{N}^{Q_0}|\substack{\dd=0\textrm{ or}\\\slope(\dd)=\theta}}\HO_c\!\left(\Mst(\Pi_Q)^{\mathcal{S},\zeta\sst}_{\dd},\mathbb{Q}\right)\otimes\LL^{ (\dd,\dd)}\right),\nonumber
\end{align}
where for $\dd',\dd''\in\mathbb{N}^{Q_0}$,
\[
(\dd',\dd''):=\sum_{i\mathrm{ \:a\: vertex\: of\: }Q}\dd'_i\dd''_i-\sum_{a\mathrm{\: an\: arrow\: of\: }Q}\dd'_{\mathrm{source}(a)}\dd''_{\mathrm{target}(a)}
\]
and $\Mst(\Pi_Q)^{\mathcal{S},\zeta\sst}_{\dd}$ is the stack of $\dd$-dimensional $\zeta$-semistable $\Pi_Q$-modules in $\mathcal{S}$.  
\end{thmx}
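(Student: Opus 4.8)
The plan is to transport the statement to cohomological noncommutative Donaldson--Thomas theory via the dimensional reduction isomorphism (\ref{Ataste}), to establish there the factorisation governed by the relative cohomological integrality theorem, and finally to cut everything down to the Serre subcategory $\mathcal S$. Concretely, let $(\tilde{Q},\tilde{W})$ be the quiver with potential of \cite{Moz11}, so that $\Mst(\mathbb{C}\tilde{Q})_{\dd}\cong(X(\overline{Q})_{\dd}\times\g_{\dd})/\Gl_{\dd}$ and $\crit(\mathfrak{Tr}(\tilde{W}))=\Mst(\Jac(\tilde{Q},\tilde{W}))_{\dd}$, and let $p_{\dd}\colon\Mst(\Jac(\tilde{Q},\tilde{W}))_{\dd}\to\Msp(\tilde{Q})_{\dd}$ be the map to the coarse moduli space of $\tilde{Q}$-representations. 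Restricting (\ref{Ataste}) to the Serre subcategory gives, for every $\dd$,
\[
\HO_c\!\left(\Mst(\Pi_Q)^{\mathcal S}_{\dd},\QQ\right)\otimes\LL^{\otimes\dim\Gl_{\dd}}\cong\HO_c\!\left(\Mst(\Jac(\tilde{Q},\tilde{W}))^{\tilde{\mathcal S}}_{\dd},\phi_{\mathfrak{Tr}(\tilde{W})}\QQ\right),
\]
where $\Mst(\Jac(\tilde{Q},\tilde{W}))^{\tilde{\mathcal S}}_{\dd}:=p_{\dd}^{-1}(\Msp(\tilde{Q})^{\tilde{\mathcal S}}_{\dd})$, together with the identical statement for the $\zeta$-semistable substacks. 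Working over $\Msp(\tilde{Q})$ is what makes the Serre restriction tractable: the direct sum maps $\Msp(\tilde{Q})_{\dd'}\times\Msp(\tilde{Q})_{\dd''}\to\Msp(\tilde{Q})_{\dd'+\dd''}$ carry $\tilde{\mathcal S}$-loci to $\tilde{\mathcal S}$-loci since $\mathcal S$ is extension-closed, so the twisted symmetric monoidal structure on mixed Hodge modules over $\coprod_{\dd}\Msp(\tilde{Q})_{\dd}$ underlying cohomological DT theory restricts to the $\tilde{\mathcal S}$-loci, and by Lemma \ref{lem1} each of the BPS sheaves below restricts compatibly.

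Now apply the relative cohomological integrality theorem (Theorem \ref{IntThm}) twice. Applied to all of $\Mst(\Jac(\tilde{Q},\tilde{W}))$ it describes the direct image as a free symmetric algebra,
\[
\bigoplus_{\dd\in\NN^{Q_0}}(p_{\dd})_{!}\phi_{\mathfrak{Tr}(\tilde{W})}\QQ\cong\Sym\!\left(\bigoplus_{\dd\neq 0}\DTS^{\tilde{Q},\tilde{W}}_{\dd}\otimes\HO(\pt/\mathbb{C}^*,\QQ)\right),
\]
up to a cohomological shift, in mixed Hodge modules over $\coprod_{\dd}\Msp(\tilde{Q})_{\dd}$; applied to the $\zeta$-semistable locus of each slope $\theta$ --- with $p^{\zeta\sst}_{\dd}\colon\Mst(\Jac(\tilde{Q},\tilde{W}))^{\zeta\sst}_{\dd}\to\Msp(\tilde{Q})_{\dd}$ the restriction of $p_{\dd}$ --- it gives the same description with $\DTS^{\tilde{Q},\tilde{W}}_{\dd}$ replaced by $\DTS^{\tilde{Q},\tilde{W},\zeta\sst}_{\dd}$ and the sums running over dimension vectors of slope $\theta$. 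The engine of the proof is the wall-crossing identity for BPS sheaves,
\[
\bigoplus_{\dd\in\NN^{Q_0}}\DTS^{\tilde{Q},\tilde{W}}_{\dd}\cong\bigoplus_{\theta\in(-\infty,\infty)}\ \bigoplus_{\dd\in\NN^{Q_0}|\substack{\dd=0\textrm{ or}\\\slope(\dd)=\theta}}\DTS^{\tilde{Q},\tilde{W},\zeta\sst}_{\dd},
\]
the BPS-sheaf incarnation of noncommutative DT wall-crossing: it follows from Theorem \ref{IntThm} by running the Harder--Narasimhan stratification of $\Mst(\Jac(\tilde{Q},\tilde{W}))_{\dd}$, pushing $\phi_{\mathfrak{Tr}(\tilde{W})}\QQ$ forward to $\Msp(\tilde{Q})$, and comparing the two integrality descriptions degree by degree --- it is precisely the fact that each $\DTS^{\tilde{Q},\tilde{W}}_{\dd}$ is a \emph{genuine} mixed Hodge module, hence has bounded hypercohomology, that lets the Harder--Narasimhan recursion close up and forces the splitting.

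Granting this, $\Sym$ converts the slope-decomposition into a tensor product, $\Sym(\bigoplus_{\theta}V_{\theta})\cong\bigotimes_{\theta}\Sym(V_{\theta})$, so combining the two integrality descriptions with the BPS wall-crossing identity yields
\[
\bigoplus_{\dd\in\NN^{Q_0}}(p_{\dd})_{!}\phi_{\mathfrak{Tr}(\tilde{W})}\QQ\cong\bigotimes_{\theta\in(-\infty,\infty)}\left(\bigoplus_{\dd\in\NN^{Q_0}|\substack{\dd=0\textrm{ or}\\\slope(\dd)=\theta}}(p^{\zeta\sst}_{\dd})_{!}\phi_{\mathfrak{Tr}(\tilde{W})}\QQ\right).
\]
Restricting this to $\Msp(\tilde{Q})^{\tilde{\mathcal S}}$, taking compactly supported hypercohomology, and feeding the left-hand side and each semistable factor on the right back through the Serre version of (\ref{Ataste}) produces (\ref{caf}) up to Tate twists. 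The remaining chore is twist-bookkeeping: (\ref{caf}) carries the twist $\LL^{\otimes(\dd,\dd)}$ precisely because $(-,-)$ is the Euler form for which the twisted symmetric monoidal structure of cohomological DT theory is symmetric, so reconciling $\LL^{\otimes(\dd,\dd)}$ with the twist $\LL^{\otimes\dim\Gl_{\dd}}$ from (\ref{Ataste}) and the shift in the integrality description is the standard Euler-form computation and is routine.

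The main obstacle, as I see it, is the wall-crossing identity for BPS sheaves in the \emph{relative} setting over $\Msp(\tilde{Q})$, together with its compatibility with restriction to the $\tilde{\mathcal S}$-locus: one must verify that the Harder--Narasimhan recursion can be carried out entirely at the level of mixed Hodge modules on the coarse moduli space, and that Lemma \ref{lem1} constrains the supports tightly enough that restricting each term to the (in general neither open nor closed) subset $\Msp(\tilde{Q})^{\tilde{\mathcal S}}$ commutes with the operations used in the recursion. By comparison, the passage across (\ref{Ataste}) and the twist count are routine.
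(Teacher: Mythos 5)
Your broad strategy is the right one --- transport the problem to cohomological DT theory via dimensional reduction, factor the pushforward by slopes, and transport back --- and this is indeed the spirit of the paper's proof. However, there are two genuine gaps.

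\textbf{The semistability mismatch.} Your statement ``together with the identical statement for the $\zeta$-semistable substacks'' hides the crux of the proof. The dimensional reduction isomorphism, applied over the $\zeta$-semistable locus of $\Mst(\overline{Q})$, produces the vanishing cycle cohomology on the locus of $\tilde{Q}$-representations whose \emph{underlying $\overline{Q}$-representation} is $\zeta$-semistable (the space $X(\tilde{Q})^{\tilde{\mathcal{S}},\tilde{\zeta}\sst}$ in the paper's notation). But Theorems \ref{IntThm} and \ref{paraProp} are stated on $\Mst(\tilde{Q})^{\zeta\sst}$, the locus of $\zeta$-semistable $\tilde{Q}$-representations. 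These two loci genuinely differ: a $\tilde{Q}$-representation can be $\zeta$-semistable as a $\tilde{Q}$-module while its underlying $\overline{Q}$-module has a destabilising $\overline{Q}$-submodule not preserved by the loops $\omega_i$. The paper resolves this with the second support lemma (Lemma \ref{supportProp}), which shows the two loci have the same intersection with $\crit(\mathfrak{Tr}(\tilde{W}))$ --- the key point being that for a $\Jac(\tilde{Q},\tilde{W})$-module, each $\omega_i$ induces a $\Pi_Q$-module endomorphism, which must respect the Harder--Narasimhan filtration. Without this, the morphism $\Gamma$ in the paper's proof, and hence the whole chain of isomorphisms, breaks down. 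Your proposal does not address this, and it is not ``routine.''

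\textbf{The BPS-sheaf wall crossing identity.} The identity you lean on, $\bigoplus_{\dd}\DTS_{\dd}\cong\bigoplus_{\theta}\bigoplus_{\slope(\dd)=\theta}\DTS^{\zeta\sst}_{\dd}$, is not what the paper uses, and the justification you give --- ``comparing the two integrality descriptions degree by degree,'' with the Harder--Narasimhan recursion ``closing up'' because BPS sheaves are genuine mixed Hodge modules --- is not correct. Boundedness of BPS cohomology does not cause the long exact sequences in the Harder--Narasimhan recursion to split. The paper instead invokes the relative cohomological \emph{wall crossing} theorem (Theorem \ref{paraProp}, cited from \cite[Thm.D]{DaMe15b}), which decomposes $\Ho(\p_!\phin{\mathfrak{Tr}(\tilde{W})}\ICS_{\Mst(\tilde{Q})}(\QQ))$ itself as a $\boxtimes_{\oplus}$-product over slopes; this is a statement proved by establishing that the Harder--Narasimhan stratification long exact sequences split, using the ``approximation by proper maps'' machinery, independently of the integrality theorem. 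The paper then applies $\iota^*$ and takes hypercohomology, which is compatible with restriction because the $\tilde{\mathcal{S}}$-locus is compatible with the monoid map $\oplus$. Your route of first establishing a BPS-sheaf-level wall-crossing and then applying $\Sym$ would, if the BPS identity were established, produce an equivalent result, but you would effectively be re-deriving a chunk of \cite{DaMe15b} rather than using it, and the derivation you sketch does not work.
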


\subsubsection{}
Taking the Hodge series of both sides of (\ref{caf}), there is an equality of generating series
\begin{align}\label{decaf}
&\sum_{\dd\in\mathbb{N}^{Q_0}}\hp\!\left(\HO_c\!\left(\Mst(\Pi_Q)^{\mathcal{S}}_{\dd},\QQ\right),x,y,z\right)(xyz^2)^{(\dd,\dd)} t^{\dd}\\=&\prod_{\theta\in({}-\infty,\infty)}\!\left(1+\sum_{\slope ({\dd})=\theta}  \hp\!\left(\HO_c\!\left(\Mst(\Pi_Q)^{\mathcal{S},\zeta\sst}_{\dd},\QQ\right),x,y,z\right)(xyz^2)^{(\dd,\dd)}t^{\dd}\right).\nonumber
\end{align}
The compactly supported cohomology of $\Mst(\Pi_Q)_{\dd}^{\mathcal{S},\zeta\sst}$ can fail to be pure, and fail to be of Tate type, but we show that the isomorphism (\ref{caf}) exists nonetheless, and hence, taking the Hodge series of both sides, equation (\ref{decaf}) holds.   We explain how a specialisation of a special case of equation (\ref{decaf}) yields Hausel's formula for the Betti polynomials of Nakajima quiver varieties \cite{Hau10} in Section \ref{AppBC}.  

\subsubsection{PBW/integrality isomorphism}
Our next result is an analogue of the PBW isomorphism from Donaldson--Thomas theory.  We fix a quiver $Q$, a stability condition $\zeta\in \mathbb{H}_+^{Q_0}$, a slope $\theta\in(-\infty,\infty)$, and a Serre subcategory $\Sp$ of the category of $\mathbb{C}\overline{Q}$-modules.  We write
\[
\Coha_{\Pi_Q,\theta}^{\Sp,\zeta}\coloneqq \bigoplus_{\dd\in\mathbb{N}^{Q_0}|\substack{\dd=0\textrm{ or}\\\slope(\dd)=\theta}}\HO^{\BM}(\Mst(\Pi_Q)^{\mathcal{S},\zeta\sst}_{\dd},\mathbb{Q})\otimes\LL^{ -(\dd,\dd)}.
\]
This graded mixed Hodge module carries a Hall algebra structure, see \S \ref{KSCSec} for a generalisation of the construction.
\begin{thmx}
\label{2dint}
Let $\slope$ be the slope function defined with respect to a stability condition $\zeta\in\mathbb{H}_+^{Q_0}$, let $\theta\in(-\infty,\infty)$ be a slope.  
Define the 2d BPS sheaf $\DTS^{\zeta}_{\Pi_Q,\theta}$ as in Theorem \ref{2dbpsthm} and the \textit{BPS cohomology} to be the mixed Hodge structure
\[
\DT_{\Pi_Q,\theta}^{\mathcal{S},\zeta}\coloneqq\bigoplus_{\substack{0\neq \dd\in\mathbb{N}^{Q_0}\\\slope(\dd)=\theta}}\HO_c\!\left(\Msp(\overline{Q})_{\dd}^{\mathcal{S},\zeta\sst},\DTS_{\Pi_Q,\dd}^{\zeta}\right)^{\vee}.
\]
Then there is an isomorphism
\begin{equation}
\label{rel2dI}
\p^{\zeta}_{\theta,!}\left(\bigoplus_{\dd\in\dvst}\QQ_{\Mst(\Pi_Q)_{\dd}}\otimes\LL^{(\dd,\dd)}\right)\cong \Sym_{\boxtimes_{\oplus}}\!\left(\DTS_{\Pi_Q,\theta}^{\zeta}\otimes\HO(\B \mathbb{C}^*,\QQ)^{\vee}\right).
\end{equation}
Moreover, there is a PBW isomorphism
\begin{equation}
\label{bmirror}
\Sym\!\left(\DT_{\Pi_Q,\theta}^{\Sp,\zeta}\otimes\HO(\B\mathbb{C}^*,\mathbb{Q})\right)\xrightarrow{\cong} \Coha_{\Pi_Q,\theta}^{\Sp,\zeta}.
\end{equation}
\end{thmx}
Since $\DTS_{\Pi_Q,\dd}^{\zeta}$ is Verdier self-dual by Theorem \ref{2dbpsthm}, $\DT_{\Pi_Q,\dd}^{\mathcal{S},\zeta}$ is the hypercohomology of the $!$-restriction of the BPS sheaf on the coarse moduli space of $\zeta$-semistable $\dd$-dimensional $\mathbb{C}\tilde{Q}$-modules to the subspace of points representing modules in $\mathcal{S}$.
\subsection{Positivity of restricted Kac polynomials}
\label{KacIntro}
For an arbitrary quiver $Q$, it was proven by Kac in \cite{kac83} that for each dimension vector $\dd\in\mathbb{N}^{Q_0}$ there is a polynomial $\kac_{Q,\dd}(q)\in\mathbb{Z}[q]$ which is equal to the number of absolutely indecomposable $\dd$-dimensional representations of $Q$ over the field of order $q$, whenever $q$ is equal to a prime power.  
\sbrk
In the case of the degenerate stability condition, for which \textit{all} modules are semistable of the same slope, and so the superscript $\zeta$ and the subscript $\theta$ can be dropped, (\ref{bmirror}) gives
\begin{equation}
\label{cmirror}
\bigoplus_{\dd\in\mathbb{N}^{Q_0}}\HO^{\BM}\!\left(\Mst(\Pi_Q)^{\mathcal{S}}_{\dd},\mathbb{Q}\right)\otimes\LL^{ -(\dd,\dd)}\cong\Sym\!\left(\DT_{\Pi_Q}^{\mathcal{S}}\otimes\HO(\B\mathbb{C}^*,\mathbb{Q})\right).
\end{equation}

Taking weight series of both sides of (\ref{cmirror}) yields
\begin{align}
\label{expintro}
\sum_{\dd\in\mathbb{N}^{Q_0}}\wt\!\left(\HO^{\BM}\!\left(\Mst(\Pi_Q)^{\mathcal{S}}_{\dd},\QQ\right),q^{1/2}\right)q^{-(\dd,\dd)} t^{\dd}=&\Exp\left(\sum_{\dd\neq 0}\kac_{Q,\dd}^{\mathcal{S}}(q^{-1/2})(1-q)^{-1}t^{\dd}\right)
\end{align}
where 
\begin{equation}
\label{rKdef}
\kac_{Q,\dd}^{\mathcal{S}}(q^{1/2})=\wt\left(\DT_{\Pi_Q,\dd}^{\mathcal{S}},q^{1/2}\right)
\end{equation}
is the ``$\mathcal{S}$-restricted Kac polynomial''.  The right hand side of (\ref{expintro}) is defined in terms of the plethystic exponential:
\begin{align*}
\Exp\colon &\mathbb{Z}(\!(q^{1/2})\!)[\![t_i\lvert i\in Q_0]\!]_+\rightarrow \mathbb{Z}(\!(q^{1/2})\!)[\![t_i\lvert i\in Q_0]\!]\\
&\sum_{i\in \mathbb{Z},\dd\in\mathbb{N}^{Q_0}}b_{i,\dd}q^{i/2}t^{\dd}\mapsto \prod_{i\in \mathbb{Z},\dd\in\mathbb{N}^{Q_0}}(1-q^{i/2}t^{\dd})^{-b_{i,\dd}}
\end{align*}
where the $+$ subscript means that $b_{i,0}=0$ for all $i\in\mathbb{Z}$.  
\subsubsection{}
Calculating the right hand side of (\ref{rKdef}) looks daunting, but the mere existence of isomorphism (\ref{cmirror}) can tell us something highly nontrivial about $\kac_{Q,\dd}^{\mathcal{S}}(q^{1/2})$ without knowing how to do this calculation (or even knowing what the definition of $\DTS_{\Pi_Q,\dd}$ or $\DTS_{\tilde{Q},\tilde{W},\dd}$ is!).  Namely, if the left hand side of (\ref{cmirror}) is pure, then the BPS cohomology $\DT_{\Pi_Q,\dd}^{\mathcal{S}}$ must also be pure, and so $\kac_{\dd}^{\mathcal{S}}(q^{1/2})$ has positive coefficients (expressed as a polynomial in $-q^{1/2})$.  
\smallbreak
In particular, for the case $\mathcal{S}=\mathbb{C}\overline{Q}\lmod$, the $\mathcal{S}$-restricted Kac polynomial is the same as Kac's original polynomial, and our purity theorem (Theorem \ref{purityThm}) implies Kac's positivity conjecture, originally proved by Hausel, Letellier and Rodriguez-Villegas \cite{HLRV13}.  
\subsubsection{}
A rich source of proper Serre subcategories of $\mathbb{C}\overline{Q}$ is provided by demanding nilpotence of certain paths in $\mathbb{C}\overline{Q}$.  Via this construction Bozec, Schiffmann and Vasserot \cite{BSV17,SV17} define the subcategory of nilpotent, *-semi-nilpotent and *-strongly-semi-nilpotent $\mathbb{C}\overline{Q}$-representations; see \S \ref{PProof} for definitions.  By the above method, in \S \ref{AppKac} we prove positivity of \textit{all} of the resulting polynomials:
\begin{thmx}[Theorem \ref{mpos}, Remark \ref{kacfin}]
\label{rkacp}
Let $Q$ be an arbitrary finite quiver, and let $\dd\in\mathbb{N}^{Q_0}$ be a dimension vector.  Setting $\Sp$ to be any out of the the full subcategory of nilpotent, *-semi-nilpotent, or *-strongly-semi-nilpotent $\mathbb{C}\overline{Q}$-representations, the $\Sp$-restricted Kac polynomial $\kac^{\Sp}_{Q,\dd}(q)$ has positive coefficients.
\end{thmx}
\subsection{Structure of the paper}
In \S\ref{NandC} we give definitions and notation concerning quivers.  In Section \ref{DTtheory} we collect together all required definitions and background theorems from noncommutative/cohomological Donaldson--Thomas theory.  Then in \S\ref{mainProof} we prove Theorem \ref{purityThm}, regarding purity of the compactly supported cohomology of the stack $\Mst(\Pi_Q)$.  Along the way we prove the support lemma (Lemma \ref{slemma} above) and Theorem \ref{2dbpsthm}.
\smallbreak
In \S\ref{Jordan} we focus on the Jordan quiver, corresponding to ``degree zero'' DT theory.  The associated Jacobi algebra $\Jac(\widetilde{Q_{\Jor}},\tilde{W})$ is isomorphic to the \textit{commutative} algebra $\mathbb{C}[x,y,z]$, so that our work makes contact with classical algebraic geometry.  We determine the BPS sheaves for the pair $\widetilde{Q_{\Jor}},\tilde{W}$, and thus determine the compactly supported/vanishing cycle cohomology of various stacks in geometry and nonabelian Hodge theory.  In particular, in this section we calculate the vanishing cycle cohomology of $\Hilb_n(\AA^3)$, via a proof that it is pure, categorifying the results of Behrend, Bryan and Szendr\H{o}i \cite{BBS}.
\smallbreak
In \S\ref{conseq} we turn back to the geometry of representations of $\Pi_Q$ for a general $Q$.  Thanks to a second support lemma (Lemma \ref{supportProp}), essentially all moduli spaces and stacks of representations of $\Pi_Q$-representations have a (categorically) 3-dimensional analogue, by which we mean that their compactly supported cohomologies fit into isomorphisms like (\ref{Ataste}), and so are recovered from the DT theory of $\Jac(\tilde{Q},\tilde{W})$.  This enables us to prove a generalisation of Theorem \ref{purityThm} incorporating stability conditions.  
\smallbreak
In \S\ref{BC} we use this second support lemma to prove Theorems \ref{PoinThm} and \ref{2dint}.  Then in \S\ref{AppKac} we use the PBW isomorphism in order to relate restricted Kac polynomials to hypercohomology of restrictions of BPS sheaves, and prove Theorem \ref{rkacp}.
\smallbreak
In Sections \S \ref{CoHASec} and \ref{CoHASec2} we explore the implications of purity for the structure of the cohomological Hall algebra $\Coha_{\Pi_Q}$ built out of $\HO_c(\Mst(\Pi_Q),\mathbb{Q})$.  Firstly, in \S \ref{CoHASec} we use equivariant formality (a consequence of purity) to relate this Hall algebra to Hall algebras defined with extra equivariant parameters, introduced by letting a torus $T$ scale the arrows of $\tilde{Q}$.  Then in \S \ref{CoHASec2} we prove (a generalisation of) Conjecture 4.4 of \cite{ScVa12}, stating that the cohomological Hall algebra $\Coha_{\tau,\Pi_Q}$ for an arbitrary preprojective algebra $\Pi_Q$, deformed by incorporating extra equivariant parameters, is naturally a subalgebra of an elementary shuffle algebra.  With these results it becomes possible to perform concrete calculations in $\Coha_{\Pi_Q}$, and we finish with some example applications.  In particular, we prove that the cohomological Hall algebra $\Coha_{\mathfrak{C}oh(\mathbb{A}^2)}$ of finite length coherent sheaves on $\mathbb{A}^2$, without any extra equivariant parameters, is \textit{not} commutative, and that the cohomological Hall algebra $\Coha_{\tau,\Pi_Q}$ can fail to be torsion free if $T$ is chosen to be one of certain bad 1-dimensional tori.
\medbreak
\subsection{Conventions}
\begin{itemize}
\item
For $G$ a complex algebraic group, we set $\HO_G\coloneqq \HO(\B G,\mathbb{Q})$.
\item
All functors are assumed to be derived unless explicitly stated otherwise.
\item
All quivers are assumed to be finite.
\item
For $X$ a complex variety, or global quotient stack, we continue to denote
\[
\HO^{\BM}(X,\mathbb{Q})\coloneqq \HO_c(X,\mathbb{Q})^{\vee}.
\]
\item
We continue to write $\mathbb{N}=\mathbb{Z}_{\geq 0}$.
\item
Wherever an object appears with a subscript that is a bold Roman letter, that letter refers to a dimension vector, and $\bullet_{\dd}$ is the subobject corresponding to that dimension vector.  If any such object appears with a Greek letter such as $\theta$ as a subscript, then $\theta$ will refer to a slope, and $\bullet_{\theta}$ will refer to the subobject corresponding to dimension vectors in $\Lambda_{\theta}^{\zeta}$.  Finally, if an expected subscript is missing altogether, then the entire object is intended.
\item
We generally use capital Roman letters to refer to spaces of representations before taking any kind of quotient, calligraphic letters to refer to GIT moduli spaces, and fraktur letters to refer to moduli stacks.
\item
For $\mathscr{D}$ a triangulated category equipped with a t structure, we define the total cohomology functor $\Ho(\bullet)\coloneqq \bigoplus_{i\in\mathbb{Z}}\Ho^i(\bullet)[-i]$.
\end{itemize}
\begin{convention}\label{stabConv}
Where a space or object is defined with respect to a stability condition $\zeta$, that stability condition will appear as a superscript.  In the event that the superscript is missing, we assume that $\zeta$ is the degenerate King stability condition $(i,\ldots,i)\in \mathbb{H}_+^{Q_0}$.  With respect to this stability condition all representations have the same slope and are semistable, semisimple representations are the polystable representations, and the stable representations are exactly the simple ones.
\end{convention}

\subsection{Acknowledgements}
During the writing of this paper, I was a postdoctoral researcher at EPFL, supported by the Advanced Grant ``Arithmetic and physics of Higgs moduli spaces'' No. 320593 of the European Research Council.  During redrafting of the paper, I was supported by the starter grant ``Categorified Donaldson--Thomas theory'' No. 759967 of the European Research Council.  I was also supported by a Royal Society university research fellowship.
\sbrk
I would like to thank Sasha Minets, Tristan Bozec, Olivier Schiffmann, Eric Vasserot, Davesh Maulik and Victor Ginzburg for illuminating conversations that contributed greatly to the paper.  In particular, the idea for the proof of the crucial ``support lemma'' (Lemma \ref{lem1}) came from seeing Victor Ginzburg talk about the results of \cite{DGT16} at the Warwick EPSRC symposium ``Derived Algebraic Geometry, with a focus on derived symplectic techniques'', and the final section of the paper benefitted greatly from Sasha Minets' careful reading of an earlier draft.
\section{Quiver representations}
\label{NandC}
\subsection{Quivers and potentials}
\label{QandP}
Throughout the paper, $Q$ will be used to denote a finite quiver, i.e. a pair of finite sets $Q_0$ and $Q_1$ (the vertices and arrows, respectively), and a pair of maps $s,t\colon Q_1\rightarrow Q_0$ (taking an arrow to its source and target, respectively).  We denote by $\mathbb{C}Q$ the path algebra of $Q$, i.e. the algebra over $\mathbb{C}$ having as a basis the paths in $Q$, with structure constants for the multiplication given by concatenation of paths.  For each vertex $i\in Q_0$, there is a ``lazy'' path of length $0$ starting and ending at $i$, and we denote by $e_i$ the resulting element of $\mathbb{C}Q$.
\sbrk
A \textit{potential} on a quiver $Q$ is an element $W\in \mathbb{C} Q/[\mathbb{C} Q,\mathbb{C} Q]_{\textrm{vect}}$.  A potential is given by a linear combination of cyclic words in $Q$, where two cyclic words are considered to be the same if one can be cyclically permuted to the other.  If $W$ is a single cyclic word, and $a\in Q_1$, we define
\[
\partial W/\partial a=\sum_{\substack{W=cac'\\ c\textrm{ and }c'\textrm{ paths in }Q}}c'c
\]
and we extend this definition linearly to general $W$.  We define
\[
\Jac(Q,W):=\mathbb{C} Q/\langle \partial W/\partial a|a\in Q_1\rangle,
\]
the \textit{Jacobi algebra} associated to the quiver with potential $(Q,W)$.  We will often abbreviate ``quiver with potential'' to just ``QP''.
\sbrk
Given a quiver $Q$, we denote by $\overline{Q}$ the quiver obtained by doubling $Q$.  This is defined by setting $\overline{Q}_0:=Q_0$ and $\overline{Q}_1=\{a,a^*|a\in Q_1\}$, and extending $s$ and $t$ to maps $\overline{Q}_1\rightarrow \overline{Q}_0$ by setting
\begin{align*}
s(a^*)=&t(a)\\
t(a^*)=&s(a).
\end{align*}
We denote by $\Pi_Q$ the preprojectve algebra of $Q$, defined by
\[
\Pi_Q:=\mathbb{C}\overline{Q}/\left\langle \sum_{a\in Q_1}[a,a^*]\right\rangle.
\]
We denote by $\tilde{Q}$ the quiver obtained from $Q$ by setting 
\begin{align*}
\tilde{Q}_0:=&Q_0\\
\tilde{Q}_1:=&\overline{Q}_1\coprod \{\omega_i|i\in Q_0\}, 
\end{align*}
where each $\omega_i$ is an arrow satisfying $s(\omega_i)=t(\omega_i)=i$.  If a quiver $Q$ is fixed, we define the potential $\tilde{W}$ as in \cite[Sec.4.2]{ginz} and \cite{Moz11} by setting
\[
\tilde{W}=\sum_{a\in Q_1}[a,a^*]\sum_{i\in Q_0}\omega_i.
\]
\smallbreak 
If $A$ is an algebra, we denote by $A\lmod$ the category of finite-dimensional $A$-modules.
\begin{proposition}
\label{basicEq}
Define $\mathcal{C}_{\Pi_Q}$ to be the category whose objects are pairs $(M,f)$, where $M$ is a finite-dimensional $\Pi_Q$-module, and $f\in\End_{\Pi_Q\lmod}(M)$, and define $\Hom_{\mathcal{C}_{\Pi_Q}}\big((M,f),(M',f')\big)$ to be the subspace of morphisms $g\in\Hom_{\Pi_Q\lmod}(M,M')$ such that the diagram
\[
\xymatrix{
M\ar[d]^f\ar[r]^g&M'\ar[d]^{f'}\\
M\ar[r]^g&M'
}
\]
commutes.  Then there is an isomorphism of categories
\[
\mathcal{C}_{\Pi_Q}\cong \Jac(\tilde{Q},\tilde{W})\lmod.
\]
\end{proposition}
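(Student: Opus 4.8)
The plan is to unwind both categories down to linear-algebra data and exhibit a dictionary between them. Start on the left: an object of $\mathcal{C}_Q$ is a finite-dimensional $\Pi_Q$-module $M$ together with a $\Pi_Q$-module endomorphism $f$. Unpacking, $M$ is a $\ZZ^{Q_0}$-graded vector space $(M_i)_{i\in Q_0}$ together with linear maps $\rho(a),\rho(a^*)$ for each $a\in Q_1$ satisfying the preprojective relation $\sum_{a\in Q_1}[\rho(a),\rho(a^*)]=0$ at each vertex; and $f=(f_i)_{i\in Q_0}$ is a tuple of linear maps commuting with all the $\rho(a),\rho(a^*)$. Now turn to the right: a $\mathbb{C}\tilde{Q}$-module is the same graded vector space equipped with maps $\rho(a),\rho(a^*)$ for $a\in Q_1$ plus maps $\rho(\omega_i)\in\End(M_i)$ for each $i\in Q_0$, with no relations; imposing the relations $\partial\tilde{W}/\partial a$ for all arrows $a$ of $\tilde{Q}$ cuts out $\Jac(\tilde{Q},\tilde{W})\lmod$. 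So the obvious assignment is: send $(M,f)$ to the $\mathbb{C}\tilde{Q}$-representation with the same $\rho(a),\rho(a^*)$ and with $\rho(\omega_i):=f_i$.

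The key computation is to check that this assignment lands in $\Jac(\tilde{Q},\tilde{W})\lmod$ and is essentially surjective, which amounts to computing the cyclic derivatives of $\tilde{W}=\sum_{a\in Q_1}[a,a^*]\sum_{i\in Q_0}\omega_i$. I would compute $\partial\tilde{W}/\partial\omega_i$, which should read off as $e_i\big(\sum_{a\in Q_1}[a,a^*]\big)e_i$, so that the relation $\partial\tilde{W}/\partial\omega_i=0$ is exactly the $i$-th component of the preprojective relation for $\Pi_Q$; this forces the underlying $\overline{Q}$-representation to be a $\Pi_Q$-module. Next I would compute $\partial\tilde{W}/\partial a$ and $\partial\tilde{W}/\partial a^*$ for $a\in Q_1$: differentiating the cyclic word $a a^* \omega_{t(a)} - a^* a \omega_{s(a)}$ (summed appropriately, being careful about which $\omega$ appears — note $\omega_i$ attaches at vertex $i$ and $[a,a^*]$ lives at $t(a^*)=s(a)$... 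I should track this precisely) yields relations of the shape $\omega\cdot a^* - a^*\cdot\omega = 0$ and $a\cdot\omega - \omega\cdot a = 0$, i.e. exactly the statement that $\rho(\omega_i)=f_i$ commutes with every $\rho(a)$ and $\rho(a^*)$, which is precisely the condition that $f$ is a $\Pi_Q$-module endomorphism. Once objects match, morphisms match on the nose: a $\mathbb{C}\tilde{Q}$-module map between two such representations is a graded linear map intertwining all $\rho(a),\rho(a^*)$ and all $\rho(\omega_i)$, i.e. a $\Pi_Q$-module map $g$ with $g f = f' g$, which is exactly $\Hom_{\mathcal{C}_Q}$. This gives a fully faithful, essentially surjective (indeed bijective-on-objects) functor, hence an isomorphism of categories.

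I would present this as: (i) recall the standard description of $\mathbb{C}\overline{Q}\lmod$ and $\mathbb{C}\tilde{Q}\lmod$ as representations, (ii) do the cyclic-derivative computation for $\tilde{W}$ to identify the three families of Jacobi relations $\partial\tilde{W}/\partial\omega_i$, $\partial\tilde{W}/\partial a$, $\partial\tilde{W}/\partial a^*$, (iii) observe these translate verbatim into ``$\Pi_Q$-module plus commuting endomorphism'', (iv) check the morphism spaces agree, (v) conclude the constructed functor is an isomorphism of categories, noting it is the identity on underlying graded vector spaces and on the $\overline{Q}$-structure maps.

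The main obstacle is purely bookkeeping: getting the cyclic derivatives of $\tilde{W}$ exactly right, including all signs and all source/target decorations, so that $\partial\tilde{W}/\partial a$ genuinely produces the commutation relation $[\rho(\omega_{?}),\rho(a)]=0$ attached to the correct vertex rather than something slightly off. I would do one arrow carefully and then invoke symmetry $a\leftrightarrow a^*$. There is a small subtlety worth flagging: since $\tilde{W}$ is a sum of commutators $[a,a^*]$ each multiplied by $\sum_i\omega_i$, only the idempotent-matching terms survive in each cyclic derivative, so one should be careful that $\partial\tilde{W}/\partial\omega_i$ picks out only the part of $\sum_a[a,a^*]$ supported at vertex $i$ — but that is precisely what is wanted, as the preprojective relation is a single relation that decomposes as one relation per vertex. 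No genuinely hard step is expected; the content is that Mozgovoy's potential $\tilde{W}$ was designed so that this dictionary holds.
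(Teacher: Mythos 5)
Your proposal is correct and follows essentially the same route as the paper's own proof: compute the cyclic derivatives $\partial\tilde{W}/\partial\omega_i$ to recover the preprojective relation (hence the embedding $\Pi_Q\subset\Jac(\tilde{Q},\tilde{W})$), compute $\partial\tilde{W}/\partial a$ and $\partial\tilde{W}/\partial a^*$ to recover the commutation of $\rho(\omega_i)$ with the $\overline{Q}$-structure maps, and observe this is exactly the data of a $\Pi_Q$-module endomorphism. The only cosmetic difference is direction: the paper unwinds a $\Jac(\tilde{Q},\tilde{W})$-module into a pair $(M,f)$, whereas you build the functor $\mathcal{C}_Q\to\Jac(\tilde{Q},\tilde{W})\lmod$ and check it is an isomorphism, but the underlying computation and the content are identical.
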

\begin{proof}
From the relations $\partial \tilde{W}/\partial \omega_i$, for $i\in Q_0$, we deduce that the natural inclusion $\mathbb{C}\overline{Q}\subset \mathbb{C}\tilde{Q}$ induces an inclusion $\Pi_Q\subset \Jac(\tilde{Q},\tilde{W})$.  So a $\Jac(\tilde{Q},\tilde{W})$-module is given by a $\Pi_Q$-module $M$, along with linear maps $M(\omega_i)\in\End_{\mathbb{C}}(e_i\cdott M)$ satisfying
\begin{align*}
M\big(\partial \tilde{W}/\partial a\big)=&M\big(a^*\big)M\big(\omega_{s(a^*)}\big)-M\big(\omega_{t(a^*)}\big)M\big(a^*\big)=0\\
M\big(\partial \tilde{W}/\partial a^*\big)=&M\big(\omega_{t(a)}\big)M\big(a\big)-M\big(a\big)M\big(\omega_{s(a)}\big)=0.
\end{align*}
These are precisely the conditions for the elements $\{M(\omega_i)\}_{i\in Q_0}$ to define an endomorphism of $M$, considered as a $\Pi_Q$-module.
\end{proof}

\subsection{Moduli spaces}
\label{modSpaces}
Given an algebra $A$, presented as a quotient 
\[
A=\mathbb{C}Q/I
\]
of a free path algebra by a two-sided ideal $I\subset \mathbb{C}Q_{\geq 1}$ generated by paths of length at least one, and a dimension vector $\dd\in \mathbb{N}^{Q_0}$, we denote by $\Mst(A)_{\dd}$ the stack of $\dd$-dimensional complex representations of $A$.  This is a finite type Artin stack.  In the case $A=\mathbb{C}Q$ we abbreviate $\Mst(\mathbb{C}Q)_{\dd}$ to $\Mst(Q)_{\dd}$.  This stack is naturally isomorphic to the quotient stack
\[
X(Q)_{\dd}/\Gl_{\dd},
\]
where
\[
X(Q)_{\dd}:=\prod_{a\in Q_1}\Hom\left(\mathbb{C}^{\dd_{s(a)}},\mathbb{C}^{\dd_{t(a)}}\right)
\]
and
\[
\Gl_{\dd}:=\prod_{i\in Q_0} \Gl_{\dd_i}(\mathbb{C}).
\]
We define $\g_{\dd}=\prod_{i\in Q_0}\mathfrak{gl}_{\dd_i}(\mathbb{C})$, and define
\begin{align*}
\mu_{Q,\dd}\colon&X(\overline{Q})_{\dd}\rightarrow \g_{\dd}\\
&\rho\mapsto \sum_{a\in Q_1}[\rho(a),\rho(a^*)].
\end{align*}
Then as substacks of $\Mst(\overline{Q})_{\dd}$, there is an equality $\mu_{Q,\dd}^{-1}(0)/\G_{\dd}=\Mst(\Pi_Q)_{\dd}$.  
\smallbreak
As in the introduction, we define the function
\begin{align*}
\Tr (\tilde{W})_{\dd}\colon&X(\tilde{Q})_{\dd}\rightarrow \mathbb{C}\\
&\rho\mapsto \Tr\left(\sum_{a\in Q_1}[\rho(a),\rho(a^*)]\sum_{i\in Q_0}\rho(\omega_i)\right),
\end{align*}
and denote by $\mathfrak{Tr}(\tilde{W})_{\dd}\colon\Mst(\tilde{Q})_{\dd}\rightarrow \mathbb{C}$ the induced function.  As substacks of $\Mst(\tilde{Q})_{\dd}$, there are equalities
\begin{equation}
\label{jacCrit}
\crit\left(\Tr(\tilde{W})_{\dd}\right)/\G_{\dd}=\Mst(\Jac(\tilde{Q},\tilde{W}))_{\dd}=\crit(\mathfrak{Tr}(\tilde{W})_{\dd}).
\end{equation}
We define $\Mst(\tilde{Q})^{\omega\nnilp}_{\dd}\subset\Mst(\tilde{Q})_{\dd}$ to be the reduced stack defined by the vanishing of the functions 
\[
\{\mathfrak{Tr}(\rho(\omega_i)^m)|i\in Q_0\hbox{ }1\leq m\leq {\dd}_{i}\}.  
\]
The geometric points of $\Mst(\tilde{Q})^{\omega\nnilp}_{\dd}$ over a field extension $K\supset \mathbb{C}$ correspond to $\dd$-dimensional $K\tilde{Q}$ representations $\rho$ such that for each $i\in Q_0$, the endomorphism $\rho(\omega_i)$ is a nilpotent $K$-linear endomorphism.

A \textit{stability condition} for $Q$ is defined to be an element of $\mathbb{H}_+^{Q_0}$, where 
\[
\mathbb{H}_+:=\{r\exp(i\pi\phi)\in \mathbb{C}\mid r>0, 0<\phi\le 1\}.
\]
\begin{definition}
For a fixed stability condition $\zeta\in \mathbb{H}_+^{Q_0}$, we define the \textit{central charge} 
\begin{align*}
Z\colon&\mathbb{N}^{Q_0}\setminus\{0\}\rightarrow \mathbb{H}_+\\
&\dd\mapsto \dd\cdot \zeta.
\end{align*}
\label{slopeDef}
We define the slope of a dimension vector $\dd\in\mathbb{N}^{Q_0}\setminus\{0\}$ by setting
\begin{equation}
\label{sl_def}
\slope(\dd):=\begin{cases} - \Re e (Z(\dd))/ \Im m (Z(\dd))&\textrm{if }\Im m(Z(\dd))\neq 0\\\infty&\textrm{otherwise.}\end{cases}
\end{equation}
\end{definition}
If $\rho$ is a representation of $Q$, we define $\slope(\rho):=\slope(\dim(\rho))$.  A representation $\rho$ is called $\zeta$\textit{-semistable} if for all proper subrepresentations $\rho'\subset \rho$ we have $\slope(\rho')\leq \slope(\rho)$, and $\rho$ is called $\zeta$-\textit{stable} if the inequality is strict.
\sbrk
We will always assume that our stability conditions are \textit{King stability conditions}, meaning that for each $1_i\in\mathbb{N}^{Q_0}$ in the natural generating set, $\Im m (Z(1_i))=1$ and $\Re e (Z(1_i))\in\mathbb{Q}$.
\sbrk
If $\zeta$ is a King stability condition, then for each $\dd\in\mathbb{N}^{Q_0}$ there is a geometric invariant theory (GIT) coarse moduli space of $\zeta$-semistable $Q$-representations of dimension $\dd$, constructed in \cite{King94}, which we denote $\Msp(Q)^{\zeta\sst}_{\dd}:=X(Q)^{\zeta\sst}_{\dd}/\!\!/_{\chi(\zeta)} \Gl_{\dd}$.  Here $X(Q)^{\zeta\sst}_{\dd}\subset X(Q)_{\dd}$ is the open subscheme whose geometric points correspond to $\zeta$-semistable $Q$-representations.
\sbrk
We denote by 
\begin{equation}
\label{pdef}
\p_{Q,\dd}^{\zeta}\colon\Mst(Q)^{\zeta\sst}_{\dd}\rightarrow \Msp(Q)^{\zeta\sst}_{\dd}
\end{equation}
the morphism from the stack to the coarse moduli space.  At the level of points, this map takes a semistable representation $\rho$ to the direct sum of the subquotients appearing in the Jordan--H\"older filtration of $\rho$, considered as an object in the category of $\zeta$-semistable representations of slope $\slope(\dd)$.  If there is no ambiguity, we omit the subscript $Q$ from the definition of $\p$.
\smallbreak
We denote by 
\[
q^{\zeta}_{Q,\dd}\colon \Msp(Q)^{\zeta\sst}_{\dd}\rightarrow \Msp(Q)_{\dd}
\]
the morphism from the GIT quotient to the affinization.  This morphism is proper, as can be seen from the construction of the domain via GIT.  At the level of points, $q^{\zeta}_{Q,\dd}$ takes a $\zeta$-semistable module to its semisimplification.  
\smallbreak
We define two pairings on $\mathbb{Z}^{Q_0}$
\begin{align*}
(\dd,\ee)_Q:=&\sum_{i\in Q_0} \dd_i \ee_i-\sum_{a\in Q_1}\dd_{s(a)}\ee_{t(a)}\\
\langle \dd,\ee\rangle_Q:=&(\dd,\ee)_Q-(\ee,\dd)_Q.
\end{align*}
Again, we will drop the subscript $Q$ when the choice of quiver is obvious from the context.  For $\theta\in (-\infty,\infty)$ a slope, we denote by 
\begin{equation}
\label{LambdaDef}
\Lambda_{\theta}^{\zeta}\subset\mathbb{N}^{Q_0}
\end{equation}
the submonoid of dimension vectors $\dd$ such that $\dd=0$ or $\slope(\dd)=\theta$.
\begin{definition}
A stability condition $\zeta\in\mathbb{H}_+^{Q_0}$ is $\theta$-\textit{generic} if for all $\dd,\ee\in\Lambda_{\theta}^{\zeta}$, $\langle \dd,\ee\rangle=0$, and we say that $\zeta$ is \textit{generic} if it is $\theta$-generic for all $\theta$.  
\end{definition}
\begin{definition}
A quiver $Q$ is a \textit{symmetric} if for any two vertices $i,j\in Q_0$ the number of arrows $a$ with $s(a)=i$ and $t(a)=j$ is equal to the number of arrows with $s(a)=j$ and $t(a)=i$.
\end{definition}
\begin{definition}\label{degstabcond}
For $Q$ a quiver, we define the \textit{degenerate} stability condition
\[
\zeta=(i,\ldots,i)\in\mathbb{H}_+^{Q_0}.
\]
\end{definition}
If $Q$ is symmetric, then all stability conditions $\zeta\in\mathbb{H}_+^{Q_0}$ are generic.  The degenerate stability condition is generic if and only if $Q$ is symmetric.  In particular, for all quivers $Q$, the degenerate stability condition is generic for $\overline{Q}$ and $\tilde{Q}$.

\begin{definition}
\label{dimDef}
We define by $\dim^{\zeta}\colon\Msp(Q)^{\zeta\sst}\rightarrow \mathbb{N}^{Q_0}$ the map taking a polystable quiver representation to its dimension vector, and define 
\[
\Dim^{\zeta}:=\dim^{\zeta}\circ \p_Q^{\zeta}.
\]
where $\p_Q^{\zeta}$ is as in (\ref{pdef}).
\end{definition}
If $\Sp$ is a Serre subcategory of the category of $\mathbb{C}Q\lmod$, we denote by $\iota'\colon \Msp(Q)_{\dd}^{\Sp,\zeta\sst}\hookrightarrow \Msp(Q)_{\dd}^{\Sp,\zeta\sst}$ the inclusion of the polystable $\mathbb{C}Q$ modules that are objects of $\Sp$.  We only consider choices of $\Sp$ for which this is a morphism of varieties.  We set
\[
\Mst(Q)_{\dd}^{\Sp,\zeta\sst}=\Msp(Q)_{\dd}^{\Sp,\zeta\sst}\times_{\Msp(Q)_{\dd}^{\zeta\sst}}\Mst(Q)_{\dd}^{\zeta\sst}
\]
and denote by
\[
\iota\colon \Mst(Q)_{\dd}^{\Sp,\zeta\sst}\hookrightarrow \Mst(Q)_{\dd}^{\zeta\sst}
\]
the inclusion.
\section{Cohomological Donaldson--Thomas theory}
\label{DTtheory}
\subsection{Vanishing cycles and mixed Hodge modules}
\label{iygtyf}
Let $X$ be a smooth complex variety, and let $f$ be a regular function on it.  Set 
\begin{align*}
X_0=f^{-1}(0)&&
X_{<0}=f^{-1}(\mathbb{R}_{<0}).
\end{align*}
We define the nearby cycle functor as the following composition of (derived) functors
\[
\psi_f:=(X_0\rightarrow X)_*(X_0\rightarrow X)^* (X_{<0}\rightarrow X)_*(X_{<0}\rightarrow X)^*,
\]
and we define the functor $\phip{f}=\cone\left((X_0\rightarrow X)_*(X_0\rightarrow X)^*\rightarrow\psi_f\right)[-1]$.  Alternatively, define $X_{\leq 0}=f^{-1}(\mathbb{R}_{\leq 0})$, and define the (underived) functor $\Gamma_{X_{\leq 0}}$ by setting 
\[
\Gamma_{X_{\leq 0}}\mathcal{F}(U)=\ker (\mathcal{F}(U)\rightarrow \mathcal{F}(U\setminus X_{\leq 0})).  
\]
Then we can define $\phip{f}\mathcal{F}=(R\Gamma_{X_{\leq 0}}\mathcal{F})_{X_0}$.  We define $\psip{f}\coloneqq\psi_f[-1]$.   

\smallbreak
If $X$ is a quasiprojective complex variety, and so there is a closed embedding $X\subset Y$ inside a smooth complex variety, and $f$ extends to a function $\overline{f}$ on $Y$, we define $\phip{f}=i^*\phip{\overline{f}}i_*$, where $i\colon X\rightarrow Y$ is the embedding.

For a complex variety $X$ we define as in \cite{Saito89,Saito90} the category $\MHM(X)$ of mixed Hodge modules on $X$.  See \cite{Saito1} for an overview of the theory.  There is an exact functor 
\[
\rat\colon\Dub(\MHM(X))\rightarrow\Dub(\perv(X))
\]
which takes a complex of mixed Hodge modules $\mathcal{F}$ to its underlying complex of perverse sheaves, and commutes with $f_*,f_!,f^*,f^!,\DD_X$ and tensor product.  In addition, the functors $\phip{f}$ and $\psip{f}$ lift to exact functors for the category of mixed Hodge modules.  We denote by $\phin{f}$ the lift of $\phip{f}$.
\begin{remark}
\label{critRem}
If  $f$ is a regular function on the smooth variety $X$, then $\supp(\phip{f}\mathbb{Q}_X)=\supp(\phin{f}\mathbb{Q}_X)=\crit(f)$.
\end{remark}
\subsubsection{}
In the context of Donaldson--Thomas theory of \textit{general} Jacobi algebras it is necessary to work in a larger category than $\MHM(X)$, called the category of monodromic mixed Hodge modules on $X$, denoted $\MMHM(X)$.  This category is equivalent to the full subcategory of mixed Hodge modules on $X\times\mathbb{C}^*$ such that along each fibre $\{x\}\times\mathbb{C}^*$ the total cohomology of the restriction is an admissible variation of mixed Hodge structure.  See \cite[Sec.7]{COHA} or \cite[Sec.2]{DaMe15b} for an introduction to this category, along with its slightly subtle monoidal product.  Shifted pullback along the inclusion $X\times\{1\}\rightarrow X\times\mathbb{C}^*$ gives a faithful functor $\MMHM(X)\rightarrow\MHM(X)$ --- one should think of this functor as ``forgetting monodromy.''  There is an embedding of monoidal categories $\tau\colon\MHM(X)\rightarrow\MMHM(X)$ defined by 
\begin{equation}
\label{taudef}
\tau=\bullet\boxtimes \QQ_{\mathbb{C}^*}[1].
\end{equation}
One should think of this functor as turning a mixed Hodge module into a monodromic mixed Hodge module by stipulating that the monodromy is trivial, and of the essential image of this functor as being ``monodromy-free'' monodromic mixed Hodge modules.  This is a symmetric monoidal functor.
\smallbreak
The functor $\phi_f\colon \MHM(X)\rightarrow \MHM(X)$ lifts to a functor 
\begin{align*}
\phim{f}&\colon \MHM(X)\rightarrow \MMHM(X)\\
\phim{f}&\coloneqq\phi_{f/u}(X\times\mathbb{C}^*\xrightarrow{\pi_X} X)^*[1]
\end{align*}
as in \cite[Def.27]{COHA}.  Here $u$ is the coordinate on $\mathbb{C}^*$.  
\begin{remark}
\label{canlift}
For $g$ a regular function on a complex variety $Y$, set 
\[
\nu_g\colon \phi_{g}\rightarrow (g^{-1}(0)\rightarrow Y)_*(g^{-1}(0)\rightarrow Y)^*
\]
to be the canonical natural transformation of functors of mixed Hodge modules.  Let $f$ be a regular function on $X$.  Then the natural transformation 
\[
\nu_{f/u}(X\times\mathbb{C}^*\xrightarrow{\pi_X} X)^*[1]
\]
provides a natural transformation $\nu^{\mon}_f\colon\phim{f}\rightarrow \tau \circ(\mathcal{F}\mapsto \mathcal{F}|_{X_0})$, where $\tau$
is as in (\ref{taudef}).  The natural transformation $\nu^{\mon}_f$ is a lift of the natural transformation $\nu_f$ to the category $\MMHM(X)$.  
\end{remark}
The reason for introducing monodromic mixed Hodge modules is that for a general pair $(Q,W)$, if one restates the cohomological integrality theorem (Theorem \ref{IntThm}) purely in terms of the ordinary tensor category of mixed Hodge modules, with $\phi$ instead of $\phim{}$, it is not true -- the subtlety here regards tensor products of monodromic mixed Hodge modules.  For our purposes though, this headache will not occur --- see Remark \ref{getOut}.

\begin{definition}
\label{DerDef}
We define $\Db(\MMHM(X))$ to be the bounded derived category of monodromic mixed Hodge modules on $X$.  If $X$ is connected, we define $\Du(\MMHM(X))$ to be the inverse limit of the diagram of categories
\[
\ldots\rightarrow\Db(\MMHM(X))\xrightarrow{\tau^{\leq n}}\Db(\MMHM(X))\xrightarrow{\tau^{\leq n-1}}\Db(\MMHM(X))\rightarrow\ldots 
\]
Explicitly, an object of $\Du(\MMHM(X))$ is given by a $\mathbb{Z}$-tuple of objects $\mathcal{F}_n$ in $\Du(\MMHM(X))$, along with isomorphisms $\tau^{\leq n-1}\mathcal{F}_n\cong \mathcal{F}_{n-1}$.  For $\mathcal{F}$ an object in $\Du(\MMHM(X))$  we write $\tau^{\leq n}\mathcal{F}=\mathcal{F}_n$ and $\Ho^n(\mathcal{F})=\Ho^n(\mathcal{F}_n)$.  For an object $\mathcal{F}$ of $\Du(\MMHM(X))$, the cohomological amplitude of the objects $\mathcal{F}_n$ are universally bounded below.  
\smallbreak
Similarly, we define $\Dl(\MMHM(X))$ to be the inverse limit of the diagram
\[
\ldots\rightarrow\Db(\MMHM(X))\xrightarrow{\tau^{\geq n}}\Db(\MMHM(X))\xrightarrow{\tau^{\geq n+1}}\Db(\MMHM(X))\rightarrow\ldots 
\]
For general $X$, we define
\[
\Du(\MMHM(X))\coloneqq\prod_{X'\in\pi_0(X)}\Du(\MMHM(X'))
\]
and $\Du(\MMHM(X))$ similarly.
\end{definition}

A monodromic mixed Hodge module $\mathcal{F}$ comes with a filtration 
\[
\ldots\subset W_{i}\mathcal{F}\subset W_{i+1}\mathcal{F}\subset\ldots, 
\]
the weight filtration, which is equal to the usual weight filtration if $\mathcal{F}$ is a genuine mixed Hodge module.  
\begin{definition}
We say that $\mathcal{F}\in\MMHM(X)$ is pure of weight $n$ if $W_{n-1}\mathcal{F}=0$ and $W_{n}\mathcal{F}=\mathcal{F}$.  Given $\mathcal{F}$ an object of $\Dub^{\heartsuit}(\MMHM(X))$, with $\heartsuit=b,\leq,\geq$, we say that $\mathcal{F}$ is pure of weight $n$ if $\Ho^i(\mathcal{F})$ is pure of weight $i+n$ for all $i$, or we just call $\mathcal{F}$ ``\textit{pure}'' if each $\Ho^i(\mathcal{F})$ is pure of weight $i$.
\end{definition}

We define $\mathbb{L}:=\HO_c(\mathbb{A}^1,\mathbb{Q})$, considered as a cohomologically graded mixed Hodge structure, i.e. as a pure cohomologically graded mixed Hodge structure concentrated in cohomological degree two.  Via the embedding (\ref{taudef}) we may consider this object alternatively as a cohomologically graded monodromic mixed Hodge structure, or a cohomologically graded monodromic mixed Hodge module on a point.  Working in the category $\MMHM(\pt)$, we define $\LL^{ 1/2}:=\HO_c(\mathbb{A}^1,\phim{x^2}\mathbb{Q}_{\mathbb{A}^1})$, to obtain a tensor square root of $\mathbb{L}$.  In other words we have $\mathbb{L}^{1/2}\otimes \mathbb{L}^{ 1/2}\cong\mathbb{L}$.  
\begin{warning}
Using the Thom--Sebastiani isomorphism and Theorem \ref{dimRedThm} below, one can show that there are \textit{two} equally natural choices for this isomorphism, depending on which ``dimensional reduction'' of $x_1^2+x_2^2=(x_1+ix_2)(x_1-ix_2)$ we consider.  These isomorphisms are not the same!  This issue can be largely ignored in this paper, but is the reason for the signs appearing in \eqref{signedIso}.
\end{warning}
\begin{convention}
Let $X$ be a complex variety, such that each connected component contains a connected dense smooth locus.  In this paper we will shift the definition of the intersection complex mixed Hodge module for $X$ so that it is pure of weight zero, while its underlying element in $\Db(\perv(X))$ is a perverse sheaf.  This we achieve by setting
\[
\ICS_{X}(\mathbb{Q}):=\sum_{Z\in \pi_0(X)}\mathrm{IC}_{Z_{\reg}}(\mathbb{Q})\otimes\mathbb{L}^{ -\dim(Z)/2}.
\]
If $X$ is a smooth connected variety, we set 
\[
\HO(X,\mathbb{Q})_{\vir}:=\HO(X,\ICS_{X}(\mathbb{Q}))\cong\HO(X,\mathbb{Q})\otimes\mathbb{L}^{ -\dim(X)/2}
\]
and
\[
\HO_c(X,\mathbb{Q})_{\vir}:=\HO_c(X,\ICS_{X}(\mathbb{Q}))\cong\HO_c(X,\mathbb{Q})\otimes\mathbb{L}^{-\dim(X)/2}.
\]
Since the smooth stack $\B\mathbb{C}^*$ has complex dimension -1, we extend this notation in the natural way by setting
\begin{align*}
\HO(\B\mathbb{C}^*,\mathbb{Q})_{\vir}:=&\HO(\B\mathbb{C}^*,\mathbb{Q})\otimes\mathbb{L}^{1/2}
\\
\HO_c(\B\mathbb{C}^*,\mathbb{Q})_{\vir}:=&\HO(\B\mathbb{C}^*,\mathbb{Q})^{\vee}\otimes\mathbb{L}^{-1/2}
\\\HO_c(\B\mathbb{C}^*,\mathbb{Q}):=&\HO(\B\mathbb{C}^*,\mathbb{Q})^{\vee}\otimes \mathbb{L}^{-1}.
\end{align*}
\end{convention}

\subsection{Pushforwards from stacks}
\label{pfs}
Assume that $X$ is a smooth complex variety, carrying the action of the algebraic group $G$, and let $f$ be a $G$-invariant regular function on $X$, and let $p\colon X/G\rightarrow Y$ be a morphism from the global quotient stack to a scheme $Y$.  Then we recall (following \cite[Sec.2]{DaMe15b}) how to define $p_*\phim{f}\mathbb{Q}_{X/G}\in\Ob(\Du(\MMHM(Y)))$.  We recall the definition for the case in which $X$ is connected --- the general definition is obtained by taking the direct sum over connected components.  The definition is a minor modification of Totaro's well-known construction \cite{To99}.  
\smallbreak
Firstly, let 
\[
V_0\subset V_1\subset\ldots
\]
be an ascending chain of $G$-representations, and let 
\[
U_0\subset U_1\subset\ldots
\]
be an ascending sequence of closed inclusions of $G$-equivariant varieties, with each $U_i\subset X\times V_i$ an open dense subvariety.  We assume furthermore that 
\[
\lim_{i\mapsto\infty}\left(\codim_{X\times V_i}\big( (X\times V_i)\setminus U_i\big)\right)=\infty,
\]
that $G$ acts freely on $U_i$ for all $i$, and that the principal bundle $U_i\rightarrow U_i/G$ exists in the category of complex varieties.  Then we define
\[
X_i:=U_i/G
\]
and denote by $p_i\colon X_i\rightarrow Y$ and $f_i\colon X_i\rightarrow\mathbb{C}$ the induced maps.  The closed embedding $\iota_{i,i+1}\colon X_i\rightarrow X_{i+1}$ induces maps 
\[
p_{i+1,*}\phim{f_{i+1}}\mathbb{Q}_{X_{i+1}}\rightarrow p_{i,*}\phim{f_i}\mathbb{Q}_{X_i}
\]
and 
\[
p_{i,!}\phim{f_i}\mathbb{Q}_{X_i}\rightarrow p_{i+1,!}\phim{f_{i+1}}\mathbb{Q}_{X_{i+1}}\otimes\mathbb{L}^{\dim(X_i)-\dim(X_{i+1})}.
\]
For fixed $n$ and sufficiently large $i$, the maps
\[
\Ho^n(p_{i+1,*}\phim{f_{i+1}}\mathbb{Q}_{X_{i+1}})\rightarrow \Ho^n(p_{i,*}\phim{f_i}\mathbb{Q}_{X_i})
\]
and
\[
\Ho^n(p_{i,!}\phim{f_i}\mathbb{Q}_{X_i}\otimes\mathbb{L}^{-\dim(U_i)})\rightarrow \Ho^n(p_{i+1,!}\phim{f_{i+1}}\mathbb{Q}_{X_{i+1}}\otimes\mathbb{L}^{-\dim(U_{i+1})})
\]
are isomorphisms (see e.g. \cite[Sec.3.4]{QCP}), stabilising to a monodromic mixed Hodge module that is independent of our choice of $\ldots\subset U_i\subset U_{i+1}\subset\ldots$.  Note that 
\[
\Ho^m(p_{i,*}\phim{f_{i}}\mathbb{Q}_{X_{i+1}})=0\quad\quad \Ho^n(p_{i,!}\phim{f_i}\mathbb{Q}_{X_i}\otimes\mathbb{L}^{-\dim(U_i)})=0
\]
for $m<\dim(X)$ and $n>\dim(X)$.  So for fixed $n$ and sufficiently large $i$ the morphisms
\[
\tau^{\leq n}(p_{i+1,*}\phim{f_{i+1}}\mathbb{Q}_{X_{i+1}})\rightarrow \tau^{\leq n}(p_{i,*}\phim{f_i}\mathbb{Q}_{X_i})
\]
and
\[
\tau^{\geq n}(p_{i,!}\phim{f_i}\mathbb{Q}_{X_i}\otimes\mathbb{L}^{-\dim(U_i)})\rightarrow \tau^{\geq n}(p_{i+1,!}\phim{f_{i+1}}\mathbb{Q}_{X_{i+1}}\otimes\mathbb{L}^{-\dim(U_{i+1})})
\]
are isomorphisms.  We define 
\begin{align*}
\tau^{\leq n}\!\left(p_*\phim{f}\ICS_{X/G}(\mathbb{Q})\right):=&\lim_{i\mapsto \infty}\tau^{\leq n}(p_{i,*}\phim{f_i}\mathbb{Q}_{X_i})\otimes\mathbb{L}^{(\dim(G)-\dim(X))/2}\\
\tau^{\geq n}\!\left(p_!\phim{f}\ICS_{X/G}(\mathbb{Q})\right):=&\lim_{i\mapsto \infty}\tau^{\geq n}(p_{i,!}\phim{f_i}\mathbb{Q}_{X_i}\otimes\mathbb{L}^{-\dim(U_i)})\otimes\mathbb{L}^{(\dim(G)-\dim(X))/2}.
\end{align*}
Similarly, we define
\begin{align*}
\tau^{\leq n}\!\left(p_*\ICS_{X/G}(\mathbb{Q})\right):=&\lim_{i\mapsto \infty}\tau^{\leq n}\left(p_{i,*}\mathbb{Q}_{X_i}\right)\otimes\mathbb{L}^{(\dim(G)-\dim(X))/2}\\
\tau^{\geq n}\!\left(p_!\ICS_{X/G}(\mathbb{Q})\right):=&\lim_{i\mapsto \infty}\tau^{\geq n}\left(p_{i,!}\mathbb{Q}_{X_i}\otimes\mathbb{L}^{-\dim(U_i)}\right)\otimes\mathbb{L}^{(\dim(G)-\dim(X))/2}.
\end{align*}
This can be seen as a special case of the previous definition, with $f=0$.
\smallbreak
Let $Z\subset X$ be a subvariety, preserved by the $G$-action, and denote by $\iota\colon Z/G\hookrightarrow X/G$ the inclusion of stacks.  We obtain inclusions
\begin{align*}
\iota_i \colon &Z_i:=\big( U_i\cap (Z\times V_i)\big)/G\rightarrow X_i
\end{align*}
and we define the restricted pushforward of vanishing cycle cohomology
\begin{align*}
\tau^{\leq n}\!\left(p_*\iota_*\iota^!\phim{f}\ICS_{X/G}(\mathbb{Q})\right):=&\lim_{i\mapsto \infty}\tau^{\leq n}\!\left(p_{i,*}\iota_{i,*}\iota_i^!\phim{f_i}\mathbb{Q}_{X_i}\right)\otimes\mathbb{L}^{(\dim(G)-\dim(X))/2}\\
\tau^{\geq n}\!\left(p_!\iota_!\iota^*\phim{f}\ICS_{X/G}(\mathbb{Q})\right):=&\lim_{i\mapsto \infty}\tau^{\geq n}\!\left(p_{i,!}\iota_{i,!}\iota_i^*\phim{f_i}\mathbb{Q}_{X_i}\otimes\mathbb{L}^{-\dim(U_i)}\right)\otimes\mathbb{L}^{(\dim(G)-\dim(X))/2}.
\end{align*}
As a particular case, setting $Y$ to be a point, we obtain
\[
\HO^n_c\!\left(Z/G,\phim{f}\ICS_{X/G}(\mathbb{Q})\right):=\lim_{i\mapsto \infty}\HO^n_c\!\left(Z_i,\iota_i^*\phim{f_i}\mathbb{Q}_{X_i}\otimes\mathbb{L}^{-\dim(U_i)}\right)\otimes\mathbb{L}^{(\dim(G)-\dim(X))/2}.
\]
\subsection{Dimensional reduction}
Assume that we are given a decomposition $X=X'\times\mathbb{A}^n$ of varieties, and that $\mathbb{C}^*$ acts on $X$ via the product of the trivial action on $X'$, and the scaling action on $\mathbb{A}^n$.  Assume that $f$ has weight one.  Denote by $\pi\colon X\rightarrow X'$ the natural projection.  Then we can write
\[
f=\sum_{1\leq i\leq n}\pi^*f_i\cdot x_i
\]
where $f_i$ are functions on $X'$, and $x_i$ are coordinates for $\mathbb{A}^n$.  Define 
\[
Z'=Z(f_1,\ldots,f_n)
\]
to be the shared vanishing locus of all the functions $f_1,\ldots,f_n$, and denote 
\[
Z=\pi^{-1}(Z').
\]
Note that $Z\subset X_0:=f^{-1}(0)$, and so we can postcompose the canonical natural transformation $\nu^{\mon}_f\colon \phim{f}\rightarrow (X_0\rightarrow X)_*(X_0\rightarrow X)^*$ with the restriction map 
\[
(X_0\rightarrow X)_*(X_0\rightarrow X)^*\rightarrow (Z\rightarrow X)_*(Z\rightarrow X)^*
\]
to obtain a natural transformation 
\[
\upsilon\colon \phim{f}\rightarrow (Z\rightarrow X)_*(Z\rightarrow X)^*.  
\]
\begin{theorem}\cite[Thm.A.1]{Chicago2}
\label{dimRedThm}
The natural transformation $\pi_!\upsilon\pi^*$ is a natural isomorphism.  
\end{theorem}
This is a cohomological analogue of the dimensional reduction theorem of \cite{BBS}.  It implies (see \cite[Cor.A.7]{Chicago2}) that if $X$ is the total space of a $G$-equivariant affine fibration $\pi\colon X\rightarrow X'$ for $G$ an algebraic group, and $S\subset X'$ is a $G$-invariant subspace of the base, there is a natural isomorphism in compactly supported cohomology
\[
\HO_c\left(\pi^{-1}(S)/G,\phim{f}\mathbb{Q}_{X/G}\right)\cong \HO_c\left((S\cap Z')/G,\mathbb{Q}\right)\otimes\mathbb{L}^{\dim(\pi)}.
\]
\begin{remark}
\label{getOut}
The natural transformation $\pi_!\upsilon\pi^*$ is considered as a natural transformation between two functors $\Db(\MHM(X'))\rightarrow \Db(\MMHM(X'))$ (see Remark \ref{canlift}).  However, the target functor is defined as such a functor via the embedding $\Db(\MHM(X'))\rightarrow \Db(\MMHM(X'))$.  So Theorem \ref{dimRedThm} implies that under suitable equivariance conditions, the monodromy on $\pi_!\phim{f}\pi^*$ is trivial, and we can replace $\pi_!\phim{f}\pi^*$ with the more standard functor $\pi_!\phi_f\pi^*$.
\end{remark}

\subsection{Integrality and PBW isomorphisms}\label{CoDT}
Let $Q$ be a finite quiver.  We consider $\mathbb{N}^{Q_0}$-graded monodromic mixed Hodge structures as monodromic mixed Hodge modules on the space $\mathbb{N}^{Q_0}$ in the obvious way: a monodromic mixed Hodge module on a point is just a monodromic mixed Hodge structure, and $\mathbb{N}^{Q_0}$ is a union of points $\dd\in\mathbb{N}^{Q_0}$, and so a monodromic mixed Hodge module on $\mathbb{N}^{Q_0}$ is given by a formal direct sum
\[
\bigoplus_{\dd\in\mathbb{N}^{Q_0}}\mathcal{L}_{\dd}
\]
of monodromic mixed Hodge structures.  
\smallbreak
The GIT quotient $\Msp(Q)^{\zeta\sst}$ is a monoid with monoid morphism
\[
\Msp(Q)^{\zeta\sst}\times \Msp(Q)^{\zeta\sst}\xrightarrow{\oplus}\Msp(Q)^{\zeta\sst}
\]
taking a pair of polystable representations $\rho,\rho'$ to their direct sum $\rho\oplus\rho'$.  This morphism is finite \cite[Lem.2.1]{MeRe14}.  A unit for the monoid morphism is provided by the inclusion $\Msp(Q)^{\zeta\sst}_0\hookrightarrow \Msp(Q)^{\zeta\sst}$, which at the level of complex points, corresponds to the inclusion of the zero module.  The morphism $\dim^\zeta\colon \Msp(Q)^{\zeta\sst}\rightarrow \mathbb{N}^{Q_0}$, taking a representation to its dimension vector, is a morphism of monoids, where the morphism
\[
+\colon \mathbb{N}^{Q_0}\times\mathbb{N}^{Q_0}\rightarrow\mathbb{N}^{Q_0}
\]
is the usual addition map.  If $W$ is a potential for $Q$, there is an induced function $\mathcal{T}r(W)^{\zeta}\colon \Msp(Q)^{\zeta\sst}\rightarrow\mathbb{C}$ such that the following diagram commutes
\[
\xymatrix{
\Mst(Q)^{\zeta\sst}\ar[dr]^{\mathfrak{Tr}(W)^\zeta}\ar[d]_{\p^{\zeta}}\\
\Msp(Q)^{\zeta\sst}\ar[r]_{\mathcal{T}r(W)^\zeta}&\mathbb{C}.
}
\]
If $X$ is a commutative monoid in the category of locally finite type complex schemes, with finite type monoid morphism
\[
\tau\colon X\times X\rightarrow X,
\]
then by \cite[Thm.1.9]{MSS11} the categories $\Du(\MMHM(X))$, and $\Dl(\MMHM(X))$ of Definition \ref{DerDef} carry symmetric monoidal structures defined by
\[
\mathcal{F}\boxtimes_{\tau}\mathcal{G}:=\tau_*(\mathcal{F}\boxtimes\mathcal{G}).
\]
In particular, the categories $\Du\big(\MMHM(\Msp(Q)^{\zeta\sst})\big)$ and $\Dl\big(\MMHM(\Msp(Q)^{\zeta\sst})\big)$ carry symmetric monoidal structures defined by 
\[
\mathcal{F}\boxtimes_{\oplus}\mathcal{G}:=\oplus_*(\mathcal{F}\boxtimes\mathcal{G}).
\]
The following theorem allows for the definition of BPS sheaves and BPS cohomology.  It is a cohomological lift of the property known in DT theory as \textit{integrality}.
\begin{theorem}\cite[Thm.A]{DaMe15b}
\label{IntThm}
Fix a QP $(Q,W)$ such that $\crit(\Tr(W))\subset \Tr(W)^{-1}(0)$, a slope $\theta\in(-\infty,\infty)$, and a $\theta$-generic stability condition $\zeta$.  For nonzero $\dd\in\Lambda_{\theta}^{\zeta}$, where $\Lambda_{\theta}^{\zeta}\subset\mathbb{N}^{Q_0}$ is as in (\ref{LambdaDef}), define the monodromic mixed Hodge module
\[
\DTS_{Q,W,\dd}^{\zeta}\in\Ob\!\left(\MMHM\left(\Msp(Q)^{\zeta\sst}_{\dd}\right)\right)
\]
by
\begin{equation}
\label{DTSdef}
\DTS_{Q,W,\dd}^{\zeta}=\begin{cases} \phim{\mathcal{T}r(W)^{\zeta}_{\dd}}\ICS_{\Msp(Q)^{\zeta\sst}_{\dd}}(\mathbb{Q})&\textrm{if }\Msp(Q)^{\zeta\st}_{\dd}\neq \emptyset\\
0&\textrm{otherwise,}\end{cases}
\end{equation}
and define $\DTS_{Q,W,\theta}^{\zeta}:=\bigoplus_{\dd\in\Lambda_{\theta}^{\zeta}}\DTS_{Q,W,\dd}^{\zeta}$.  Then there are isomorphisms of objects in $\Du\!\left(\MMHM\!\left(\Msp(Q)_{\theta}^{\zeta\sst}\right)\right)$ and $\Dl\!\left(\MMHM\!\left(\Msp(Q)_{\theta}^{\zeta\sst}\right)\right)$ respectively:
\begin{align}\label{starInt}
\p_{\theta,*}^{\zeta}\phin{\WWW^{\zeta}_{\theta}}\ICS_{\Mst(Q)^{\zeta\sst}_{\theta}}(\mathbb{Q})\cong&\Sym_{\boxtimes_{\oplus}}\!\left(\DTS_{Q,W,\theta}^{\zeta}\otimes\HO(\B\mathbb{C}^*,\mathbb{Q})_{\vir}\right),
\\ \label{shriekInt}
\p_{\theta,!}^{\zeta}\phin{\WWW^{\zeta}_{\theta}}\ICS_{\Mst(Q)^{\zeta\sst}_{\theta}}(\mathbb{Q})\cong&\Sym_{\boxtimes_{\oplus}}\!\left(\DTS_{Q,W,\theta}^{\zeta}\otimes\HO_c(\B\mathbb{C}^*,\mathbb{Q})_{\vir}\right).
\end{align}
\end{theorem}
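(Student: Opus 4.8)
The plan is to prove the $*$-version (\ref{starInt}) and to deduce the $!$-version (\ref{shriekInt}) from it by Verdier duality. Indeed, $\DD_{\Msp}$ interchanges $\p^{\zeta}_{\theta,*}$ with $\p^{\zeta}_{\theta,!}$, commutes with the (shift-normalised) vanishing cycle functor $\phin{}$, and fixes $\ICS_{\Mst(Q)^{\zeta\sst}_{\theta}}(\QQ)$, which is Verdier self-dual by the weight-zero normalisation of Convention \ref{shiftConvention}; since the monoid map $\oplus$ on $\Msp(Q)^{\zeta\sst}$ is finite one has $\DD\oplus_{*}\cong\oplus_{*}\DD$, hence $\DD\Sym_{\boxtimes_{\oplus}}\cong\Sym_{\boxtimes_{\oplus}}\DD$; and $\DD\DTS^{\zeta}_{Q,W,\dd}\cong\DTS^{\zeta}_{Q,W,\dd}$ (again by self-duality of the IC sheaf of $\Msp(Q)^{\zeta\sst}_{\dd}$ together with that of $\phin{}$), while $\DD\bigl(\HO(\pt/\CC^*,\QQ)_{\vir}\bigr)\cong\HO_c(\pt/\CC^*,\QQ)_{\vir}$ by Convention \ref{shiftConvention}. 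So it is enough to establish (\ref{starInt}).

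First I would reduce to honest varieties. By the construction recalled in Section \ref{pfs}, both sides of (\ref{starInt}) are stabilising limits, taken over the finite-dimensional approximations $U_{\dd,N}/\Gl_{\dd}$ of the stacks $\Mst(Q)^{\zeta\sst}_{\dd}=X(Q)^{\zeta\sst}_{\dd}/\Gl_{\dd}$, of the corresponding objects built from the approximating functions $f_{\dd,N}$; these $U_{\dd,N}/\Gl_{\dd}$ are smooth quasiprojective varieties carrying a projective morphism to the affine quotient $X(Q)_{\dd}/\!\!/\Gl_{\dd}$, so the relevant sheaf-theoretic machinery (for $W=0$ the decomposition theorem, and in general Thom--Sebastiani together with dimensional reduction, Theorem \ref{dimRedThm}) is available, and it suffices to produce a compatible system of decompositions downstairs and pass to the limit. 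On the coarse moduli space the monoid map $\oplus$ makes $\Coha_{\theta}:=\Ho\bigl(\p^{\zeta}_{\theta,*}\phin{\WWW^{\zeta}_{\theta}}\ICS_{\Mst(Q)^{\zeta\sst}_{\theta}}(\QQ)\bigr)$ an algebra object in $\Du(\MMHM(\Msp(Q)^{\zeta\sst}_{\theta}))$ for the product $\boxtimes_{\oplus}$ --- the relative incarnation of the Kontsevich--Soibelman cohomological Hall algebra of $(Q,W)$, cf.\ \cite{COHA}. Over the open stable locus $\Msp(Q)^{\zeta\st}_{\dd}\subset\Msp(Q)^{\zeta\sst}_{\dd}$ the morphism $\p^{\zeta}_{\dd}$ is a $\CC^*$-gerbe, so $\phin{\WWW^{\zeta}_{\dd}}\ICS_{\Mst}$ restricts there to $\DTS^{\zeta}_{Q,W,\dd}\otimes\HO(\pt/\CC^*,\QQ)_{\vir}$, where $\DTS^{\zeta}_{Q,W,\dd}=\phim{\WW^{\zeta}_{\dd}}\ICS_{\Msp(Q)^{\zeta\sst}_{\dd}}(\QQ)$ is the perverse-degree-zero monodromic mixed Hodge module of (\ref{DTSdef}); this computation is the base of the whole argument and motivates the definition of the BPS sheaf.

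The engine of the proof is an induction on $\dd\in\Lambda^{\zeta}_{\theta}$ ordered by total dimension. Granting (\ref{starInt}) in all strictly smaller degrees, the degree-$\dd$ component of its right-hand side is a sum over partitions of $\dd$ inside $\Lambda^{\zeta}_{\theta}$, exactly one summand of which --- the ``new generator'' $\DTS^{\zeta}_{Q,W,\dd}\otimes\HO(\pt/\CC^*,\QQ)_{\vir}$ --- is not built by $\boxtimes_{\oplus}$ from strictly smaller $\DTS$'s. One therefore \emph{defines} the primitive part of $\Coha_{\theta}$ in degree $\dd$ to be this difference, and the theorem becomes the assertion that this primitive part agrees with $\DTS^{\zeta}_{Q,W,\dd}\otimes\HO(\pt/\CC^*,\QQ)_{\vir}$; in particular that it is concentrated in perverse cohomological degree $0$ and supported on $\Msp(Q)^{\zeta\sst}_{\dd}$. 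To analyse it I would pass to \'etale-local models: by a Luna-type slice theorem for the moduli stack, near a polystable point $\rho=\bigoplus_{i}\rho_i^{\oplus m_i}$ --- the $\rho_i$ pairwise non-isomorphic and $\zeta$-stable of slope $\theta$ --- the stack $\Mst(Q)^{\zeta\sst}$ is modelled by representations of the Ext-quiver-with-potential $(Q_\rho,W_\rho)$ near the dimension vector $(m_i)$, and $\Msp(Q)^{\zeta\sst}$ by $\Msp(Q_\rho,W_\rho)$ near the origin. After putting $W_\rho$ in a normal form, splitting off its quadratic part by Thom--Sebastiani and applying dimensional reduction where applicable, multiplication in the CoHA by classes of strictly smaller dimension vector exhibits $\Coha_{\theta}$ over every non-small-diagonal stratum of the local model as generated by lower pieces, so the only possible new contribution is primitive, sits over the small diagonal, and --- by a weight and perversity estimate --- lies in perverse degree $0$ with the stated support. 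Assembling and gluing these local statements gives a surjection $\Sym_{\boxtimes_{\oplus}}\bigl(\bigoplus_{\dd}\DTS^{\zeta}_{Q,W,\dd}\otimes\HO(\pt/\CC^*,\QQ)_{\vir}\bigr)\rightarrow\Coha_{\theta}$, which is an isomorphism by comparison of classes in the Grothendieck group of $\MMHM(\Msp(Q)^{\zeta\sst}_{\theta})$ --- the numerical cohomological integrality identity, accessible via the motivic results of Kontsevich--Soibelman, Meinhardt and Reineke \cite{MeRe14} --- once the perverse filtration has been split by a freeness argument for the action of the polynomial ring $\HO(\pt/\CC^*,\QQ)$.

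I expect the main obstacle to be precisely this last geometric step: showing that the primitive part produced by the inductive subtraction is a genuine monodromic mixed Hodge module, i.e.\ is concentrated in a single perverse cohomological degree. Over the stable locus this is automatic from exactness of the vanishing cycle functor, but over the strictly semistable strata the difference could a priori contain pieces in nonzero perverse degrees supported on deeper strata, and ruling this out is exactly where one needs the full strength of the local-model analysis together with the support estimates coming from the CoHA multiplication. It is also here that $\theta$-genericity of $\zeta$ is genuinely used: it forces the stable locus to be the smooth part of $\Msp(Q)^{\zeta\sst}_{\dd}$ and annihilates the extra terms that would otherwise be produced by pairs with $\langle\dd,\ee\rangle\neq 0$.
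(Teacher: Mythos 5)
This theorem is cited from \cite{DaMe15b} and is not reproved here, but the proof of (\ref{piIso}) in Section~\ref{framedVer}, together with (\ref{limIso}), recalls the strategy, and it differs substantially from yours. The recalled strategy is: first establish the $W=0$ analogue of (\ref{shriekInt}) at the level of the Grothendieck group of mixed Hodge modules, using \cite[Prop.4.3]{MeRe14}; then observe that both sides of that identity are \emph{pure} --- the left because the framed approximations $\pi^{\zeta}_{\ff,\theta}$ of $\p^{\zeta}_{\theta}$ are proper and proper direct image preserves purity, the right because $\DTS^{\zeta}_{Q,W=0,\dd}$ is pure and $\oplus$ is finite --- so that semisimplicity of pure mixed Hodge modules upgrades the Grothendieck-group identity to an honest isomorphism of complexes for $W=0$; finally apply the vanishing cycle functor for $W$ to both sides of this $W=0$ isomorphism and conclude by commutativity of vanishing cycles with proper direct image and the Thom--Sebastiani theorem. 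The perverse-concentration problem you identify never arises in this route: it is absorbed by the decomposition theorem for proper maps, applied \emph{before} the potential enters.

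Your proposal takes a genuinely different route --- induction on $\dd$, a primitive summand defined by inductive subtraction, and Ext-quiver local models --- and while this flavour of argument does appear elsewhere in the literature, as written it has an unfilled gap at exactly the point you flag as the ``main obstacle'': nothing in the sketch actually establishes that the primitive part is concentrated in a single perverse degree over the deeper strata, and ``a freeness argument for the action of $\HO(\pt/\CC^*,\QQ)$'' is not a substitute for that argument. The concluding passage from a surjection ``glued'' from local statements to an isomorphism via the Grothendieck group is also shaky: the gluing of a global surjection out of formal-local data is not justified, and passing from a surjection plus a class identity to an isomorphism of \emph{complexes} requires exactly the concentration-in-one-perverse-degree input that is missing. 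Two smaller misstatements: dimensional reduction (Theorem~\ref{dimRedThm}) is special to tripled quivers and so cannot be invoked in the Ext-quiver local model for a general QP satisfying only $\crit(\Tr(W))\subset\Tr(W)^{-1}(0)$; and $\theta$-genericity of $\zeta$ is needed not to make the stable locus smooth (that is automatic) but to kill the Tate twists $\LL^{\otimes\langle\dd,\ee\rangle/2}$ that would otherwise obstruct the $\Sym_{\boxtimes_{\oplus}}$ formula. Your Verdier-duality reduction of (\ref{shriekInt}) to (\ref{starInt}) at the start is correct, and is a genuine observation.
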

Since Verdier duality naturally commutes with the vanishing cycles functor, and $\ICS_{\Msp(Q)^{\zeta\sst}_{\dd}}(\mathbb{Q})$ is Verdier self-dual, the BPS sheaf is Verdier self-dual: there is an isomorphism
\begin{equation}
\label{nvsd}
\DTS_{Q,W,\dd}^{\zeta}\cong\mathbb{D}\DTS_{Q,W,\dd}^{\zeta}.
\end{equation}
\begin{remark}
Technically, the result quoted from \cite{DaMe15b} is stated for $\Ho(\bullet)$ of the LHS of \eqref{starInt} and \eqref{shriekInt}, and not the LHS as stated above.  That there is an isomorphism $\Ho(\mathrm{LHS})\cong \mathrm{LHS}$ is a consequence of approximation by projective morphisms and the decomposition theorem, see Corollary \ref{hpc}.
\end{remark}
\subsubsection{(3d) BPS cohomology}
\label{DTSS}
Now let $\Sp$ be a Serre subcategory of the category of $\mathbb{C}Q$-modules.  Recall that we denote by $\iota'\colon\Msp(Q)^{\Sp,\zeta\sst}\hookrightarrow \Msp(Q)^{\zeta\sst}$ the inclusion of objects in $\Sp$.  We define the \textit{BPS cohomology}:
\begin{align*}
\DT^{\Sp,\zeta}_{Q,W,\dd}\coloneqq &\HO\!\left(\Msp(Q)^{\Sp,\zeta\sst}_{\dd},\iota'^!\DTS^{\zeta}_{Q,W,\dd}\right)\\
\cong&\HO_c\!\left(\Msp(Q)^{\Sp,\zeta\sst}_{\dd},\iota'^*\DTS^{\zeta}_{Q,W,\dd}\right)^{\vee}
\end{align*}
where the isomorphism follows from Verdier self-duality of the BPS sheaf.
\subsubsection{}
The cohomologically graded mixed Hodge structure
\[
\Coha_{Q,\theta}^{\Sp,\zeta}\coloneqq \Dim^{\zeta}_*\iota_*\iota^!\phin{\WWW^{\zeta}_{\theta}}\ICS_{\Mst(Q)^{\zeta\sst}_{\theta}}(\mathbb{Q})
\]
carries a Hall algebra multiplication, defined in \cite{COHA,Chicago2}, via pullback and pushforward of vanishing cycle sheaves; see \S\ref{KSCSec} for a generalisation of the construction.  Applying the natural transformation $\tau^{\leq 1}\rightarrow \id$ to \eqref{starInt} we obtain the morphism
\begin{equation}
\label{BPSIN}
\DTS_{Q,W,\theta}^{\zeta}\otimes\LL^{1/2}\rightarrow \p_{\theta,*}^{\zeta}\phin{\WWW^{\zeta}_{\theta}}\ICS_{\Mst(Q)^{\zeta\sst}_{\theta}}(\mathbb{Q}).
\end{equation}
Applying $\HO\iota'_*\iota'^!$ to \eqref{BPSIN}, we obtain the embedding
\[
\DT_{Q,W,\theta}^{\Sp,\zeta}\otimes\LL^{1/2}\hookrightarrow \Coha_{Q,W,\theta}^{\Sp,\zeta}.
\]
Since $\HO_{\mathbb{C}^*}$ acts on the target, this extends to a morphism
\[
g\colon \DT_{Q,W,\theta}^{\Sp,\zeta}\otimes\HO(\B\mathbb{C}^*,\mathbb{Q})_{\vir}\rightarrow \Coha_{Q,W,\theta}^{\Sp,\zeta}.
\]
\begin{theorem}[PBW theorem]\cite[Thm.C]{DaMe15b}\label{PBW3d}
The morphism
\[
\Sym(\DT_{Q,W,\theta}^{\Sp,\zeta}\otimes\HO(\B\mathbb{C}^*,\mathbb{Q})_{\vir})\rightarrow \Coha_{Q,W,\theta}^{\Sp,\zeta}
\] extending $g$ via the Hall algebra multiplication on the target is an isomorphism.
\end{theorem}

\subsection{Framed moduli spaces and hidden properness}
\label{framedVer}

Recall that the left hand sides of (\ref{starInt}) and (\ref{shriekInt}) are defined with respect to a chain $\ldots\subset U_i\subset U_{i+1}\subset\ldots$ of $\G_{\dd}$-equivariant varieties satisfying the conditions of Section \ref{pfs}.  In this subsection we recall a natural choice of such a chain, for which the $U_i$ themselves have a representation theoretic definition.  Via this choice of $U_i$, the morphism $\p$ behaves ``like a proper map'', although it is certainly not proper in the traditional sense.
\smallbreak
Let $Q$ be a quiver.  For the moment we do not assume that $Q$ is symmetric.  Let $\dd,\ff\in\mathbb{N}^{Q_0}$ be a pair of dimension vectors.  Following \cite[Sec.3.3]{DaMe15b} we define $Q_{\ff}$ to be the quiver obtained from $Q$ by setting
\begin{align*}
(Q_{\ff})_0:=&Q_0\cup \{\infty\}\\
(Q_{\ff})_1:=& Q_1\cup\{\beta_{i,m}|i\in Q_0,\hbox{ }1\leq m\leq \ff_i\}
\end{align*}
and $s(\beta_{i,m})=\infty$, $t(\beta_{i,m})=i$.  Given a King stability condition $\zeta$ for $Q$, and a slope $\theta\in({}-\infty,\infty)$, we extend $\zeta$ to a stability condition $\zeta^{(\theta)}$ for $Q_{\ff}$ by fixing the slope
\[
- \Re e (\zeta^{(\theta)}_{\infty})/ \Im m (\zeta^{(\theta)}_{\infty})=\theta+\epsilon
\]
for sufficiently small positive $\epsilon$.  Let $\dd\in\Lambda_{\theta}^{\zeta}$.  Then a $(1,\dd)$-dimensional representation $\rho$ of $Q_{\ff}$ is $\zeta^{(\theta)}$-semistable if and only if it is $\zeta^{(\theta)}$-stable, and this holds if and only if the underlying $Q$-representation of $\rho$ is $\zeta$-semistable, and for all proper $Q_{\ff}$-subrepresentations $\rho'\subset \rho$, if $\dim(\rho')_{\infty}=1$ then the underlying $Q$-representation of $\rho'$ has slope strictly less than $\theta$.
\sbrk
Define $V_{\ff,\dd}:=\prod_{i\in Q_0}\Hom(\mathbb{C}^{\ff_i},\mathbb{C}^{\dd_i})$.  Then $V_{\ff,\dd}$ carries a $\Gl_{\dd}$-action, given by the product of the $\Gl_{\dd_i}(\mathbb{C})$-actions on $\mathbb{C}^{\dd_i}$.  Furthermore, there is an obvious decomposition
\[
X(Q_{\ff})_{(1,\dd)}\cong X(Q)_{\dd}\times V_{\ff,\dd}.
\]
If $L,L'$ are vector spaces, we define $\Hom^{\surj}(L,L')\subset \Hom(L,L')$ to be the subvariety of surjective homomorphisms.  Then the subspace 
\[
S_{\ff,\dd}:=\prod_{i\in Q_0}\Hom^{\surj}(\mathbb{C}^{\ff_i},\mathbb{C}^{\dd_i})\subset V_{\ff,\dd}
\]
is acted on freely by $\Gl_{\dd}$, and there is a chain of $\Gl_{\dd}$-equivariant inclusions of open dense subvarieties over the coarse moduli space $\Msp(Q)^{\zeta\sst}_{\dd}$
\[
\xymatrix{
X(Q)^{\zeta\sst}_{\dd}\times S_{\ff,\dd}\ar[drr]\ar@{^(->}[r]&X(Q_{\ff})_{(1,\dd)}^{\zeta^{(\theta)}\sst}\ar[dr]\ar@{^(->}[r]&X(Q)^{\zeta\sst}_{\dd}\times V_{\ff,\dd}\ar[d]
\\
&&\Msp(Q)_{\dd}^{\zeta\sst}.
}
\]
The group $\Gl_{\dd}$ acts freely on $X(Q_{\ff})^{\zeta^{(\theta)}\sst}_{(1,\dd)}$.  In the notation of the start of the section, we may set $U_i=X(Q_{i\cdot (1,\ldots,1)})^{\zeta^{(\theta)}\sst}_{(1,\dd)}$ to obtain our promised chain of $\Gl_{\dd}$-equivariant varieties.
\begin{definition}
We denote by $\Msp(Q)_{\ff,\dd}^{\zeta}=X(Q_{\ff})^{\zeta^{(\theta)}\sst}_{(1,\dd)}/\Gl_{\dd}$ the fine moduli space of $\ff$-framed $\zeta$-semistable representations of $Q$ of dimension $\dd$, or in other words, the fine moduli space of $\zeta^{(\theta)}$-stable $(1,\dd)$-dimensional representations of $Q_{\ff}$.  
\end{definition}
We denote by 
\begin{equation}
\label{frDef}
\pi_{\ff,\dd}^{\zeta}\colon \Msp(Q)_{\ff,\dd}^{\zeta}\rightarrow \Msp(Q)_{\dd}^{\zeta\sst}
\end{equation}
the induced map from the quotient.  
\begin{proposition}
The map $\pi_{\ff,\dd}$ above is proper.
\end{proposition}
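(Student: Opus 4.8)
The plan is to exhibit $\pi^{\zeta}_{\ff,\dd}$ as a morphism between two varieties that are each projective over the affine coarse moduli space
\[
\Msp(Q)_{\dd}:=\Spec\!\left(\mathbb{C}[X(Q)_{\dd}]^{\Gl_{\dd}}\right)
\]
of $\dd$-dimensional $Q$-representations (with respect to the degenerate stability condition), and then to conclude by the standard cancellation property of proper morphisms: if $X\xrightarrow{f}Y\xrightarrow{g}S$ are morphisms of schemes with $g\circ f$ proper and $g$ separated, then $f$ is proper.

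First I would record the relevant GIT descriptions. By \cite{King94} and the discussion preceding the proposition, $\Msp(Q)_{\ff,\dd}^{\zeta}$ is a GIT quotient of the \emph{affine} variety $X(Q_{\ff})_{(1,\dd)}\cong X(Q)_{\dd}\times V_{\ff,\dd}$ by the reductive group $\Gl_{(1,\dd)}=\Gl_{1}\times\Gl_{\dd}$, linearised by the character $\chi(\zeta^{(\theta)})$; since $X(Q_{\ff})_{(1,\dd)}$ is affine, this GIT quotient is projective over the affinisation $\Spec$ of its ring of $\Gl_{(1,\dd)}$-invariant functions. The key computation is that this affinisation is exactly $\Msp(Q)_{\dd}$: the factor $\Gl_{1}$, being the automorphism group of the framing vertex $\infty$, acts trivially on $X(Q)_{\dd}$ and by the overall scaling action on the vector space $V_{\ff,\dd}=\prod_{i\in Q_0}\Hom(\mathbb{C}^{\ff_i},\mathbb{C}^{\dd_i})$ (it rescales the common source $\mathbb{C}^{1}$ of all the framing maps $\beta_{i,m}$), so any $\Gl_{1}$-invariant function must have $V_{\ff,\dd}$-degree zero, i.e. must be pulled back from $X(Q)_{\dd}$; taking the remaining $\Gl_{\dd}$-invariants then yields $\mathbb{C}[X(Q)_{\dd}]^{\Gl_{\dd}}$. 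Hence $\Msp(Q)_{\ff,\dd}^{\zeta}$ is projective over $\Msp(Q)_{\dd}$. By the same principle, $\Msp(Q)_{\dd}^{\zeta\sst}=X(Q)^{\zeta\sst}_{\dd}/\!\!/_{\chi(\zeta)}\Gl_{\dd}$ is projective, hence in particular separated, over $\Msp(Q)_{\dd}$.

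Next I would verify that $\pi^{\zeta}_{\ff,\dd}$ is a morphism over $\Msp(Q)_{\dd}$, i.e. that it is compatible with the two structure maps to $\Msp(Q)_{\dd}$ just described. Both structure maps send the point represented by a framed representation to the point of $\Msp(Q)_{\dd}$ cut out by the $\Gl_{\dd}$-invariant functions of its underlying $Q$-representation: for $\pi^{\zeta}_{\ff,\dd}$ this follows from its description as forgetting the framing and passing to the $\zeta$-semistable coarse moduli space, post-composed with the affinisation $\Msp(Q)_{\dd}^{\zeta\sst}\to\Msp(Q)_{\dd}$; for the structure map of $\Msp(Q)_{\ff,\dd}^{\zeta}$ it follows from the identification of invariant rings above, which says precisely that this affinisation is computed using only functions pulled back from $X(Q)_{\dd}$ along the projection. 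With this established, applying the cancellation property to $\Msp(Q)_{\ff,\dd}^{\zeta}\xrightarrow{\pi^{\zeta}_{\ff,\dd}}\Msp(Q)_{\dd}^{\zeta\sst}\to\Msp(Q)_{\dd}$ shows that $\pi^{\zeta}_{\ff,\dd}$ is proper.

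I expect the only genuinely delicate point to be the compatibility in the last paragraph: that the canonical affinisation morphism of the framed moduli space really does coincide with $\pi^{\zeta}_{\ff,\dd}$ followed by the affinisation of the $\zeta$-semistable coarse moduli space --- equivalently, that $S$-equivalence in the category of $\zeta$-semistable representations of a fixed slope refines isomorphism of semisimplifications in the whole category $\mathbb{C}Q\lmod$. Everything else is routine bookkeeping with GIT linearisations, using only that $X(Q_{\ff})_{(1,\dd)}$ is affine and that the framing vertex contributes an extra copy of $\Gl_{1}$ acting by scaling.
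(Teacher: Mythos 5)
Your proof is correct and takes essentially the same route as the paper: both exhibit $\pi^{\zeta}_{\ff,\dd}$ as a morphism over the common affinisation $\Msp(Q)_{\dd}$, observe that the two GIT quotient maps down to this affine are proper, and conclude by the cancellation property of proper morphisms (which the paper phrases as an appeal to the valuative criterion). You spell out more of the bookkeeping --- the identification of the $\Gl_1\times\Gl_{\dd}$-invariant ring of $X(Q_\ff)_{(1,\dd)}$ with $\mathbb{C}[X(Q)_{\dd}]^{\Gl_{\dd}}$, and the commutativity of the triangle --- but the key structure is identical.
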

\begin{proof}
This is standard, and follows from the valuative criterion of properness and the fact that in the following diagram over the common affinization of the domain and target of $\pi_{\ff,\dd}^{\zeta}$
\[
\xymatrix{
\Msp(Q)_{\ff,\dd}^{\zeta}\ar[d]^{\pi_{\ff,\dd}^{\zeta}}\ar[rd]\\
\Msp(Q)_{\dd}^{\zeta\sst}\ar[r]&\Msp(Q)_{\dd}
}
\]
the unmarked arrows are GIT maps, and hence proper.
\end{proof}

We define 
\[
\WW_{\ff,\dd}^{\zeta}:=\WW^{\zeta}_{\dd}\circ \pi^{\zeta}_{\ff,\dd}.  
\]
We write $\ff\mapsto \infty$ to mean that all of the individual entries of $\ff$ tend to $\infty$.  Then as $\ff\mapsto \infty$, since 
\[
\codim_{ V_{\ff,\dd}}( V_{\ff,\dd}\setminus S_{\ff,\dd})\mapsto \infty, \]
we deduce that 
\[
\codim_{X(Q)_{\dd}\times V_{\ff,\dd}}\!\left((X(Q)_{\dd}\times V_{\ff,\dd})\setminus X(Q_{\ff})^{\zeta^{(\theta)}\sst}_{(1,\dd)}\right)\mapsto \infty,
\]
and so 
\begin{equation}
\label{limIso}
\Ho\!\left(\p^{\zeta}_{\dd,!}\phim{\WWW^{\zeta}_{\dd}}\ICS_{\Mst(Q)_{\dd}^{\zeta\sst}}(\mathbb{Q})\right)=\lim_{\ff\mapsto \infty} \Ho\!\left(\pi^{\zeta}_{\ff,\dd,!}\phim{\WW^{\zeta}_{\ff,\dd}}\ICS_{\Msp(Q)_{\ff,\dd}^{\zeta}}(\mathbb{Q})\otimes\LL^{-\ff\cdot \dd/2}\right),
\end{equation}
as per the definition in Section \ref{pfs}.  Equation (\ref{limIso}) states that the cohomology of $\p^{\zeta}_{\dd,!}\phim{\WWW^{\zeta}_{\dd}}\ICS_{\Mst(Q)_{\dd}^{\zeta\sst}}(\mathbb{Q})$ is obtained as a limit of direct images of related vanishing cycle complexes along projective morphisms from smooth complex varieties.  It is in this sense that $\p^{\zeta}$ is ``approximated by proper maps'', and the outcome is that many theorems regarding projective morphisms are true of $\p^{\zeta}$.  For instance, it follows from the $W=0$ case of equation (\ref{limIso}) and the decomposition theorem of Beilinson, Bernstein, Deligne and Gabber, that $\p^{\zeta}_{\dd,!}\ICS_{\Mst(Q)_{\dd}^{\zeta\sst}}(\mathbb{Q})$ is pure.
\smallbreak
\begin{lemma}
\label{decomp2}
Let $Q$ be quiver, let $\zeta$ be a stability condition on $Q$, let $W\in\mathbb{C}Q/[\mathbb{C}Q,\mathbb{C}Q]$ be a potential, and let $\dd,\ff\in\mathbb{N}^{Q_0}$ be a pair of dimension vectors.  As above, we let $\pi^{\zeta}_{\ff,\dd}\colon\Msp(Q)^{\zeta}_{\ff,\dd}\rightarrow \Msp(Q)^{\zeta\sst}_{\dd}$ be the forgetful map taking a stable framed $\zeta$-semistable representation to its underlying $\zeta$-polystable representation.  Then there is an isomorphism
\begin{equation}
\label{isotc}
\pi_{\ff,\dd,!}^{\zeta}\phim{\mathcal{T}r(W)_{\ff,\dd}^{\zeta}}\mathbb{Q}_{\Msp(Q)_{\ff,\dd}^{\zeta}}\cong\Ho\!\left(\pi_{\ff,\dd,!}^{\zeta}\phim{\mathcal{T}r(W)_{\ff,\dd}^{\zeta}}\mathbb{Q}_{\Msp(Q)_{\ff,\dd}^{\zeta}}\right),
\end{equation}
i.e. the left hand side of (\ref{isotc}) is isomorphic to its total cohomology.
\end{lemma}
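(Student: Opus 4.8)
The plan is to push the vanishing cycle complex forward along the proper map $\pi^{\zeta}_{\ff,\dd}$ and invoke the decomposition theorem. First I would reduce to the intersection complex normalisation. The fine moduli space $\Msp(Q)^{\zeta}_{\ff,\dd}$ is smooth, being a free $\Gl_{\dd}$-quotient of an open subvariety of the affine space $X(Q_{\ff})_{(1,\dd)}$, so $\mathbb{Q}_{\Msp(Q)^{\zeta}_{\ff,\dd}}$ differs from $\ICS_{\Msp(Q)^{\zeta}_{\ff,\dd}}(\mathbb{Q})$ only by a cohomological shift and a Tate twist. Since these operations commute with $\phim{-}$ and with $\pi^{\zeta}_{\ff,\dd,!}$, and do not affect whether a complex is isomorphic to its total cohomology, it suffices to prove the statement with $\ICS_{\Msp(Q)^{\zeta}_{\ff,\dd}}(\mathbb{Q})$ in place of $\mathbb{Q}_{\Msp(Q)^{\zeta}_{\ff,\dd}}$.

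Next I would treat the case $W=0$. The mixed Hodge module $\ICS_{\Msp(Q)^{\zeta}_{\ff,\dd}}(\mathbb{Q})$ is pure, and $\pi^{\zeta}_{\ff,\dd}$ is proper (as shown above), so $\pi^{\zeta}_{\ff,\dd,!}=\pi^{\zeta}_{\ff,\dd,*}$ and the decomposition theorem for pure Hodge modules \cite{Saito90} shows that $\pi^{\zeta}_{\ff,\dd,!}\ICS_{\Msp(Q)^{\zeta}_{\ff,\dd}}(\mathbb{Q})$ is a direct sum of shifts of pure mixed Hodge modules. In particular it is (noncanonically) isomorphic to $\bigoplus_{n\in\ZZ}\Ho^n\!\big(\pi^{\zeta}_{\ff,\dd,!}\ICS_{\Msp(Q)^{\zeta}_{\ff,\dd}}(\mathbb{Q})\big)[-n]$.

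To reach general $W$, I would use the identity $\mathcal{T}r(W)^{\zeta}_{\ff,\dd}=\mathcal{T}r(W)^{\zeta}_{\dd}\circ\pi^{\zeta}_{\ff,\dd}$ together with the compatibility of the monodromic vanishing cycle functor with proper pushforward to produce a natural isomorphism
\[
\pi^{\zeta}_{\ff,\dd,!}\phim{\mathcal{T}r(W)^{\zeta}_{\ff,\dd}}\ICS_{\Msp(Q)^{\zeta}_{\ff,\dd}}(\mathbb{Q})\cong\phim{\mathcal{T}r(W)^{\zeta}_{\dd}}\Big(\pi^{\zeta}_{\ff,\dd,!}\ICS_{\Msp(Q)^{\zeta}_{\ff,\dd}}(\mathbb{Q})\Big).
\]
Since $\phim{\mathcal{T}r(W)^{\zeta}_{\dd}}$ is exact for the standard ($=$ perverse) $t$-structure on $\MHM(\Msp(Q)^{\zeta\sst}_{\dd})$, it commutes with the formation of $\Ho^n(-)$; applying it to the direct sum of the previous paragraph therefore exhibits the left hand side above as a direct sum of shifts of single monodromic mixed Hodge modules, that is, as isomorphic to its own total cohomology. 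Unwinding the reduction of the first paragraph, this is exactly the assertion of (\ref{isotc}).

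The step requiring the most care is the compatibility of $\phim{-}$ with the proper pushforward $\pi^{\zeta}_{\ff,\dd,!}$ in the monodromic setting. The classical statement that $\phin{g}$ commutes with $R\pi_{*}$ for $\pi$ proper and $g$ a function on the target lifts to mixed Hodge modules within Saito's formalism; and since $\phim{-}$ is defined from the vanishing cycle functor of $\mathcal{T}r(W)^{\zeta}_{\dd}/u$ on $\Msp(Q)^{\zeta\sst}_{\dd}\times\mathbb{C}^{*}$ (with $u$ the coordinate on $\mathbb{C}^{*}$) by pullback and shift, the monodromic version follows from the classical one together with smooth base change along the projection $\Msp(Q)^{\zeta\sst}_{\dd}\times\mathbb{C}^{*}\to\Msp(Q)^{\zeta\sst}_{\dd}$. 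I would make this precise, but it is routine.
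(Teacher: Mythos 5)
Your proposal is correct and follows essentially the same route as the paper: factor $\mathcal{T}r(W)^{\zeta}_{\ff,\dd}$ through the proper map, commute $\phim{}$ with proper pushforward, apply the BBDG/Saito decomposition theorem to $\pi^{\zeta}_{\ff,\dd,!}\mathbb{Q}$ (equivalently $\ICS$), and use $t$-exactness of $\phim{}$ to carry the splitting through. The extra normalisation step from $\mathbb{Q}$ to $\ICS$ and the closing remarks on the monodromic compatibility are harmless elaborations of the same argument.
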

\begin{proof}
This follows from the existence of the chain of isomorphisms
\begin{align*}
\pi_{\ff,\dd,!}^{\zeta}\phim{\mathcal{T}r(W)_{\ff,\dd}^{\zeta}}\mathbb{Q}_{\Msp(Q)_{\ff,\dd}^{\zeta}}\cong&\phim{\mathcal{T}r(W)_{\dd}^{\zeta}}\pi_{\ff,\dd,!}^{\zeta}\mathbb{Q}_{\Msp(Q)_{\ff,\dd}^{\zeta}}&\substack{\textrm{commutativity of vanishing}\\ \textrm{cycles with proper maps}}\\
\cong&\phim{\mathcal{T}r(W)_{\dd}^{\zeta}}\Ho\!\left(\pi_{\ff,\dd,!}^{\zeta}\mathbb{Q}_{\Msp(Q)_{\ff,\dd}^{\zeta}}\right)&\substack{\textrm{the BBDG}\\\textrm{decomposition theorem}}\\
\cong&\Ho\!\left(\phim{\mathcal{T}r(W)_{\dd}^{\zeta}}\pi_{\ff,\dd,!}^{\zeta}\mathbb{Q}_{\Msp(Q)_{\ff,\dd}^{\zeta}}\right)&\substack{\textrm{exactness of vanishing}\\ \textrm{cycles functor}}\\
\cong&\Ho\!\left(\pi_{\ff,\dd,!}^{\zeta}\phim{\mathcal{T}r(W)_{\ff,\dd}^{\zeta}}\mathbb{Q}_{\Msp(Q)_{\ff,\dd}^{\zeta}}\right)&\substack{\textrm{commutativity of vanishing}\\ \textrm{cycles with proper maps.}}
\end{align*}
\end{proof}

Lemma \ref{decomp2} can be thought of as saying that ``one half'' of the BBDG decomposition theorem is true, even with the introduction of the vanishing cycles functor (which may destroy purity, i.e. the other half of the theorem).
\begin{corollary}
\label{hpc}
Let $Q$ be a quiver, let $\zeta$ be a stability condition on $Q$, let $W\in\mathbb{C}Q/[\mathbb{C}Q,\mathbb{C}Q]$ be a potential, and let $\dd\in\mathbb{N}^{Q_0}$ be a dimension vector.  Then there are isomorphisms in $\Dl(\MMHM(\Msp(Q)^{\zeta\sst}_{\dd}))$ and $\Du(\MMHM(\Msp(Q)^{\zeta\sst}_{\dd}))$ respectively:
\begin{align*}
\p_{\theta,*}^{\zeta}\phim{\WWW^{\zeta}_{\theta}}\ICS_{\Mst(Q)^{\zeta\sst}_{\theta}}(\mathbb{Q})\cong&\Ho\!\left(\p_{\theta,*}^{\zeta}\phim{\WWW^{\zeta}_{\theta}}\ICS_{\Mst(Q)^{\zeta\sst}_{\theta}}(\mathbb{Q})\right),
\\
\p_{\theta,!}^{\zeta}\phim{\WWW^{\zeta}_{\theta}}\ICS_{\Mst(Q)^{\zeta\sst}_{\theta}}(\mathbb{Q})\cong&\Ho\!\left(\p_{\theta,!}^{\zeta}\phim{\WWW^{\zeta}_{\theta}}\ICS_{\Mst(Q)^{\zeta\sst}_{\theta}}(\mathbb{Q})\right).
\end{align*}
\end{corollary}

\begin{proposition}
Let $\zeta$ be a $\theta$-generic stability condition, and assume that $\crit(\Tr(W))\subset \Tr(W)^{-1}(0)$.  There is an isomorphism in the category $\Dl\!\left(\MMHM\!\left(\Msp(Q)_{\theta}^{\zeta\sst}\right)\right)$
\begin{align}\label{piIso}
&\pi^{\zeta}_{Q,\ff,\theta,!}\!\left(\bigoplus_{\dd\in\Lambda_{\theta}^{\zeta}}\phim{\WW^{\zeta}_{\ff,\dd}}\QQ_{\Msp(Q)_{\ff,\dd}^{\zeta}}\otimes \LL^{(\dd,\dd)_Q/2}\right)\cong 
\\
&\Sym_{\boxtimes_{\oplus}}\!\left(\bigoplus_{\dd\in\Lambda_{\theta}^{\zeta}}\DTS_{Q,W,\dd}^{\zeta}\otimes \HO(\mathbb{C}\mathbb{P}^{\ff\cdot \dd-1},\mathbb{Q})^{\vee}\otimes\LL^{ -1/2} \right).\nonumber
\end{align}
\end{proposition}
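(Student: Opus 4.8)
The plan is to deduce \eqref{piIso} from the cohomological integrality theorem (Theorem~\ref{IntThm}, \cite[Thm.~A]{DaMe15b}), Lemma~\ref{decomp2}, and the description of the framed moduli spaces in Section~\ref{framedVer}; indeed \eqref{piIso} is essentially the statement that is proved en route to Theorem~\ref{IntThm}, before one passes to the limit $\ff\mapsto\infty$ in \eqref{limIso}.

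First I would reduce to an isomorphism after applying the total cohomology functor $\Ho(-)$. The maps $\pi^{\zeta}_{\ff,\dd}$ are proper, so Lemma~\ref{decomp2} shows the left hand side of \eqref{piIso} is isomorphic to its own total cohomology. The right hand side is built from the BPS sheaves $\DTS^{\zeta}_{Q,W,\dd}$, which are genuine monodromic mixed Hodge modules by Theorem~\ref{IntThm}, by tensoring with the finite-dimensional cohomologically graded mixed Hodge structures $\HO_c(\mathbb{C}\mathbb{P}^{\ff\cdot \dd-1},\mathbb{Q})_{\vir}$, applying the finite (hence exact) pushforward $\boxtimes_{\oplus}$, and taking symmetric powers; since grading by $\dd$ makes each $\mathbb{N}^{Q_0}$-degree a finite sum, the right hand side lies in $\Dl(\MMHM(\Msp(Q)^{\zeta\sst}_{\theta}))$ and is likewise a formal sum of its cohomology sheaves. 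It therefore suffices to match the two sides degree by degree after $\Ho(-)$.

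The substance of the proof is then to run the argument establishing \eqref{shriekInt} in \cite{DaMe15b}, but keeping the framing vector $\ff$ finite rather than passing to infinity. Recall from Section~\ref{framedVer} that $\Msp(Q)^{\zeta}_{\ff,\dd}=X(Q_{\ff})^{\zeta^{(\theta)}\sst}_{(1,\dd)}/\Gl_{\dd}$ is a fine moduli space and that $\p^{\zeta}_{\dd}$ is recovered as the $\ff\mapsto\infty$ limit of the $\pi^{\zeta}_{\ff,\dd}$, the space of framings through which one quotients being $V_{\ff,\dd}=\prod_{i}\Hom(\mathbb{C}^{\ff_i},\mathbb{C}^{\dd_i})$, of dimension $\ff\cdot\dd$. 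The proof of \eqref{shriekInt} stratifies the coarse moduli space $\Msp(Q)^{\zeta\sst}_{\theta}$ by the representation type of the underlying polystable representation, computes the relevant vanishing cycle cohomology over each stratum by means of a local étale model together with the support and exactness properties of $\phim{}$, and reassembles the answer into a symmetric algebra; the factor $\HO_c(\pt/\mathbb{C}^*,\mathbb{Q})_{\vir}$ there is the contribution of the residual scaling automorphism $\mathbb{C}^*$ attached to each simple isotypic block of multiplicity one. Carrying out the same stratified computation with $\ff$ fixed, the one ingredient that changes is that this residual $\mathbb{C}^*$ now acts on a space of generating framings whose projectivisation is $\mathbb{C}\mathbb{P}^{\ff\cdot\gamma-1}$, with $\gamma$ the dimension vector of the block, rather than on a point; over the stratum of type $\gamma^{\oplus m}$ the symmetric algebra formalism then produces the factor $\Sym^{m}\big(\DTS^{\zeta}_{Q,W,\gamma}\otimes\HO_c(\mathbb{C}\mathbb{P}^{\ff\cdot\gamma-1},\mathbb{Q})_{\vir}\big)$ in place of $\Sym^{m}\big(\DTS^{\zeta}_{Q,W,\gamma}\otimes\HO_c(\pt/\mathbb{C}^*,\mathbb{Q})_{\vir}\big)$, the codimension of the locus omitted in passing to $\mathbb{C}\mathbb{P}^{\ff\cdot\gamma-1}$ growing without bound in $\ff$ exactly as in Section~\ref{framedVer}. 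The identification \eqref{DTSdef} of the BPS sheaves is untouched, since they are defined for $Q$ and see no framing, and the support lemma, the exactness of $\phim{}$, and the reassembly into a symmetric algebra all go through verbatim; the outcome is \eqref{piIso}, with every generator $\DTS^{\zeta}_{Q,W,\dd}\otimes\HO_c(\pt/\mathbb{C}^*,\mathbb{Q})_{\vir}$ in \eqref{starInt}--\eqref{shriekInt} replaced by $\DTS^{\zeta}_{Q,W,\dd}\otimes\HO_c(\mathbb{C}\mathbb{P}^{\ff\cdot \dd-1},\mathbb{Q})_{\vir}$. As a consistency check, tensoring the $\dd$-graded part of \eqref{piIso} by $\LL^{\otimes-\ff\cdot\dd/2}$ and letting $\ff\mapsto\infty$ sends $\HO_c(\mathbb{C}\mathbb{P}^{\ff\cdot \dd-1},\mathbb{Q})_{\vir}\otimes\LL^{\otimes-\ff\cdot\dd/2}$ to $\HO_c(\pt/\mathbb{C}^*,\mathbb{Q})_{\vir}$, so by \eqref{limIso} the whole identity degenerates to \eqref{shriekInt}.

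The genuine obstacle is the step just sketched: one must verify that keeping $\ff$ finite affects \emph{only} the cohomology factors attached to the (unchanged) BPS generators and nothing in the symmetric algebra combinatorics — equivalently, one must control the fibrewise structure of the proper map $\pi^{\zeta}_{\ff,\dd}$ over the stratification of $\Msp(Q)^{\zeta\sst}_{\theta}$ by representation type, and its interaction with the monodromic vanishing cycle functor, exactly as in \cite{DaMe15b} but stopping short of the passage to the limit.
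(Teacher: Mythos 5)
Your approach differs from the one the paper actually gives, although both are valid, and the paper even notes that the proof is contained in that of \cite[Thm.A]{DaMe15b}. You propose to re-run the stratified argument of \cite{DaMe15b} --- stratify $\Msp(Q)^{\zeta\sst}_{\theta}$ by polystable type, work out the local \'etale model over each stratum, and reassemble into a symmetric algebra --- with the framing $\ff$ held fixed rather than passed to $\infty$, so that the factor attached to each BPS generator is $\HO_c(\mathbb{C}\mathbb{P}^{\ff\cdot\dd-1},\mathbb{Q})_{\vir}$ rather than $\HO_c(\pt/\mathbb{C}^*,\mathbb{Q})_{\vir}$. The paper instead avoids the stratified analysis altogether: it first establishes the $W=0$ case of \eqref{piIso}. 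By \cite[Prop.4.3]{MeRe14} the two sides (with the vanishing cycle functor removed) agree in the Grothendieck group of mixed Hodge modules on $\Msp(Q)^{\zeta\sst}_{\theta}$, and both sides are pure --- the left because $\pi^{\zeta}_{\ff,\theta}$ is proper and proper pushforward preserves purity, the right because it is built from pure generators and $\oplus$ is finite --- so semisimplicity of pure Hodge modules upgrades the Grothendieck group equality to the isomorphism \eqref{isoWO}. The general case then follows by applying $\phim{\WW^{\zeta}_{\theta}}$ to \eqref{isoWO} and using commutativity of the vanishing cycle functor with proper pushforward \cite[Thm.2.14]{Saito89} and with $\Sym_{\boxtimes_{\oplus}}$ via Thom--Sebastiani \cite{Saito10} together with \cite[Prop.3.8]{DaMe15b}. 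Your route is more transparent about where the $\mathbb{C}\mathbb{P}^{\ff\cdot\dd-1}$ factors come from, but amounts to repeating the bulk of \cite{DaMe15b}; the paper's route is far shorter, outsourcing the hard work to the motivic identity of \cite{MeRe14} plus a purity-and-semisimplicity bootstrap. One small remark: your opening appeal to Lemma \ref{decomp2} is not load-bearing here --- in the paper's argument, purity of the $W=0$ case already forces both sides to split as their total cohomology, and in your stratified argument the isomorphism would be produced directly; that lemma is a byproduct of the same properness of $\pi^{\zeta}_{\ff,\dd}$ put to use elsewhere in the paper, not in this proposition.
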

\begin{proof}
By \cite[Prop.4.3]{MeRe14} there is an equality in the Grothendieck group of mixed Hodge modules on $\Msp(Q)^{\zeta\sst}_{\theta}$:
\begin{align}
\label{groCase1}
&\left[\pi^{\zeta}_{\ff,\theta,!}\bigoplus_{\dd\in\Lambda_{\theta}^{\zeta}}\left(\QQ_{\Msp(Q)_{\ff,\dd}^{\zeta}}\otimes \LL^{(\dd,\dd)_Q/2}\right)\right]=\\&\left[\Sym_{\boxtimes_{\oplus}}\!\left(\bigoplus_{\dd\in\Lambda_{\theta}^{\zeta}}\ICS_{\Msp(Q)^{\zeta\sst}_{\dd}}(\QQ)\otimes \HO(\mathbb{C}\mathbb{P}^{\ff\cdot \dd-1},\mathbb{Q})^{\vee}\otimes\LL^{ -1/2} \right)\right].
\label{groCase2}
\end{align}
On the other hand, the terms in square brackets in \eqref{groCase1} and \eqref{groCase2} are pure; the left hand term is pure since $\pi_{\ff,\theta}^{\zeta}$ is proper, and purity is preserved by direct image along proper maps \cite[p.324]{Saito89}, while the right hand side is pure since it is generated by pure mixed Hodge modules, and the map $\oplus\colon\Msp(Q)^{\zeta}_{\theta}\times \Msp(Q)^{\zeta}_{\theta}\rightarrow \Msp(Q)^{\zeta}_{\theta}$ is proper by \cite[Lem.2.1]{MeRe14}.  Since the category of pure mixed Hodge modules on a complex variety is Krull--Schmidt \cite{Saito89} this equality implies that there is an isomorphism 
\begin{align}
\label{isoWO}
&\pi^{\zeta}_{\ff,\theta,!}\bigoplus_{\dd\in\Lambda_{\theta}^{\zeta}}(\QQ_{\Msp(Q)_{\ff,\dd}^{\zeta}}\otimes \LL^{(\dd,\dd)/2})\cong\Sym_{\boxtimes_{\oplus}}\!\left(\bigoplus_{\dd\in\Lambda_{\theta}^{\zeta}}\ICS_{\Msp(Q)^{\zeta\sst}_{\dd}}(\QQ)\otimes \HO(\mathbb{C}\mathbb{P}^{\ff\cdot \dd-1},\mathbb{Q})^{\vee}\otimes\LL^{ -1/2} \right).
\end{align}
The proposition follows from applying $\phim{\WW_{\theta}^{\zeta}}$ to both sides of (\ref{isoWO}), and using the fact that the vanishing cycle functor commutes with taking direct image along proper maps \cite[Thm.2.14]{Saito89}, as well as commuting with the monoidal structure $\boxtimes_{\oplus}$ on $\Dl(\MMHM(\Msp(Q)_{\theta}^{\zeta\sst}))$ by Saito's version of the Thom--Sebastiani theorem \cite{Saito10}, as well as the enhancement of this monoidal structure to a symmetric monoidal structure, by \cite[Prop.3.8]{DaMe15b}.
\end{proof}

\section{Purity and supports}
In this section we prove Theorem \ref{purityThm}.  A crucial role in the proof is played by the support lemma (Lemma \ref{lem1}), which also enables us to prove Theorem \ref{2dbpsthm}.
\label{mainProof}
\subsection{Proof of Theorem \ref{purityThm}}
Fix a quiver $Q$.  We define $(\tilde{Q},\tilde{W})$ as in Section \ref{QandP}.  Define 
\begin{align}\label{DTdef}
\DT_{\tilde{Q},\tilde{W}}^{\vee}:=&\dim_!\DTS_{\tilde{Q},\tilde{W}}=\HO_c(\Msp(\tilde{Q}),\DTS_{\tilde{Q},\tilde{W}})\\
\DT_{\tilde{Q},\tilde{W}}^{\omega\nnilp,\vee}:=&\dim_!\!\left(\DTS_{\tilde{Q},\tilde{W}}|_{\Msp(\tilde{Q})^{\omega\nnilp}}\right)\nonumber
\end{align}
the compactly supported cohomology, and the restricted compactly supported cohomology, respectively, of the BPS sheaf from Theorem \ref{IntThm}.  As in Definition \ref{dimDef}, $\dim\colon\Msp(\tilde{Q})\rightarrow \mathbb{N}^{Q_0}$ is the map taking a semisimple representation to its dimension vector.  Note that no stability condition appears in \eqref{DTdef} (see Convention \ref{stabConv}).  As explained at the beginning of Section \ref{CoDT}, we consider a $\mathbb{N}^{Q_0}$-graded mixed Hodge structure as essentially the same thing as a mixed Hodge module on $\mathbb{N}^{Q_0}$, and so we consider both of the above objects equivalently as mixed Hodge module complexes on the discrete space $\mathbb{N}^{Q_0}$, or $\mathbb{N}^{Q_0}$-graded mixed Hodge structures.
\smallbreak
We break the proof of Theorem \ref{purityThm} into several steps.  We use the following three lemmas.
\begin{lemma}[Support lemma]
\label{lem1}
Let $x\in\Msp(\tilde{Q})^{\zeta\sst}_{\dd}$ lie in the support of $\DTS^{\zeta\sst}_{\tilde{Q},\tilde{W},\dd}$, corresponding to a $\dd$-dimensional semisimple $\mathbb{C}\tilde{Q}$ representation $\rho$.  Then the union of the multisets $\cup_{i\in Q_0}\{\lambda_{i,1},\ldots,\lambda_{i,\dd_i}\}$ of generalised eigenvalues of $\rho(\omega_i)$ contains only one distinct element $\lambda\in\mathbb{C}$.  Furthermore, the action of $\sum_{i\in Q_0} \omega_i$ on the underlying vector space of $\rho$ is by multiplication by the constant $\lambda$.
\end{lemma}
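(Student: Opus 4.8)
The plan is to exploit the fact that $\omega:=\sum_{i\in Q_0}\omega_i$ is a \emph{central} element of $\Jac(\tilde Q,\tilde W)$, together with the structure of the BPS sheaf provided by the cohomological integrality theorem (Theorem \ref{IntThm}). Centrality follows at once from Proposition \ref{basicEq}: under the identification $\Jac(\tilde Q,\tilde W)\lmod\cong\mathcal{C}_Q$, the element $\omega$ acts on an object $(M,f)$ as the distinguished endomorphism $f$, while morphisms in $\mathcal{C}_Q$ are by definition those that intertwine these endomorphisms; hence $\omega$ commutes with every morphism, i.e.\ is central. I will use two consequences. First, by Schur's lemma $\omega$ acts by a scalar on every simple $\Jac(\tilde Q,\tilde W)$-module, so on any \emph{semisimple} Jacobi module $\omega$ acts semisimply, its spectrum being the multiset of these scalars. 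Second, $\Jac(\tilde Q,\tilde W)\lmod$ is the direct sum, over $\lambda\in\CC$, of the Serre subcategories $\mathcal{C}_\lambda$ of modules on which $\omega-\lambda$ is nilpotent, and $\Hom$ and $\Ext^1$ both vanish between $\mathcal{C}_{\lambda'}$ and $\mathcal{C}_{\lambda''}$ whenever $\lambda'\neq\lambda''$ (since coprime powers of $\omega-\lambda'$ and $\omega-\lambda''$ annihilate any such extension, forcing it to split).

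Next I would reduce to a support statement. By Theorem \ref{IntThm}, $\DTS_{\tilde Q,\tilde W,\dd}\otimes\HO_c(\pt/\CC^*,\QQ)_{\vir}$ is a direct summand of $\Ho\bigl(\p_{\dd,!}\phim{\mathfrak{Tr}(\tilde W)_\dd}\ICS_{\Mst(\tilde Q)_\dd}\bigr)$, and since $\Mst(\tilde Q)_\dd$ is smooth, Remark \ref{critRem} shows this last object is supported on $\p_\dd\bigl(\crit(\mathfrak{Tr}(\tilde W)_\dd)\bigr)$, which is the closed locus of semisimple $\Jac(\tilde Q,\tilde W)$-modules; hence so is $\supp\bigl(\DTS_{\tilde Q,\tilde W,\dd}\bigr)$. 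Now fix a point $x$ of this locus, corresponding to $\rho$. By the first consequence above, the eigenspace decomposition $\rho=\bigoplus_\lambda\rho_\lambda$ of $\omega$ is a decomposition of $\tilde Q$-representations, $\omega$ acts on $\rho_\lambda$ by $\lambda\cdot\id$, and therefore $\rho(\omega_i)$ acts on $e_i\cdot\rho_\lambda$ by $\lambda\cdot\id$; consequently the union $\bigcup_{i\in Q_0}\{\lambda_{i,1},\dots,\lambda_{i,\dd_i}\}$ is precisely the $\omega$-spectrum of $\rho$, each $\lambda$ occurring with multiplicity $\dim(\rho_\lambda)$. So the lemma is equivalent to the claim that $\rho$ has a single $\omega$-eigenvalue $\lambda$ whenever $x\in\supp\bigl(\DTS_{\tilde Q,\tilde W,\dd}\bigr)$, for then $\rho(\omega_i)=\lambda\cdot\id$ for every $i$ and $\sum_i\rho(\omega_i)=\lambda\cdot\id$, which is exactly both assertions. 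Thus it suffices to show that $\DTS_{\tilde Q,\tilde W,\dd}$ vanishes at every $x$ whose $\rho$ has at least two distinct $\omega$-eigenvalues.

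To prove this last vanishing, write $\rho=\rho_1\oplus\rho_2$ with $\rho_s$ nonzero of dimension vector $\dd^{(s)}$ lying in distinct blocks $\mathcal{C}_{\lambda_1},\mathcal{C}_{\lambda_2}$ (more than two eigenvalues is handled by induction on their number, run simultaneously with an induction on $\dd$). Because $\Hom$ and $\Ext^1$ vanish between these blocks, in an \'etale neighbourhood of $x$ the coarse moduli space $\Msp(\tilde Q)_\dd$, the stack $\Mst(\tilde Q)_\dd$ and the map $\p_\dd$ all split as products of the corresponding objects for the two blocks, while $\mathfrak{Tr}(\tilde W)_\dd$ splits as a sum; hence, by the Thom--Sebastiani isomorphism and the compatibility of $\phim{}$, $\p_!$ and $\boxtimes_{\oplus}$ with these products, $\Ho\bigl(\p_{\dd,!}\phim{\mathfrak{Tr}(\tilde W)_\dd}\ICS_{\Mst(\tilde Q)_\dd}\bigr)$ is, near $x$, an external $\boxtimes_{\oplus}$-product of the two block objects. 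Comparing this with the single free symmetric algebra $\Sym_{\boxtimes_{\oplus}}\bigl(\DTS_{\tilde Q,\tilde W}\otimes\HO_c(\pt/\CC^*,\QQ)_{\vir}\bigr)$ of Theorem \ref{IntThm}, and using that by the inductive hypothesis every $\DTS_{\tilde Q,\tilde W,\ee}$ with $0\neq\ee<\dd$ is supported on single-$\omega$-eigenvalue loci, one finds that all contributions to this symmetric algebra supported near $x$ arise as $\boxtimes_{\oplus}$-products of generators of dimension vector strictly smaller than $\dd$, with no ``length one'' contribution of total dimension $\dd$; freeness of $\Sym_{\boxtimes_{\oplus}}$ (equivalently, the plethystic logarithm, applied sheaf-wise) then forces the degree-$\dd$ generator $\DTS_{\tilde Q,\tilde W,\dd}$ to have zero stalk at $x$. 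The main obstacle is making this last comparison rigorous: one must verify that the cohomological integrality isomorphism restricts compatibly to these local product charts, so that the graded pieces of the free symmetric algebra can be peeled off one dimension vector at a time. This is precisely the point where the genuine content of the relative integrality theorem, and not merely the definition of the BPS sheaf, is used.
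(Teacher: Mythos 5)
Your strategy is genuinely the same as the paper's in its main ideas: exploit the centrality of $\omega=\sum_i\omega_i$ in $\Jac(\tilde Q,\tilde W)$ via Proposition \ref{basicEq}, cut the category of Jacobi modules into eigenvalue blocks, and then compare the restricted integrality isomorphism (Theorem \ref{IntThm}) to peel off the generators of the free symmetric algebra. The final paragraph (Schur's lemma, nilpotence-degree filtration to show $\Psi=\omega-\lambda\cdot\id$ vanishes on a simple module) matches the paper's argument for the second assertion exactly.

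However, there is a genuine gap in the middle step, and it is not quite where you locate it. You claim that near $x$ ``the coarse moduli space $\Msp(\tilde Q)_\dd$, the stack $\Mst(\tilde Q)_\dd$ and the map $\p_\dd$ all split as products,'' citing the vanishing of $\Hom$ and $\Ext^1$ between the blocks $\mathcal{C}_{\lambda_1}$ and $\mathcal{C}_{\lambda_2}$. But those are $\Hom$ and $\Ext^1$ in $\Jac(\tilde Q,\tilde W)\lmod$, whereas $\Msp(\tilde Q)_\dd$ and $\Mst(\tilde Q)_\dd$ are moduli of $\mathbb{C}\tilde Q$-modules, for which $\omega$ is \emph{not} central. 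The \'etale slice of $\Msp(\tilde Q)_\dd$ at $x$ is modelled on $\Ext^1_{\mathbb{C}\tilde Q}(\rho,\rho)\mathbin{/\!/}\Aut(\rho)$, and the cross-term $\Ext^1_{\mathbb{C}\tilde Q}(\rho_1,\rho_2)$ is typically nonzero even when $\Ext^1_{\Jac}(\rho_1,\rho_2)=0$, so the ambient stack and its coarse space do not split as products in any neighbourhood of $x$. The block decomposition is available only on the critical locus of $\mathfrak{Tr}(\tilde W)$, i.e.\ for $\Jac$-modules. The paper sidesteps exactly this issue by choosing disjoint analytic open sets $U_1,U_2\subset\mathbb{C}$ isolating the two eigenvalue clusters, passing to the smooth framed moduli spaces of Section \ref{framedVer}, and recording only a homeomorphism of \emph{critical loci} over $\Msp^{U_1\cup U_2}(\tilde Q)$ as a disjoint union of products $\Msp^{U_1}(\tilde Q)_{\ff,\dd'}\cap\crit \times \Msp^{U_2}(\tilde Q)_{\ff,\dd''}\cap\crit$; this suffices, since the vanishing cycle complex is determined near its support, and one then compares the restrictions of the integrality isomorphism to $\Msp^{U_1\cup U_2}$, $\Msp^{U_1}$ and $\Msp^{U_2}$ to deduce $\DTS|_{\Msp^{U_1\cup U_2}}\cong\DTS|_{\Msp^{U_1}}\oplus\DTS|_{\Msp^{U_2}}$. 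You should replace your \'etale-local product chart step with this analytic-open restriction, and in doing so you can also dispense with the auxiliary induction on dimension vector and number of eigenvalues, which the paper does not need.
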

Recall the definition of $\Dim=\dim\circ \p_{\tilde{Q}}:\Mst(\tilde{Q})\rightarrow\mathbb{N}^{Q_0}$ from Definition \ref{dimDef}: on $K$-points, with $\mathbb{K}$ a field extension of $\mathbb{C}$, it is the map taking a $K\tilde{Q}$-module to its dimension vector.
\begin{lemma}
\label{lem2}
There are isomorphisms of $\mathbb{N}^{Q_0}$-graded mixed Hodge structures
\begin{align}
\label{nonNilpEq}
\bigoplus_{\dd\in\mathbb{N}^{Q_0}}\HO_c\!\left(\mu_{Q,\dd}^{-1}(0)/\Gl_{\dd},\mathbb{Q}\right)\otimes\mathbb{L}^{(\dd,\dd)}\cong &\Dim_!\phin{\mathfrak{Tr}(\tilde{W})}\ICS_{\Mst(\tilde{Q})}(\mathbb{Q})\\\cong&\Sym_{\boxtimes_+}\!\left(\DT^{\omega\nnilp,\vee}_{\tilde{Q},\tilde{W}}\otimes\mathbb{L}\otimes\HO_c(\B\mathbb{C}^*,\mathbb{Q})_{\vir}\right) \label{non2}
\end{align}
and
\begin{equation}
\label{nilpEq}
\Dim_!\!\left((\phin{\mathfrak{Tr}(\tilde{W})}\ICS_{\Mst(\tilde{Q})}(\mathbb{Q}))|_{\Mst(\tilde{Q})^{\omega\nnilp}}\right)\cong\Sym_{\boxtimes_+}\!\left(\DT^{\omega\nnilp,\vee}_{\tilde{Q},\tilde{W}}\otimes\HO_c(\B\mathbb{C}^*,\mathbb{Q})_{\vir}\right).
\end{equation}
\end{lemma}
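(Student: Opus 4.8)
The plan is to deduce the three isomorphisms of Lemma~\ref{lem2} from dimensional reduction (Theorem~\ref{dimRedThm}), the integrality theorem (Theorem~\ref{IntThm}), and the support lemma (Lemma~\ref{lem1}), together with the monoidality of pushforward along $\dim\colon\Msp(\tilde{Q})\to\mathbb{N}^{Q_0}$. For isomorphism~(\ref{nonNilpEq}): under the decomposition $X(\tilde{Q})_{\dd}\cong X(\overline{Q})_{\dd}\times\g_{\dd}$ of \cite{Moz11}, the function $\mathfrak{Tr}(\tilde{W})_{\dd}$ is exactly the function $\overline{g}$ of~(\ref{gdef}) for the Hamiltonian $\Gl_{\dd}$-variety $X(\overline{Q})_{\dd}$ with moment map $\mu_{Q,\dd}$; in particular it is linear along the $\g_{\dd}$-factor with linear term $\mu_{Q,\dd}$, so $\crit(\mathfrak{Tr}(\tilde{W})_{\dd})$ contains $\mu_{Q,\dd}^{-1}(0)$ and lies inside $\mathfrak{Tr}(\tilde{W})_{\dd}^{-1}(0)$. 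Applying the $\Gl_{\dd}$-equivariant form of Theorem~\ref{dimRedThm} with base $X(\overline{Q})_{\dd}$ (so that the locus $Z'$ there is $\mu_{Q,\dd}^{-1}(0)$) gives essentially~(\ref{Ataste}); I would then unwind the Totaro-limit definition from Section~\ref{pfs} of $\Ho(\Dim_!\phin{\mathfrak{Tr}(\tilde{W})}\ICS_{\Mst(\tilde{Q})}(\mathbb{Q}))$, checking that the twist $\LL^{\otimes\dim(\g_{\dd})}$ produced by dimensional reduction and the normalisation twist $\LL^{\otimes(\dim(\Gl_{\dd})-\dim(X(\tilde{Q})_{\dd}))/2}$ built into $\ICS_{\Mst(\tilde{Q})_{\dd}}$ add up in each degree $\dd$ to exactly $\LL^{\otimes(\dd,\dd)}$; the replacement of $\phim{}$ by $\phin{}$ is harmless by Remark~\ref{getOut}.

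For the identification with~(\ref{non2}) I would apply $\Ho(\dim_!(-))$ to the integrality isomorphism~(\ref{shriekInt}) for the degenerate stability condition, which is legitimate since that condition is generic for $\tilde{Q}$ and $\crit(\Tr(\tilde{W}))\subset\Tr(\tilde{W})^{-1}(0)$ by the previous step. Since $\dim$ is a morphism of monoids and $\oplus$ is finite, $\dim_!$ carries $\boxtimes_{\oplus}$ to $\boxtimes_+$ and hence commutes with $\Sym$, while $\dim_!\circ\p_!=\Dim_!$; this produces $\Ho(\Dim_!\phin{\mathfrak{Tr}(\tilde{W})}\ICS_{\Mst(\tilde{Q})}(\mathbb{Q}))\cong\Sym_{\boxtimes_+}(\DT_{\tilde{Q},\tilde{W}}\otimes\HO_c(\pt/\mathbb{C}^*,\mathbb{Q})_{\vir})$, with $\DT_{\tilde{Q},\tilde{W}}$ as in~(\ref{DTdef}). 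It then remains to prove $\DT_{\tilde{Q},\tilde{W}}\cong\DT^{\omega\nnilp}_{\tilde{Q},\tilde{W}}\otimes\LL$, and this is where Lemma~\ref{lem1} enters. The automorphism $\omega_i\mapsto\omega_i+te_i$ of $\mathbb{C}\tilde{Q}$ induces a $\GG_a$-action on $\Msp(\tilde{Q})_{\dd}$ under which $\mathcal{T}r(\tilde{W})_{\dd}$ is invariant (its variation is $t\sum_a\Tr[\rho(a),\rho(a^*)]=0$), so $\DTS_{\tilde{Q},\tilde{W},\dd}$ is $\GG_a$-equivariant. By Lemma~\ref{lem1} its support lies in the locus on which $\sum_i\omega_i$ acts by a scalar $\lambda$, and there $\lambda$ is a $\GG_a$-equivariant function to $\AA^1$ with $\GG_a$ acting by translation; hence $\supp\DTS_{\tilde{Q},\tilde{W},\dd}$ is a trivial $\AA^1$-bundle $\AA^1\times F_{\dd}$ over its fibre $F_{\dd}=\{\rho\mid\rho(\omega_i)=0\ \forall i\}\cap\supp\DTS_{\tilde{Q},\tilde{W},\dd}$, and $\GG_a$-equivariance forces $\DTS_{\tilde{Q},\tilde{W},\dd}\cong\mathbb{Q}_{\AA^1}\boxtimes(\DTS_{\tilde{Q},\tilde{W},\dd}|_{F_{\dd}})$. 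As a scalar that is also nilpotent vanishes, $F_{\dd}=\supp(\DTS_{\tilde{Q},\tilde{W},\dd}|_{\Msp(\tilde{Q})^{\omega\nnilp}_{\dd}})$, so the K\"unneth formula gives $\DT_{\tilde{Q},\tilde{W},\dd}\cong\HO_c(\AA^1,\mathbb{Q})\otimes\DT^{\omega\nnilp}_{\tilde{Q},\tilde{W},\dd}=\LL\otimes\DT^{\omega\nnilp}_{\tilde{Q},\tilde{W},\dd}$, which is~(\ref{non2}).

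Finally, for~(\ref{nilpEq}), I would first note that nilpotence of each $\rho(\omega_i)$ depends only on the semisimplification of $\rho$, so $\Mst(\tilde{Q})^{\omega\nnilp}_{\dd}=(\p_{\dd})^{-1}(\Msp(\tilde{Q})^{\omega\nnilp}_{\dd})$ and $\Msp(\tilde{Q})^{\omega\nnilp}$ is a closed submonoid of $\Msp(\tilde{Q})$. Base change along this preimage identifies the left-hand side of~(\ref{nilpEq}) with $\Ho(\dim_!((\Ho(\p_!\phin{\mathfrak{Tr}(\tilde{W})}\ICS_{\Mst(\tilde{Q})}(\mathbb{Q})))|_{\Msp(\tilde{Q})^{\omega\nnilp}}))$; by Theorem~\ref{IntThm} this is $\Ho(\dim_!(-))$ of a restriction of a $\Sym_{\boxtimes_{\oplus}}$, and since $\oplus$ is finite and $\oplus^{-1}$ of the $\omega$-nilpotent locus is the product of $\omega$-nilpotent loci, restriction along $\Msp(\tilde{Q})^{\omega\nnilp}\hookrightarrow\Msp(\tilde{Q})$ commutes with $\Sym_{\boxtimes_{\oplus}}$. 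Together with $\Ho(\dim_!(\DTS_{\tilde{Q},\tilde{W}}|_{\Msp(\tilde{Q})^{\omega\nnilp}}))=\DT^{\omega\nnilp}_{\tilde{Q},\tilde{W}}$ and the K\"unneth argument above, this yields $\Sym_{\boxtimes_+}(\DT^{\omega\nnilp}_{\tilde{Q},\tilde{W}}\otimes\HO_c(\pt/\mathbb{C}^*,\mathbb{Q})_{\vir})$, which is~(\ref{nilpEq}). I expect the only genuinely new input to be Lemma~\ref{lem1} and the product structure $\DTS_{\tilde{Q},\tilde{W},\dd}\cong\mathbb{Q}_{\AA^1}\boxtimes(\DTS_{\tilde{Q},\tilde{W},\dd}|_{F_{\dd}})$ that it yields; everything else is formal. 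The main things to get right will be the bookkeeping of Tate twists, so that~(\ref{nonNilpEq}) emerges with exactly $\LL^{\otimes(\dd,\dd)}$, and verifying that $\GG_a$-equivariance really produces an external product with the constant sheaf $\mathbb{Q}_{\AA^1}$ carrying no extra Tate twist --- unlike the $\GG_m$-equivariant case.
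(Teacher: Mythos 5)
Your proposal tracks the paper's proof very closely in structure. For isomorphism~(\ref{nonNilpEq}) you reproduce the paper's argument exactly: dimensional reduction (Theorem~\ref{dimRedThm}) applied to the decomposition $X(\tilde{Q})_{\dd}\cong X(\overline{Q})_{\dd}\times X(L)_{\dd}$ with $X(L)_{\dd}\cong\g_{\dd}$, so that $Z'=\mu_{Q,\dd}^{-1}(0)$ and the twists combine to $\LL^{\otimes(\dd,\dd)}$. For~(\ref{non2}) you also match the paper: apply $\Ho\dim_!$ to~(\ref{shriekInt}), use $\dim_!\Sym_{\boxtimes_\oplus}\cong\Sym_{\boxtimes_+}\dim_!$, and then reduce to $\DT_{\tilde{Q},\tilde{W}}\cong\DT^{\omega\nnilp}_{\tilde{Q},\tilde{W}}\otimes\LL$ via the $\GG_a$-action $\omega_i\mapsto\omega_i+te_i$ together with the support lemma; the paper writes the conclusion as $\DTS'_{\tilde{Q},\tilde{W},\dd}\cong m_*(\mathbb{Q}_{\mathbb{A}^1}\boxtimes\DTS^{\omega\nnilp}_{\tilde{Q},\tilde{W}})$ but this is the same external-product statement you give, and you are right that no extra Tate twist appears.

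There is, however, a genuine omission in your treatment of~(\ref{nilpEq}), concentrated in the phrase ``base change along this preimage'' and in your closing claim that ``everything else is formal.'' You want to move from
\[
\Ho\!\left(\Dim_!\left((\phin{\mathfrak{Tr}(\tilde{W})}\ICS_{\Mst(\tilde{Q})}(\mathbb{Q}))|_{\Mst(\tilde{Q})^{\omega\nnilp}}\right)\right)
\quad\text{to}\quad
\Ho\!\left(\dim_!\left(\Ho\!\left(\p_!\phin{\mathfrak{Tr}(\tilde{W})}\ICS_{\Mst(\tilde{Q})}(\mathbb{Q})\right)\Big|_{\Msp(\tilde{Q})^{\omega\nnilp}}\right)\right).
\]
At each finite framing level $\ff$, proper base change for the Cartesian square involving $\pi_{\ff,\dd}$ and the closed immersion $\xi_{\dd}\colon\Msp(\tilde{Q})_{\dd}^{\omega\nnilp}\hookrightarrow\Msp(\tilde{Q})_{\dd}$ does give
$\pi^{\omega\nnilp}_{\ff,\dd,!}\xi_{\ff,\dd}^*\cong\xi_{\dd}^*\pi_{\ff,\dd,!}$, but what you also need is to commute $\xi_{\dd}^*$ past the total-cohomology functor $\Ho$, and $\xi_{\dd}^*$ is not perverse $t$-exact. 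The step that fills this hole is Lemma~\ref{decomp2}, i.e.\ formality of $\pi_{\ff,\dd,!}\phin{\mathcal{T}r(\tilde{W})_{\ff,\dd}}\mathbb{Q}$, which follows from properness of $\pi_{\ff,\dd}$ together with the BBDG decomposition theorem and the exactness of $\phi$. That is a substantive input — the decomposition theorem — not a formality, and it is exactly the missing $\gamma$ in the paper's chain $\HO_c(\gamma\circ\beta)$. The remainder of your~(\ref{nilpEq}) argument (that $\Msp(\tilde{Q})^{\omega\nnilp}$ is a closed submonoid, that $\oplus^{-1}$ of the $\omega$-nilpotent locus is the product of $\omega$-nilpotent loci so that restriction commutes with $\Sym_{\boxtimes_\oplus}$, and the final commutation $\dim_!\Sym_{\boxtimes_\oplus}\cong\Sym_{\boxtimes_+}\dim_!$) is correct and matches the paper. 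So the gap is isolated but real: without the formality of the framed pushforwards, the ``base change'' you invoke does not commute with the limit of total cohomologies that defines $\Ho(\p_!-)$.
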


\begin{lemma}
\label{lem3}
The $\mathbb{N}^{Q_0}$-graded mixed Hodge structure $\Dim_!\!\left((\phin{\mathfrak{Tr}(\tilde{W})}\ICS_{\Mst(\tilde{Q})}(\mathbb{Q}))|_{\Mst(\tilde{Q})^{\omega\nnilp}}\right)$ is pure, of Tate type.
\end{lemma}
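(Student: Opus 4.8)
\emph{Strategy.} The plan is to strip the vanishing cycle functor off by a dimensional reduction, thereby replacing the object of the statement by the compactly supported cohomology of an \emph{honest} moduli stack (no potential), and then to invoke the Springer theory of the nilpotent cone to prove purity of that. Unwinding the definition of $\Dim_!$ from Section~\ref{pfs}, the mixed Hodge module in the statement is the $\NN^{Q_0}$-graded object whose $\dd$-component is $\HO_c(\Mst(\tilde{Q})^{\omega\nnilp}_{\dd},(\phin{\mathfrak{Tr}(\tilde{W})}\ICS_{\Mst(\tilde{Q})_{\dd}}(\mathbb{Q}))|_{\Mst(\tilde{Q})^{\omega\nnilp}_{\dd}})$; since $\Mst(\tilde{Q})_{\dd}$ is smooth this is, up to a fixed Tate twist, $\HO_c(\Mst(\tilde{Q})^{\omega\nnilp}_{\dd},\phin{\mathfrak{Tr}(\tilde{W})}\mathbb{Q})$.

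\emph{Step 1: dimensional reduction.} Write the $\overline{Q}$-block as $X(\overline{Q})_{\dd}=X(Q)_{\dd}\times X(Q)^{*}_{\dd}$ (original arrows, then starred arrows), so that $X(\tilde{Q})_{\dd}=X(Q)_{\dd}\times X(Q)^{*}_{\dd}\times\g_{\dd}$ and the projection $\pi$ forgetting the middle block is a $\Gl_{\dd}$-equivariant affine fibration onto $X(Q)_{\dd}\times\g_{\dd}$, along whose fibres $\mathfrak{Tr}(\tilde{W})_{\dd}$ is linear with coefficients the partials $\partial\tilde{W}/\partial a^{*}$ pulled back from the base --- exactly the $\mathbb{C}^{*}$-equivariance situation of Theorem~\ref{dimRedThm}. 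Now $\Mst(\tilde{Q})^{\omega\nnilp}_{\dd}$ is the $\Gl_{\dd}$-quotient of $\pi^{-1}(S)$ for the $\Gl_{\dd}$-invariant subvariety $S:=X(Q)_{\dd}\times\N_{\dd}$ of the base, where $\N_{\dd}:=\prod_{i\in Q_0}\N_{\dd_i}$ is the product of nilpotent cones; and the common zero locus $Z'$ of the coefficients inside the base is $\{((M,(a)),(\omega)):(\omega)\in\End_{\mathbb{C}Q}(M)\}$, so that $S\cap Z'$ is the space of pairs $(M,\phi)$ with $M$ a $\dd$-dimensional $\mathbb{C}Q$-representation and $\phi\in\End_{\mathbb{C}Q}(M)$ nilpotent. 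Applying the corollary to Theorem~\ref{dimRedThm}, the $\dd$-component becomes $\HO_c(\mathfrak{R}_{\dd},\mathbb{Q})$ --- the Tate twist coming from dimensional reduction cancelling the one coming from the shift in $\ICS$ --- where $\mathfrak{R}_{\dd}$ is the finite type stack of such pairs. It thus suffices to show that $\HO_c(\mathfrak{R}_{\dd},\mathbb{Q})$ is pure of Tate type for every $\dd$.

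\emph{Step 2: Springer stratification.} Stratify $\mathfrak{R}_{\dd}$ by the Jordan type $\underline{\lambda}=(\lambda^{(i)})_{i\in Q_0}$, $\lambda^{(i)}\vdash\dd_i$, of $\phi$ (each $\phi_i\in\End(e_i M)$ is nilpotent). The conditions cutting $\mathfrak{R}_{\dd}$ out are linear in the representation data once $\phi$ is fixed, and the dimension of the space of $\phi$-intertwiners $\{(a):\phi_{t(\alpha)}a_{\alpha}=a_{\alpha}\phi_{s(\alpha)}\}$ depends only on the Jordan types involved; hence the $\underline{\lambda}$-stratum is the $\Gl_{\dd}$-quotient of a vector bundle over the single orbit $\mathcal{O}_{\underline{\lambda}}=\Gl_{\dd}/Z_{\underline{\lambda}}$, and so is isomorphic to $V_{\underline{\lambda}}/Z_{\underline{\lambda}}$ for a linear representation $V_{\underline{\lambda}}$ of the centraliser $Z_{\underline{\lambda}}$ of a nilpotent of type $\underline{\lambda}$. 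By homotopy invariance its compactly supported cohomology is $\HO_c(\pt/Z_{\underline{\lambda}},\mathbb{Q})\otimes\LL^{\otimes\dim V_{\underline{\lambda}}}$, which is pure of Tate type: $Z_{\underline{\lambda}}$ is a unipotent extension of a product of general linear groups, so $\HO(\pt/Z_{\underline{\lambda}},\mathbb{Q})$ is a polynomial ring on generators whose weight equals their cohomological degree, and $\HO_c$ is a Tate twist of its graded dual. Finally, the spectral sequence of the stratification has every $E_1$-term pure of weight equal to its total degree; by strictness of morphisms of mixed Hodge structures all differentials vanish, the sequence degenerates at $E_1$, and since an extension of mixed Hodge structures each pure of weight $n$ (resp.\ of Tate type) is again pure of weight $n$ (resp.\ of Tate type), the abutment $\HO^n_c(\mathfrak{R}_{\dd},\mathbb{Q})$ is pure of weight $n$ and of Tate type. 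This proves the lemma; combined with the isomorphism (\ref{nilpEq}) of Lemma~\ref{lem2} it also shows $\DT^{\omega\nnilp}_{\tilde{Q},\tilde{W}}$ is pure of Tate type, which is what the sequel needs.

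\emph{Main obstacle.} The conceptual crux is Step~1: recognising that the $\omega$-nilpotency constraint is precisely the preimage, under the affine fibration ``forget the starred arrows'', of a $\Gl_{\dd}$-invariant subvariety of the base, which is the one situation where Theorem~\ref{dimRedThm} applies and trades $\phin{\mathfrak{Tr}(\tilde{W})}$ for the constant sheaf on a ``representation-with-nilpotent-endomorphism'' stack. This is exactly what makes Lemma~\ref{lem3} provable by elementary means, whereas the full purity theorem (Theorem~\ref{purityThm}) has to be bootstrapped from it through the cohomological integrality isomorphisms of Lemma~\ref{lem2}. The remainder (Step~2) is routine; the only point requiring care is the constancy along $\mathcal{O}_{\underline{\lambda}}$ of the dimension of the intertwiner space, which holds because that dimension is determined by the Jordan types alone.
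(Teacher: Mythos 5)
Your proof is correct and follows essentially the same route as the paper: the same dimensional reduction along the fibration that forgets the starred arrows (with base $X(Q)_{\dd}\times\g_{\dd}=X(Q^+)_{\dd}$), identifying the reduced object as the stack of pairs $(\rho,\phi)$ with $\phi$ a nilpotent $\mathbb{C}Q$-endomorphism, followed by the stratification into the strata indexed by the Jordan type of $\phi$ and the observation that each stratum is an affine space quotiented by a unipotent extension of a product of general linear groups. Your reorganisation of the final step as a spectral-sequence degeneration by strictness is equivalent to the paper's formulation via vanishing connecting maps and induction, and the extra detail you supply about each stratum being $V_{\underline{\lambda}}/Z_{\underline{\lambda}}$ is a welcome unpacking of what the paper asserts in one line.
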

Assuming Lemmas \ref{lem1}, \ref{lem2} and \ref{lem3}, we argue as follows.
\renewcommand*{\proofname}{Proof of Theorem \ref{purityThm}}
\begin{proof}
First, note that a graded mixed Hodge structure $\mathcal{F}$ is pure, of Tate type, if and only if $\Sym(\mathcal{F})$ is.  Lemma \ref{lem3} and (\ref{nilpEq}) thus imply that $\DT^{\omega\nnilp,\vee}_{\tilde{Q},\tilde{W}}$ is pure, of Tate type.  A tensor product of pure mixed Hodge modules is pure, and so $\DT^{\omega\nnilp,\vee}_{\tilde{Q},\tilde{W}}\otimes\mathbb{L}$ is also pure, of Tate type.  It follows from (\ref{nonNilpEq}) and (\ref{non2}) that $\Dim_!\phin{\WWW}\ICS_{\Mst(\tilde{Q})}(\mathbb{Q})$ and $\bigoplus_{\dd\in\mathbb{N}^{Q_0}}\HO_c(\mu_{Q,\dd}^{-1}(0)/\Gl_{\dd},\mathbb{Q})$ are pure, of Tate type, and the theorem follows.
\end{proof}
\medbreak
Before coming to the proof of Lemmas \ref{lem1}, \ref{lem2} and \ref{lem3} we note the following
\begin{corollary}
\label{BPSpure}
For arbitrary $Q$ and $\dd\in\mathbb{N}^{Q_0}$ the BPS cohomology $\DT_{\tilde{Q},\tilde{W},\dd}$ is pure of Tate type.
\end{corollary}
\renewcommand*{\proofname}{Proof}
\begin{proof}
From Theorem \eqref{PBW3d} we deduce that there is an inclusion of monodromic mixed Hodge structures
\[
\DT_{\tilde{Q},\tilde{W},\dd}\otimes\LL^{1/2}\hookrightarrow \HO_c(\Mst(\Pi_Q)_{\dd},\mathbb{Q})^{\vee}\otimes \LL^{-(\dd,\dd)}.
\]
By Theorem \ref{purityThm} the target is pure, of Tate type, and so $\DT_{\tilde{Q},\tilde{W},\dd}\otimes\LL^{ 1/2}$ is also pure, of Tate type.  It follows that the Tate twist $\DT_{\tilde{Q},\tilde{W},\dd}$ is pure, of Tate type.
\end{proof}
The proof of both Lemmas \ref{lem2} and \ref{lem3} will use the dimensional reduction theorem, recalled as Theorem \ref{dimRedThm}.  Let $Q^+$ be obtained from $\tilde{Q}$ by deleting all of the arrows $a^*$, and let $Q^{\op}$ be obtained from $\tilde{Q}$ by deleting all the arrows $a$ and all the loops $\omega_i$.  We decompose 
\[
X(\tilde{Q})_{\dd}=X(Q^+)_{\dd}\times X(Q^{\op})_{\dd}.
\]

If we let $\mathbb{C}^*$ act on $X(\tilde{Q})_{\dd}$ via the trivial action on $X(Q^+)_{\dd}$ and the weight one action on $X(Q^{\op})_{\dd}$, then $\Tr(\tilde{W})_{\dd}$ is $\mathbb{C}^*$-equivariant in the manner required to apply Theorem \ref{dimRedThm}.  In the notation of Theorem \ref{dimRedThm}, we have that $Z'\subset X(Q^+)_{\dd}$ is determined by the vanishing of the matrix valued functions, for $a\in Q_1$
\begin{equation}
\label{Xplus}
\partial W/\partial a^*= a\omega_{s(a)}-\omega_{t(a)}a.
\end{equation}

Concretely, the stack $Z'/\G_{\dd}$ is isomorphic to the stack of pairs $(\rho,f)$, where $\rho$ is a $\dd$-dimensional $Q$-representation, and $f\colon\rho\rightarrow\rho$ is an endomorphism in the category of $Q$-representations.
\smallbreak

We fix $X(Q^+)^{\omega\nnilp}_{\dd}\subset X(Q^+)_{\dd}$ to be the subspace of representations such that each $\rho(\omega_i)$ is nilpotent.  We deduce from Theorem \ref{dimRedThm} that there is a natural isomorphism in compactly supported cohomology
\begin{equation}
\label{firstDimRed}
\Dim_!\!\left((\phi_{\WWW}\ICS_{\Mst(\tilde{Q})_{\dd}}(\mathbb{Q}))|_{\Mst(\tilde{Q})^{\omega\nnilp}_{\dd}}\right)\cong \HO_c\!\left((Z'\cap X(Q^+)^{\omega\nnilp}_{\dd})/\Gl_{\dd},\mathbb{Q}\right).
\end{equation}
Lemma \ref{lem3} is proved by analyzing the right hand side of (\ref{firstDimRed}).  Note that there is no overall Tate twist in (\ref{firstDimRed}) --- the Tate twist in the definition of the left hand side is cancelled by the Tate twist appearing in Theorem \ref{dimRedThm}.  
\smallbreak 
The first isomorphism in Lemma \ref{lem2} is obtained in similar fashion.  Let $L\subset \tilde{Q}$ be the quiver obtained by deleting all of the arrows $a$ and $a^*$, for $a\in Q_1$.  Then we can decompose
\[
X(\tilde{Q})_{\dd}\cong X(\overline{Q})_{\dd}\times X(L)_{\dd},
\]
and let $\mathbb{C}^*$ act on $X(\tilde{Q})_{\dd}$ via the trivial action on $X(\overline{Q})_{\dd}$ and the scaling action on $X(L)_{\dd}$.  This time the role of $Z'$ in Theorem \ref{dimRedThm} is played by $\mu_{Q,\dd}^{-1}(0)\subset X(\overline{Q})_{\dd}$, and we deduce that
\begin{equation}
\label{secondDimRed}
\Dim_!\phi_{\mathfrak{Tr}(\tilde{W})_{\dd}}\ICS_{\Mst(\tilde{Q})_{\dd}}(\mathbb{Q})\cong \HO_c\!\left(\mu_{Q,\dd}^{-1}(0)/\Gl_{\dd},\mathbb{Q}\right)\otimes\mathbb{L}^{(\dd,\dd)}.
\end{equation}

\renewcommand*{\proofname}{Proof of Lemma \ref{lem3}}
\begin{proof}
This is \cite[Thm.3.4]{Chicago3}; we recall a sketch of the proof and refer the reader to \cite{Chicago3} for more details.  The space $Z'\cap X(Q^+)^{\omega\nnilp}_{\dd}\subset X(Q^+)_{\dd}$ is defined by the equations (\ref{Xplus}) and the condition that $\omega_i$ acts nilpotently, for every $i$.  It follows, as in the proof of Proposition \ref{basicEq}, that the stack $(Z'\cap X(Q^+)^{\omega\nnilp}_\dd)/\Gl_{\dd}$ is isomorphic to the stack for which the $\mathbb{C}$-points are pairs $(\rho, f)$, where $\rho$ is a $\dd$-dimensional $\mathbb{C}Q$-module, and 
\[
f\in\End_{\mathbb{C}Q\lmod}(\rho)
\]
is a nilpotent endomorphism of $\rho$.  This stack decomposes into finitely many disjoint locally closed strata indexed by multipartitions $\overline{\pi}$ of $\dd$ (i.e. $Q_0$-tuples $(\pi^{(1)},\ldots,\pi^{(n)})$ of partitions, such that $|\pi^{(i)}|=\dd_i$ for $i\in Q_0$), where a multipartition determines the Jordan normal form of each $\rho(\omega_i)$ in the obvious way.  We label these strata $\Mst_{\overline{\pi}}$.  Given a partition $\pi$ of a number $d$, containing each number $i$ a total of $\pi_i$-times, we define the $\mathbb{C}[x]$-module 
\[
N_{\pi}\coloneqq\prod_{i\geq 1}(\mathbb{C}[x]/(x^i))^{\oplus \pi_i}.
\]
So $\dim(N_{\pi})=d$.  Define 
\begin{align*}
X(Q)_{\overline{\pi}}&=\prod_{a\in Q_1}\Hom(N_{\pi^{(s(a))}},N_{\pi^{(t(a))}})\\
\Gl_{\pi}&=\prod_{i\in Q_0}\Aut(N_{\pi^{(i)}}).
\end{align*}
Then
\[
\Mst_{\overline{\pi}}\cong X(Q)_{\overline{\pi}}/\Gl_{\overline{\pi}}.
\]
Each of the stacks $\Mst_{\overline{\pi}}$ can thus be described as a stack-theoretic quotient of an affine space by a unipotent extension of a product of general linear groups, from which it follows that each $\HO_c(\Mst_{\overline{\pi}},\mathbb{Q})$ is pure, of Tate type.  It follows that the connecting maps in the long exact sequences of compactly supported cohomology associated to the stratification indexed by multipartitions are zero, and the resulting short exact sequences are split.  It follows by induction that $\HO_c\!\left(\left(Z'\cap X(Q^+)^{\omega\nnilp}_\dd\right)/\Gl_{\dd},\mathbb{Q}\right)$ is pure, of Tate type.
\end{proof}

\renewcommand*{\proofname}{Proof of Lemma \ref{lem2}}
\begin{proof}
Since the map $\dim\colon \Msp(\tilde{Q})\rightarrow\mathbb{N}^{Q_0}$ is a morphism of commutative monoids, with proper monoid maps $\oplus$ and $+$ respectively, by \cite[Sec.1.12]{MSS11} there is a natural equivalence of functors
\[
\dim_!\Sym_{\boxtimes_{\oplus}}\cong\Sym_{\boxtimes_+}\dim_!.
\]
We denote by $\iota'_{\dd}\colon \Msp(\tilde{Q})_{\dd}^{\omega\nnilp}\hookrightarrow \Msp(\tilde{Q})_{\dd}$ the inclusion.  Taking the direct sum over all $\dd\in\mathbb{N}^{Q_0}$, applying base change, and using the relative cohomological integrality theorem (Theorem \ref{IntThm}):
\begin{align*}
\Dim_!\!\left((\phin{\mathfrak{Tr}(\tilde{W})}\ICS_{\Mst(\tilde{Q})}(\mathbb{Q}))|_{\Mst(\tilde{Q})^{\omega\nnilp}}\right)\cong&\dim_!\iota'^* \p_{!}\phin{\mathfrak{Tr}(\tilde{W})}\ICS_{\Mst(\tilde{Q})}(\mathbb{Q})\\
\cong&\dim_!\iota'^* \Sym_{\boxtimes_{\oplus}}\!\left(\DTS_{\tilde{Q},\tilde{W}}\otimes\HO_c(\B\mathbb{C}^*,\mathbb{Q})_{\vir}\right)\\
\cong&\dim_! \Sym_{\boxtimes_{\oplus}}\!\left(\DTS_{\tilde{Q},\tilde{W}}|_{\Msp(\tilde{Q})^{\omega\nnilp}}\otimes\HO_c(\B\mathbb{C}^*,\mathbb{Q})_{\vir}\right)\\
\cong&\Sym_{\boxtimes_+}\!\left(\dim_!\DTS_{\tilde{Q},\tilde{W}}|_{\Msp(\tilde{Q})^{\omega\nnilp}}\otimes\HO_c(\B\mathbb{C}^*,\mathbb{Q})_{\vir}\right)\\
\cong&\Sym_{\boxtimes_+}\!\left(\dim_!\DTS_{\tilde{Q},\tilde{W}}|_{\Msp(\tilde{Q})^{\omega\nnilp}}\otimes\HO_c(\B\mathbb{C}^*,\mathbb{Q})_{\vir}\right)
\end{align*}
giving the isomorphism (\ref{nilpEq}).

Taking the direct sum of the isomorphisms (\ref{secondDimRed}) over $\dd\in\mathbb{N}^{Q_0}$ gives the isomorphism (\ref{nonNilpEq}).  Applying $\dim_!$ to (\ref{shriekInt}) we have the isomorphisms
\begin{align*}
\dim_!\p_{!}\phin{\mathfrak{Tr}(\tilde{W})}\ICS_{\Mst(\tilde{Q})}(\mathbb{Q})&\cong\dim_!\Sym_{\boxtimes_{\oplus}}\!\left(\DTS_{\tilde{Q},\tilde{W}}\otimes\HO_c(\B\mathbb{C}^*,\mathbb{Q})_{\vir}\right)\\
&\cong\Sym_{\boxtimes_+}\dim_!\!\left(\DTS_{\tilde{Q},\tilde{W}}\otimes\HO_c(\B\mathbb{C}^*,\mathbb{Q})_{\vir}\right)\\
&\cong\Sym_{\boxtimes_+}\!\left(\DT_{\tilde{Q},\tilde{W}}\otimes\HO_c(\B\mathbb{C}^*,\mathbb{Q})_{\vir}\right).
\end{align*}
To prove the existence of the isomorphism (\ref{non2}), then, it is sufficient to prove that $\DT_{\tilde{Q},\tilde{W}}\cong\DT_{\tilde{Q},\tilde{W}}^{\omega\nnilp}\otimes\mathbb{L}$.  Fix a dimension vector $\dd$.  We let $\mathbb{A}^1$ act on $\Msp(\tilde{Q})_{\dd}$ as follows
\[
z\cdot\rho(a)=\begin{cases} \rho(a)+z\id_{\dd_i\times \dd_i}&\textrm{if }a=\omega_i\textrm{ for some }i\\ \rho(a)&\textrm{otherwise.}\end{cases}
\]
Then $\mathcal{T}r(\tilde{W})_{\dd}$ is invariant with respect to the $\mathbb{A}^1$-action and it follows that the underlying perverse sheaf of $\DTS_{\tilde{Q},\tilde{W},\dd}$ can be obtained from an $\mathbb{A}^1$-equivariant MHM via the forgetful map.  In particular, if we let $\DTS_{\tilde{Q},\tilde{W},\dd}'$ be the restriction of $\DTS_{\tilde{Q},\tilde{W},\dd}$ to the locus $\Msp\subset\Msp(\tilde{Q})_{\dd}$ where the union of the sets of generalized eigenvalues of all of the $\omega_i$ has only one element, and let $m\colon \mathbb{A}^1\times\Msp(\tilde{Q})^{\omega\nnilp}_{\dd}\xrightarrow{\cong} \Msp$ be the restriction of the action map, we have $\DTS_{\tilde{Q},\tilde{W},\dd}'\cong m_*(\mathbb{Q}_{\mathbb{A}^1}\boxtimes\DTS^{\omega\nnilp}_{\tilde{Q},\tilde{W}})$.  By the support lemma (Lemma \ref{lem1}) we have $\DTS_{\tilde{Q},\tilde{W},\dd}=\DTS_{\tilde{Q},\tilde{W},\dd}'$ and so we deduce that
\begin{align*}
\DT_{\tilde{Q},\tilde{W},\dd}\cong&\DT_{\tilde{Q},\tilde{W},\dd}^{\omega\nnilp}\otimes (\mathbb{A}^1\rightarrow\pt)_!\mathbb{Q}_{\mathbb{A}^1}\\
\cong&\DT_{\tilde{Q},\tilde{W},\dd}^{\omega\nnilp}\otimes\mathbb{L}
\end{align*}
as required.
\end{proof}
We complete the proof of Theorem \ref{purityThm} by proving the support lemma.  
\renewcommand*{\proofname}{Proof of Lemma \ref{lem1}}
\begin{proof}
To ease the notation we prove the lemma under the assumption that $\zeta$ is the degenerate stability condition: the proof for the general case is unchanged.
\smallbreak
Since the support of $\DTS_{\tilde{Q},\tilde{W}}$ is the same as the support of the underlying perverse sheaf, and all complexes that we encounter in the following proof are quasi-isomorphic to their total cohomology, throughout the proof we work in the category of cohomologically graded perverse sheaves.  
\smallbreak
Let $x\in\Msp(\tilde{Q})_{\dd}$ be a point corresponding to a semisimple $\mathbb{C}\tilde{Q}$-module $\rho$, and assume that there are at least two distinct eigenvalues $\epsilon_1,\epsilon_2$ for the set of operators $\{\rho(\omega_i)|i\in Q_0\}$.  Assume, for a contradiction, that $x\in\supp(\DTS_{\tilde{Q},\tilde{W}})$, so that in particular
\[
x\in\supp \!\left(\p_*\phip{\mathfrak{Tr}(\tilde{W})}\ICS_{\Mst(\tilde{Q})}(\mathbb{Q})\right)
\]
and so by (\ref{jacCrit}) and Remark \ref{critRem}, there exists a $\Jac(\tilde{Q},\tilde{W})$ module with semisimplification given by $\rho$, and so $\rho$ is a semisimple $\Jac(\tilde{Q},\tilde{W})$-module.  

\smallbreak
Under our assumptions, there are disjoint (analytic) open sets $U_1,U_2\subset\mathbb{C}$ with $\epsilon_1\in U_1$ and $\epsilon_2\in U_2$, and with all of the generalised eigenvalues of $\rho$ contained in $U_1\cup U_2$.  Given an (analytic) open set $U\subset \mathbb{C}$, we denote by $\Msp^{U}(\tilde{Q})_{\dd}\subset \Msp(\tilde{Q})_{\dd}$ the subspace consisting of those $\rho$ such that all of the generalised eigenvalues of $\{\rho(\omega_i)|i\in Q_0\}$ belong to $U$, and we define $\Mst^{U}(\tilde{Q})$ similarly.  
\smallbreak
Given a point $x\in \Mst^{U_1\cup U_2}(\Jac(\tilde{Q},\tilde{W}))$, the associated $\Jac(\tilde{Q},\tilde{W})$-module $M$ admits a canonical direct sum decomposition 
\[
M=M_{1}\oplus M_2
\]
where all of the eigenvalues of all of the $\omega_i$, restricted to $M_i$, belong to $U_i$\footnote{Note that this is not true of a general point in $\Mst^{U_1\cup U_2}(\tilde{Q})$ --- the crucial fact is that the operation $\sum_{i\in Q_0}\rho(\omega_i)\cdot$ defines a module homomorphism for a $\Jac(\tilde{Q},\tilde{W})$-module $\rho$, since $\sum_{i\in Q_0}\omega_i$ is central in $\Jac(\tilde{Q},\tilde{W})$.}.  In particular, there is an isomorphism of complex analytic stacks
\begin{equation}
\label{sopp}
\Mst^{U}(\Jac(\tilde{Q},\tilde{W}))\cong \Mst^{U_1}(\Jac(\tilde{Q},\tilde{W}))\times\Mst^{U_2}(\Jac(\tilde{Q},\tilde{W})).
\end{equation}
We claim that there is an isomorphism
\begin{align}
\label{tle}
\p_*\phip{\mathfrak{Tr}(\tilde{W})}|_{\Mst^{U_1\cup U_2}(\tilde{Q})}\cong &\p_*\phip{\mathfrak{Tr}(\tilde{W})}\ICS_{\Mst(\tilde{Q})}|_{\Mst^{U_1}(\tilde{Q})}\boxtimes_{\oplus}\p_*\phip{\mathfrak{Tr}(\tilde{W})}\ICS_{\Mst(\tilde{Q})}|_{\Mst^{U_2}(\tilde{Q})}.
\end{align}
This follows from Lemma \ref{techLemma} below.  Assuming the claim, applying \eqref{starInt} to the RHS of \eqref{tle} we have isomorphisms
\begin{align}
\nonumber
\p_*\phip{\mathfrak{Tr}(\tilde{W})}|_{\Mst^{U_1\cup U_2}(\tilde{Q})}\cong& \Sym_{\boxtimes_{\oplus}}\!\left(\left(\DTS_{\tilde{Q},\tilde{W}}\otimes\HO(\B\mathbb{C}^*,\mathbb{Q})_{\vir}\right)\mathlarger{|}_{\Msp^{U_1}(\tilde{Q})}\right)\boxtimes_{\oplus}\\
\nonumber
&\Sym_{\boxtimes_{\oplus}}\!\left(\left(\DTS_{\tilde{Q},\tilde{W}}\otimes\HO(\B\mathbb{C}^*,\mathbb{Q})_{\vir}\right)\mathlarger{|}_{\Msp^{U_2}(\tilde{Q})}\right)\\
\label{hupa}
\cong & \Sym_{\boxtimes_{\oplus}}\!\left(\left(\DTS_{\tilde{Q},\tilde{W}}|_{\Msp^{U_1}(\tilde{Q})}\oplus\DTS_{\tilde{Q},\tilde{W}}|_{\Msp^{U_2}(\tilde{Q})}\right)\otimes\HO(\B\mathbb{C}^*,\mathbb{Q})_{\vir}\right).
\end{align}
On the other hand, restricting the isomorphism of (\ref{starInt}) to the LHS of \eqref{tle} yields
\begin{align}
\label{hupb}
\p_*\phip{\mathfrak{Tr}(\tilde{W})}|_{\Mst^{U_1\cup U_2}(\tilde{Q})}\cong&\Sym_{\boxtimes_{\oplus}}\!\left(\DTS_{\tilde{Q},\tilde{W}}|_{\Msp^{U_1\cup U_2}(\tilde{Q})}\otimes \HO(\B\mathbb{C}^*,\mathbb{Q})_{\vir}\right).
\end{align}
Comparing (\ref{hupa}) and (\ref{hupb}), we deduce that
\[
\DTS_{\tilde{Q},\tilde{W}}|_{\Msp^{U_1\cup U_2}(\tilde{Q})}\cong \DTS_{\tilde{Q},\tilde{W}}|_{\Msp^{U_1}(\tilde{Q})}\oplus\DTS_{\tilde{Q},\tilde{W}}|_{\Msp^{U_2}(\tilde{Q})}.
\]
We deduce that
\begin{align*}
\supp\!\left(\DTS_{\tilde{Q},\tilde{W}}|_{\Msp^{U_1\cup U_2}(\tilde{Q})}\right)=&\supp\!\left(\DTS_{\tilde{Q},\tilde{W}}|_{\Msp^{U_1}(\tilde{Q})}\oplus\DTS_{\tilde{Q},\tilde{W}}|_{\Msp^{U_2}(\tilde{Q})}\right)\\
= &\supp\!\left(\DTS_{\tilde{Q},\tilde{W}}|_{\Msp^{U_1}(\tilde{Q})}\right)\cup \supp\!\left(\DTS_{\tilde{Q},\tilde{W}}|_{\Msp^{U_2}(\tilde{Q})}\right)\\
\subset&\Msp^{U_1}(\tilde{Q})\cup \Msp^{U_2}(\tilde{Q}),
\end{align*}
and so since 
\[
x\in \Msp^{U_1\cup U_2}(\tilde{Q})\setminus(\Msp^{U_1}(\tilde{Q})\cup \Msp^{U_2}(\tilde{Q})), 
\]
the restriction of $\DTS_{\tilde{Q},\tilde{W}}$ to $x$ is zero, which is the required contradiction.  

\smallbreak
For the final statement of the lemma, it suffices to prove that if $\rho$ is a simple $\Jac(\tilde{Q},\tilde{W})$-module, then $\sum_{i\in Q_0}\rho(\omega_i)$ acts via scalar multiplication.  In the decomposition of $\rho$ into generalised eigenspaces for the action of the operator $\sum_{i\in Q_0}\rho(\omega_i)\cdot$ we have already shown that there is only one generalised eigenvalue, which we denote $\lambda$.  Then $\rho$ is filtered by the nilpotence degree of the nilpotent operator 
\[
\Psi:=\sum_{i\in Q_0}\rho(\omega_i)\cdot -\lambda\Id_{\rho},
\]
and so since $\rho$ is simple, $\Psi=0$ and we are done.
\end{proof}
\subsection{Proof of Theorem \ref{2dbpsthm}}
\label{thbproof}
We have now introduced all of the ingredients required to prove Theorem \ref{2dbpsthm}.  Firstly, by Lemma \ref{lem1}, the support of $\DTS_{\tilde{Q},\tilde{W},\dd}^{\zeta}$ lies in the image of the morphism
\[
\AA^1\times\Msp(\overline{Q})^{\zeta\sst}\rightarrow \Msp(\tilde{Q})^{\zeta\sst}
\]
defined as in the statement of Theorem \ref{2dbpsthm}.  The support of $\DTS_{\tilde{Q},\tilde{W},\dd}^{\zeta}$ also lies within the locus of polystable $\Jac(\tilde{Q},\tilde{W})$-modules, so by Proposition \ref{basicEq}, the support of $\DTS_{\tilde{Q},\tilde{W},\dd}^{\zeta}$ lies within the image of 
\[
m\colon \AA^1\times\Msp(\Pi_Q)_{\dd}^{\zeta\sst}\hookrightarrow \Msp(\tilde{Q})_{\dd}^{\zeta\sst}.
\]
We have seen in the proof of Lemma \ref{lem2} that $\DTS_{\tilde{Q},\tilde{W},\dd}^{\zeta}$ is $\AA^1$-equivariant, where the $\AA^1$-action on the subspace $\AA^1\times\Msp(\Pi_Q)_{\dd}^{\zeta\sst}$ is via translation in the first factor.  It follows that we can write
\begin{equation}
\label{ttys}
\DTS_{\tilde{Q},\tilde{W},\dd}^{\zeta}\cong \ICS_{\AA^1}(\QQ)\boxtimes \DTS_{\Pi_Q,\dd}^{\zeta}.
\end{equation}
Finally, $\DTS_{\tilde{Q},\tilde{W},\dd}^{\zeta}=\phim{\mathfrak{T}r(W)}\ICS_{\Msp(\tilde{Q})^{\zeta\sst}_{\dd}}$ is Verdier self-dual \cite{DaMe15b}, as is $\ICS_{\AA^1}(\QQ)$.  So from \eqref{ttys} we deduce
\[
\ICS_{\AA^1}(\QQ)\boxtimes \DTS_{\Pi_Q,\dd}^{\zeta}\cong \ICS_{\AA^1}(\QQ)\boxtimes \mathbb{D}\DTS_{\Pi_Q,\dd}^{\zeta}
\]
and Verdier self-duality of $\DTS_{\Pi_Q,\dd}^{\zeta}$ follows.
\subsubsection{}
Although we will not use it, we remark that via the same proof as Lemma \ref{lem1} we deduce the following
\begin{lemma}
Let $(Q,W)$ be a QP, with $Q=Q'\coprod Q''$ a disjoint union of quivers, and let $\zeta$ be a generic stability condition.  Denote by 
\begin{align*}
\pi'\colon &\mathbb{N}^{Q_0}\rightarrow \mathbb{N}^{Q'_0}\\
\pi''\colon &\mathbb{N}^{Q_0}\rightarrow \mathbb{N}^{Q''_0}
\end{align*}
the projections.  If $\pi'(\dd)\neq 0$ and $\pi''(\dd)\neq 0$ then 
\[
\DTS^{\zeta}_{Q,W,\dd}=0.
\]
\end{lemma}

\subsubsection{}
We finish this section with the technical lemma appearing in the proof of Lemma \ref{lem1}.  Fix a decomposition $\dd=\dd'+\dd''$.  Then via \eqref{sopp} there is an open and closed inclusion
\[
i\colon \Mst^{U_1}(\Jac(\tilde{Q},\tilde{W}))_{\dd'}\times\Mst^{U_2}(\Jac(\tilde{Q},\tilde{W}))_{\dd''}\rightarrow \Mst^{U_1\cup U_2}(\Jac(\tilde{Q},\tilde{W}))_{\dd}.
\]
\begin{lemma}
\label{techLemma}
Let $U_1,U_2$ be disjoint analytic open subspaces of $\AA^1$.  There is a natural isomorphism of perverse sheaves
\[
i^*\phip{\mathfrak{Tr}(\tilde{W})}\ICS_{\Mst^{U_1\cup U_2}(\tilde{Q})^{\zeta\sst}_{\dd}}(\mathbb{Q})\cong \phip{\mathfrak{Tr}(\tilde{W})}\ICS_{\Mst^{U_1}(\tilde{Q})^{\zeta\sst}_{\dd'}}(\mathbb{Q})\boxtimes \phip{\mathfrak{Tr}(\tilde{W})}\ICS_{\Mst^{U_2}(\tilde{Q})^{\zeta\sst}_{\dd''}}(\mathbb{Q}).
\]
\end{lemma}
This isomorphism does \textit{not} follow directly from the Thom--Sebastiani isomorphism, since we need to compare the vanishing cycle sheaf of the function $\mathfrak{Tr}(\tilde{W})\boxplus\mathfrak{Tr}(\tilde{W})$ on $\Mst^{U_1}(\tilde{Q})_{\dd'}\times \Mst^{U_2}(\tilde{Q})_{\dd''}$ with the vanishing cycle sheaf for the function $\mathfrak{Tr}(\tilde{W})$ on $\Mst^{U_1\cup U_2}(\tilde{Q})_{\dd}$, and these ambient smooth stacks are different.
\renewcommand*{\proofname}{Proof of Lemma \ref{techLemma}}
\begin{proof}
Again, it is sufficient to prove the lemma for the degenerate stability condition (the general case then follows by restriction to the $\zeta$-semistable locus).  Writing
\begin{align*}
Y=&X^{U_1\cup U_2}(L)_{\dd}\times X(\overline{Q})_{\dd}\\
B=&X^{U_1}(\tilde{Q})_{\dd'}\times X^{U_2}(\tilde{Q})_{\dd''}
\end{align*}
we have 
\begin{align*}
\Mst^{U_1\cup U_2}(\tilde{Q})_{\dd}\cong&Y/\Gl_{\dd'\times \dd''}\\
\Mst^{U_1}(\tilde{Q})_{\dd'}\times \Mst^{U_1}(\tilde{Q})_{\dd''}\cong&B/\Gl_{\dd'\times\dd''}.
\end{align*}
The space $Y$ is the total space of the $\Gl_{\dd'\times\dd''}$-equivariant vector bundle $V^{+}\oplus V^{-}$ on $B$, where 
\begin{align*}
V^+=&\prod_{a\in \overline{Q}_1}\Hom(\mathbb{C}^{\dd'_{s(a)}},\mathbb{C}^{\dd''_{t(a)}})\\
V^-=&\prod_{a\in \overline{Q}_1}\Hom(\mathbb{C}^{\dd'_{t(a)}},\mathbb{C}^{\dd''_{t(a)}}).
\end{align*}
Note that $\rank(V^+)=\rank(V^-)$.  Denote by $z\colon B\rightarrow Y$ the inclusion of the zero section.  Writing $f,g$ for the functions on $Y,B$ induced by $\Tr(\tilde{W})$, it is sufficient to show that there is an isomorphism of $\Gl_{\dd'\times\dd''}$-equivariant perverse sheaves
\[
\phip{f}\ICS_{Y}(\mathbb{Q})\cong z_*\phip{g}\ICS_{B}(\mathbb{Q}).
\]
\smallbreak
Let $\mathbb{C}^*$ act on $V^{+}$ and $V^{-}$ with weights $1$ and $-1$ respectively, then $f$ is $\mathbb{C}^*$-invariant.  It follows that $g\lvert_{\Tot(V^+)}=f\circ\pi^+$, where $\pi^+\colon\Tot(V^+)\rightarrow B$ is the projection.  So there is a natural isomorphism 
\begin{equation}
\label{hl1}
\phip{g}\mathbb{Q}_{B}\cong \pi^+_!\phip{f}\mathbb{Q}_{\Tot(V^+)}[2\rank(V^+)].
\end{equation}
We claim that the natural morphism 
\begin{equation}
\label{hl2}
\pi_!\phip{f}\left(\mathbb{Q}_{Y}\rightarrow i^+_*\mathbb{Q}_{\Tot(V^+)}\right)
\end{equation}
is an isomorphism, where we denote by $i^+\colon \Tot(V^+)\rightarrow Y$ the inclusion.  This can be checked locally on the base $B$.  Pick $b\in B$, and let $x_1,\ldots,x_{\alpha},y_1,\ldots,y_{\beta},z_1,\ldots,z_{\beta}$ be a set of elements of the local ring $\mathcal{O}_{X(\tilde{Q}),b}$, providing a basis for $\mathfrak{m}_b/\mathfrak{m}_b^2$, where $x_i$ all have weight zero, $y_i$ have weight $1$, and $z_i$ have weight $-1$ for the $\mathbb{C}^*$-action.  The weight $-1$ partial derivatives of $f$ are provided by $\partial g/\partial y_i$, and so since the critical locus of $g$ (restricted to a neighbourhood of $b$) lies on the zero section $B$, it follows that we can change coordinates and pick $z_i=\partial g/\partial y_i$.  Then we have $g=h+k$ in $\mathcal{O}_{X(\tilde{Q}),b}$, with $h\in\mathbb{C}[x_1,\ldots,x_{\alpha}]$ and 
\[
k=\sum_{1\leq i\leq \beta}y_iz_i.
\]
By the Thom--Sebastiani theorem, after restricting to a neighbourhood $U\ni b$
\[
\phip{g}\mathbb{Q}_{U\times V}\cong \phip{h}\mathbb{Q}_U\boxtimes \phip{k}\mathbb{Q}_V
\]
and the claim reduces to the claim that
\[
\pi_!\phip{k}(\mathbb{Q}_V\rightarrow (V^+\hookrightarrow V)_*\mathbb{Q}_{V^+})
\]
is an isomorphism, which is a simple calculation, or a trivial application of the dimensional reduction theorem.

Combining \eqref{hl1} and \eqref{hl2} yields the isomorphism $\phip{g}\mathbb{Q}_B\cong\pi_!\phip{f}\mathbb{Q}_Y[\codim_Y(B)]$.  Since $\phip{f}\mathbb{Q}_Y$ is supported on $B$, we obtain the required isomorphism by applying $z_*$ to this isomorphism and shifting cohomological degree by $\dim B$.
\end{proof}

\subsection{Calculating $\HO_c(\Mst(\Pi_Q)_{\dd},\mathbb{Q})$}\label{hssection}
We use Theorem \ref{purityThm} and existing results on the $E$ series of $\Mst(\Pi_Q)_{\dd}$ to determine the compactly supported cohomology of $\Mst(\Pi_Q)_{\dd}$, along with its mixed Hodge structure.  The $E$ series (see Subsection \ref{TPT}) of $\HO_c(\Mst(\Pi_Q)_{\dd},\mathbb{Q})$ was calculated in \cite{Moz11}.  Before we recall this series, we recall some definitions.  
\smallbreak
Recall the plethystic exponential defined in \S\ref{KacIntro}.  For our purposes, it is profitable to think of the plethystic exponential as the decategorification of the endofunctor of tensor categories taking an object to the underlying object of the free symmetric algebra generated by that object.  We define the ring $\mathbb{Z}\dbl X_1,\ldots, X_m\dbr[\![Y_1,\ldots,Y_n]\!]$ of formal Laurent power series $g(X_1,\ldots,X_m,Y_1,\ldots,Y_n)$ such that for each $(a_1,\ldots, a_n)\in\mathbb{N}^n$ the $Y_1^{a_1}\cdots Y_n^{a_n}$ coefficient of 
\[
g(X_1,\ldots,X_m,Y_1,\ldots,Y_n)X_1^{c_1}\cdots X_m^{c_m}
\]
is in $\mathbb{Z}[\![X_1,\ldots,X_m]\!]$ for sufficiently large $c_1,\ldots,c_m$.  This is isomorphic to the Grothendieck ring of the category $\Dub^{\diamond}(\Vect_{\mathbb{Z}^m\oplus\mathbb{Z}^n})$, which we define to be the subcategory of the unbounded derived category of $\mathbb{Z}^m\oplus\mathbb{Z}^n$-graded vector spaces $V$ such that
\begin{enumerate}
\item
For each $(\ee,\dd)\in\mathbb{Z}^m\oplus\mathbb{Z}^n$ the total cohomology $\HO(V)_{\ee,\dd}$ is finite-dimensional
\item
$\HO(V_{\ee,\dd})\neq 0$ only if $\dd\in\mathbb{N}^{n}$
\item
For each $\dd\in\mathbb{N}^{n}$ there exists $\ee\in\mathbb{Z}^m$ such that $\HO(V)_{\ee',\dd}=0$ if $\ee'_i\leq \ee_i$ for some $i=1,\ldots,m$.
\end{enumerate}
This isomorphism is induced by the character function
\[
\chi\colon [V]\mapsto\sum_{i\in\mathbb{Z}}\sum_{(\ee,\dd)\in\mathbb{Z}^m\oplus\mathbb{Z}^n}(-1)^i\dim\left(\HO^i(V)_{\ee,\dd}\right)X^{\ee}Y^{\dd}.
\]
We define $\Dub^{\diamond}(\Vect^+_{\mathbb{Z}^m\oplus\mathbb{Z}^n})\subset \Dub^{\diamond}(\Vect_{\mathbb{Z}^m\oplus\mathbb{Z}^n})$ to be the full subcategory satisfying the extra condition that the total cohomology $\HO(V)_{(\ee,0)}$ is zero for all $\ee\in\mathbb{Z}^m$.  Then $\chi$ induces an isomorphism
\[
\chi\colon \Kk\!\left(\Dub^{\diamond}(\Vect^+_{\mathbb{Z}^m\oplus\mathbb{Z}^n})\right)\rightarrow \mathfrak{m}\mathbb{Z}\dbl X_1,\ldots, X_m\dbr[\![Y_1,\ldots,Y_n]\!]
\]
where $\mathfrak{m}$ is the maximal ideal generated by $Y_1,\ldots,Y_n$.  We may define plethystic exponentiation via the formula
\begin{equation}
\label{aped}
\Exp(\chi([V]))=\chi[\Sym(V)]
\end{equation}
for $V\in \Dub^{\diamond}(\Vect^+_{\mathbb{Z}^m\oplus\mathbb{Z}^n})$.  Then the E series for $\HO_c(\Mst(\Pi_Q)_{\dd},\mathbb{Q})$ is given by \cite{Moz11}
\begin{equation}
\label{Kacy}
\sum_{\dd\in\mathbb{N}^{Q_0}}\EE\!\left(\HO_c\!\left(\Mst(\Pi_Q)_{\dd},\mathbb{Q}\right),x,y\right)(xy)^{(\dd,\dd)}t^{\dd}=\Exp\left(\sum_{0\neq \dd\in\mathbb{N}^{Q_0}}\kac_{Q,\dd}(xy)(1-x^{-1}y^{-1})^{-1}t^{\dd}\right).
\end{equation}
Here $x^{-1}$ and $y^{-1}$ are the invertible commuting variables, and $\{t_i\}_{i\in Q_0}$ are the other commuting variables.  Each of the $(xy)$ terms arises from the E polynomial $\EE(\LL,x,y)=xy$.  Given a polynomial $b(q)=\sum_{i\geq 0}b_iq^i\in\mathbb{N}[q]$ and an object $\mathcal{F}$ in a tensor category $\mathscr{C}$ we define
\[
b(\mathcal{F})=\bigoplus_{i\in\mathbb{N}}(\mathcal{F}^{\otimes i})^{\oplus b_i}.
\]
By Theorem \ref{purityThm} the mixed Hodge structure on $\HO_c(\Mst(\Pi_Q)_{\dd},\mathbb{Q})$ is entirely determined by its E series, and we deduce from \eqref{Kacy}:
\begin{theorem}
There is an isomorphism of cohomologically graded, $\mathbb{N}^{Q_0}$-graded mixed Hodge structures
\begin{equation}
\label{KacHH}
\bigoplus_{\dd\in\mathbb{N}_{Q_0}}\HO_c(\Mst(\Pi_Q)_{\dd},\mathbb{Q})\otimes \LL^{(\dd,\dd)_Q}\cong \Sym\!\left(\bigoplus_{\dd\in\mathbb{N}^{Q_0}\setminus \{0\}}\kac_{Q,\dd}(\LL)\otimes \HO(\B\mathbb{C}^*,\mathbb{Q})^{\vee}\right).
\end{equation}
\end{theorem}
Taking Hodge series of \eqref{KacHH} yields the following refinement of (\ref{Kacy}):
\begin{align*}
&\sum_{\dd\in\mathbb{N}^{Q_0}}\hp\!\left(\HO_c\!\left(\Mst(\Pi_Q)_{\dd},\mathbb{Q}\right),x,y,z\right)(xyz^2)^{(\dd,\dd)_Q}t^{\dd}\\&=\Exp\left(\sum_{0\neq \dd\in\mathbb{N}^{Q_0}}\kac_{Q,\dd}(xyz^2)(1-x^{-1}y^{-1}z^{-2})^{-1}t^{\dd}\right)&\in \mathbb{Z}\dbl x^{-1}  y^{-1} z^{-1}\dbr[\![ t_i\lvert i\in Q_0]\!].
\end{align*}

\renewcommand*{\proofname}{Proof}

\section{Degree zero DT theory}
\label{Jordan}
\subsection{Degree zero BPS sheaves}
For $n\in\mathbb{N}$ we define $Q^{(n)}$ to be a quiver with one vertex, which we denote $0$, and $n$ loops.  We will be particularly interested in the quiver $Q_{\Jor}:=Q^{(1)}$: the \textit{Jordan quiver}.  We identify
\[
Q^{(3)}=\widetilde{Q_{\Jor}}.
\]
We denote by $x,y,z$ the three arrows of $Q^{(3)}$.  Then $\tilde{W}=x[y,z]$.  The ideas in the proof of Theorem \ref{purityThm} allow us to prove rather more for the QP $(\widetilde{Q_{\Jor}},\tilde{W})$, essentially because this QP is invariant (up to sign) under permutation of the loops, so that we can apply the support lemma (Lemma \ref{lem1}) three times. 
\smallbreak
Let $d\in\mathbb{N}$ with $d\geq 1$.  The support of $\p_{\widetilde{Q_{\Jor}},!}\phin{\mathfrak{Tr}(\tilde{W})_d}\ICS_{\Mst(\widetilde{Q_{\Jor}})_{d}}(\mathbb{Q})$ is given by the coarse moduli space of $d$-dimensional representations of the Jacobi algebra $\mathbb{C}[x,y,z]$, i.e. the space of semisimple representations of $\mathbb{C}[x,y,z]$.  This space is in turn isomorphic to $\Sym^d(\AA^3)$, since any simple representation $\rho$ of $\mathbb{C}[x,y,z]$ is one-dimensional, and determined up to isomorphism by the three complex numbers $\rho(x),\rho(y),\rho(z)$.
\begin{theorem}
\label{dzBPS}
There is an isomorphism in $\MHM(\Msp(\widetilde{Q_{\Jor}})_d)$
\[
\DTS_{\widetilde{Q_{\Jor}},\tilde{W},d}=\Delta_{\mathbb{A}^3,d,*}\ICS_{\mathbb{A}^3}(\QQ)
\]
for all $d$.  
\end{theorem}
\begin{proof}
\smallbreak
By the same argument as for Lemma \ref{lem1}, the support of $\DTS_{\widetilde{Q_{\Jor}},\tilde{W},d}$ is contained in the image of the morphism
\begin{align*}
\Delta_{\AA^3,d}\colon&\AA^3\rightarrow \Msp(\widetilde{Q_{\Jor}})_d\\
&(z_1,z_2,z_3)\mapsto\left(z_1\cdot\Id_{d\times d},z_2\cdot\Id_{d\times d},z_3\cdot\Id_{d\times d}\right).  
\end{align*}
By the argument in the proof of Lemma \ref{lem2}, $\DTS_{\widetilde{Q_{\Jor}},\tilde{W},d}$ is constant on its support, so
\[
\DTS_{\widetilde{Q_{\Jor}},\tilde{W},d}\cong \Delta_{\AA^3,d,*}\ICS_{\AA^3}(\mathbb{Q})\otimes \mathcal{L}_d
\]
for some monodromic mixed Hodge structure $\mathcal{L}_d$.  It follows that 
\begin{equation}
\label{beforea}
\DT^{\vee}_{\widetilde{Q_{\Jor}},\tilde{W},d}\cong \mathcal{L}_d^{\vee}\otimes\mathbb{L}^{3/2}.
\end{equation}
On the other hand, by \cite[Prop.1.1]{BBS} we have, after passing to classes in the Grothendieck ring of mixed Hodge structures
\begin{equation}
\label{beforer}
[\DT^{\vee}_{\widetilde{Q_{\Jor}},\tilde{W},d}]=[\mathbb{L}^{ 3/2}].
\end{equation}
The monodromic mixed Hodge structure $\DT_{\widetilde{Q_{\Jor}},\tilde{W},d}$ is pure by Corollary \ref{BPSpure}.  From (\ref{beforer}) we deduce that 
\[
\DT^{\vee}_{\widetilde{Q_{\Jor}},\tilde{W},d}\cong \LL^{ 3/2}
\]
and so from (\ref{beforea}) there is an isomorphism
\[
\mathcal{L}_d\otimes\mathbb{L}^{3/2}\cong\mathbb{L}^{3/2},
\]
and we finally deduce that $\mathcal{L}_d\cong\mathbb{Q}$, with the standard pure weight zero mixed Hodge structure, as required.
\end{proof}

For any constructible inclusion $\varepsilon\colon U\hookrightarrow \mathbb{C}^3$ there is an inclusion of triples of diagonal matrices with entries in $U$
\[
\iota_{U,d}\colon \Sym^{d}(U)\hookrightarrow \Msp(\widetilde{Q_{\Jor}})_d
\]
as well as an inclusion
\[
\Delta_{U,d}\colon U\hookrightarrow \Sym^{d}(U)\hookrightarrow \Msp(\widetilde{Q_{\Jor}})_d
\]
of the small diagonal.  Taking disjoint unions of all these inclusions we define the inclusions
\[
\iota_{U}\colon \Sym(U)\hookrightarrow \Msp(\widetilde{Q_{\Jor}}).
\]
and
\[
\Delta_{U}\colon \coprod_{d\geq 1} U\hookrightarrow \Msp(\widetilde{Q_{\Jor}}).
\]

We denote by $\Mst(\mathbb{C}[x,y,z])_d^U$ the preimage of $\iota_{U,d}(\Sym^d(U))$ under the map 
\[
\p_{\widetilde{Q_{\Jor}},d}\colon \Mst(\widetilde{Q_{\Jor}})_d\rightarrow \Msp(\widetilde{Q_{\Jor}})_d.  
\]
We set $\mathfrak{C}oh^U(\AA^3)=\left(\coprod_{d\geq 1}\Mst(\mathbb{C}[x,y,z])_d^U\right)\coprod\Mst(\widetilde{Q_{\Jor}})_0$.  Then define the $\mathbb{N}$-graded, cohomologically graded mixed Hodge structure
\[
\Coha_{\mathfrak{C}oh^U(\AA^3)}\coloneqq \HO\!\left(\mathfrak{C}oh^U(\AA^3),\phin{\mathfrak{Tr}(\tilde{W})}\ICS_{\Mst(\widetilde{Q_{\Jor}})}\right).
\]
Combining Theorems \ref{IntThm}, \ref{PBW3d} and \ref{dzBPS} gives the following
\begin{corollary}
\label{stackyUthm}
There is an isomorphism in $\Dl\!\left(\MMHM(\Msp(\widetilde{Q_{\Jor}}))\right)$
\begin{align}
\label{UIso}
\left(\p_{\widetilde{Q_{\Jor}},!}\phim{\mathfrak{Tr}(\tilde{W})}\ICS_{\Mst(\widetilde{Q_{\Jor}})}(\mathbb{Q})\right)\mathlarger{\mathlarger{|}}_{\Sym(U)}\cong \Sym_{\boxtimes_{\oplus}}\!\left(\Delta_{U,*}\QQ_{\coprod_{d\geq 1}U}\otimes\LL^{ -1}\otimes\HO_c(\B\mathbb{C}^*,\mathbb{Q})\right),
\end{align}
and a PBW isomorphism of $\mathbb{N}$-graded mixed Hodge structures
\begin{align}\label{UAIso}
\Sym\!\left(\bigoplus_{d\in\mathbb{Z}_{\geq 1}}\HO^{\BM}(U,\mathbb{Q})\otimes \HO(\B\mathbb{C}^*,\mathbb{Q})\otimes\mathbb{L}^{2}\right)\xrightarrow{\cong}\Coha_{\mathfrak{C}oh^U(\AA^3)} .
\end{align}
\end{corollary}
\begin{proof}
We just construct isomorphism (\ref{UIso}) as a special case of \eqref{shriekInt}; via the same argument we then realise \eqref{UAIso} as a special case of Theorem \ref{PBW3d}.  In fact it is sufficient to construct the isomorphism in the case $U=\mathbb{C}^3$, since then the general case is given by restriction to $\iota_{U}(\Sym(U))$.  In this case, since $\supp (\p_{\widetilde{Q_{\Jor}},!}\phim{\mathfrak{Tr}(\tilde{W})}\ICS_{\Mst(\widetilde{Q_{\Jor}})}(\mathbb{Q}))=\Sym(\AA^3)$, the proposed isomorphism becomes
\[
\p_{\widetilde{Q_{\Jor}},!}\phim{\mathfrak{Tr}(\tilde{W})}\ICS_{\Mst(\widetilde{Q_{\Jor}})}(\mathbb{Q})\cong \Sym_{\boxtimes_{\oplus}}\!\left(\Delta_{\mathbb{A}^3,*}\ICS_{\coprod_{d\geq 1}\mathbb{A}^3}(\QQ)\otimes\HO_c(\B\mathbb{C}^*,\mathbb{Q})_{\vir}\right),
\]
which follows from (\ref{shriekInt}) and Theorem \ref{dzBPS}.
\end{proof}
\subsection{Applications to surfaces and character stacks}
\label{appsss}
Let $j\colon V\hookrightarrow \mathbb{A}^2$ be the inclusion of a constructible subset, and write
\[
U=V\times \mathbb{A}^1\subset \mathbb{A}^3.
\]
We consider the commutative diagram
\[
\xymatrix{
\Mst(\widetilde{Q_{\Jor}})\ar[r]^{\pi}\ar[d]^{\p_{\widetilde{Q_{\Jor}}}}&\Mst(\overline{Q_{\Jor}})\ar[d]^{\p_{\overline{Q_{\Jor}}}}\\
\Msp(\widetilde{Q_{\Jor}})\ar[r]^{\varpi}&\Msp(\overline{Q_{\Jor}})
}
\]
where the horizontal morphisms are the forgetful morphisms.  We denote by $\mathfrak{C}_d=\mathcal{C}_d/\Gl_d(\mathbb{C})$ the stack of commuting pairs of matrices, and set $\mathfrak{C}=\coprod_{d\in\mathbb{N}}\mathfrak{C}_d$.  We define the inclusions
\begin{align*}
\iota_{V}\colon &\Sym(V)\hookrightarrow \Msp(\overline{Q_{\Jor}})\\
\Delta_{V}\colon &\coprod_{d\geq 1} V\hookrightarrow \Sym(V)
\end{align*}
as in the previous section.  We denote by $\cup\colon \Sym(V)\times\Sym(V)\rightarrow \Sym(V)$ the morphism taking a pair of multisets of points to their union (so that $\iota_V$ is a morphism of monoids in the category of schemes).  We define
\[
\mathcal{F}\boxtimes_{\cup}\mathcal{G}\coloneqq \cup_*\!\left(\mathcal{F}\boxtimes\mathcal{G}\right).
\]
We denote by $i\colon \mathfrak{C}\hookrightarrow \Mst(\overline{Q_{\Jor}})$ the inclusion.  By Theorem \ref{dimRedThm}, there is an isomorphism of complexes of mixed Hodge modules
\begin{equation}
\label{dimr0}
\pi_!\phin{\mathfrak{Tr}(\tilde{W})}\ICS_{\Mst(\widetilde{Q_{\Jor}})}(\mathbb{Q})\cong i_*\underline{\mathbb{Q}}_{\mathfrak{C}}.
\end{equation}
We denote by $\mathfrak{C}oh^V(\AA^2)$ the reduced substack of coherent sheaves on $\mathbb{A}^2$ set-theoretically supported on $V$ with zero-dimensional support, and by $p\colon \mathfrak{C}oh^V(\AA^2)\rightarrow \Sym(V)$ the morphism taking such a sheaf to its support, counted with multiplicity, so that $p$ restricts to a morphism
\[
p_d\colon \mathfrak{C}oh^V_d(\AA^2)\rightarrow \Sym^d(V)
\]
from the stack of coherent sheaves of length $d$.  We define
\[
\Coha_{\mathfrak{C}oh^V(\AA^2)}\cong \bigoplus_{d\geq 0} \HO^{\BM}(\mathfrak{C}oh^V_d(\AA^2),\mathbb{Q}).
\]
\begin{corollary}
\label{go2d}
Let $j\colon V\hookrightarrow \mathbb{A}^2$ be the inclusion of a constructible subset.  Then there is an isomorphism of complexes of mixed Hodge modules
\[
p_!\mathbb{Q}_{\mathfrak{C}oh^V(\AA^2)}\cong \Sym_{\cup}\!\left(\Delta_{V,*}\QQ_{\coprod_{d\geq 1}V}\otimes \HO_c(\B\mathbb{C}^*,\mathbb{Q})\right)
\]
and a PBW isomorphism of $\mathbb{N}$-graded mixed Hodge structures
\[
\Sym\left(\bigoplus_{d\geq 1}\HO^{\BM}(V,\mathbb{Q})\otimes\LL\otimes \HO(\B\mathbb{C}^*,\mathbb{Q})\right)\xrightarrow{\cong}\Coha_{\mathfrak{C}oh^V(\AA^2)} .
\]
\end{corollary}
\begin{proof}
We denote by
\[
\iota_{V}\colon \Sym(V)\rightarrow \Msp(\overline{Q_{\Jor}})
\]
the inclusion.  Then we compose the ismorphisms
\begin{align}
p_{!}\mathbb{Q}_{\mathfrak{C}oh^V(\AA^2)}\cong&\iota_V^*\p_{\overline{Q_{\Jor}},!}\mathbb{Q}_{\mathfrak{C}}
\nonumber
\\
\label{dimr}
\cong&\iota_V^*\p_{\overline{Q_{\Jor}},!}\pi_!\phin{\mathfrak{Tr}(\tilde{W})}\ICS_{\Mst(\widetilde{Q_{\Jor}})}(\mathbb{Q})\\
\nonumber
\cong&\iota_V^*\varpi_!\p_{\widetilde{Q_{\Jor}},!}\phin{\mathfrak{Tr}(\tilde{W})}\ICS_{\Mst(\widetilde{Q_{\Jor}})}(\mathbb{Q})\\
\label{quickl}
\cong&\iota_V^*\varpi_!\Sym_{\boxtimes_{\oplus}}\left(\Delta_{\mathbb{A}^3,*}\ICS_{\coprod_{d\geq 1}\mathbb{A}^3}(\QQ)\otimes\HO_c(\B\mathbb{C}^*,\mathbb{Q})_{\vir}\right)\\
\nonumber
\cong&\Sym_{\boxtimes_{\oplus}}\left(\iota_V^*\varpi_!\Delta_{\mathbb{A}^3,*}\ICS_{\coprod_{d\geq 1}\mathbb{A}^3}(\QQ)\otimes\HO_c(\B\mathbb{C}^*,\mathbb{Q})_{\vir}\right)\\
\nonumber
\cong&\Sym_{\boxtimes_{\oplus}}\left(\varpi_!\Delta_{U,*}\QQ_{\coprod_{d\geq 1}U}\otimes\LL^{ -1}\otimes\HO_c(\B\mathbb{C}^*,\mathbb{Q})\right)\\
\cong&\Sym_{\boxtimes_{\oplus}}\left(\Delta_{V,*}\mathbb{Q}_{\coprod_{d\geq 1}V}\otimes\HO_c(\B\mathbb{C}^*,\mathbb{Q})\right)\nonumber
\end{align}
where \eqref{dimr} comes from \eqref{dimr0} and the isomorphism \eqref{quickl} comes from Corollary \eqref{stackyUthm}.  This establishes the first isomorphism, the PBW isomorphism follows by the same argument, and \eqref{UAIso}.
\end{proof}

In nonabelian Hodge theory, an interesting special case of Corollary \ref{stackyUthm} comes from setting
\[
V=(\mathbb{C}^*)^2.
\]
Set $A=\mathbb{C}\langle x^{\pm 1},y^{\pm 1}\rangle$.  Then there is a natural identification of substacks of $\Mst(\overline{Q_{\Jor}})$
\[
\Mst(A)=\mathfrak{C}oh((\mathbb{C}^*)^2)=\Mst(\pi_1(\Sigma_1))
\]
where the final stack is the stack of finite-dimensional representations of the fundamental group of a genus 1 closed Riemann surface.  As a special case of Corollary \ref{go2d} we deduce
\begin{corollary}
\label{oneCorr}
There is a PBW isomorphism of $\mathbb{N}$-graded mixed Hodge structures
\begin{equation}
\label{g1PBW}
\bigoplus_{d\in\mathbb{N}}\HO^{\BM}(\Mst(\pi_1(\Sigma_1))_d,\mathbb{Q})\cong \Sym\!\left(\bigoplus_{d\in\mathbb{Z}_{\geq 1}}\HO^{\BM}((\mathbb{C}^*)^2,\mathbb{Q})\otimes \LL\otimes\HO(\B\mathbb{C}^*,\mathbb{Q})\right).
\end{equation}
\end{corollary}
The CoHA structure on the left hand side of \eqref{g1PBW} is introduced and studied in \cite{Da15Co}.
\smallbreak
Given $g,d\in\mathbb{Z}_{\geq 1}$, consider the stack theoretic quotient
\[
\Rep_d^{\tw}(\Sigma_g):=\left\{\substack{(A_1,\ldots,A_g,B_1,\ldots,B_g)\in\Gl_d(\mathbb{C})^{\times 2g}|\\\prod_{n=1}^g(A_n,B_n)=\exp(2\pi i/d)\cdot\Id_{d\times d}}\right\}/\Gl_d(\mathbb{C}),
\]
where the action is the usual simultaneous conjugation action.  The action of $\Gl_d(\mathbb{C})$ on the variety in brackets is not free, but it factors through the conjugation action by $\PGL_d(\mathbb{C})$, which is scheme-theoretically free by \cite[Cor.2.2.7]{HRV08}, and the quotient
\[
\overline{\Rep}_d^{\tw}(\Sigma_g):=\left\{\substack{(A_1,\ldots,A_g,B_1,\ldots,B_g)\in\Gl_d(\mathbb{C})^{\times 2g}|\\\prod_{n=1}^g(A_n,B_n)=\exp(2\pi i/d)\cdot\Id_{d\times d}}\right\}/\PGL_d(\mathbb{C})
\]
is a smooth quasiprojective variety.  It follows that there is an isomorphism
\[
\HO(\Rep_d^{\tw}(\Sigma_g),\mathbb{Q})\cong \HO(\overline{\Rep}_d^{\tw}(\Sigma_g,\mathbb{Q}))\otimes\HO(\B\mathbb{C}^*,\mathbb{Q}).
\]
In the $g=1$ case, we have by \cite[Thm.2.2.17]{HRV08} that 
\begin{equation}
\label{HRV1}
\overline{\Rep}_d^{\tw}(\Sigma_1)\cong(\mathbb{C}^*)^2.
\end{equation}
In the general case, we have the following conjecture \cite[Conj.1.1]{Da15Co}.
\begin{conjecture}
\label{genConj}
There is an isomorphism of $\mathbb{N}$-graded cohomologically graded mixed Hodge structures
\[
\bigoplus_{d\in\mathbb{N}}\HO_c\!\left(\Mst(\pi_1(\Sigma_g))_d,\mathbb{Q}\right)\otimes\mathbb{L}^{(1-g)n^2}\cong \Sym\!\left(\bigoplus_{d\geq 1}\HO_c\!\left(\overline{\Rep}_d^{\tw}(\Sigma_g),\mathbb{Q}\right)\otimes \HO_c(\B\mathbb{C}^*,\mathbb{Q})\otimes \mathbb{L}^{(1-g)n^2}\right).
\]
\end{conjecture}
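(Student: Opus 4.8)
The conjecture is the multiplicative analogue, for the $g$-loop quiver $Q_{(g)}$, of equation (\ref{cmirror}) — the degenerate-stability case of Theorem \ref{2dint}; note that $(\dd,\dd)=(1-g)d^2$ is exactly the Euler form of $Q_{(g)}$, so the Tate twist $\mathbb{L}^{\otimes(1-g)d^2}$ is the one that appears throughout this paper. Accordingly the plan is to run, for arbitrary $g$, the three-move argument behind Corollary \ref{oneCorr}: (i) realise $\Rep_d(\Sigma_g)$ as a moduli stack of modules for a (localized) quiver with potential, so as to bring it within the scope of cohomological Donaldson--Thomas theory; (ii) apply the cohomological integrality theorem (Theorem \ref{IntThm}) to split the generating series as a free symmetric algebra on a BPS contribution; (iii) identify that contribution with $\HO_c(\overline{\Rep}_d^{\tw}(\Sigma_g),\mathbb{Q})$ up to a Tate twist, via a support lemma and a purity argument. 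The point that is new relative to the $g=1$ case is that the relation $\prod_{n=1}^g(A_n,B_n)=\Id$ defining $\pi_1(\Sigma_g)$ is multiplicative, not additive, so $\Rep_d(\Sigma_g)$ is not the invertible locus inside $\Mst(\Pi_{Q_{(g)}})$.

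For (i) I would replace the preprojective algebra by the multiplicative preprojective algebra $\Lambda^1(Q_{(g)})$ of Crawley-Boevey and Shaw, whose stack of $d$-dimensional modules is $\Rep_d(\Sigma_g)$ — the multiplicative counterpart of the identification $\Rep_d(\Sigma_1)\cong(X_d\cap\mu_{Q_{\Jor},d}^{-1}(0))/\Gl_d$ of (\ref{repFind}). The essential input is then a \emph{multiplicative dimensional reduction}: an isomorphism, up to a Tate twist,
\[
\HO_c(\Rep_d(\Sigma_g),\mathbb{Q})\otimes\mathbb{L}^{\otimes(1-g)d^2}\cong\HO_c\!\left(\Mst(\Jac(\hat Q,\hat W))^{\mathcal S}_d,\phim{\mathfrak{Tr}(\hat W)}\mathbb{Q}\right)
\]
for a quiver with potential $(\hat Q,\hat W)$ refining $Q_{(g)}$ — playing the role that $(Q_{(3)},x[y,z])$ plays for $g=1$ — and a Serre subcategory $\mathcal S$ imposing invertibility of the appropriate loops, with $\Jac(\hat Q,\hat W)$ a localization of the $3$-Calabi--Yau completion of $\Lambda^1(Q_{(g)})$. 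This should be an instance of the shifted-cotangent dimensional reduction of which Theorem \ref{dimRedThm} is the quiver shadow, and its $g=1$ case is precisely the chain (\ref{dimRedG0})--(\ref{repFind}).

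Granting this, step (ii) is immediate: applying Theorem \ref{IntThm} with the degenerate stability condition, exactly as in the proof of Theorem \ref{stackyUthm}, yields
\[
\bigoplus_{d\in\mathbb{N}}\HO_c(\Rep_d(\Sigma_g),\mathbb{Q})\otimes\mathbb{L}^{\otimes(1-g)d^2}\cong\Sym\!\left(\bigoplus_{d\geq 1}\mathrm{BPS}_{g,d}\otimes\HO_c(\pt/\mathbb{C}^*,\mathbb{Q})\otimes\mathbb{L}^{\otimes -1/2}\right),
\]
with $\mathrm{BPS}_{g,d}$ the compactly supported cohomology of the restriction of the BPS sheaf of $(\hat Q,\hat W)$ to the subspace of $\Msp(\hat Q)$ cut out by $\mathcal S$. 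For step (iii) it then remains to prove $\mathrm{BPS}_{g,d}\cong\HO_c(\overline{\Rep}_d^{\tw}(\Sigma_g),\mathbb{Q})\otimes\mathbb{L}^{\otimes(1-g)d^2+1/2}$ (for $g=1$ this reads $\mathrm{BPS}_{1,d}\cong\HO_c((\mathbb{C}^*)^2,\mathbb{Q})\otimes\mathbb{L}^{\otimes 1/2}$, matching Corollary \ref{oneCorr} via (\ref{HRV1})). A support lemma in the style of Lemma \ref{lem1}, using centrality in $\Lambda^1(Q_{(g)})$ of the relevant group-like element in place of $\sum_i\omega_i\in\Jac(\tilde Q,\tilde W)$, should confine the support of the BPS sheaf to the locus of simple $\pi_1(\Sigma_g)$-representations, where it is forced to be the (shifted) intersection complex — the totally negative $2$-Calabi--Yau analogue of the identification $\DTS_{\widetilde{Q_{\Jor}},\tilde W,d}\cong\Delta_{\mathbb{C}^3,d,*}\ICS_{\mathbb{C}^3}(\mathbb{Q})$ established inside the proof of Theorem \ref{stackyUthm}. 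One then compares the intersection cohomology of the coarse moduli space of simple $\pi_1(\Sigma_g)$-representations of dimension $d$ with the compactly supported cohomology of the everywhere-simple, smooth twisted variety $\overline{\Rep}_d^{\tw}(\Sigma_g)$, of dimension $2((g-1)d^2+1)$, using a $\mathbb{C}^*$-action comparison argument (in the spirit of non-abelian Hodge theory) to match the two after the prescribed Tate twist.

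The formal heart, step (ii), is free; the difficulty is entirely in the two geometric inputs. Establishing the multiplicative dimensional reduction of step (i) rigorously — that the multiplicative preprojective stack genuinely occurs as a dimensional reduction of a $3$-Calabi--Yau quiver-with-potential moduli stack — goes beyond the techniques of this paper and would require either localizations of quivers with potential or the shifted-symplectic formalism. And the twisted/untwisted identification of step (iii) cannot use the clean purity bootstrap of Lemmas \ref{lem2}--\ref{lem3}: as Corollary \ref{oneCorr} already shows, $\HO_c(\Rep_d(\Sigma_g),\mathbb{Q})$ is not pure, so one cannot deduce that the BPS sheaf is the shifted constant sheaf from purity of the left-hand side; one must instead pin the BPS sheaf down as the intersection complex of the coarse space of simples directly, and then transfer the needed structure from the smooth twisted variety, checking compatibility against the $E$-series of both sides (known in the twisted case by \cite{HRV08} and Mozgovoy-type arguments). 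Those two steps carry essentially all of the new work.
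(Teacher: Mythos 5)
Note first that the statement you are asked to prove is labelled a \emph{conjecture} in the paper; the paper proves it only for $g=1$ (as the theorem immediately following), and explicitly defers higher genus to ``a subsequent paper''. The $g=1$ case is an immediate consequence of Corollary~\ref{oneCorr} together with (\ref{HRV1}): since $1-g=0$ the Tate twist vanishes and Conjecture~\ref{genConj} at $g=1$ \emph{is} Corollary~\ref{oneCorr}. So there is no proof in the text to compare against for general $g$; your proposal is an attempt at the open case, and you should be aware that that is what it is.

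Your decomposition of the problem is sensible and your self-assessment of what is missing is essentially correct, so I will only make precise where the $g\geq 2$ case genuinely escapes this paper's toolbox. For $g=1$ the relation $\prod_n(A_n,B_n)=\Id$ is $ABA^{-1}B^{-1}=\Id$, i.e.\ the \emph{additive} relation $AB-BA=0$ restricted to the invertible locus; thus $\Rep_d(\Sigma_1)$ is an open, Serre-cut substack of $\Mst(\Pi_{Q_{\Jor}})_d\cong\Mst(\mathbb{C}[x,y])_d$ and the ordinary dimensional reduction Theorem~\ref{dimRedThm} applies verbatim, which is exactly what the chain (\ref{dimRedG0})--(\ref{repFind}) does. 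For $g\geq 2$ the relation is genuinely multiplicative and $\Rep_d(\Sigma_g)$ does not arise inside any $\Mst(\Pi_Q)$ by a Serre condition. Your step~(i) --- a multiplicative dimensional reduction realising $\Rep_d(\Sigma_g)$ as a vanishing-cycle stack on a smooth ambient stack built from the Crawley-Boevey--Shaw algebra $\Lambda^1(Q_{(g)})$ --- is therefore a new theorem, supplied nowhere in this paper. Your step~(iii) is likewise not available: the identification $\DTS_{\widetilde{Q_{\Jor}},\tilde{W},d}\cong\Delta_{\mathbb{C}^3,d,*}\ICS_{\mathbb{C}^3}(\QQ)$ inside the proof of Theorem~\ref{stackyUthm} hinges on the explicit description of the coarse moduli of simples as $\mathbb{C}^3$ together with a purity bootstrap via Lemmas~\ref{lem2}--\ref{lem3}, and neither ingredient has a higher-genus analogue of comparable simplicity (and, as you note, $\HO_c(\Rep_d(\Sigma_g),\QQ)$ is already impure at $g=1$, so purity cannot be pushed through the dimensional reduction to constrain the BPS sheaf). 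You have correctly located the two load-bearing gaps; but as written the proposal is a programme, not a proof, which matches the paper's own stated attitude to the general statement.
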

From Corollary \ref{oneCorr} and (\ref{HRV1}) we deduce the $g=1$ part of the following; the genus zero case follows from \cite[Sec.1]{COHA}.
\begin{theorem}
Conjecture \ref{genConj} is true for $g\leq 1$.
\end{theorem}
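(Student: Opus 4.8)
The plan is that this follows essentially immediately from Corollary \ref{oneCorr} together with the identification (\ref{HRV1}), with no serious new input required. First I would observe that when $g=1$ the exponent $(1-g)n^2$ vanishes, so all of the Lefschetz twists $\mathbb{L}^{\otimes(1-g)n^2}$ appearing in Conjecture \ref{genConj} are trivial, and the conjectural isomorphism reduces to
\[
\bigoplus_{d\in\mathbb{N}}\HO_c\left(\Rep_d(\Sigma_1),\mathbb{Q}\right)\cong \Sym\left(\bigoplus_{d\geq 1}\HO_c\left(\overline{\Rep}_d^{\tw}(\Sigma_1),\mathbb{Q}\right)\otimes \HO_c(\pt/\mathbb{C}^*,\mathbb{Q})\right).
\]

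Next I would invoke (\ref{HRV1}), i.e. the isomorphism of varieties $\overline{\Rep}_d^{\tw}(\Sigma_1)\cong(\mathbb{C}^*)^2$ coming from \cite[Thm.2.2.17]{HRV08}, which in particular is independent of $d$. This yields an isomorphism of mixed Hodge structures $\HO_c(\overline{\Rep}_d^{\tw}(\Sigma_1),\mathbb{Q})\cong \HO_c((\mathbb{C}^*)^2,\mathbb{Q})$ for every $d\geq 1$, so that the right hand side of the displayed reduction is isomorphic to
\[
\Sym\left(\bigoplus_{d\geq 1}\HO_c\left((\mathbb{C}^*)^2,\mathbb{Q}\right)\otimes \HO_c(\pt/\mathbb{C}^*,\mathbb{Q})\right).
\]

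Finally, this last expression is precisely the right hand side of the isomorphism furnished by Corollary \ref{oneCorr}, whose left hand side is $\bigoplus_{d\in\mathbb{N}}\HO_c(\Rep_d(\Sigma_1),\mathbb{Q})$; composing the two isomorphisms gives the result. There is no real obstacle at this stage, since all of the substantive content (the purity theorem, dimensional reduction, and the explicit computation of the Jordan-quiver BPS sheaf via \cite{BBS}) has already been invested in establishing Corollary \ref{oneCorr}. The only points needing a word are the harmless compatibility of (\ref{HRV1}) with compactly supported cohomology, and the matching of the $\mathbb{N}$-gradings on the two sides, which holds because $\Rep_d(\Sigma_1)$ and $\overline{\Rep}_d^{\tw}(\Sigma_1)$ contribute in degree $d$ by construction.
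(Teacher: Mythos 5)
Your argument is exactly the paper's: the authors state the theorem and justify it with precisely the sentence ``From Corollary \ref{oneCorr} and (\ref{HRV1}) we deduce the following,'' and you have correctly unpacked this, noting that the twist $\mathbb{L}^{\otimes(1-g)n^2}$ is trivial at $g=1$ and that (\ref{HRV1}) gives $\HO_c(\overline{\Rep}_d^{\tw}(\Sigma_1),\mathbb{Q})\cong\HO_c((\mathbb{C}^*)^2,\mathbb{Q})$ for all $d\geq 1$, so the right-hand sides match term by term in the $\mathbb{N}$-grading.
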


\section{Generalisations of the purity theorem}
\label{conseq}
\subsection{The wall crossing isomorphism in DT theory}
The wall crossing isomorphism in cohomological DT theory (e.g. \cite[Thm.B]{DaMe15b}) provides a powerful way to deduce purity of Borel--Moore homology of moduli spaces of semistable quiver representations, for some stability condition $\zeta$, from purity of Borel--Moore homology for some other stability condition $\zeta'$ (see e.g. \cite{QCP} for an application of this principle for quantum cluster algebras).  We will use this idea to prove a generalisation of Theorem \ref{mmCo} incorporating stability conditions.
\smallbreak

Fix a quiver $Q$, and a stability condition $\zeta\in\mathbb{H}_+^{Q_0}$.  Let $\rho$ be a finite-dimensional $\mathbb{C}Q$-module, then $\rho$ admits a unique \textit{Harder--Narasimhan} filtration
\[
0=\rho^0\subset\ldots\subset \rho^s=\rho
\]
such that each $\rho^t/\rho^{t-1}$ is $\zeta$-semistable, and the slopes $\slope(\rho^1/\rho^0),\ldots,\slope(\rho^s/\rho^{s-1})$ are strictly descending.  Given a dimension vector $\dd\in\mathbb{N}^{Q_0}$, we denote by 
\[
\HN_{\dd}:=\left\{(\mathbf{d}^1,\ldots,\mathbf{d}^s)\in(\mathbb{N}^{Q_0})^s\setminus \{0\}|\hbox{ }\slope (\mathbf{d}^1)>\slope (\mathbf{d}^2)>\ldots>\slope (\mathbf{d}^s),\textrm{ }\sum_{ j\leq s}\mathbf{d}^j=\dd\right\}
\]
the set of Harder--Narasimhan types for $\mathbb{C}Q$-modules of dimension $\dd$.  For $\alpha=(\dd^1,\ldots,\dd^s)\in\HN_{\dd}$, we denote $\dd^j$ by $\alpha^j$, and write $s(\alpha)=s$.  For each $\alpha\in\HN_{\dd}$, there is a locally closed quasiprojective subvariety
\[
X(Q)_{[\alpha]}\subset X(Q)
\]
for which the closed points correspond exactly to those $\mathbb{C}Q$-modules $\rho$ of Harder--Narasimhan type $\alpha$.  For $\alpha\in\HN_{\dd}$, define by
\[
X(Q)_{\alpha}\subset X(Q)_{\dd}
\]
the subspace of linear maps preserving the $Q_0$-graded flag
\[
0\subset \mathbb{C}^{\alpha^1}\subset \mathbb{C}^{\alpha^1+\alpha^2}\subset\ldots\subset\mathbb{C}^{\dd},
\]
and such that each subquotient is $\zeta$-semistable, and denote by $\Bl_{\alpha}\subset \Gl_{\dd}$ the subgroup preserving this same flag.  Then the natural map
\[
X(Q)_{\alpha}/\Bl_{\alpha}\rightarrow X(Q)_{[\alpha]}/\Gl_{\dd}
\]
is an isomorphism.  We set 
\[
\Mst(Q)_{\alpha}:=X(Q)_{\alpha}/\Bl_{\alpha}
\]
and denote by 
\[
i_{\alpha}\colon \Mst(Q)_{\alpha}\rightarrow\Mst(Q)_{\dd}
\]
the locally closed inclusion of substacks.  By \cite[Prop.3.4]{Reineke_HN} there is a decomposition into locally closed substacks
\[
\Mst(Q)_{\dd}\cong \coprod_{\alpha\in\HN_{\dd}} \Mst(Q)_{\alpha}.
\]
\smallbreak
The following are the relative and absolute versions of the cohomological wall-crossing isomorphism, respectively \cite[Thm.B]{DaMe15b}.  Since we state them in the general case, which may involve nontrivial monodromy, we first state them in terms of the functor $\phim{\mathfrak{Tr}(W)}$ of Section \ref{DTtheory}.  When we come to use the theorem, we will be back in the trivial monodromy situation, and we will be able to revert to using the functor $\phin{\mathfrak{Tr}(W)}$, as explained in Remark \ref{getOut}.
\begin{theorem}
\label{paraProp}
For $Q$ a quiver, $W\in \mathbb{C}Q/[\mathbb{C}Q,\mathbb{C}Q]$ a potential, and stability condition $\zeta$, there is an isomorphism in $\Dl\left(\MMHM(\Msp(Q))\right)$:
\begin{equation}
\label{paraDecomp}
\p_!\phim{\mathfrak{Tr}(W)}\ICS_{\Mst(Q)}(\mathbb{Q})\cong\bigoplus_{\substack{\dd\in\mathbb{N}^{Q_0}\\ \alpha\in\HN_{\dd}}} \left(\Boxtimes_{\oplus,\mathsmaller{1\leq j\leq s(\alpha)}}q^{\zeta}_{\alpha^j,!}\p^{\zeta}_{\alpha^j,!}\phim{\mathfrak{Tr}(W)^{\zeta}_{\alpha^j}}\ICS_{\Mst(Q)^{\zeta\sst}_{\alpha^j}}(\mathbb{Q})\right)\otimes \mathbb{L}^{f(\alpha)/2}
\end{equation}
where
\[
f((\mathbf{d}^1,\ldots,\mathbf{d}^s)):=\sum_{1\leq j'<j''\leq s}\langle \mathbf{d}^{j'},\mathbf{d}^{j''}\rangle
\]
and $q^{\zeta}_{\dd}\colon \Msp(Q)^{\zeta\sst}_{\dd}\rightarrow \Msp(Q)_{\dd}$ is the affinization morphism.  Taking the direct image to $\mathbb{N}^{Q_0}$, there is an isomorphism in $\Dl(\MMHM(\NN^{Q_0}))$:
\begin{equation}
\label{paraDecompAbs}
\Dim_!\phim{\mathfrak{Tr}(W)}\ICS_{\Mst(Q)}(\mathbb{Q})\cong\bigoplus_{\substack{\dd\in\mathbb{N}^{Q_0}\\ \alpha\in\HN_{\dd}}} \left(\Boxtimes_{1\leq j\leq s(\alpha)}\Dim^{\zeta}_{\alpha^j,!}\phim{\mathfrak{Tr}(W)^{\zeta}_{\alpha^j}}\ICS_{\Mst(Q)^{\zeta\sst}_{\alpha^j}}(\mathbb{Q})\right)\otimes \mathbb{L}^{f(\alpha)/2}
\end{equation}
with $\Dim\colon \Mst(Q)\rightarrow\mathbb{N}^{Q_0}$ as in Definition \ref{dimDef}.
\end{theorem}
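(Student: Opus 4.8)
The plan is to establish the relative isomorphism \eqref{paraDecomp} first, and then deduce \eqref{paraDecompAbs} by pushing forward along $\dim\colon\Msp(Q)\to\NN^{Q_0}$. For \eqref{paraDecomp} I would start from the Harder--Narasimhan stratification $\Mst(Q)_{\dd}\cong\coprod_{\alpha\in\HN_{\dd}}\Mst(Q)_{\alpha}$ of \cite[Prop.3.4]{Reineke_HN}. Fix $\alpha\in\HN_{\dd}$ with parts $\alpha^1,\ldots,\alpha^{s(\alpha)}$. Passing to the associated graded of the $Q_0$-graded flag gives a morphism $\Mst(Q)_{\alpha}=X(Q)_{\alpha}/\Bl_{\alpha}\to\prod_{j}\Mst(Q)^{\zeta\sst}_{\alpha^j}$ which is a stack-theoretic affine fibration: an affine-space bundle (the off-diagonal arrow blocks) followed by a quotient by the unipotent radical of $\Bl_{\alpha}$, and a dimension count against the virtual normalisation of $\ICS$ turns its relative dimension into the twist $\LL^{f(\alpha)/2}$. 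Crucially, since the trace of a cyclic word evaluated on a block-upper-triangular representation picks up only the block-diagonal contributions, $\mathfrak{Tr}(W)|_{X(Q)_{\alpha}}$ is exactly the pullback of the sum of the $\mathfrak{Tr}(W)_{\alpha^j}$ along the associated-graded map, with no correction terms; and $\p\circ i_{\alpha}$ factors as $\Mst(Q)_{\alpha}\to\prod_{j}\Mst(Q)^{\zeta\sst}_{\alpha^j}\xrightarrow{\prod_{j}\p^{\zeta}_{\alpha^j}}\prod_{j}\Msp(Q)^{\zeta\sst}_{\alpha^j}\xrightarrow{\oplus}\Msp(Q)_{\dd}$, because the polystabilisation of an HN-filtered module is the direct sum of the polystabilisations of its HN subquotients. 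Combining smooth base change for $\phim{}$ along the affine fibration, commutation of $\phim{}$ with the proper map $\oplus$, and Saito's Thom--Sebastiani theorem \cite{Saito10} (via additivity of trace on direct sums), the stratum $\Mst(Q)_{\alpha}$ contributes exactly the summand $\bigl(\Boxtimes_{\oplus,1\leq j\leq s(\alpha)}\Ho(\p^{\zeta}_{\alpha^j,!}\phim{\mathfrak{Tr}(W)^{\zeta}_{\alpha^j}}\ICS_{\Mst(Q)^{\zeta\sst}_{\alpha^j}}(\mathbb{Q}))\bigr)\otimes\LL^{f(\alpha)/2}$ appearing on the right of \eqref{paraDecomp}.

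The real content is that the iterated extension of $\Ho(\p_!\phim{\mathfrak{Tr}(W)}\ICS_{\Mst(Q)}(\mathbb{Q}))$ built from these strata splits as a direct sum, and I would prove this first for $W=0$. There, by the framed-moduli construction of Section \ref{framedVer}, $\p_{\dd,!}\ICS_{\Mst(Q)_{\dd}}(\mathbb{Q})$ is a limit of proper pushforwards $\pi^{\zeta}_{\ff,\dd,!}\ICS_{\Msp(Q)_{\ff,\dd}}(\mathbb{Q})$ of intersection complexes of smooth varieties, hence by the decomposition theorem it is a semisimple pure object of $\Dl(\MMHM(\Msp(Q)))$; the right-hand side of \eqref{paraDecomp} with $W=0$ is likewise pure, being built by box products along the proper monoid map $\oplus$ \cite[Lem.2.1]{MeRe14} out of the pure objects $\Ho(\p^{\zeta}_{\alpha^j,!}\ICS_{\Mst(Q)^{\zeta\sst}_{\alpha^j}}(\mathbb{Q}))$. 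Additivity of compactly supported classes over the locally closed HN pieces together with the affine-fibration structure above shows the two sides have equal class in the Grothendieck group (the categorification of Reineke's Harder--Narasimhan recursion \cite{Reineke_HN}), and since a semisimple pure mixed Hodge module is determined by its class \cite{Saito89}, we get an isomorphism of complexes, proving \eqref{paraDecomp} for $W=0$.

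For general $W$ I would apply the exact functor $\phim{\mathfrak{Tr}(W)}$ to the $W=0$ isomorphism. On the left, $\phim{\mathfrak{Tr}(W)}\p_{\dd,!}\ICS\cong\p_{\dd,!}\phim{\mathfrak{Tr}(W)}\ICS$ by commutation of vanishing cycles with proper pushforward \cite[Thm.2.14]{Saito89}, applied to the framed approximation and passed to the limit, and the outcome already equals its total cohomology by Lemma \ref{decomp2}, landing us on $\Ho(\p_{\dd,!}\phim{\mathfrak{Tr}(W)}\ICS_{\Mst(Q)_{\dd}}(\mathbb{Q}))$. On the right, $\phim{\mathfrak{Tr}(W)}$ commutes with the direct sum and with tensoring by $\LL^{f(\alpha)/2}$, and on each box product it is converted, using that $\oplus^{*}\mathfrak{Tr}(W)^{\zeta}_{\dd}$ is the sum of the $\mathfrak{Tr}(W)^{\zeta}_{\alpha^j}$ and Thom--Sebastiani \cite{Saito10} with the symmetric-monoidal enhancement of \cite[Prop.3.8]{DaMe15b}, into the box product of the $\phim{\mathfrak{Tr}(W)^{\zeta}_{\alpha^j}}$-twisted factors, which Lemma \ref{decomp2} again replaces by their total cohomologies; this gives \eqref{paraDecomp}. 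Finally, \eqref{paraDecompAbs} follows by applying $\Ho\,\dim_!$ to \eqref{paraDecomp}: $\dim$ is a morphism of commutative monoids with proper monoid maps, so \cite[Sec.1.12]{MSS11} gives the compatibility of pushforward with the symmetric monoidal structures, $\dim_!\p^{\zeta}_{\alpha^j,!}=\Dim^{\zeta}_{\alpha^j,!}$, and Lemma \ref{decomp2} supplies the formality needed to commute $\dim_!$ past $\Ho$.

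I expect the main obstacle to be precisely the splitting in the $W=0$ step. A priori the HN stratification yields only a convergent spectral sequence, and collapsing it genuinely requires purity, which is invisible on $\Mst(Q)_{\dd}$ itself; this is why the argument must be routed through the ``hidden properness'' of the framed moduli spaces of Section \ref{framedVer} so as to invoke the Beilinson--Bernstein--Deligne--Gabber decomposition theorem, and why the $W=0$ case must be isolated and settled before the vanishing cycle functor, which destroys purity, is brought in.
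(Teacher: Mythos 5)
The paper does not prove Theorem \ref{paraProp}: it is quoted from \cite[Thm.D]{DaMe15b}, so there is no internal argument to compare against. Your reconstruction follows essentially the route of that reference, and it is sound. The ingredients you assemble --- the Harder--Narasimhan stratification; the observation that $\mathfrak{Tr}(W)$ on a stratum pulls back along the associated-graded affine fibration from the sum of the trace functions on the semistable subquotients (because the trace of a cyclic word only sees block-diagonal entries, so no cross terms enter); the $W=0$ splitting settled by matching Grothendieck classes and invoking purity of both sides, with purity of the left side supplied by the framed-moduli approximation of Section \ref{framedVer} and BBDG, and purity of the right side by properness of $\oplus$; and the passage to general $W$ by applying the exact functor $\phim{}$ together with Thom--Sebastiani, commutation with proper pushforward, and Lemma \ref{decomp2} --- are all correct. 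You have also correctly identified the crucial pressure point: the HN spectral sequence has no a priori reason to degenerate, so the $W=0$ case must be isolated and proved first via hidden properness, before the vanishing-cycle functor (which destroys purity, and hence the BBDG mechanism) is introduced. A small point worth making explicit: the deduction that ``equal class plus pure plus semisimple implies isomorphic'' uses that for a complex with $\Ho^i$ pure of weight $i$ the weight grading prevents cancellation across cohomological degrees, so the class in $\Kk$ recovers each $\Ho^i$ individually; and the Tate twist $\LL^{\otimes f(\alpha)/2}$ should be derived from the relative dimension of the affine fibration against the IC normalisations of both sides (the sign of $\langle\alpha^{j'},\alpha^{j''}\rangle$ depends on the HN ordering convention), which you assert but do not compute --- worth writing out to fix the sign.
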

If $Q$ is symmetric, the function $f$ in the above proposition is identically zero.\begin{corollary}
\label{withSt}
For any stability condition $\zeta\in\mathbb{H}_+^{Q_0}$, the cohomologically graded mixed Hodge structure
\begin{equation}
\label{puStab}
\HO_c\!\left(\Mst(\tilde{Q})_{\dd}^{\zeta\sst},\phin{\mathfrak{Tr}(\tilde{W})_{\dd}^{\zeta}}\ICS_{\Mst(\tilde{Q})_{\dd}^{\zeta\sst}}(\mathbb{Q})\right)\in\Dl(\MHS)
\end{equation}
is pure of Tate type.
\end{corollary}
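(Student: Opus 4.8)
The plan is to derive the corollary from the purity theorem (Theorem~\ref{purityThm}) together with the absolute cohomological wall crossing isomorphism~(\ref{paraDecompAbs}) of Theorem~\ref{paraProp}, applied to the quiver with potential $(\tilde{Q},\tilde{W})$ and the given stability condition $\zeta$. Before that I would record two elementary points. First, $\tilde{Q}$ is a symmetric quiver --- as noted in Section~\ref{modSpaces}, the degenerate stability condition is generic for $\tilde{Q}$, which forces symmetry --- so the pairing $(\cdot,\cdot)$ on $\mathbb{Z}^{\tilde{Q}_0}$ is symmetric, the antisymmetrized form $\langle\cdot,\cdot\rangle$ vanishes identically, and hence $f(\alpha)=0$ for every Harder--Narasimhan type $\alpha$; consequently the right hand side of~(\ref{paraDecompAbs}) for $(\tilde{Q},\tilde{W})$ carries no Tate twists. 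Second, for a fixed dimension vector $\dd$ the map $\Dim^{\zeta}_{\dd}\colon\Mst(\tilde{Q})^{\zeta\sst}_{\dd}\to\{\dd\}\subset\mathbb{N}^{\tilde{Q}_0}$ factors through a point, so the mixed Hodge structure~(\ref{puStab}) in the statement is exactly the degree-$\dd$ component of $\Ho\!\left(\Dim^{\zeta}_{!}\phin{\mathfrak{Tr}(\tilde{W})^{\zeta}}\ICS_{\Mst(\tilde{Q})^{\zeta\sst}}(\mathbb{Q})\right)$.

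The main argument then runs as follows. By the dimensional reduction isomorphism~(\ref{secondDimRed}) (summed over all $\dd$) together with Theorem~\ref{purityThm}, the object $\Ho\!\left(\Dim_{!}\phin{\mathfrak{Tr}(\tilde{W})}\ICS_{\Mst(\tilde{Q})}(\mathbb{Q})\right)$ is pure of Tate type; moreover, by Remark~\ref{getOut} this same dimensional reduction shows its monodromy is trivial, so one may pass freely between $\phim{}$ and $\phin{}$ here. Feeding this into~(\ref{paraDecompAbs}), and using $f\equiv 0$, we obtain an isomorphism $\Ho\!\left(\Dim_{!}\phin{\mathfrak{Tr}(\tilde{W})}\ICS_{\Mst(\tilde{Q})}(\mathbb{Q})\right)\cong\bigoplus_{\dd,\,\alpha\in\HN_{\dd}}\Boxtimes_{\oplus,\,1\le j\le s(\alpha)}\Ho\!\left(\Dim^{\zeta}_{\alpha^j,!}\phin{\mathfrak{Tr}(\tilde{W})^{\zeta}_{\alpha^j}}\ICS_{\Mst(\tilde{Q})^{\zeta\sst}_{\alpha^j}}(\mathbb{Q})\right)$. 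Each summand on the right is then a direct summand of an object that is pure of Tate type and monodromy-free, hence is itself pure of Tate type and monodromy-free. Specializing to the length-one Harder--Narasimhan type $\alpha=(\dd)$, for which $s(\alpha)=1$ and the box product collapses to a single factor, the corresponding summand is precisely $\Ho\!\left(\Dim^{\zeta}_{\dd,!}\phin{\mathfrak{Tr}(\tilde{W})^{\zeta}_{\dd}}\ICS_{\Mst(\tilde{Q})^{\zeta\sst}_{\dd}}(\mathbb{Q})\right)$, which by the second preliminary observation is exactly~(\ref{puStab}); so it is pure of Tate type, as claimed.

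The step I expect to require the most care is the bookkeeping between the monodromic functor $\phim{\mathfrak{Tr}(\tilde{W})}$ in which Theorem~\ref{paraProp} is phrased and the ordinary functor $\phin{\mathfrak{Tr}(\tilde{W})}$ appearing in the corollary: one must check that the trivial-monodromy statement, which is transparent for the absolute object via dimensional reduction, really propagates to each wall crossing summand. This is immediate once one observes that being monodromy-free --- i.e.\ lying in the essential image of the embedding $\tau$ of~(\ref{taudef}) --- is inherited by direct summands, so that the summand for $\alpha=(\dd)$ may genuinely be computed with $\phin{}$ rather than $\phim{}$. A secondary point, which I would flag but which costs nothing, is the symmetry of $\tilde{Q}$ ensuring $f\equiv 0$: for the single Harder--Narasimhan type $\alpha=(\dd)$ one has $f(\alpha)=0$ regardless, so the symmetry is needed only for the internal consistency of the displayed decomposition (otherwise an odd $f(\alpha)$ would introduce a half-twist $\mathbb{L}^{\otimes 1/2}$ incompatible with Tate type in the strict sense).
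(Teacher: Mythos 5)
Your proof is correct and follows essentially the same line as the paper's: both deduce purity and Tate type of (\ref{puStab}) by recognising it as the summand corresponding to the length-one Harder--Narasimhan type $\alpha=(\dd)$ in the wall-crossing isomorphism (\ref{paraDecompAbs}), applied to an absolute object that is pure of Tate type and monodromy-free by dimensional reduction (Lemma~\ref{lem2}) together with Theorem~\ref{purityThm}. The preliminary observations you flag (symmetry of $\tilde{Q}$ forcing $f\equiv 0$, and persistence of monodromy-freeness under passing to direct summands) are exactly the care points the paper handles, and your note that $f((\dd))=0$ is automatic for the single-step HN type, so full symmetry is not strictly needed here, is a fair additional remark.
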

\begin{proof}
Firstly, strictly speaking, the left hand side of (\ref{puStab}), as well as both sides of (\ref{paraDecompAbs}), should be considered as monodromic mixed Hodge structures.  By Lemma \ref{lem2}, for the case in which our QP is of the form $(\tilde{Q},\tilde{W})$ for some quiver $Q$, the left hand side of (\ref{paraDecompAbs}) in fact belongs to the full subcategory of mixed Hodge structures.  For each $\dd\in\mathbb{N}^{\tilde{Q}_0}$ the Harder--Narasimhan type $(\dd)$ contributes the summand 
\begin{equation}
\label{puPart}
\HO_c\!\left(\Mst(\tilde{Q})_{\dd}^{\zeta\sst},\phin{\mathfrak{Tr}(\tilde{W})_{\dd}^{\zeta}}\ICS_{\Mst(\tilde{Q})_{\dd}^{\zeta\sst}}(\mathbb{Q})\right)
\end{equation}
to the right hand side of (\ref{paraDecompAbs}), and so we deduce that as a sub monodromic mixed Hodge module of a monodromic mixed Hodge module that is both an ordinary mixed Hodge module, and pure of Tate type by Lemma \ref{lem2} and Theorem \ref{purityThm}, the mixed Hodge module (\ref{puPart}) is a pure element of $\Dl(\MHS)$, of Tate type.
\end{proof}
\subsection{Purity for stacks of semistable $\Pi_Q$-modules}
Fix a quiver $Q$ and a dimension vector $\dd$.  There is a natural projection
\[
\tau_{Q,\dd}\colon \Mst(\widetilde{Q})_{\dd}\rightarrow \Mst(\overline{Q})_{\dd}
\]
induced by forgetting $\rho(\omega_i)$ for all $i\in Q_0$.  Let $\zeta\in\mathbb{H}_+^{Q_0}$ be a stability condition.  The inclusion
\begin{equation}
\label{troubleInc}
\tau_{Q,\dd}^{-1}\left(\Mst(\overline{Q})_{\dd}^{\zeta\sst} \right)\subset   \Mst(\widetilde{Q})_{\dd}^{\zeta\sst} 
\end{equation}
is strict in general.  This is because the underlying $\overline{Q}$-representation of a $\widetilde{Q}$-representation may have a nontrivial Harder--Narasimhan filtration that is not preserved by the action of the loops $\omega_i$.  We nonetheless have the following useful
\begin{lemma}
\label{supportProp}
For $Q$ an arbitrary finite quiver, $\zeta\in\mathbb{H}_+^{Q_0}$ a stability condition, $\dd\in\mathbb{N}^{Q_0}$ a dimension vector, and $\tau_{Q,\dd}\colon \Mst(\tilde{Q})_{\dd}\rightarrow\Mst(\overline{Q})_{\dd}$ the natural projection, the inclusion
\begin{equation}
\label{trivInc}
\left(\tau^{-1}_{Q,\dd}\left(\Mst(\overline{Q})_{\dd}^{\zeta\sst} \right)\cap\crit\!\left(\mathfrak{Tr}(\tilde{W})^{\zeta}_{\dd}\right)\right)\hookrightarrow   \left(\Mst(\widetilde{Q})_{\dd}^{\zeta\sst} \cap\crit\!\left(\mathfrak{Tr}(\tilde{W})^{\zeta}_{\dd}\right)\right)
\end{equation}
is the identity.  
\end{lemma}
\begin{proof}
Let $\rho$ be a $\Jac(\tilde{Q},\tilde{W})$-representation represented by a closed point of the complement of the inclusion (\ref{trivInc}).  Then via Proposition \ref{basicEq}, $\rho$ corresponds to a pair $(M,f)$, where $M$ is a $\Pi_{Q}$-module, and $f\in\End_{\Pi_Q}(M)$.  By assumption, the Harder-Narasimhan filtration of $M$, considered as a $\Pi_Q$-module, is nontrivial, i.e. it takes the form
\[
0=M_0\subset M_1\subset\ldots\subset M_s=M,
\]
where $s\geq 2$.  Since each $\mu(M_j/M_{j-1})$ for $j\geq 2$ has slope strictly less than $\mu(M_1)$, each $\Hom_{\Pi_Q\lmod}(M_1,M_j/M_{j-1})=0$, and so the restriction 
\[
f|_{M_1}\colon M_1\rightarrow M
\]
factors through the inclusion $M_1\subset M$.  So the pair $(M_1,f|_{M_1})$ is a proper subobject of the pair $(M,f)$ in the category $\mathcal{C}_{\Pi_Q}$ of Proposition \ref{basicEq}.  But then by Proposition \ref{basicEq}, $\rho$ is not a $\zeta$-semistable $\tilde{Q}$-representation, a contradiction.
\end{proof}
This lemma enables us to prove purity for stacks of semistable $\Pi_Q$-modules.
\begin{theorem}
\label{2dstab}
Let $Q$ be a finite quiver, let $\zeta\in\mathbb{H}_+^{Q_0}$ be a stability condition, and let $\dd\in\mathbb{N}^{Q_0}$ be a dimension vector.  There is a natural isomorphism in $\Dl(\MHS)$
\begin{align}\label{uselat}
&\HO_c\!\left(\Mst(\tilde{Q})_{\dd}^{\zeta\sst},\phin{\mathfrak{Tr}(\tilde{W})^{\zeta}_{\dd}}\mathbb{Q}_{\Mst(\tilde{Q})_{\dd}^{\zeta\sst}}\right)\cong\HO_c\!\left(\left(\mu^{-1}_{Q,\dd}(0)\cap X(\overline{Q})^{\zeta\sst}_{\dd}\right)/\Gl_{\dd},\mathbb{Q}\right)\otimes\mathbb{L}^{\dd\cdot\dd},
\end{align}
and so by Theorem \ref{purityWithStab}, taking duals, the mixed Hodge structure
\[
\HO^{\BM}(\Mst(\Pi_Q)^{\zeta\sst}_{\dd},\QQ)\cong\HO^{\BM}\!\left(\left(\mu^{-1}_{Q,\dd}(0)\cap X(\overline{Q})^{\zeta\sst}_{\dd}\right)/\Gl_{\dd},\mathbb{Q}\right)
\]
is pure of Tate type.
\end{theorem}
\begin{proof}
Write 
\begin{equation}
\label{Vdef}
V=\tau^{-1}_{Q,\dd}\left(\Mst(\overline{Q})_{\dd}^{\zeta\sst} \right).
\end{equation}
By Theorem \ref{dimRedThm} there is an isomorphism
\begin{equation}
\label{twtr}
\HO_c\!\left(\left(\mu^{-1}_{Q,\dd}(0)\cap X(\overline{Q})^{\zeta\sst}_{\dd}\right)/\Gl_{\dd},\mathbb{Q}\right)\otimes\mathbb{L}^{\dd\cdot\dd}\cong \HO_c\!\left( V,\phin{\mathfrak{Tr}(\tilde{W})^{\zeta}_{\dd}}\mathbb{Q}_{V}\right). 
\end{equation}
There are equalities
\begin{align*}
\supp\left(\phin{\mathfrak{Tr}(\tilde{W})^{\zeta}_{\dd}}\mathbb{Q}_{V}\right)=&\left(V\cap\crit\left(\mathfrak{Tr}(\tilde{W})^{\zeta}_{\dd}\right)\right)\\
=&\left(\Mst(\widetilde{Q})_{\dd}^{\zeta\sst} \cap\crit\left(\mathfrak{Tr}(\tilde{W})^{\zeta}_{\dd}\right)\right)&(\textrm{Lemma }\ref{supportProp})\\
=&\supp\left(\phin{\mathfrak{Tr}(\tilde{W})_{\dd}}\mathbb{Q}_{\Mst(\tilde{Q})_{\dd}^{\zeta\sst}}\right).
\end{align*}
Thus the natural map
\begin{equation}
\label{twtri}
\HO_c\!\left( V,\phin{\mathfrak{Tr}(\tilde{W})^{\zeta}_{\dd}}\mathbb{Q}_{V}\right)\rightarrow \HO_c\!\left(\Mst(\tilde{Q})_{\dd}^{\zeta\sst},\phin{\mathfrak{Tr}(\tilde{W})_{\dd}^{\zeta}}\mathbb{Q}_{\Mst(\tilde{Q})_{\dd}^{\zeta\sst}}\right)
\end{equation}
is an isomorphism.  Combining \eqref{twtr} and \eqref{twtri} with Corollary \ref{withSt} we deduce the result.
\end{proof}

\subsection{Framed quivers}
\label{framedSec}
For $Q'$ a quiver, $\ff,\dd\in Q'_0$ a pair of dimension vectors, and $\zeta\in\mathbb{H}_+^{Q'_0}$ a stability condition for $Q'$, recall from Section \ref{framedVer} the construction of the moduli space $\Msp(Q')^{\zeta}_{\ff,\dd}$ of $\ff$-framed $\zeta$-semistable $\dd$-dimensional $Q'$-representations.  We consider this construction in the case where $Q'=\tilde{Q}$, the tripled quiver associated to a quiver $Q$.  As in Equation (\ref{frDef}) we define 
\[
\pi^{\zeta}_{\tilde{Q},\ff,\dd}\colon \Msp(\tilde{Q})_{\ff,\dd}^{\zeta}\rightarrow \Msp(\tilde{Q})_{\dd}^{\zeta\sst}
\]
to be the map forgetting the framing and remembering the associated graded object of the Jordan--H\"older filtration (in the category of $\zeta$-semistable $\tilde{Q}$-representations) of the underlying $\tilde{Q}$-representation.
\begin{theorem}
\label{purityWithStab}
Fix a finite quiver $Q$, a dimension vector $\ff\in\mathbb{N}^{Q_0}$, a King stability condition $\zeta\in\mathbb{H}_+^{Q_0}$, and $\dd\in\mathbb{N}^{Q_0}$.  Then the $\mathbb{N}^{Q_0}$-graded mixed Hodge structure on the total vanishing cycle cohomology
\[
\HO_c\!\left(\Msp(\tilde{Q})^{\zeta}_{\ff,\dd},\phin{\mathcal{T}r(\tilde{W})^{\zeta}_{\ff,\dd}}\ICS_{\Msp(\tilde{Q})^{\zeta}_{\ff,\dd}}(\mathbb{Q})\right)
\]
on the fine moduli space of $\zeta$-semistable $\ff$-framed $\mathbb{C}\tilde{Q}$-modules is pure, of Tate type.
\end{theorem}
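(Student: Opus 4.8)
The plan is to reduce purity of the framed vanishing cycle cohomology to purity of the BPS cohomology via the framed integrality isomorphism (\ref{piIso}), and then purity of the BPS cohomology to Corollary \ref{withSt} via the unframed integrality isomorphism (\ref{shriekInt}) of Theorem \ref{IntThm}. Two elementary facts about $\mathbb{N}^{Q_0}$-graded mixed Hodge structures will be used at both steps: (i) $\Sym_{\boxtimes_+}(\mathcal{V})$ is pure of Tate type if and only if $\mathcal{V}$ is --- the $\Sym^1$-summand is a direct summand of $\Sym_{\boxtimes_+}(\mathcal{V})$, while pure of Tate type is clearly preserved by $\Sym_{\boxtimes_+}$; and (ii) tensoring by a fixed nonzero pure Tate graded mixed Hodge structure (such as $\HO_c(\mathbb{C}\mathbb{P}^n,\mathbb{Q})_{\vir}$ for $n\geq 0$, or $\HO_c(\pt/\mathbb{C}^*,\mathbb{Q})_{\vir}$) neither creates nor destroys purity or Tate type, since weights are additive and being a sum of (half-)Tate twists is reflected. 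I also record the facts that make the machinery applicable to $(\tilde{Q},\tilde{W})$: since $\tilde{Q}_0=Q_0$ and $\tilde{Q}$ is a symmetric quiver, $\zeta$ is a King stability condition for $\tilde{Q}$ which is $\theta'$-generic for every slope $\theta'$; $\crit(\Tr(\tilde{W}))\subset\Tr(\tilde{W})^{-1}(0)$ because $\tilde{W}$ is a homogeneous cubic; and, $\tilde{W}$ being linear in the arrows $a^*$, dimensional reduction (Theorem \ref{dimRedThm}) applies both to $\mathfrak{Tr}(\tilde{W})$ on $\Mst(\tilde{Q})$ and to its pullback along the framings, so every monodromic mixed Hodge module appearing lies in the essential image of $\tau$ of (\ref{taudef}) (Remark \ref{getOut}) --- I therefore suppress the distinction between $\phim{}$ and $\phin{}$.

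First I would apply (\ref{piIso}) to the QP $(\tilde{Q},\tilde{W})$, to $\ff$, to the slope $\theta$ and to $\zeta$, and push the resulting isomorphism forward along $\dim^{\zeta}_{\theta}\colon\Msp(\tilde{Q})^{\zeta\sst}_{\theta}\to\Lambda^{\zeta}_{\theta}$. Since $(\dim^{\zeta}_{\theta})_!\,\pi^{\zeta}_{\tilde{Q},\ff,\theta,!}=(\dim^{\zeta}_{\theta}\circ\pi^{\zeta}_{\tilde{Q},\ff,\theta})_!$, and since $\dim^{\zeta}_{\theta}$ is a morphism of commutative monoids with finite monoid maps so that $(\dim^{\zeta}_{\theta})_!\,\Sym_{\boxtimes_{\oplus}}\cong\Sym_{\boxtimes_+}\,(\dim^{\zeta}_{\theta})_!$ by \cite[Sec.1.12]{MSS11} (exactly as in the proof of Lemma \ref{lem2}), this produces an isomorphism
\[
\HO_c\!\left(\Msp(\tilde{Q})^{\zeta}_{\ff,\theta},\phin{\mathcal{T}r(\tilde{W})^{\zeta}_{\ff,\theta}}\ICS_{\Msp(\tilde{Q})^{\zeta}_{\ff,\theta}}(\mathbb{Q})\right)\cong\Sym_{\boxtimes_+}\!\left(\bigoplus_{0\neq\dd\in\Lambda^{\zeta}_{\theta}}\HO_c\!\left(\Msp(\tilde{Q})^{\zeta\sst}_{\dd},\DTS^{\zeta}_{\tilde{Q},\tilde{W},\dd}\right)\otimes\HO_c(\mathbb{C}\mathbb{P}^{\ff\cdot\dd-1},\mathbb{Q})_{\vir}\right),
\]
whose left-hand side is precisely the graded mixed Hodge structure in the statement of the theorem. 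By (i) and (ii) it now suffices to prove that $\bigoplus_{0\neq\dd\in\Lambda^{\zeta}_{\theta}}\HO_c(\Msp(\tilde{Q})^{\zeta\sst}_{\dd},\DTS^{\zeta}_{\tilde{Q},\tilde{W},\dd})$ is pure of Tate type.

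For this I would run the same reflection once more, now through (\ref{shriekInt}) for $(\tilde{Q},\tilde{W})$ pushed forward along $\dim^{\zeta}_{\theta}$:
\[
\bigoplus_{\dd\in\Lambda^{\zeta}_{\theta}}\HO_c\!\left(\Mst(\tilde{Q})^{\zeta\sst}_{\dd},\phin{\mathfrak{Tr}(\tilde{W})^{\zeta}_{\dd}}\ICS_{\Mst(\tilde{Q})^{\zeta\sst}_{\dd}}(\mathbb{Q})\right)\cong\Sym_{\boxtimes_+}\!\left(\bigoplus_{0\neq\dd\in\Lambda^{\zeta}_{\theta}}\HO_c\!\left(\Msp(\tilde{Q})^{\zeta\sst}_{\dd},\DTS^{\zeta}_{\tilde{Q},\tilde{W},\dd}\right)\otimes\HO_c(\pt/\mathbb{C}^*,\mathbb{Q})_{\vir}\right).
\]
The left-hand side is a direct sum, over $\dd\in\Lambda^{\zeta}_{\theta}$, of the graded mixed Hodge structures shown to be pure of Tate type in Corollary \ref{withSt}, hence is itself pure of Tate type; and $\HO_c(\pt/\mathbb{C}^*,\mathbb{Q})_{\vir}$ is nonzero and pure of Tate type. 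By (i) and (ii) again, $\bigoplus_{0\neq\dd\in\Lambda^{\zeta}_{\theta}}\HO_c(\Msp(\tilde{Q})^{\zeta\sst}_{\dd},\DTS^{\zeta}_{\tilde{Q},\tilde{W},\dd})$ is pure of Tate type, which completes the proof.

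The whole argument is essentially bookkeeping once (\ref{piIso}), (\ref{shriekInt}) and Corollary \ref{withSt} are available, so I expect no genuine obstacle. The points that do require care are the compatibility of $(\dim^{\zeta}_{\theta})_!$, $\pi^{\zeta}_{\tilde{Q},\ff,\theta,!}$ and $\Sym_{\boxtimes_{\oplus}}$ under composition (and the projection formula absorbing the constant factors $\HO_c(\mathbb{C}\mathbb{P}^{\ff\cdot\dd-1},\mathbb{Q})_{\vir}$), the (automatic, from symmetry of $\tilde{Q}$) genericity of $\zeta$ that licenses (\ref{piIso}) and Theorem \ref{IntThm}, and the vanishing of monodromy from dimensional reduction that keeps the entire discussion inside ordinary mixed Hodge modules.
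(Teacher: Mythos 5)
Your proof is correct and follows the same route as the paper: push (\ref{piIso}) forward along $\dim^{\zeta}_{\theta}$ to identify the framed vanishing-cycle cohomology with $\Sym_{\boxtimes_+}$ of the BPS cohomology tensored with $\HO_c(\mathbb{CP}^{\ff\cdot\dd-1},\mathbb{Q})_{\vir}$, then use (\ref{shriekInt}) together with Corollary \ref{withSt} to see that the BPS cohomology $\bigoplus_{\dd}\DT^{\zeta}_{\tilde{Q},\tilde{W},\dd}$ is pure of Tate type, and conclude by the stability of pure Tate type under $\Sym_{\boxtimes_+}$ and tensoring. The only difference is that you spell out more carefully the ``reflection'' steps (that $\Sym_{\boxtimes_+}$ and tensoring by a nonzero pure Tate structure both preserve \emph{and} detect pure Tate type) and record the applicability hypotheses (genericity of $\zeta$ for $\tilde{Q}$, vanishing of monodromy via dimensional reduction), which the paper takes as read.
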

\begin{proof}
Applying $\dim^{\zeta}_{\theta,!}$ to the isomorphism (\ref{piIso}) we obtain the isomorphism
\begin{align}\label{absHint}
&(\dim^{\zeta}_{\theta}\circ \pi^{\zeta}_{\tilde{Q},\ff,\theta})_!\bigoplus_{\dd\in\Lambda_{\theta}^{\zeta}}\phin{\mathcal{T}r(\tilde{W})_{\ff,\dd}^{\zeta}}\QQ_{\Msp(\tilde{Q})^{\zeta}_{\ff,\dd}}\otimes\LL^{(\dd,\dd)_{\tilde{Q}}/2}\cong\\&
\Sym_{\boxtimes_+}\!\left(\bigoplus_{\dd\in\Lambda_{\theta}^{\zeta}} \DT^{\zeta}_{\tilde{Q},\tilde{W},\dd}\otimes\HO(\mathbb{P}^{\ff\cdot \dd-1},\mathbb{Q})^{\vee}\otimes\LL^{-1/2}\right).\nonumber
\end{align}
On the other hand, from Corollary \ref{BPSpure} each of the complexes $\DT^{\zeta}_{\tilde{Q},\tilde{W},\dd}$ are pure.  The purity of the right hand side of (\ref{absHint}) follows, and so does the theorem.
\end{proof}
\subsection{Critical cohomology of $\Hilb(\AA^3)$}
We consider again the special case in which $Q=Q_{\Jor}$, and so $\tilde{Q}$ is a quiver with one vertex and three loops, which we label $x,y,z$, and $\tilde{W}=x[y,z]$.  Setting $\ff=1$, there is a natural isomorphism of schemes (see \cite{BBS})
\begin{equation}
\label{HilbIso}
\Msp(\tilde{Q})_{1,n}\cap\crit(\mathcal{T}r(\tilde{W})_{n})\cong\Hilb_n(\AA^3)
\end{equation}
where the right hand side of (\ref{HilbIso}) is the usual Hilbert scheme parameterising codimension $n$ ideals $I\subset \mathbb{C}[x,y,z]$.  The following is then a corollary of Theorem \ref{purityWithStab}:
\begin{corollary}
\label{nchP}
The mixed Hodge structure
\[
\HO_c\!\left(\Hilb_n(\AA^3),\phi_{\mathcal{T}r(\tilde{W})_n}\mathbb{Q}_{\Msp(\tilde{Q})_{1,n}}\right)
\]
is pure of Tate type for all $n$.  
\end{corollary}
The calculation of the class of 
\[
\left[\HO_c\!\left(\Hilb_n(\AA^3),\phi_{\mathcal{T}r(\tilde{W})_n}\mathbb{Q}_{\Msp(\tilde{Q})_{1,n}}\right)\right]
\]
in a suitable completion of the Grothendieck group of mixed Hodge modules is one of the main results of \cite{BBS}, following on from the earlier paper \cite{DS09}, where an in depth analysis of the $n=4$ case was undertaken.  It follows from our purity result that the Hodge polynomial $\hp\!\left(\HO_c\left(\Hilb_n(\AA^3),\phi_{\mathcal{T}r(\tilde{W})_d}\mathbb{Q}_{\Msp(\tilde{Q})_{1,n}}\right),x,y,z\right)$ is equal to the weight polynomial $\wt\!\left(\HO_c\left(\Hilb_n(\AA^3),\phi_{\mathcal{T}r(\tilde{W})_n}\right),q\right)$ after the substitution $q^2=xyz^2$, and we deduce from \cite[Thm.2.7]{BBS} the following generating function equation for the Hodge polynomial of the vanishing cycle cohomology for $\Hilb_n(\AA^3)$:
\[
\sum_{n\geq 0}\hp\!\left(\HO_c\!\left(\Hilb_n(\AA^3),\phi_{\mathcal{T}r(\tilde{W})_n}\mathbb{Q}_{\Msp(\tilde{Q})_{1,n}}\right),x,y,z\right)(xyz^2)^{-n-n^2}t^n=\prod_{n=1}^{\infty}\prod_{k=0}^{n-1}(1-(xyz^2)^{1-k}t^n)^{-1}.
\]
Indeed, we can determine the critical cohomology of $\Hilb_n(\AA^3)$ itself, without passing to any motivic invariants:
\begin{corollary}
There is an isomorphism of $\mathbb{N}$-graded, cohomologically graded mixed Hodge structures:
\begin{equation}
\label{stg}
\bigoplus_{n\in\mathbb{N}}\HO_c\!\left(\Hilb_n(\AA^3),\phi_{\mathcal{T}r(\tilde{W})_n}\mathbb{Q}_{\Msp(\tilde{Q})_{1,n}}\right)\otimes \LL^{-n-n^2}\cong \Sym\left(\bigoplus_{n\geq 1} \bigoplus_{0\leq k\leq n-1}\LL^{1-k}\right).
\end{equation}
\end{corollary}
\begin{proof}
By Corollary \ref{nchP} the left hand side of \eqref{stg} is pure of Tate type, as is the right hand side (by definition).  A cohomologically graded mixed Hodge structure that is pure, of Tate type, is determined by its weight polynomial.  The required equality of weight polynomials follows from the main result of \cite{BBS}.
\end{proof}
\subsection{Nakajima quiver varieties}
In Section \ref{framedSec} we considered the mixed Hodge structures on the vanishing cycle cohomology of framed representations of the quiver $\tilde{Q}$, where the framing results in a quiver that is not symmetric, i.e. we perform the operation of framing the quiver \textit{after} the operation $Q\mapsto \tilde{Q}$.  By reversing the order of these operations, we derive our results on Nakajima quiver varieties.

\smallbreak

Let $Q$ be an arbitrary quiver, and let $\zeta\in\mathbb{H}_+^{Q_0}$ be a stability condition.  Let $\ff\in\mathbb{N}^{Q_0}$ be a framing vector.  Throughout this section we assume that $\ff\neq 0$.  Consider the quiver $\widetilde{Q_{\ff}}$, where the tilde covers the $\ff$ as well as the $Q$; this is the quiver obtained by doubling the framed quiver $Q_{\ff}$ and then adding a loop $\omega_i$ at every vertex (including the vertex $\infty$).  

\smallbreak

Fix a slope $\theta\in(-\infty,\infty)$.  We define the stability condition $\zeta^{(\theta)}$ as in Section \ref{framedVer}.  Assume that $\dd\in\Lambda_{\theta}^{\zeta}\subset\mathbb{N}^{Q_0}$.  Then a $(1,\dd)$-dimensional $\widetilde{Q_{\ff}}$-representation $\rho$ is $\zeta^{(\theta)}$-stable if and only if the underlying $\tilde{Q}$-representation is $\zeta$-semistable, and for every proper subrepresentation $\rho'\subset \rho$ such that $\dim(\rho')_{\infty}=1$, the underlying $\tilde{Q}$-representation of $\rho'$ has slope strictly less than $\theta$.  In addition, $\zeta^{(\theta)}$-stability for $\widetilde{Q_{\ff}}$-representations of dimension $(1,\dd)$ is equivalent to $\zeta^{(\theta)}$-semistability.
\smallbreak
For each of the vertices $i\in Q_0$, the condition $\mu_{(1,\dd)}(\rho)=0$ imposes the conditions 
\begin{equation}
\label{essRel}
T_i:=\sum_{t(a)=i}\rho(a)\rho(a^*)-\sum_{s(a)=i}\rho(a^*)\rho(a)+\sum_{i\in Q_0}\sum_{1\leq n\leq \ff_i}\rho(\beta_{i,n})\rho(\beta^*_{i,n})=0
\end{equation}
which are the usual Nakajima quiver variety relations \cite{Nak94} \cite{Nak98}, while at the vertex $\infty$, the relation imposed is 
\begin{equation}
\label{redRel}
T_{\infty}:={}-\sum_{i\in Q_0}\sum_{1\leq n\leq \ff_i}\rho(\beta^*_{i,n})\rho(\beta_{i,n})=0.
\end{equation}
By cyclic invariance of the trace, we have
\[
\sum_{i\in (Q_{\ff})_0}\Tr(T_i)=0
\]
and so $T_{\infty}=\Tr(T_{\infty})=0$ follows already from the relations (\ref{essRel}), and (\ref{redRel}) is redundant.  It follows that 
\[
\left(\mu^{-1}_{Q_{\ff},(1,\dd)}(0)\cap X(\overline{Q}_{\ff})^{\zeta^{(\theta)}\sst}_{(1,\dd)}\right)/\Gl_{\dd}
\]
is the usual Nakajima quiver variety, which we will denote $\Nak^{\zeta}(\dd,\ff)$.  There is an isomorphism
\begin{equation}
\label{naksBack}
\HO_c\!\left(\left(\mu^{-1}_{Q_\ff,(1,\dd)}(0)\cap X(\overline{Q}_{\ff})^{\zeta^{(\theta)}\sst}_{(1,\dd)}\right)/\Gl_{(1,\dd)},\mathbb{Q}\right)\cong \HO_c(\Nak^{\zeta}(\dd,\ff),\mathbb{Q})\otimes\HO_c(\B \mathbb{C}^*,\mathbb{Q}).
\end{equation}
Each $\Nak^{\zeta}(\dd,\ff)$ is smooth, and so we have $\HO_c(\Nak^{\zeta}(\dd,\ff),\mathbb{Q})\cong \HO(\Nak^{\zeta}(\dd,\ff),\mathbb{Q})^{\vee}\otimes\mathbb{L}^{\dim(\Nak^{\zeta}(\dd,\ff))}$, and we recover the following corollary.
\begin{corollary}
\label{oriPure}
For an arbitrary quiver $Q$, nonzero dimension vectors $\ff,\dd\in\mathbb{N}^{Q_0}$, and a King stability condition $\zeta\in\mathbb{H}_+^{Q_0}$, the singular cohomology of the Nakajima quiver variety
\[
\HO(\Nak^{\zeta}(\dd,\ff),\mathbb{Q})
\]
is pure of Tate type.
\end{corollary}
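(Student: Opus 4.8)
The plan is to obtain this as a corollary of Theorem \ref{2dstab}, applied not to $Q$ itself but to the framed quiver $Q_{\ff}$, together with a standard reduction via Poincar\'e duality. First, since $\ff\neq 0$ the Nakajima quiver variety $\Nak^{\zeta}(\dd,\ff)$ is smooth, so Poincar\'e duality gives $\HO(\Nak^{\zeta}(\dd,\ff),\QQ)\cong \HO_c(\Nak^{\zeta}(\dd,\ff),\QQ)^{\vee}\otimes\LL^{\otimes\dim(\Nak^{\zeta}(\dd,\ff))}$; since dualizing and tensoring with a power of $\LL$ preserve both purity and the property of being of Tate type, it suffices to prove that $\HO_c(\Nak^{\zeta}(\dd,\ff),\QQ)$ is pure of Tate type.

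To get at this compactly supported cohomology I would pass to the framed picture of Section \ref{framedVer}: adjoin a framing vertex $\infty$ to form $Q_{\ff}$, fix a slope $\theta$, and equip $Q_{\ff}$ with the King stability condition $\zeta^{(\theta)}$ whose slope at $\infty$ lies just above $\theta$, for which $\zeta^{(\theta)}$-semistability and $\zeta^{(\theta)}$-stability coincide for the dimension vector $(1,\dd)$. The essential algebraic input, recorded in equations (\ref{essRel})--(\ref{redRel}) and the surrounding discussion, is that cyclic invariance of the trace makes the moment-map relation $T_{\infty}=0$ at the framing vertex a consequence of the relations $T_i=0$ at the vertices of $Q$. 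Hence $\mu^{-1}_{Q_{\ff},(1,\dd)}(0)$ is precisely the locus of the Nakajima relations, the stack $\Mst(\Pi_{Q_{\ff}})^{\zeta^{(\theta)}\sst}_{(1,\dd)}$ coincides with $(\mu^{-1}_{Q_{\ff},(1,\dd)}(0)\cap X(\overline{Q_{\ff}})^{\zeta^{(\theta)}\sst}_{(1,\dd)})/\Gl_{(1,\dd)}$, and equation (\ref{naksBack}) identifies its compactly supported cohomology with $\HO_c(\Nak^{\zeta}(\dd,\ff),\QQ)\otimes\HO_c(\mathbb{C}\mathbb{P}^1,\QQ)$.

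Now I would invoke Theorem \ref{2dstab} with $(Q_{\ff},(1,\dd),\zeta^{(\theta)})$ playing the role of $(Q,\dd,\zeta)$: it yields that $\HO_c(\Mst(\Pi_{Q_{\ff}})^{\zeta^{(\theta)}\sst}_{(1,\dd)},\QQ)$ is pure of Tate type. By the previous paragraph this means $\HO_c(\Nak^{\zeta}(\dd,\ff),\QQ)\otimes\HO_c(\mathbb{C}\mathbb{P}^1,\QQ)$ is pure of Tate type. Since $\HO_c(\mathbb{C}\mathbb{P}^1,\QQ)\cong\QQ\oplus\LL$ is nonzero, pure, of Tate type, and contains a copy of $\QQ$ in cohomological degree zero, a weight-by-weight comparison in the K\"unneth decomposition forces $\HO_c(\Nak^{\zeta}(\dd,\ff),\QQ)$ to be pure of Tate type; combined with the first step, this proves the corollary.

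I do not expect a genuine obstacle: the real content lies in Theorem \ref{2dstab}, which rests on Theorem \ref{purityWithStab}, Corollary \ref{withSt}, Theorem \ref{purityThm} and the two support lemmas. The only points that need care are (a) confirming that tripling \emph{after} framing produces a moduli problem to which Theorem \ref{2dstab} genuinely applies --- this is exactly the second support lemma (Lemma \ref{supportProp}) read for the quiver $Q_{\ff}$, which pins down the relevant critical locus --- and (b) correctly bookkeeping the extra scalar automorphism at the framing vertex, which is handled by equation (\ref{naksBack}) and the harmless cancellation of the factor $\HO_c(\mathbb{C}\mathbb{P}^1,\QQ)$.
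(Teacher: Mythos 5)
Your proof is correct and follows essentially the same route as the paper: apply Theorem \ref{2dstab} to $(Q_{\ff},(1,\dd),\zeta^{(\theta)})$, use the redundancy of the relation $T_{\infty}=0$ and (\ref{naksBack}) to pass to $\HO_c(\Nak^{\zeta}(\dd,\ff),\QQ)$, and invoke Poincar\'e duality via smoothness of $\Nak^{\zeta}(\dd,\ff)$. The only cosmetic difference is that you front-load the Poincar\'e duality reduction while the paper applies it last; and your remark that Lemma \ref{supportProp} must be ``read for $Q_{\ff}$'' is not an extra step --- it is already absorbed into the proof of Theorem \ref{2dstab}, which is stated for an arbitrary finite quiver and so applies verbatim to $Q_{\ff}$.
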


\section{The PBW and wall crossing isomorphisms}\label{BC}
\subsection{Serre subcategories}
\label{PProof}
Let $\mathcal{S}\subset \mathbb{C}\overline{Q}\lmod$ be a Serre subcategory of the category of finite-dimensional $\mathbb{C}\overline{Q}$-modules, i.e. we choose a property $P$ of $\mathbb{C}\overline{Q}$-modules such that for every short exact sequence
\[
0\rightarrow M'\rightarrow M\rightarrow M''\rightarrow 0
\]
inside $\mathbb{C}\overline{Q}\lmod$, $M'$ and $M''$ have property $P$ if and only if $M$ does.  Then $\mathcal{S}\subset \mathbb{C}\overline{Q}\lmod$ is the full subcategory of modules having property $P$.  We assume that there is an inclusion of algebraic stacks $\iota\colon \Mst(\overline{Q})^{\mathcal{S}}\hookrightarrow \Mst(\overline{Q})$ which induces the inclusion of the objects of $\mathcal{S}$ into the objects of $\mathbb{C}\overline{Q}\lmod$ after passing to $\mathbb{C}$-points.  

\smallbreak
The standard construction for $P$ is as follows.  For a quiver $Q$, let $C(Q)$ denote the set of equivalence classes of cycles in $Q$, i.e. the set of cyclic paths, where if $ll'$ and $l'l$ are both cyclic paths, they are considered to be equivalent.  For every cycle $c\in C(Q)$, we pick a constructible subset $U_c\subset \mathbb{C}$, and we say that a $\mathbb{C}\overline{Q}$-module $\rho$ has property $P$ if and only if the generalised eigenvalues of $\rho(\overline{c})$ belong to $U_c$, for each $\overline{c}$ a representative of $c\in C(U)$.
\begin{example}
\label{nilp}
Setting all $U_c=\{0\}$, $\mathcal{S}\subset \mathbb{C}\overline{Q}\lmod$ is the subcategory of nilpotent modules, i.e. those modules $M$ for which there exists some $n\in\mathbb{N}$ such that 
\[
\mathbb{C}\overline{Q}_{\geq n} \cdot M=0.
\]
\end{example}
\begin{example}
\label{snilp}
Setting
\[
U_c=\begin{cases} \mathbb{C}&\textrm{if }c\in C(Q)\\ \{0\}&\textrm{otherwise.}\end{cases}
\]
we obtain the condition for the Lusztig nilpotent variety if $Q$ has no loops.  In general, the Serre subcategory $\mathcal{S}\subset\mathbb{C}\overline{Q}\lmod$ determined by this choice of $U_c$ is the subcategory of modules $M$ for which there exists a filtration by $Q_0$-graded vector spaces $0\subset L^1\subset \ldots\subset L^n$ of the underlying $Q_0$-graded vector space of $M$, such that $a\cdot L^s\subset L^s$ for all $s$, and $a^*\cdot L^s\subset L^{s-1}$.  This second property is obviously of Serre type.  
\smallbreak
The equivalence of these two Serre properties is demonstrated as follows.  Say $M$ is a $\mathbb{C}\overline{Q}$-module in the Serre subcategory determined by the above choices of $U_c$.  Then every $p\in \mathbb{C}\overline{Q}_{\geq 1}\setminus\mathbb{C}Q_{\geq 1}$ acts on $M$ via a nilpotent operator.  By Engel's theorem there is a filtration of vector spaces $0=M_0\subset M^1\subset\ldots\subset M^n$ such that for each such $p$ there is an inclusion $p\cdot M^s\subset M^{s-1}$.  Now set $L^s=\mathbb{C}Q\cdot M^s$ to obtain the required filtration by $\mathbb{C}\overline{Q}$-modules, observing that \[
a^*\cdot\mathbb{C}Q\cdot M^s\in(\mathbb{C}\overline{Q}_{\geq 1}\setminus \mathbb{C}Q_{\geq 1})\cdot M^s\subset M^{s-1}\subset \mathbb{C}Q\cdot M^{s-1}.\]
This condition is introduced under the name of *-semi-nilpotency in \cite{BSV17}.
\end{example}
\begin{example}
\label{ssnilp}
For a final example we turn to \cite{Bo14}.  Set 
\[
U_c=\begin{cases} \mathbb{C} &\textrm{if }c\in\mathbb{C}Q'\textrm{ where }Q'\textrm{ is a subquiver of }$Q$\textrm{ containing only one vertex}\\
0&\textrm{otherwise.}\end{cases}
\]
A $\mathbb{C}\overline{Q}$-module is called *-strongly semi-nilpotent\footnote{In fact this is the modified terminology of \cite{BSV17}.} if it possesses a filtration as in Example \ref{snilp}, for which each subquotient $L^s/L^{s-1}$ is supported at a single vertex.  These are exactly the modules in the Serre subcategory corresponding to the above choices of $U_c$.  For this equivalence one argues as in Example \ref{snilp}.
\end{example}

\subsection{Proof of Theorem \ref{PoinThm} and \ref{2dint}}
Applying the functor $\dim_!\overline{\iota}^*$ to the isomorphism constructed in the next theorem yields Theorem \ref{PoinThm}.
\begin{theorem}
\label{2dWC}
Pick a stability condition $\zeta\in\mathbb{H}_+^{Q_0}$.  There is an isomorphism in $\Dl(\MHM(\Msp(\overline{Q})))$
\begin{align*}
&\bigoplus_{\dd\in\mathbb{N}^{Q_0}}\p_{\overline{Q},\dd,!}\QQ_{\Mst(\Pi_Q)_{\dd}}\otimes \LL^{(\dd,\dd)}
\cong\\&
\Boxtimes_{\oplus,\theta\in(-\infty,\infty)}\bigoplus_{\dd\in\dvst}\left( q_{\overline{Q},\dd,!}^{\zeta}\p_{\overline{Q},\dd,!}^{\zeta}\QQ_{\Mst(\Pi_Q)_{\dd}^{\zeta\sst}}\otimes \LL^{(\dd,\dd)}
\right).
\end{align*}
\end{theorem}

\renewcommand*{\proofname}{Proof of Theorem \ref{2dWC}}
\begin{proof}
We consider the commutative diagram
\[
\xymatrix{
\Mst(\tilde{Q})^{\zeta\sst}_{\dd}\ar[d]^{\p^{\zeta}_{\tilde{Q},\dd}}&\ar@{_{(}->}[l]_-jV\ar[r]^-{\tau_{Q,\dd}}&\Mst(\overline{Q})_{\dd}^{\zeta\sst}\ar[d]^{\p^{\zeta}_{\overline{Q},\dd}}\\
\Msp(\tilde{Q})^{\zeta\sst}_{\dd}\ar[d]^{q_{\tilde{Q},\dd}^{\zeta}}&&\Msp(\overline{Q})^{\zeta\sst}_{\dd}\ar[d]^{q_{\overline{Q},\dd}^{\zeta}}\\
\Msp(\tilde{Q})_{\dd}\ar[rr]^{\tau'_{Q,\dd}}&&\Msp(\overline{Q})_{\dd}.
}
\]
With $V$ defined as in \eqref{Vdef}.  By Theorem \ref{dimRedThm} there are isomorphisms
\begin{align}
\label{hwga}
\p^{\zeta}_{\overline{Q},\dd,!}\QQ_{\Mst(\Pi_Q)^{\zeta\sst}_{\dd}}\otimes\LL^{\dd\cdot \dd}\cong \p^{\zeta}_{\overline{Q},\dd,!}\tau_{Q,\dd,!}\phin{\mathfrak{Tr}(\tilde{W})^{\zeta}_{\dd}}\QQ_{V}
\\
\p_{\overline{Q},\dd,!}\QQ_{\Mst(\Pi_Q)^{\zeta\sst}_{\dd}}\otimes\LL^{ \dd\cdot \dd}\cong\tau'_{Q,\dd,!}\p_{\tilde{Q},\dd,!}\phin{\mathfrak{Tr}(\tilde{W})_{\dd}}\QQ_{\Mst(\tilde{Q})_{\dd}}.\nonumber
\end{align}
By Lemma \ref{supportProp} the support of $\phin{\mathfrak{Tr}(\tilde{W})^{\zeta}_{\dd}}\QQ_{\Mst(\tilde{Q})^{\zeta\sst}_{\dd}}$ is contained in the image of the natural inclusion $j$, and so from \eqref{hwga} and the above commutative diagram we obtain the isomorphism
\[
q_{\overline{Q},\dd,!}^{\zeta}\p^{\zeta}_{\overline{Q},\dd,!}\QQ_{\Mst(\Pi_Q)^{\zeta\sst}_{\dd}}\cong \tau'_{Q,\dd,!}q^{\zeta}_{\tilde{Q},\dd,!}\p^{\zeta}_{\tilde{Q},\dd,!}\phin{\mathfrak{Tr}(\tilde{W})_{\dd}^{\zeta}}\QQ_{\Mst(\tilde{Q})^{\zeta\sst}_{\dd}}.
\]
Thus, applying $\tau'_{Q,!}$ to the isomorphism \eqref{paraDecomp} applied to the QP $(\tilde{Q},\tilde{W})$ yields the required isomorphism.
\end{proof}
\renewcommand*{\proofname}{Proof}

We again use Lemma \ref{supportProp} to deduce Theorem \ref{2dint} from the analogous result for general quivers with potential:
\begin{proof}[Proof of Theorem \ref{2dint}]
Let $m\colon\AA^1\times \Msp(\Pi_Q)_{\dd}^{\zeta\sst}\rightarrow \Msp(\tilde{Q})_{\dd}^{\zeta\sst}$ be the morphism extending a $\Pi_Q$-module to a $\Jac(\tilde{Q},\tilde{W})$-module by letting all of the loops $\omega_i$ act by multiplication by a fixed scalar in $\AA^1$.  Then by Theorem \ref{2dbpsthm} there is an isomorphism
\begin{equation}
\label{2dhwc}
\DTS^{\zeta}_{\tilde{Q},\tilde{W},\dd}\cong m_*(\ICS_{\AA^1}(\QQ)\boxtimes\DTS_{\Pi_Q}^{\zeta})
\end{equation}
where $\DTS_{\Pi_Q}^{\zeta}$ is a mixed Hodge module on $\Msp(\Pi_Q)^{\zeta\sst}_{\dd}$. 
\smallbreak
Consider the commutative diagram
\[
\xymatrix{
\Mst(\Jac(\tilde{Q},\tilde{W}))^{\zeta\sst}_{\dd}\ar[d]^{\tilde{\p}^{\zeta}_{\dd}}\ar[r]^-{\tau_{\dd}}&\Mst(\Pi_Q)^{\zeta\sst}_{\dd}\ar[d]^{\overline{\p}^{\zeta}_{\dd}}\\
\Msp(\Jac(\tilde{Q},\tilde{W}))^{\zeta\sst}_{\dd}\ar[r]^-{\tau'_{\dd}}&\Msp(\Pi_Q)^{\zeta\sst}_{\dd}.
}
\]
Arguing as in the proof of Theorem \ref{2dWC}, there are isomorphisms
\begin{align*}
\bigoplus_{\dd\in\dvst}\overline{\p}^{\zeta}_{\dd,!}\QQ_{\Mst(\Pi_Q)^{\zeta\sst}_{\dd}}\otimes\LL^{(\dd,\dd)}\cong &\overline{\p}^{\zeta}_{\theta,!}\tau_{\theta,!}\phin{\mathfrak{Tr}(\tilde{W})^{\zeta}_{\theta}}\ICS_{\Mst(\tilde{Q})^{\zeta\sst}_{\theta}}(\QQ)\\
\cong &\tau'_{\theta,!}\tilde{\p}_{\theta,!}^{\zeta}\phin{\mathfrak{Tr}(\tilde{W})^{\zeta}_{\theta}}\ICS_{\Mst(\tilde{Q})^{\zeta\sst}_{\theta}}(\QQ)\\
\cong &\tau'_{\theta,!}\Sym_{\boxtimes_{\oplus}}\!\left(\DTS_{\tilde{Q},\tilde{W},\theta}^{\zeta}\otimes\HO_c(\B \mathbb{C}^*,\QQ)_{\vir}\right)\\
\cong &\Sym_{\boxtimes_{\oplus}}\!\left(\tau'_{\theta,!}\DTS_{\tilde{Q},\tilde{W},\theta}^{\zeta}\otimes\HO_c(\B \mathbb{C}^*,\QQ)_{\vir}\right)\\
\cong &\Sym_{\boxtimes_{\oplus}}\!\left(\DTS_{\Pi_Q,\theta}^{\zeta}\otimes\HO(\B \mathbb{C}^*,\QQ)^{\vee}\right),
\end{align*}
giving the isomorphism \eqref{rel2dI}. 

The construction of the PBW isomorphism is similar; via dimensional reduction and Lemma \ref{supportProp} there is an isomorphism
\[
\Coha_{\Pi_Q,\theta}^{\Sp,\zeta}\cong \Coha_{\tilde{Q},\tilde{W},\theta}^{\tilde{\Sp},\zeta},
\]
where $\tilde{\Sp}$ is the Serre subcategory of $\mathbb{C}\tilde{Q}$-modules for which the underlying $\mathbb{C}\overline{Q}$-module is in $\Sp$, defining a Hall algebra structure on $\Coha_{\Pi_Q,\theta}^{\Sp,\zeta}$.  Then the required PBW isomorphism is constructed from Theorem \ref{PBW3d} and the isomorphisms
\begin{align*}
\DT^{\tilde{\Sp},\zeta}_{\tilde{Q},\tilde{W},\theta}= &\HO_c(\Msp(\tilde{Q})^{\tilde{\Sp},\zeta\sst}_{\theta},\DTS^{\zeta}_{\tilde{Q},\tilde{W},\theta})^{\vee}\\
\cong &\HO_c(\Msp(\overline{Q})^{\Sp,\zeta\sst}_{\theta},\DTS^{\zeta}_{\Pi_Q,\theta})^{\vee}\otimes\LL^{ -1/2}.
\end{align*}
following from the isomorphism \eqref{2dhwc}.
\end{proof}

\subsection{Applications for Nakajima quiver varieties}
\label{AppBC}
We explain the special case of Theorem \ref{PoinThm} which gives rise to Hausel's original formula for the Poincar\'e polynomials of Nakajima quiver varieties.  In brief, we choose $\Pi_{Q_{\ff}}$ to be the preprojective algebra for a framed quiver $Q_{\ff}$, pick $\zeta$ to be the usual stability condition defining the Nakajima quiver variety, set $\mathcal{S}=\mathbb{C}Q_{\ff}\lmod$ and specialise the Hodge series to the Poincar\'e series, to derive Hausel's result.  For this set of choices, an analogue of equation (\ref{decaf}) was demonstrated by Dimitri Wyss \cite{Wyss15}, working in the naive Grothendieck ring of exponential motives.  We describe in a little more detail how our derivation runs.

\smallbreak

Firstly, let $Q$ be a quiver, and let $\mathcal{S}\subset \mathbb{C}\overline{Q}\lmod$ be a Serre subcategory.  Let $\ff\in\mathbb{N}^{Q_0}$ be a framing vector, assumed nonzero, and let $\mathcal{S}_{\ff}\subset \Pi_{Q_{\ff}}\lmod$ be the Serre subcategory consisting of those modules for which the underlying $\mathbb{C}\overline{Q}$-module is in $\mathcal{S}$.  We let $\zeta=(i,\ldots,i)$ be the degenerate stability condition on $Q$, and define $\zeta^{(0)}$ as in Section \ref{framedVer}.  If $X$ is an Artin stack, we define its Poincar\'e series via
\[
\poi(X,q)=\hp(\HO_c(X,\mathbb{Q}),1,1,q).
\]
Equating coefficients in (\ref{decaf}) for which $\dd_{\infty}=1$, and specialising, we obtain
\begin{align}\nonumber
&\sum_{\dd\in\mathbb{N}^{Q_0}}\poi(\Mst(\Pi_{Q_{\ff}})^{\mathcal{S}_{\ff}}_{(1,\dd)},q)q^{2((\dd,\dd)-\ff\cdot \dd+1)} x^{\dd}=
\\ \nonumber
&\left(\sum_{\dd\in\mathbb{N}^{Q_0}} \poi(\Mst(\Pi_Q)^{\mathcal{S}}_{\dd},q)q^{2(\dd,\dd)}x^{\dd}\right)\left(\sum_{\dd\in\mathbb{N}^{Q_0}} \poi(\Mst(\Pi_{Q_{\ff}})^{\mathcal{S}_{\ff},\zeta^{(0)}\sst}_{(1,\dd)},q)q^{2((\dd,\dd)-\ff\cdot \dd+1)}x^{\dd}  \right)
\nonumber 
\end{align}
or by (\ref{naksBack})
\begin{align}
\label{HuaInf}
&\sum_{\dd\in\mathbb{N}^{Q_0}}\poi(\Mst(\Pi_{Q_{\ff}})^{\mathcal{S}_{\ff}}_{(1,\dd)},q)q^{2((\dd,\dd)-\ff\cdot \dd+1)} x^{\dd}=
\\
&\left(\sum_{\dd\in\mathbb{N}^{Q_0}}  \poi(\Mst(\Pi_Q)^{\mathcal{S}}_{\dd},q)q^{2(\dd,\dd)}x^{\dd}\right)\left(\sum_{\dd\in\mathbb{N}^{Q_0}}  \poi(\Nak(\ff,\dd)^{\mathcal{S}},q)q^{2((\dd,\dd)-\ff\cdot \dd+1)}x^{\dd}(q^2-1)^{-1}  \right)\label{HuaIn}
\end{align}
where we have used the isomorphism (\ref{naksBack}) for the final equality, and $\Nak(\ff,\dd)^{\mathcal{S}}$ is the subvariety of the Nakajima quiver variety for the dimension vector $\dd$ and framing vector $\ff$ corresponding to those points for which the underlying $\overline{Q}$-representation is in $\mathcal{S}$.  Putting $\mathcal{S}=\mathbb{C}\overline{Q}\lmod$ (or, equivalently, removing $\mathcal{S}$ from the above formulae) and using Hua's formula \cite{Hua00} to rewrite (\ref{HuaInf}) and the first term of (\ref{HuaIn}) as rational functions in $q$ defined in terms of Kac polynomials, we recover Theorem 5 of \cite{Hau06}.  The advance that Theorem \ref{PoinThm} gives us is an upgrade from an equality of generating series to an isomorphism in cohomology, i.e. it tells us that \eqref{HuaInf} is induced by a graded isomorphism of (pure) Hodge structures
\begin{align*}
&\bigoplus_{\dd\in\mathbb{N}^{Q_0}}\HO_c\!\left(\Mst(\Pi_{Q_{\ff}})^{\mathcal{S}_{\ff}}_{(1,\dd)},\mathbb{Q}\right)\otimes\LL^{ ((\dd,\dd)-\ff\cdot\dd+1)}\cong 
\\&\left(\bigoplus_{\dd\in\mathbb{N}^{Q_0}}\HO_c\!\left(\Mst(\Pi_Q)^{\mathcal{S}}_{\dd},\mathbb{Q}\right)\otimes\LL^{ (\dd,\dd)}\right)\otimes\left( \bigoplus_{\dd\in\mathbb{N}^{Q_0}}\HO_c\!\left(\Nak(\ff,\dd)^{\mathcal{S}},\mathbb{Q}\right)\otimes\LL^{ ((\dd,\dd)-\ff\cdot\dd+1)}\otimes\HO_c(\B\mathbb{C}^*,\mathbb{Q})\right)
\end{align*}
by taking Poincar\'e series of the two sides of the isomorphism.
\section{Restricted Kac polynomials}
\label{AppKac}
\subsection{Definitions}
In this section we explain how Theorem \ref{2dint} enables one to define and categorify the Kac polynomial $\kac_{Q,\dd}^{\mathcal{S}}(q^{1/2})$ associated to a quiver $Q$, a Serre subcategory $\mathcal{S}\subset \mathbb{C}\overline{Q}$, and a dimension vector $\dd$.  Furthermore, we explain a general mechanism for deducing positivity of such Kac polynomials from purity, and in particular prove Theorem \ref{rkacp}.
\sbrk
Defining $\DT_{\Pi_Q}^{\mathcal{S}}$, as per our conventions, as $\DT^{{\Sp},\zeta}_{\Pi_Q}$, for the degenerate stability condition $\zeta=(i,\ldots,i)$ (equivalently, without any stability condition), the dual of isomorphism \eqref{bmirror} yields
\begin{equation}
\label{rkq}
\bigoplus_{\dd\in\mathbb{N}^{Q_0}}\HO_c\!\left(\Mst(\Pi_Q)_{\dd}^{\mathcal{S}},\mathbb{Q}\right)\otimes\LL^{ (\dd,\dd)}\cong\Sym\!\left(\DT^{\mathcal{S},\vee}_{\Pi_Q}\otimes\LL\otimes \HO_c(\B\mathbb{C}^*,\mathbb{Q})\right).
\end{equation}

Isomorphism (\ref{rkq}) can be restated as saying that $\DT^{\mathcal{S},\vee}_{\Pi_Q}$ categorifies the \textit{restricted Kac polynomials} $\kac^{\mathcal{S}}_{Q,\dd}(q^{1/2})$, defined by the plethystic logarithm (the inverse to the plethystic exponential)
\[
q(q-1)^{-1}\sum_{\dd\in\mathbb{N}^{Q_0}}\kac^{\mathcal{S}}_{Q,\dd}(q^{1/2})t^{\dd}=\Log\!\left(\sum_{\dd\in\mathbb{N}^{Q_0}}\wt\!\left(\HO_c(\Mst(\Pi_Q)_{\dd}^{\mathcal{S}},\mathbb{Q}),q^{1/2}\right)q^{(\dd,\dd)}t^{\dd}\right).
\]
Isomorphism \eqref{rkq} and \eqref{aped} imply
\[
\kac^{\mathcal{S}}_{Q,\dd}(q^{1/2})=\wt(\DT^{\mathcal{S}}_{\Pi_Q,\dd},q^{-1/2}).
\]
This is indeed a polynomial: despite its high-tech definition, $\DT^{\mathcal{S}}_{\Pi_Q,\dd}$ is, after all, the hypercohomology of a bounded complex of mixed Hodge modules on an algebraic variety.
\smallbreak
\subsection{Positivity of Kac polynomials}
A corollary of the existence of the isomorphism (\ref{rkq}) is that if $\bigoplus_{\dd\in\mathbb{N}_{Q_0}}\HO_c(\Mst(\Pi_Q)_{\dd}^{\mathcal{S}},\mathbb{Q})$ is pure, then so is $\DT_{\Pi_Q}^{\mathcal{S}}$, and as a result, $\kac^{\mathcal{S}}_{Q,\dd}(q^{1/2})$ has only positive coefficients, when expressed as a polynomial in $-{q^{1/2}}$.
  \sbrk
This brings us to the special case of Theorem \ref{2dint} that, along with Theorem \ref{purityThm}, implies the Kac positivity conjecture, first proved by Hausel, Letellier and Villegas in \cite{HLRV13} via arithmetic Fourier analysis for \textit{smooth} Nakajima quiver varieties.  Namely, we set $\mathcal{S}=\mathbb{C}\overline{Q}\lmod$, and we set $\zeta=(i,\ldots,i)$ to be the degenerate stability condition.  Then Theorem \ref{2dint} states that there is an isomorphism
\begin{equation}
\label{util}
\bigoplus_{\dd\in\mathbb{N}^{Q_0}}\HO_c(\Mst(\Pi_Q)_{\dd},\mathbb{Q})\otimes\LL^{ (\dd,\dd)}\cong\Sym\!\left(\DT_{\Pi_Q}^{\vee}\otimes\HO(\B\mathbb{C}^*,\mathbb{Q})^{\vee}\right)
\end{equation}
while Theorem \ref{dimRedThm} states that there is an isomorphism
\[
\bigoplus_{\dd\in\mathbb{N}^{Q_0}}\HO_c\!\left(\Mst(\Pi_Q)_{\dd},\mathbb{Q}\right)\otimes\LL^{ (\dd,\dd)}\cong\bigoplus_{\dd\in\mathbb{N}^{Q_0}}\HO_c\!\left(\Mst(\tilde{Q})_{\dd},\phin{\mathfrak{T}r(\tilde{W})}\ICS_{\Mst(\tilde{Q})_{\dd}}(\mathbb{Q})\right).
\]
On the other hand by \cite[Thm.5.1]{Moz11} there is an equality
\begin{align*}
\sum_{\dd\in\mathbb{N}^{Q_0}}\wt\!\left(\HO_c\!\left(\Mst(\tilde{Q})_{\dd},\phin{\mathfrak{T}r(\tilde{W})}\ICS_{\Mst(\tilde{Q})_{\dd}}(\mathbb{Q})\right),q^{1/2}\right)t^{\dd}=\Exp\!\left(\sum_{\dd\in\mathbb{N}^{Q_0}\setminus \{0\}}\kac_{Q,\dd}(q)(1-q^{-1})^{-1}t^{\dd}\right)
\end{align*}
where $\kac_{Q,\dd}(q)$ is Kac's original polynomial, from which we deduce that 
\[
\wt(\DT_{\Pi_Q,\dd},q^{1/2})=\kac_{Q,\dd}(q^{-1}).
\]
On the other hand, from Corollary \ref{BPSpure}, we deduce that each $\DT_{\Pi_Q,\dd}\cong \DT_{\tilde{Q},\tilde{W},\dd}\otimes\LL^{1/2}$ is pure, and so $\wt(\DT_{\Pi_Q,\dd},q^{1/2})$ is a polynomial in $-q^{1/2}$ with positive coefficients.  In particular, since $\kac_{Q,\dd}(q)$ is a polynomial in $q$, we have reproved the following theorem:
\begin{theorem}\cite{HLRV13}\label{KP}
For a finite quiver $Q$, and a dimension vector $\dd\in\mathbb{N}^{Q_0}$, the Kac polynomial $\kac_{Q,\dd}(q)$ has positive coefficients.
\end{theorem}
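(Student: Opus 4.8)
The plan is to reduce Theorem \ref{KP} to a purity statement about the cohomological BPS invariants $\DT_{\tilde{Q},\tilde{W},\dd}$ of the tripled quiver with potential, combined with Mozgovoy's identification of the corresponding weight series with Kac polynomials. First I would take the isomorphism (\ref{util}), which is the specialisation of Theorem \ref{2dint} to $\mathcal{S}=\mathbb{C}\overline{Q}\lmod$ and the degenerate stability condition, and combine it with the dimensional reduction theorem (Theorem \ref{dimRedThm}), which identifies the left hand side of (\ref{util}) with $\bigoplus_{\dd}\HO_c\!\left(\Mst(\tilde{Q})_{\dd},\phin{\mathfrak{Tr}(\tilde{W})}\ICS_{\Mst(\tilde{Q})_{\dd}}(\mathbb{Q})\right)$. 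This places the problem into exactly the setting covered by \cite[Thm.5.1]{Moz11}.

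Second, I would extract weight series. By \cite[Thm.5.1]{Moz11} the weight series of $\bigoplus_{\dd}\HO_c\!\left(\Mst(\tilde{Q})_{\dd},\phin{\mathfrak{Tr}(\tilde{W})}\ICS_{\Mst(\tilde{Q})_{\dd}}(\mathbb{Q})\right)$ is $\Exp\!\left(\sum_{\dd\neq 0}\kac_{Q,\dd}(q)(1-q^{-1})^{-1}t^{\dd}\right)$, with $\kac_{Q,\dd}$ Kac's original polynomials. Taking weight series on both sides of (\ref{util}) and applying $\Log$ (using its injectivity on the relevant completed power series ring) I would read off $\wt(\DT_{\tilde{Q},\tilde{W},\dd},q^{1/2})=-q^{1/2}\kac_{Q,\dd}(q)$. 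The only care required here is tracking the Tate twists $\LL^{\otimes(\dd,\dd)}$ and $\LL^{\otimes -1/2}\otimes\HO_c(\pt/\mathbb{C}^*,\mathbb{Q})$, so that the plethystic logarithm isolates precisely the BPS contribution and produces the factor $-q^{1/2}$.

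Finally I would invoke purity. By Theorem \ref{purityThm} and Lemma \ref{lem2} the left hand side of (\ref{util}) is pure of Tate type, and since a graded mixed Hodge structure $\mathcal{F}$ is pure of Tate type if and only if $\Sym(\mathcal{F})$ is, each factor $\DT_{\tilde{Q},\tilde{W},\dd}$ is pure of Tate type. As it is the hypercohomology of a genuine mixed Hodge module on a variety (the integrality half of Theorem \ref{IntThm}), it is concentrated in finitely many cohomological degrees, so purity of Tate type gives $\HO^a(\DT_{\tilde{Q},\tilde{W},\dd})\cong(\LL^{\otimes a/2})^{\oplus a_a}$ with $a_a\in\mathbb{N}$, hence $\wt(\DT_{\tilde{Q},\tilde{W},\dd},q^{1/2})=\sum_a a_a(-q^{1/2})^a$ is a polynomial in $-q^{1/2}$ with nonnegative coefficients. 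Writing $\kac_{Q,\dd}(q)=\sum_k c_k q^k$, the identity $\wt(\DT_{\tilde{Q},\tilde{W},\dd},q^{1/2})=-q^{1/2}\kac_{Q,\dd}(q)=\sum_k c_k(-q^{1/2})^{2k+1}$ forces $c_k=a_{2k+1}\ge 0$ for all $k$, which is the assertion. I do not expect a serious obstacle: the genuinely hard input — the first support lemma (Lemma \ref{lem1}) and the relative cohomological integrality theorem — is already packaged inside Theorems \ref{purityThm} and \ref{2dint}, so the step most in need of care is merely confirming that Mozgovoy's formula is applied to exactly the complex that dimensional reduction produces, and that the twists line up to give the clean identity $\wt(\DT_{\tilde{Q},\tilde{W},\dd},q^{1/2})=-q^{1/2}\kac_{Q,\dd}(q)$.
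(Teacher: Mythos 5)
Your proposal is correct and follows the paper's own argument essentially line by line: specialise Theorem \ref{2dint} to get (\ref{util}), use dimensional reduction (Theorem \ref{dimRedThm}) to reinterpret the left hand side as vanishing cycle cohomology of $\Mst(\tilde{Q})$, invoke \cite[Thm.5.1]{Moz11} to deduce $\wt(\DT_{\tilde{Q},\tilde{W},\dd},q^{1/2})=-q^{1/2}\kac_{Q,\dd}(q)$, and extract purity of each $\DT_{\tilde{Q},\tilde{W},\dd}$ from Theorem \ref{purityThm} via the observation that $\mathcal{F}$ is pure of Tate type iff $\Sym(\mathcal{F})$ is. The bookkeeping of twists and the boundedness observation you include are exactly the points the paper relies on implicitly, so there is nothing missing and no divergence from the paper's route.
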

\subsection{Positivity of restricted Kac polynomials}
For \textit{new} positivity results, we turn to the examples of Serre subcategories appearing in the work of Bozec, Schiffmann and Vasserot --- see Examples \ref{nilp}, \ref{snilp} and \ref{ssnilp} for the definitions.  Setting $\N,\SN,\SSN\subset \mathbb{C}\overline{Q}\lmod$ to be the full subcategory of nilpotent, *-semi-nilpotent and *-strongly-semi-nilpotent $\mathbb{C}\overline{Q}$-modules, respectively, we define 
\begin{align}
\label{kpdef}
\kac^{\sharp}_{Q,\dd}(q^{1/2})=\wt(\HO_c(\Msp(\overline{Q})_{\dd},\iota'^{\sharp,*}_{\dd}\DTS_{\Pi_Q,\dd}),q^{1/2})
\end{align}
for $\sharp=\N,\SN,\SSN$, where $\iota'^{\sharp}\colon\Msp(\overline{Q})_{\dd}^{\sharp}\hookrightarrow \Msp(\overline{Q})_{\dd}$ is the inclusion.  In this way we obtain a new description of the nilpotent, semi-nilpotent and strongly-semi-nilpotent Kac polynomials of \cite{BSV17}.  Via the results of \cite{BSV17} the polynomials $\kac^{\SN}_{Q,\dd}(q)$ and $\kac_{Q,\dd}^{\SSN}(q)$ have an enumerative definition when $q$ is a prime power: the former counts absolutely indecomposable $\dd$-dimensional $\mathbb{F}_qQ$-modules such that each loop acts via a nilpotent operator, while the latter counts absolutely indecomposable $\dd$-dimensional nilpotent $\mathbb{F}_qQ$-modules.
\sbrk
Note that the above proof of the Kac positivity conjecture (Theorem \ref{KP}) used only the purity of $\HO_c(\Pi_Q,\mathbb{Q})$.  We deduce the following variant of the positivity theorem for Kac polynomials, conjectured in \cite{BSV17}.
\begin{theorem}\label{mpos}
For a finite quiver $Q$, the Kac polynomials $\kac_{Q,\dd}^{\SN}(q)$ and $\kac_{Q,\dd}^{\SSN}(q)$ have positive coefficients.
\end{theorem}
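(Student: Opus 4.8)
The plan is to reproduce the proof of the Kac positivity conjecture (Theorem~\ref{KP}) given above, with the category $\mathbb{C}\overline{Q}\lmod$ replaced by the Serre subcategories $\SN$ and $\SSN$, and with the full Theorem~\ref{2dint} playing the role of its degenerate-$\zeta$, $\mathcal{S}=\mathbb{C}\overline{Q}\lmod$ specialisation (\ref{util}). Applying Theorem~\ref{2dint} with $\zeta=(i,\ldots,i)$ and $\mathcal{S}=\SN$ (resp. $\SSN$) gives the isomorphism (\ref{rkq})
\[
\bigoplus_{\dd\in\mathbb{N}^{Q_0}}\HO_c\!\left(\Mst(\Pi_Q)^{\mathcal{S}}_{\dd},\mathbb{Q}\right)\otimes\LL^{\otimes(\dd,\dd)}\cong\Sym_{\boxtimes_+}\!\left(\DT_{\tilde{Q},\tilde{W}}^{\tilde{\mathcal{S}}}\otimes\HO_c(\pt/\mathbb{C}^*,\mathbb{Q})_{\vir}\right).
\]
Since $\HO_c(\pt/\mathbb{C}^*,\mathbb{Q})_{\vir}$ is pure of Tate type and a graded mixed Hodge structure is pure of Tate type if and only if its free symmetric algebra is, the left-hand side is pure of Tate type if and only if each $\DT_{\tilde{Q},\tilde{W},\dd}^{\tilde{\mathcal{S}}}$ is. Granting that, $\kac^{\mathcal{S}}_{Q,\dd}(q^{1/2})=-q^{-1/2}\wt(\DT_{\tilde{Q},\tilde{W},\dd}^{\tilde{\mathcal{S}}},q^{1/2})$ is a polynomial in $-q^{1/2}$ with nonnegative coefficients, exactly as in the proof of Theorem~\ref{KP}; and since $\kac^{\SN}_{Q,\dd}$ and $\kac^{\SSN}_{Q,\dd}$ are genuine polynomials in $q$ by the enumerative description of \cite{BSV17}, their coefficients as polynomials in $q$ are nonnegative. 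So everything reduces to the purity statement: $\HO_c(\Mst(\Pi_Q)^{\SN}_{\dd},\mathbb{Q})$ and $\HO_c(\Mst(\Pi_Q)^{\SSN}_{\dd},\mathbb{Q})$ are pure of Tate type for all $\dd$.

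To prove this I would rerun Section~\ref{mainProof} with the Serre restriction carried along throughout. The first support lemma (Lemma~\ref{lem1}) holds verbatim for the restricted BPS sheaf: the decomposition of a $\Jac(\tilde{Q},\tilde{W})$-module into generalised eigenspaces of the central element $\sum_i\omega_i$ is a decomposition of $\mathbb{C}\overline{Q}$-modules, under which $\SN$ and $\SSN$, being Serre subcategories, are stable, so the restricted sheaf is again supported on the locus where all $\rho(\omega_i)$ share a single eigenvalue; the $\mathbb{A}^1$-translation argument in the proof of Lemma~\ref{lem2} likewise restricts, since translating the $\rho(\omega_i)$ does not affect the underlying $\mathbb{C}\overline{Q}$-module. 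Hence, just as in the proof of Theorem~\ref{purityThm} via the $\mathcal{S}$-restricted forms of (\ref{nonNilpEq}) and (\ref{nilpEq}), purity of $\bigoplus_{\dd}\HO_c(\Mst(\Pi_Q)^{\mathcal{S}}_{\dd},\mathbb{Q})$ reduces to purity of the ``$\omega$-nilpotent'' restricted piece $\HO_c\big(\Mst(\tilde{Q})^{\tilde{\mathcal{S}},\omega\nnilp}_{\dd},\phin{\mathfrak{Tr}(\tilde{W})}\ICS_{\Mst(\tilde{Q})_{\dd}}(\mathbb{Q})\big)$.

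Establishing this last purity is the heart of the argument, and the main obstacle. The difficulty is that the dimensional reduction (Theorem~\ref{dimRedThm}) which, in the proof of Lemma~\ref{lem3}, strips away the preprojective relation $\mu_{Q,\dd}=0$ is precisely the one scaling the arrows $a^*$, whereas the $\SN$ and $\SSN$ conditions are conditions on those same $a^*$; one therefore cannot simply reduce to the relation-free stack of a $\mathbb{C}Q$-module equipped with a nilpotent endomorphism. Instead I would stratify the stack in question directly, using the staircase filtrations of Examples~\ref{snilp} and~\ref{ssnilp}. An $\SN$-module carries a canonical increasing filtration by $\mathbb{C}\overline{Q}$-submodules, coming from the radical filtration for the two-sided ideal generated by the starred arrows (Engel's theorem, as recalled in Example~\ref{snilp}), whose associated graded is a $\mathbb{C}Q$-module; stratifying by the dimension-vector sequence of this filtration, refined on each stratum by the isomorphism type of that associated graded $\mathbb{C}Q$-module and by the Jordan types of the $\rho(\omega_i)$, should exhibit each stratum as a quotient of an affine space by a unipotent extension of a product of general linear groups, exactly as for the strata $\Mst_\pi$ in the proof of Lemma~\ref{lem3}, and hence with pure Tate-type compactly supported cohomology; the split long exact sequences of such a stratification then force purity of the whole. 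The step requiring the most care is checking that, once the flag has been fixed, the relation $\mu_{Q,\dd}=0$ becomes block-triangular and is resolved filtration-degree by filtration-degree by the refinement of the stratification, so that the residual parameters on each stratum are genuinely affine; for $\SSN$, where every graded piece is supported at a single vertex, this bookkeeping is the cleanest, whereas for $\SN$ it is the delicate point.
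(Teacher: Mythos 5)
Your reduction of the theorem to a single purity claim is exactly the one the paper uses: once one knows that $\HO_c(\Mst(\Pi_Q)^{\SN}_{\dd},\mathbb{Q})$ and $\HO_c(\Mst(\Pi_Q)^{\SSN}_{\dd},\mathbb{Q})$ are pure of Tate type for all $\dd$, Theorem~\ref{2dint} with the degenerate stability condition and the definition of $\kac_{Q,\dd}^{\mathcal{S}}$ give positivity, by the same argument as for Theorem~\ref{KP}. Up to that point you are following the paper. Where your proposal diverges --- and where it has a genuine gap --- is in the proof of that purity.

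You correctly identify the obstruction: in the proof of Lemma~\ref{lem3} the dimensional reduction scales the arrows $a^*$, i.e. the fibre direction is $X(Q^{\op})_{\dd}$; the $\SN$ and $\SSN$ constraints are constraints on exactly these arrows, so the constrained locus is not of the form $\pi^{-1}(S)$ for a subspace $S$ of the base $X(Q^+)_{\dd}$, and Theorem~\ref{dimRedThm} does not apply. Having noticed this, you propose to bypass dimensional reduction entirely and stratify $\Mst(\tilde{Q})^{\tilde{\mathcal{S}},\omega\nnilp}_{\dd}$ by the dimension-vector sequence of the canonical $\mathcal{S}$-filtration refined by Jordan types of the $\rho(\omega_i)$, and to show each stratum is a quotient of an affine space by a unipotent extension of a product of general linear groups. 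But you explicitly leave unchecked the step that would make this work --- that fixing the flag makes the relation $\mu_{Q,\dd}=0$ block-triangular in a way that is resolved degree by degree, so that the residual parameters are genuinely affine --- and you flag this as ``the delicate point'' for $\SN$. That is the content that your proposal does not supply, and it is not a routine verification.

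The paper sidesteps this by invoking results of Schiffmann and Vasserot directly: by \cite[Thm.3.2.d]{SV17} the Serre spectral sequence with $E_2^{p,q}=\HO^p_T(\pt)\otimes \HO^q_{c,\Gl_{\dd}}(\mu_{\dd}^{-1}(0)^{\sharp},\mathbb{Q})^{\vee}$ (for $\sharp\in\{\SN,\SSN\}$ and $T$ an extra torus as in Section~\ref{KSCSec}) converging to $\HO_{c,\Gl_{\dd}\times T}(\mu_{\dd}^{-1}(0)^{\sharp},\mathbb{Q})^{\vee}$ degenerates at $E_2$, and by \cite[Thm.3.2.b]{SV17} the abutment is pure; hence $\HO_c(\Mst(\Pi_Q)^{\sharp},\mathbb{Q})$ is pure. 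You would need either to reproduce those inputs or to carry out the stratification argument you sketch; as written, the proposal asserts the purity but does not establish it.
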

\begin{proof}
The proof proceeds exactly as in the above reproof of Theorem \ref{KP}, using the results and proofs of \cite{BSV17} and \cite{SV17} to deduce purity of $\HO_c(\Mst(\Pi_Q)^{\SN},\mathbb{Q})$ and $\HO_c(\Mst(\Pi_Q)^{\SSN},\mathbb{Q})$.  For example one may extract this purity result as follows: let $\sharp$ be either of the conditions $\SN$ or $\SSN$.  By \cite[Thm.3.2.d]{SV17} the Serre spectral sequence $E_2^{p,q}=\HO^p_T(\pt,\QQ)\otimes \HO^q_{c,\Gl_{\dd}}(\mu_{\dd}^{-1}(0)^{\sharp},\mathbb{Q})^{\vee}=\HO^p_T(\pt,\QQ)\otimes \HO_c(\Mst(\Pi_Q)^{\sharp},\mathbb{Q})^{\vee}$ converging to $\HO_{c,\Gl_{\dd}\times T}(\mu_{\dd}^{-1}(0)^{\sharp},\mathbb{Q})^{\vee}$ degenerates at the second sheet (here $T$ is an extra complex torus acting on all relevant varieties, and is a special case of one of the tori $T^{\tau}$ that we consider in Section \ref{KSCSec}).  In particular, purity of $\HO_c(\Mst(\Pi_Q)^{\sharp},\mathbb{Q})^{\vee}$ follows from purity of $\HO_{c,\Gl_{\dd}\times T}(\mu_{\dd}^{-1}(0)^{\sharp},\mathbb{Q})^{\vee}$, which is \cite[Thm.3.2.b]{SV17}.
\end{proof}

\subsection{Verdier duality and nilpotent Kac polynomials}
We finish our discussion of restricted Kac polynomials with a result relating $\kac_{Q,\dd}(q)$ with $\kac_{Q,\dd}^{\N}(q)$, providing a cohomological refinement of a Kac polynomial identity \cite[Thm.1.4]{BSV17}, which in turn extended the main result of \cite{Sch12} from the case of a quiver without loops.  In the case of quivers without loops, this Kac polynomial identity is explained \cite{Sch12} by Poincar\'e duality for smooth Nakajima quiver varieties.  In general it may be explained by self-Verdier duality of 2d BPS sheaves.
\begin{proposition}\label{ST}
For a quiver $Q$, and a dimension vector $\dd\in\mathbb{N}^{Q_0}$, there is an isomorphism
\begin{equation}
\label{kpip}
\HO_c\!\left(\Msp(\overline{Q}),\DTS_{\Pi_Q,\dd}\right)\cong \HO_c\!\left(\Msp(\overline{Q})^{\N},\DTS_{\Pi_Q,\dd}\right)^{\vee}
\end{equation}
providing a cohomological refinement of the identity
\begin{equation}
\label{kpi}
\kac^{\N}_{Q,\dd}(q)=\kac_{Q,\dd}(q^{-1}).
\end{equation}
\end{proposition}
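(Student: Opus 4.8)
The plan is to prove the cohomological isomorphism (\ref{kpip}) and then read off the polynomial identity (\ref{kpi}) by passing to weight series. The essential input is that the BPS sheaf $\DTS_{\tilde{Q},\tilde{W},\dd}$ is \emph{Verdier self-dual}: when the $\zeta$-stable locus is empty it vanishes and there is nothing to prove, and otherwise $\DTS_{\tilde{Q},\tilde{W},\dd}=\phim{\mathcal{T}r(\tilde{W})_{\dd}}\ICS_{\Msp(\tilde{Q})_{\dd}}(\mathbb{Q})$, where the normalised intersection complex $\ICS_{\Msp(\tilde{Q})_{\dd}}(\mathbb{Q})$ of Convention \ref{shiftConvention} is self-dual under $\DD_{\Msp(\tilde{Q})_{\dd}}$ by construction, while the monodromic vanishing cycle functor commutes with Verdier duality (classically for $\phi$, and in the trivial-monodromy situation relevant here, cf.\ Remark \ref{noMonRem}, for $\phim{}$ as well). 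Hence $\DT_{\tilde{Q},\tilde{W},\dd}=\HO_c(\Msp(\tilde{Q}),\DTS_{\tilde{Q},\tilde{W},\dd})\cong\HO(\Msp(\tilde{Q}),\DTS_{\tilde{Q},\tilde{W},\dd})^{\vee}$, so it suffices to prove that $\HO(\Msp(\tilde{Q}),\DTS_{\tilde{Q},\tilde{W},\dd})\cong\DT^{\tilde{\N}}_{\tilde{Q},\tilde{W},\dd}\otimes\LL^{\otimes -1}$, where I write $\DT^{\tilde{\N}}_{\tilde{Q},\tilde{W},\dd}:=\HO_c(\Msp(\tilde{Q}),\DTS^{\N}_{\tilde{Q},\tilde{W},\dd})$ for the right hand side of (\ref{kpip}).

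For this I would recycle the structural results behind the proof of Theorem \ref{purityThm}. By the support lemma (Lemma \ref{lem1}), $\DTS_{\tilde{Q},\tilde{W},\dd}$ is supported on the locus where all the $\rho(\omega_i)$ act by a common scalar $\lambda$, and, exactly as in the proof of Lemma \ref{lem2}, this locus is $\mathbb{A}^1_{\lambda}\times\Msp(\tilde{Q})^{\omega\nnilp}_{\dd}$ with $\DTS_{\tilde{Q},\tilde{W},\dd}\cong\mathbb{Q}_{\mathbb{A}^1}\boxtimes\DTS^{\omega\nnilp}_{\tilde{Q},\tilde{W},\dd}$ under this splitting, where $\DTS^{\omega\nnilp}_{\tilde{Q},\tilde{W},\dd}:=\DTS_{\tilde{Q},\tilde{W},\dd}|_{\Msp(\tilde{Q})^{\omega\nnilp}_{\dd}}$. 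On the BPS support $\omega$-nilpotence forces $\lambda=0$, so by Proposition \ref{basicEq} the factor $\DTS^{\omega\nnilp}_{\tilde{Q},\tilde{W},\dd}$ is a perverse sheaf on the affine good moduli space $\Msp(\Pi_Q)_{\dd}$ of $\dd$-dimensional semisimple $\Pi_Q$-modules. On the other hand, a simple $\Pi_Q$-module whose underlying $\mathbb{C}\overline{Q}$-module is nilpotent is one-dimensional and concentrated at a vertex (Engel's theorem), so the only point of $\Msp(\Pi_Q)_{\dd}$ lying over a nilpotent $\mathbb{C}\overline{Q}$-module is the cone point $[\rho_0]$ represented by $\bigoplus_{i\in Q_0}S_i^{\oplus\dd_i}$; hence $\Msp(\tilde{Q})^{\tilde{\N}}_{\dd}$ meets the BPS support in exactly $\mathbb{A}^1_{\lambda}\times\{[\rho_0]\}$. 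Feeding these descriptions through the Künneth formula (the relevant supports being closed) gives $\HO(\Msp(\tilde{Q}),\DTS_{\tilde{Q},\tilde{W},\dd})\cong\HO(\Msp(\Pi_Q)_{\dd},\DTS^{\omega\nnilp}_{\tilde{Q},\tilde{W},\dd})$ and $\DT^{\tilde{\N}}_{\tilde{Q},\tilde{W},\dd}\cong\LL\otimes(\DTS^{\omega\nnilp}_{\tilde{Q},\tilde{W},\dd})_{[\rho_0]}$, the stalk at the cone point.

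It then remains to identify the stalk $(\DTS^{\omega\nnilp}_{\tilde{Q},\tilde{W},\dd})_{[\rho_0]}$ with $\HO(\Msp(\Pi_Q)_{\dd},\DTS^{\omega\nnilp}_{\tilde{Q},\tilde{W},\dd})$. For this I would use the $\mathbb{C}^*$-action on $\Msp(\Pi_Q)_{\dd}$ scaling all arrows $a,a^*$ of $\overline{Q}$: it is a contracting action whose unique fixed point is $[\rho_0]$, and since $\mathcal{T}r(\tilde{W})_{\dd}$ is homogeneous for the induced action the sheaf $\DTS^{\omega\nnilp}_{\tilde{Q},\tilde{W},\dd}$ carries a $\mathbb{C}^*$-equivariant structure, so the contraction (hyperbolic localisation) lemma supplies the desired identification. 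Combining with the previous paragraph, $\DT^{\tilde{\N}}_{\tilde{Q},\tilde{W},\dd}\cong\LL\otimes\HO(\Msp(\tilde{Q}),\DTS_{\tilde{Q},\tilde{W},\dd})$, and therefore $\DT_{\tilde{Q},\tilde{W},\dd}\cong(\DT^{\tilde{\N}}_{\tilde{Q},\tilde{W},\dd})^{\vee}\otimes\LL$, which is (\ref{kpip}). To deduce (\ref{kpi}) one takes weight series of (\ref{kpip}) and uses $\wt(\mathcal{L}^{\vee},q^{1/2})=\wt(\mathcal{L},q^{-1/2})$ together with $\wt(\mathcal{L}\otimes\LL,q^{1/2})=q\cdot\wt(\mathcal{L},q^{1/2})$, so that $\wt(\DT_{\tilde{Q},\tilde{W},\dd},q^{1/2})=q\cdot\wt(\DT^{\tilde{\N}}_{\tilde{Q},\tilde{W},\dd},q^{-1/2})$; substituting the normalisation $\wt(\DT_{\tilde{Q},\tilde{W},\dd},q^{1/2})=-q^{1/2}\kac_{Q,\dd}(q)$ (from \cite{Moz11}, as recalled in Section \ref{AppBC}) and the defining relation (\ref{rKdef}) for $\kac^{\N}_{Q,\dd}$, and cancelling the common factor, yields $\kac^{\N}_{Q,\dd}(q)=\kac_{Q,\dd}(q^{-1})$.

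Given the machinery already assembled, the genuine content is concentrated in the last two paragraphs. The points deserving care are: verifying that $\DTS^{\omega\nnilp}_{\tilde{Q},\tilde{W},\dd}$ is honestly $\mathbb{C}^*$-equivariant for the arrow-scaling action and that the contraction lemma applies on this affine (and typically singular) space; checking that the nilpotent locus really does cut the BPS support down to an affine line over the cone point $[\rho_0]$; and keeping track of the Tate twists, since it is there that an error would most easily creep in. I do not anticipate a serious obstacle beyond these bookkeeping issues.
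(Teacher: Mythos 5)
Your argument is correct and follows essentially the same route as the paper: Verdier self-duality of $\DTS_{\tilde{Q},\tilde{W},\dd}$ converts $\HO_c$ into $\HO$, the support lemma (Lemma~\ref{lem1}) reduces to the $\omega$-nilpotent slice up to an $\mathbb{A}^1$-factor, a contracting torus action identifies the cohomology with the stalk at the cone point, and the nilpotent Serre subcategory cuts the BPS support down to $\mathbb{A}^1\times\{[\rho_0]\}$. The paper's proof carries out the identical steps, only phrased at the level of weight series rather than at the sheaf level; you instead isolate the stalk $(\DTS^{\omega\nnilp}_{\tilde{Q},\tilde{W},\dd})_{[\rho_0]}$ as the common pivot, which is a clean reorganisation but not a different method.

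One small imprecision worth fixing: you justify the $\mathbb{C}^*$-equivariance of $\DTS^{\omega\nnilp}_{\tilde{Q},\tilde{W},\dd}$ by saying $\mathcal{T}r(\tilde{W})_{\dd}$ is \emph{homogeneous} under the action scaling the arrows $a,a^*$. Homogeneity of nonzero weight does not by itself give an equivariant structure on $\phi_{\mathcal{T}r(\tilde{W})}$. The right move is the one the paper makes: extend the scaling to $\Msp(\tilde{Q})_{\dd}$ by also scaling each $\omega_i$ by $z^{-2}$ (the diagonal $\mathbb{C}^*$ inside the paper's $(\mathbb{C}^*)^2$), so that $\mathcal{T}r(\tilde{W})$ is honestly invariant; since this action preserves the $\omega$-nilpotent locus and restricts on $\Msp(\Pi_Q)_{\dd}$ to your arrow-scaling action, the equivariant structure on $\DTS^{\omega\nnilp}_{\tilde{Q},\tilde{W},\dd}$ you need is inherited from the $W$-invariant action on the ambient space. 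With that adjustment the contraction lemma applies and your Tate twist bookkeeping goes through exactly as you wrote it.
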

\begin{proof}
The torus $T=(\mathbb{C}^*)^2$ acts on $\Msp(\tilde{Q})$ via the rescaling action
\[
(z_1,z_2)\cdot\rho(b)=\begin{cases} z_1\rho(b) &\textrm{if }b\in Q_1\\ z_2\rho(b)&\textrm{if }b^*\in Q_1\\ (z_1z_2)^{-1}\rho(b)&\textrm{if }\exists i\in Q_0\textrm{ such that }b=\omega_i.\end{cases}
\]
This action preserves $\Tr(\tilde{W})$, so that $\DTS_{\tilde{Q},\tilde{W},\dd}$ lifts to a $T$-equivariant mixed Hodge module on $\Msp(\tilde{Q})_{\dd}$, and so $\DTS_{\Pi_Q,\dd}$ lifts to a $T$-equivariant MHM on $\Msp(\overline{Q})_{\dd}$.  By Theorem \ref{2dbpsthm} the MHM $\DTS_{\Pi_Q,\dd}$ is Verdier self-dual, so that there is an isomorphism
\[
\HO_c\!\left(\Msp(\overline{Q})_{\dd},\DTS_{\Pi_Q,\dd}\right)\cong \HO\!\left(\Msp(\overline{Q})_{\dd},\DTS_{\Pi_Q,\dd}\right)^{\vee}.
\]
Since $T$ contracts $\Msp(\overline{Q})_{\dd}$ to the point $\Msp(\overline{Q})^{\N}_{\dd}$, which is a proper variety, there are isomorphisms
\begin{align*}
\HO\!\left(\Msp(\overline{Q})_{\dd},\DTS_{\Pi_Q,\dd}\right)\cong &\HO\!\left(\Msp(\overline{Q})^{\N}_{\dd},\DTS_{\Pi_Q,\dd}\right)
\\
\cong&\HO_c\!\left(\Msp(\overline{Q})_{\dd}^{\N},\DTS_{\Pi_Q,\dd}\right).
\end{align*}
Combining these isomorphisms gives the required isomorphism \eqref{kpip}.  The identity \eqref{kpi} then follows from the definition \eqref{kpip}.
\end{proof}
\begin{remark}
\label{kacfin}
Combining Theorems \ref{KP} and \ref{mpos} with Proposition \ref{ST} we conclude that all of the Kac polynomials $\kac_{Q,\dd}(q),\kac_{Q,\dd}^{\N}(q),\kac_{Q,\dd}^{\SN}(q)$ and $\kac_{Q,\dd}^{\SSN}(q)$ have positive coefficients.
\end{remark}

\section{Deformations of Hall algebras}
\label{CoHASec}
\subsection{Kontsevich--Soibelman CoHAs}
\label{KSCSec}
In \cite{COHA}, a method was given for associating a cohomological Hall algebra (CoHA for short) to the data of an arbitrary QP $(Q,W)$.  The construction provides a mathematically rigorous approach to defining algebras of BPS states --- see \cite{HM98} for the physical motivation.  We will work with a slight generalisation of the original definition, denoted $\Coha_{\tau,Q,W}$, incorporating extra parameters depending on a weight function $\tau$.
\begin{definition}
If $(Q,W)$ is a QP, a $W$-invariant grading for $Q$ is a function $\tau\colon Q_1\rightarrow \mathbb{Z}^s$ such that every cyclic word appearing in $W$ is homogeneous of weight zero.
\end{definition}

\begin{example}\label{KST}
For $s=0$, the function $\tau=0\colon Q_1\rightarrow\mathbb{Z}^0$ gives a $W$-invariant grading for any potential $W$, and we will recover below the original definition of Kontsevich and Soibelman, by considering this grading.
\end{example}

\begin{example}\label{SVT}
For a quiver $Q$ and $s=2$, the weight function
\begin{align*}
\tau(a)=(1,0)&\textrm{ for all }a\in Q_1\\
\tau(a^*)=(0,1)&\textrm{ for all }a\in Q_1\\
\tau(\omega_i)=(-1,-1)&\textrm{ for all }i\in Q_0
\end{align*}
is a $\tilde{W}$-invariant grading for the tripled quiver $\tilde{Q}$. 
\end{example}
From now on we will only consider the case in which our quiver with potential is $(\tilde{Q},\tilde{W})$ for some quiver $Q$.  Since $\tilde{Q}$ is symmetric we avoid some troublesome Tate twists, and since the potential $\tilde{W}$ is linear in the $\omega_i$ direction we also avoid the notion of monodromic mixed Hodge modules; in this section we deal only with monodromic mixed Hodge modules for which Theorem \ref{dimRedThm} applies, and so we use the usual mixed Hodge module complex  $\phin{\mathfrak{Tr}(\tilde{W})}\mathbb{Q}$, as opposed to the monodromic mixed Hodge module complex $\phim{\mathfrak{Tr}(\tilde{W})}\mathbb{Q}$ (see Remark \ref{getOut}).  
\subsubsection{}
Given a grading $\tau\colon Q_1\rightarrow \mathbb{Z}^s$, define $T^{\tau}:=\Hom(\mathbb{Z}^s,\mathbb{C}^*)$.  Given a dimension vector $\dd\in\mathbb{N}^{Q_0}$ we form the extended gauge group
\[
\Gl^{\tau}_{\dd}:=\Gl_{\dd}\times T^{\tau}.
\]
The group $\Gl^{\tau}_{\dd}$ acts on $X(Q)_{\dd}$ via 
\begin{equation}
\label{exAct}
\left((\{g_i\}_{i\in Q_0},\upsilon)\cdot \rho\right)(a)=\upsilon\left(\tau(a)\right)g_{t(a)}\rho(a)g^{-1}_{s(a)}
\end{equation}
extending the action of $\Gl_{\dd}$ on $X(Q)_{\dd}$.  Similarly, if $\dd',\dd''\in\mathbb{N}^{Q_0}$ we define
\[
\Gl^{\tau}_{\dd',\dd''}:=\Gl_{\dd',\dd''}\times T^{\tau},
\]
the parabolic gauge group, acting on $X(Q)_{\dd',\dd''}$ via the same formula as (\ref{exAct}), and
\[
\Gl^{\tau}_{\dd'\times \dd''}:=\Gl_{\dd'}\times \Gl_{\dd''}\times T^{\tau}
\]
acting on $X(Q)_{\dd'}\times X(Q)_{\dd''}$ via 
\[
\left((\{g'_i\}_{i\in Q_0},\{g''_i\}_{i\in Q_0},\upsilon)\cdot (\rho',\rho'')\right)(a)=\upsilon(\tau(a))\left(g'_{t(a)}\rho'(a)g'^{-1}_{s(a)},g''_{t(a)}\rho''(a)g''^{-1}_{s(a)}\right).
\]
\smallbreak
For fixed $\upsilon\in T^{\tau}$, the action of $\upsilon$ on the category of $\mathbb{C} Q$-modules is functorial, and preserves dimension vectors.  It follows that if $\zeta\in\mathbb{H}_+^{Q_0}$ is a stability condition, the spaces $X(Q)^{\zeta\sst}_{\dd}$ and $X(Q)^{\zeta\st}_{\dd}$ are preserved by $\Gl^{\tau}_{\dd}$.  We define the stack
\[
{}^{\tau}\Mst(Q)^{\zeta\sst}_{\dd}=X(Q)^{\zeta\sst}_{\dd}/\Gl^{\tau}_{\dd}.
\]
\smallbreak
For the rest of the section we will only consider the degenerate stability condition $\zeta=(i,\ldots,i)$ and so we drop $\zeta$ from our notation, as per Convention \ref{stabConv}.  We denote by 
\[
\Dim^\tau\colon {}^{\tau}\Mst(\tilde{Q})\rightarrow\mathbb{N}^{Q_0}
\]
the map taking a $\tilde{Q}$-representation to its dimension vector.  

\smallbreak
Assume that the grading $\tau\colon \tilde{Q}_1\rightarrow\mathbb{Z}^s$ is $\tilde{W}$-invariant.  The function $\Tr(\tilde{W})$ induces a function $\mathfrak{Tr}(\tilde{W})$ on ${}^{\tau}\Mst(\tilde{Q})$.  Let $\mathcal{S}$ be a Serre subcategory of the category of $\mathbb{C}\tilde{Q}$-modules, which we assume to be invariant under the action of $T^{\tau}$, with induced morphism
\[
\iota\colon {}^{\tau}\Mst(\tilde{Q})^{\mathcal{S}}\hookrightarrow {}^{\tau}\Mst(\tilde{Q}).
\]
We define
\begin{align*}
\Coha_{\tau,\tilde{Q},\tilde{W}}\coloneqq&\Dim^{\tau}_*\iota^!\phin{\mathfrak{Tr}(\tilde{W})}\ICS_{{}^{\tau}\Mst(Q)}(\mathbb{Q})\otimes\LL^{-\dim(T^{\tau})/2}\in\Du(\MHM(\mathbb{N}^{Q_0})),
\end{align*}
the underlying cohomologically graded mixed Hodge module, equivalently, $\mathbb{N}^{Q_0}$-graded mixed Hodge structure, of $\Coha_{\tau,\tilde{Q},\tilde{W}}$.  
\begin{remark}
The reason for the peculiar twist in the definition of $\Coha_{\tau,\tilde{Q},\tilde{W}}$ is that we think of this algebra as being a version of the Kontsevich--Soibelman cohomological Hall algebra with extra parameters.  So, given that the correct sheaf for the cohomological Hall algebra is perverse, the correct sheaf for this extended version should be a family of perverse sheaves on the fibres of the projection 
\[
{}^{\tau}\Mst(\tilde{Q})_{\dd}\rightarrow \pt/T^{\tau}, 
\]
as opposed to a perverse sheaf on ${}^{\tau}\Mst(\tilde{Q})_{\dd}$ itself.  Note that the overall Tate twist in the definition of $\ICS_{{}^{\tau}\Mst(Q)}(\mathbb{Q})\otimes\LL^{ -\dim(T^{\tau})/2}$ is a full (not half) Tate twist.
\end{remark}
\subsubsection{}
We endow $\Coha_{\tau,\tilde{Q},\tilde{W}}$ with the structure of an algebra object in the category of complexes of mixed Hodge modules on $\mathbb{N}^{Q_0}$.  In order to achieve this, as in \S \ref{pfs}, a little care has to be taken to approximate morphisms of stacks by morphisms of varieties, so that we can apply Saito's theory of mixed Hodge modules to these morphisms.  We spell this out in detail.  
\smallbreak
We define 
\begin{align*}
V_{\dd,N}=&\left(\bigoplus_{i\in Q_0}\Hom(\mathbb{C}^N, \mathbb{C}^{\dd_i})\right)\\
V_{\tau,\dd,N}=&\left(\bigoplus_{i\in Q_0}\Hom(\mathbb{C}^N, \mathbb{C}^{\dd_i})\right)\oplus \Hom(\mathbb{C}^N,\mathfrak{t}^{\tau}).
\end{align*}
We let $\Gl^{\tau}_{\dd}$ act on $V_{\tau,\dd,N}$ via the product of the natural action of $\Gl_{\dd}$ on the first component, and the action of $T^{\tau}$ on $\mathfrak{t}^{\tau}$ given by the embedding $(\mathbb{C}^*)^s\subset\mathbb{C}^{s}=\mathfrak{t}^{\tau}$, and componentwise multiplication.  We define $U_{\tau,\dd,N}\subset V_{\tau,\dd,N}$ to be the subset consisting of those $(\{g_i\}_{i\in Q_0},f)\in V_{\tau,\dd,N}$ such that each $g_i$ is surjective, and $f$ is too.  Then $\Gl^{\tau}_{\dd}$ acts freely on $U_{\tau,\dd,N}$.
\smallbreak
We break the multiplication into two parts.  Fix a pair of dimension vectors $\dd',\dd''$ and set $\dd=\dd'+\dd''$.  We write
\[
\Gl_{\dd'\times \dd''}:=\Gl_{\dd'}\times \Gl_{\dd''}.  
\]
We embed $\Gl_{\dd'\times \dd''}$ and $\Gl_{\dd',\dd''}$ into $\Gl_{\dd}$ as a $Q_0$-indexed product of Levi or parabolic subgroups, respectively.   We define $\Gl_{\dd}^{\tau},\Gl_{\dd',\dd''}^{\tau}$ and $\Gl_{\dd'\times\dd''}^{\tau}$, as the product of $T^{\tau}$ with $\Gl_{\dd},\Gl_{\dd',\dd''}$ and $\Gl_{\dd'\times\dd''}$, respectively.  
\sbrk
For $G$ an algebraic group with a fixed embedding $G\subset \Gl^{\tau}_{\dd}$, we define the functor on $G$-equivariant varieties $X$
\begin{align*}
A_{N}(X,G)\coloneqq &X\times_G U_{\tau,\dd,N}.
\end{align*}
If $f\colon X\rightarrow Y$ is a $G$-invariant morphism, we denote by $f_N\colon A_N(X)\rightarrow Y$ the induced morphism.  For $\iota \colon Y\hookrightarrow X$ a $G$-invariant subvariety, then as discussed in Section \ref{pfs}, for fixed $i$ the mixed Hodge structure $\Ho^i\!\left((Y/G\rightarrow \pt)_*\iota^!\phi_f\mathbb{Q}_{X/G}\right)$ is defined as 
\[
\Ho^i\!\left((A_N(Y,G)\rightarrow\pt)_*A_N(\iota,G)^!\phi_{f_N}\mathbb{Q}_{A_N(X,G)}\right),
\]
for $N\gg 0$ depending on $i$.  Consider the commutative diagram
\begin{equation}
\label{CoHACorr}
\xymatrix{
&A_N(X(\tilde{Q})_{\dd',\dd''},\Gl^{\tau}_{\dd'\times \dd''})\ar[d]_{q_1}\ar[ld]_-{q_2}\\
A_N(X(\tilde{Q})_{\dd'}\times X(\tilde{Q})_{\dd''},\Gl^{\tau}_{\dd'\times \dd''})\ar[d]_{(\Dim^{\tau}\times \Dim^{\tau})_N}&A_N(X(\tilde{Q})_{\dd',\dd''},\Gl^{\tau}_{\dd',\dd''})\ar[d]_{\Dim^{\tau,\circ}_N}\\
\mathbb{N}^{Q_0}\times \mathbb{N}^{Q_0}\ar[r]^+&\mathbb{N}^{Q_0},
}
\end{equation}
where $q_1$ and $q_2$ are the natural affine fibrations, inducing isomorphisms
\begin{align*}
\alpha_{\dd',\dd''}\colon &+_*(\Dim^{\tau}\times\Dim^{\tau})_*\left(\iota^!\phi_{\mathfrak{Tr}(\tilde{W})\boxtimes \mathfrak{Tr}(\tilde{W})}\ICS_{{}^{\tau}\Mst(\tilde{Q})_{\dd'}\times_{\B T^{\tau}}\Mst(\tilde{Q})_{\dd''}}(\mathbb{Q})\right)\\\rightarrow &\Dim^{\tau,\circ}_*\iota^!\phin{\mathfrak{Tr}(\tilde{W})}\ICS_{{}^{\tau}\Mst(\tilde{Q})_{\dd',\dd''}}(\mathbb{Q})\otimes\mathbb{L}^{-(\dd',\dd'')_{\tilde{Q}}/2}.
\end{align*}
Consider the composition of proper maps
\begin{equation}
\label{propcomp}
A_N(X(\tilde{Q})_{\dd',\dd''},\Gl^{\tau}_{\dd',\dd''})\xrightarrow{r_N} A_N(X(\tilde{Q})_{\dd},\Gl^{\tau}_{\dd',\dd''})\xrightarrow{s_N} A_N(X(\tilde{Q})_{\dd},\Gl^{\tau}_{\dd})
\end{equation}
where $r_N$ is induced by the inclusion $X(\tilde{Q})_{\dd',\dd''}\hookrightarrow X(\tilde{Q})_{\dd}$ and $s_N$ is induced by the inclusion $\Gl^{\tau}_{\dd',\dd''}\hookrightarrow\Gl^{\tau}_{\dd}$.  Since $r_N$ and $s_N$ are proper, there is a natural morphism
\begin{equation}
\label{ProRes}
s_{N,*}r_{N,*}\QQ_{A_N(X(\tilde{Q})_{\dd',\dd''},\Gl^{\tau}_{\dd',\dd''})}\otimes \LL^{-(\dd',\dd'')_{\tilde{Q}}}\rightarrow \QQ_{A_N(X(\tilde{Q})_{\dd},\Gl^{\tau}_{\dd})}.
\end{equation}
Applying $\Dim^{\tau}_{N,*}\phin{\Tr(\tilde{W})}$ and letting $N\mapsto\infty$, the morphism (\ref{ProRes}) induces the morphism
\[
\beta_{\dd',\dd''}\colon \Dim^{\tau,\circ}_*\iota^!\phi_{\mathfrak{Tr}(\tilde{W})}\ICS_{{}^{\tau}\Mst(\tilde{Q})_{\dd',\dd''}}(\mathbb{Q})\otimes\mathbb{L}^{-(\dd',\dd'')_{\tilde{Q}}/2}\rightarrow \Dim^{\tau}_*\iota^!\phi_{\mathfrak{Tr}(\tilde{W})}\ICS_{{}^{\tau}\Mst(\tilde{Q})_{\dd}}(\mathbb{Q}).
\]
Defining $m_{\dd',\dd''}= (\beta_{\dd',\dd''}\otimes \LL^{-\dim(T^{\tau})/2})\circ (\alpha_{\dd',\dd''}\otimes\LL^{-\dim(T^{\tau})/2})\circ \TS$, where $\TS$ is the Thom--Sebastiani isomorphism, gives the multiplication 
\[
m\colon \Coha^{\mathcal{S}}_{\tau,\tilde{Q},\tilde{W}}\otimes_{\HO_{T^{\tau}}} \Coha^{\Sp}_{\tau,\tilde{Q},\tilde{W}}\rightarrow \Coha^{\Sp}_{\tau,\tilde{Q},\tilde{W}}.
\]
We write $\Coha^{\Sp}_{\tilde{Q},\tilde{W}}$ for the special case in which $T^{\tau}$ is the zero-dimensional torus (as in Example \ref{KST}).  In this case, the above multiplication is exactly the multiplication defined by Kontsevich and Soibelman in \cite{COHA}.  The proof that for general $T^{\tau}$ the multiplication is associative is standard, and is in particular unchanged from the proof given in \cite[Sec.7]{COHA}, to which we refer for fuller details.

\subsection{The degeneration result}
The extra equivariant parameters arising from the torus action on $\Mst(\tilde{Q})$ are not considered in the original paper \cite{COHA}, but were introduced, for the particular cohomological Hall algebras we are considering, in \cite{DRS15} and \cite{YaZh14,YaZh16}.  In general, such extra parameters are of most interest when they provide a geometric deformation of the original algebra, i.e. when they provide a flat family of algebras over $\Spec(\HO_{T})$, such that the specialization at the central fibre is our original algebra, which in this case is $\Coha_{\tilde{Q},\tilde{W}}$.  For $T^{\tau}$ the torus associated to a $\tilde{W}$-invariant grading of $\tilde{Q}$ this is precisely the result we prove in this section.  

\smallbreak
 Let $\tau\colon \tilde{Q}_1\rightarrow \mathbb{Z}^s$ be a $\tilde{W}$-invariant grading, with associated torus $T$.  Let $\upsilon\colon \mathbb{Z}^s\rightarrow \mathbb{Z}^{s'}$ be a surjective morphism of groups, inducing the inclusion of tori $T'\hookrightarrow T$, where $T'$ is the torus associated to the $\tilde{W}$-invariant grading $\tau'=\upsilon\circ\tau$.  Write
\[
s''=s-s'.
\]
Then picking a splitting of $\upsilon$, i.e. an extension of $\upsilon$ to an isomorphism $\mathbb{Z}^s\rightarrow \mathbb{Z}^{s'}\oplus\mathbb{Z}^{s''}$, induces an isomorphism 
\begin{equation}
\label{Tchidef}
\HO_{T}\cong \HO_{T'}\otimes \HO_{T^{\chi}},
\end{equation}
where $\chi\colon\tilde{Q}_1\rightarrow \mathbb{Z}^{s''}$ is induced by the splitting.  The splitting of $\upsilon$ induces a splitting $\mathfrak{t}\cong \mathfrak{t}'\oplus \mathfrak{t}^{\chi}$.

\smallbreak
We define 
\[
Y_{\tau,\dd,N}:=X(\tilde{Q})_{\dd}\times_{\Gl^{\tau}_\dd}U_{\tau,\dd,N}.
\]
We consider the natural maps 
\[
v_{\dd,N}\colon Y_{\tau,\dd,N} \rightarrow \Hom^{\surj}(\mathbb{C}^N,\mathfrak{t}^{\chi})/T^{\chi}\eqqcolon S_{\chi,N}
\]
defined by the morphism
\begin{align*}
&\Hom^{\surj}(\mathbb{C}^N,\mathfrak{t})\rightarrow \Hom^{\surj}(\mathbb{C}^N,\mathfrak{t}^{\chi})\\
&f\mapsto \pi_{\mathfrak{t}^{\chi}}\circ f.
\end{align*}
The function $\Tr(\tilde{W})$ induces functions
\[
\Tr(\tilde{W})_{\tau,\dd,N}\colon Y_{\tau,\dd,N}\rightarrow\mathbb{C}.
\]
\begin{lemma}\label{Sgoo}
The space $S_{\chi,N}$ is $s''(N-1)$-dimensional, simply connected, and $\HO(S_{\chi,N},\mathbb{Q})$ is pure.
\end{lemma}
\begin{proof}
By choosing a splitting $\mathfrak{t}^{\chi}=\mathbb{C}^{\oplus s''}$ and considering the entries of a morphism $f\in S_{\chi,N}$ one by one, we obtain a sequence of morphisms
\[
S_{\chi,N}=H_{s''}\xrightarrow{l_{s''-1}}H_{s''-1}\xrightarrow{l_{s''-2}}\ldots\xrightarrow{l_{1}}H_1=\mathbb{P}^{N-1}
\]
where $l_e$ is a $(\mathbb{A}^{N}\setminus \mathbb{A}^e)/\mathbb{C}^*$-fibration, with $\mathbb{C}^*$ acting on $\AA^N$ via scaling.  All the claims follow from this description.
\end{proof}
Each of the maps $v_{\dd,N}$ is a fibre bundle with fibre $Y_{\tau',\dd,N}$.  Picking 
\[
i\colon \Upsilon\hookrightarrow S_{\chi,N}
\]
the inclusion of a sufficiently small open ball (in the analytic topology) contained in the base, we may write 
\[
\Tr(\tilde{W})_{\tau,\dd,N}|_{v_{\dd,N}^{-1}(\Upsilon)}\colon \Upsilon\times_{v_{\dd,N}} Y_{\tau,\dd,N} \cong \Upsilon\times Y_{\tau',\dd,N}\rightarrow \mathbb{C}
\]
as $\Tr(\tilde{W})_{\tau',\dd,N}\circ\pi$ where 
\[
\pi\colon v^{-1}_{\dd,N}(\Upsilon)\rightarrow Y_{\tau',\dd,N}
\]
is the projection, and so we deduce that the mixed Hodge modules
\[
\Ho^q(v_{\dd,N,*}\phi_{\Tr(\tilde{W})_{\tau,\dd,N}}\mathbb{Q}_{Y_{\tau,\dd,N}})
\]
are locally trivial in the analytic topology, with fibre given by $\HO^q(Y_{\tau',\dd,N},\phi_{\Tr(\tilde{W})_{\tau',\dd,N}}\mathbb{Q}_{Y_{\tau',\dd,N}})$, and are furthermore globally trivial by the rigidity theorem \cite[Thm.4.20]{StZu85}, since the base of $v_{\dd,N}$ is simply connected.  
\smallbreak
As a result, the Leray spectral sequence $E_{\upsilon,\dd,N,\bullet}^{\bullet,\bullet}$ converging to $\HO(Y_{\tau,\dd,N},\phi_{\Tr(\tilde{W})_{\tau,\dd,N}}\ICS_{Y_{\tau,\dd,N}}(\mathbb{Q})\otimes\mathbb{L}^{(\dim(V_{\tau,\dd,N})-s)/2})$ satisfies
\begin{align}
\label{E2Def}
E_{\upsilon,\dd,N,2}^{p,q}=&\HO^p\!\left(S_{\chi,N},\mathbb{Q}\right)\otimes\HO^q\!\left(Y_{\tau',\dd,N},\phi_{\Tr(\tilde{W})_{\tau',\dd,N}}\ICS_{Y_{\tau',\dd,N}}(\mathbb{Q})\otimes\mathbb{L}^{(\dim(V_{\tau',\dd,N})-s')/2}\right).
\end{align}
Set
\begin{align*}
\heartsuit^{(')}=&(\dim(U_{\tau^{(')},\dd,N}-s^{(')}N-(\dd',\dd'')_{\tilde{Q}})/2\\
\spadesuit^{(')}=&(\dim(U_{\tau^{(')},\dd,N}-s^{(')}N)/2.
\end{align*}
In similar fashion, we obtain spectral sequences $E^{\bullet,\bullet}_{\upsilon,N,\dd',\dd'',\bullet}$ and $E^{\bullet,\bullet}_{\upsilon,N,\dd'\times \dd'',\bullet}$ satisfying
\begin{align*}
E^{p,q}_{\upsilon,\dd',\dd'',N,2}=& \HO^p\!\left(S_{\chi,N},\mathbb{Q}\right)\otimes\HO^q\!\left(Y_{\tau',\dd',\dd'',N},\:\phi_{\Tr(\tilde{W})_{\tau',\dd',\dd'',N}}\ICS_{Y_{\tau',\dd',\dd'',N}}(\mathbb{Q})\otimes\mathbb{L}^{ \heartsuit'}\right)\\
E^{p,q}_{\upsilon,\dd'\times \dd'',N,2}=&\HO^p\!\left(S_{\chi,N},\mathbb{Q}\right)\otimes \HO^q\!\left( Y_{\tau',\dd'\times \dd'',N},\:\phi_{\Tr(\tilde{W})_{\tau',\dd'\times \dd'',N}}\ICS_{Y_{\tau',\dd'\times \dd'',N}}(\mathbb{Q})\otimes\mathbb{L}^{\spadesuit'}\right)
\end{align*}
converging to 
\[
\HO(Y_{\tau,\dd',\dd'',N},\:\phi_{\Tr(\tilde{W})_{\tau,\dd',\dd'',N}}\ICS_{Y_{\tau,\dd',\dd'',N}}(\mathbb{Q})\otimes\mathbb{L}^{\heartsuit})
\]
and 
\[
\HO(Y_{\tau,\dd'\times\dd'',N},\:\phi_{\Tr(\tilde{W})_{\tau,\dd'\times\dd'',N}}\ICS_{Y_{\tau,\dd'\times\dd'',N}}(\mathbb{Q})\otimes\mathbb{L}^{\spadesuit})
\]
respectively.  As in the construction of $\Coha_{\tau,\tilde{Q},\tilde{W}}$ we obtain a commutative diagram of morphisms of spectral sequences, with vertical morphisms provided by restriction morphisms in cohomology
\begin{equation}
\label{specMult}
\xymatrix{
E^{\bullet,\bullet}_{\upsilon,\dd,N+1,\bullet}\ar[d] &E^{\bullet,\bullet}_{\upsilon,\dd',\dd'',N+1,\bullet}\ar[l]\ar[d]&E^{\bullet,\bullet}_{\upsilon,\dd'\times \dd'',N+1,\bullet}\ar[d]\ar[l]
\\
E^{\bullet,\bullet}_{\upsilon,\dd,N,\bullet} &E^{\bullet,\bullet}_{\upsilon,\dd',\dd'',N,\bullet}\ar[l]&E^{\bullet,\bullet}_{\upsilon,\dd'\times \dd'',N,\bullet}.\ar[l]
}
\end{equation}
Each of the spectral sequences $E^{\bullet,\bullet}_{\upsilon,\dd,N,\bullet}$, $E^{\bullet,\bullet}_{\upsilon,\dd',\dd'',N,\bullet}$, $E^{\bullet,\bullet}_{\upsilon,\dd'\times \dd'',N,\bullet}$ is a first quadrant spectral sequence, and each of the limits 
\begin{align*}
&\lim_{N\mapsto \infty}E^{p,q}_{\upsilon,\dd,N,2}\\
&\lim_{N\mapsto \infty}E^{p,q}_{\upsilon,\dd',\dd'',N,2}\\
&\lim_{N\mapsto \infty}E^{p,q}_{\upsilon,\dd'\times \dd'',N,2}
\end{align*}
exists as in Section \ref{pfs}.  We claim the following commutativity of limits
\begin{align}\label{commss}
\mathcal{A}_{\tau,\tilde{Q},\tilde{W},\dd}\cong&\lim_{N\mapsto\infty}\lim_{l\mapsto \infty}E^{p,q}_{\upsilon,\dd,N,l}&\cong \lim_{l\mapsto\infty}\lim_{N\mapsto \infty}E^{p,q}_{\upsilon,\dd,N,l}
\\\nonumber
\mathcal{A}_{\tau,\tilde{Q},\tilde{W},\dd',\dd''}\cong&\lim_{N\mapsto\infty}\lim_{l\mapsto \infty}E^{p,q}_{\upsilon,\dd',\dd'',N,l}&\cong \lim_{l\mapsto\infty}\lim_{N\mapsto \infty}E^{p,q}_{\upsilon,\dd',\dd'',N,l}
\\\nonumber
\mathcal{A}_{\tau,\tilde{Q},\tilde{W},\dd'}\otimes_{\HO_T} \mathcal{A}_{\tau,\tilde{Q},\tilde{W},\dd''}\cong&\lim_{N\mapsto\infty}\lim_{l\mapsto \infty}E^{p,q}_{\upsilon,\dd'\times \dd'',N,l}&\cong\lim_{l\mapsto\infty}\lim_{N\mapsto \infty}E^{p,q}_{\upsilon,\dd'\times \dd'',N,l},
\end{align}
using the shorthand 
\[
\mathcal{A}_{\tau,\tilde{Q},\tilde{W},\dd',\dd''}\cong \mathcal{A}_{\tau,\tilde{Q},\tilde{W},\dd'}\otimes_{\HO_T} \mathcal{A}_{\tau,\tilde{Q},\tilde{W},\dd''}\otimes\mathbb{L}^{{}-(\dd',\dd'')_{\tilde{Q}}/2}.
\]
The argument for all three statements is the same: fixing $p$ and $q$, the limit $E^{p,q}_{\upsilon,\dd,N,\infty}$ depends only on a finite portion of $E^{p,q}_{\upsilon,\dd,N,s}$, which therefore stabilises for sufficiently large $N=N_{p,q}$.  The $(p,q)$-term of both the second and third expression of (\ref{commss}) are then given by $E^{p,q}_{\upsilon,\dd,N_{p,q},\infty}$.  
\sbrk
We may define the cohomological Hall algebra multiplication on $\Coha_{\tau,\tilde{Q},\tilde{W}}$ via the commutative diagram obtained from (\ref{CoHACorr}) or as the morphism induced in the double limit by the composition of the horizontal morphisms in (\ref{specMult}).  Via the morphism
\[
E^{\bullet,\bullet}_{\upsilon,\dd,\infty,2}\rightarrow E^{0,\bullet}_{\dd,\infty,2}
\]
to the degenerate spectral sequence concentrated on the first nontrivial column, and the analogous morphisms for the spectral sequences $E^{\bullet,\bullet}_{\upsilon,\dd',\dd'',\infty,\bullet}$ and $E^{\bullet,\bullet}_{\upsilon,\dd'\times \dd'',\infty,\bullet}$ we obtain a commutative diagram of double limits
\[
\xymatrix{
\lim_{l\mapsto \infty} E^{\bullet,\bullet}_{\upsilon,\dd,\infty,l}\ar[r] \ar[d]&\lim_{l\mapsto \infty}E^{\bullet,\bullet}_{\upsilon,\dd',\dd'',\infty,l}\ar[r]\ar[d]&\lim_{l\mapsto \infty} E^{\bullet,\bullet}_{\upsilon,\dd'\times \dd'',\infty,l}\ar[d]\\
\Coha_{\tau',\tilde{Q},\tilde{W},\dd}\ar[r]& \Coha_{\tau',\tilde{Q},\tilde{W},\dd',\dd''}\ar[r]&\Coha_{\tau',\tilde{Q},\tilde{W},\dd'}\otimes_{\HO_T'} \Coha_{\tau',\tilde{Q},\tilde{W},\dd''}
}
\]
providing a lift of the natural morphism
\begin{equation}
\label{specCo}
 \Coha_{\tau,\tilde{Q},\tilde{W}}\otimes_{\HO_T} \HO_{T'}\rightarrow\Coha_{\tau',\tilde{Q},\tilde{W}}
\end{equation}
in cohomology to a morphism in the category of algebra objects in the category of complexes of mixed Hodge structures.  
\begin{theorem}\label{degThm}
Let 
\[
\xymatrix{
\tilde{Q}_1\ar@/^1pc/[rr]^-{\tau'}\ar[r]_-{\tau}&\mathbb{Z}^s\ar@{->>}[r]_-{\upsilon}&\mathbb{Z}^{s'}
}
\]
be as above a specialisation of a $\tilde{W}$-invariant weighting of $\tilde{Q}$.  Then there is an isomorphism of mixed Hodge structures
\begin{equation}
\label{degIso}
\Coha_{\tau,\tilde{Q},\tilde{W}}\cong\Coha_{\tau',\tilde{Q},\tilde{W}}\otimes_{\mathbb{Q}} \HO_{T^{\chi}}
\end{equation}
with $T^{\chi}$ as in \eqref{Tchidef}.  Furthermore, the morphism (\ref{specCo}) is an isomorphism, and both sides of (\ref{degIso}) are pure.
\end{theorem}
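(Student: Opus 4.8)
The plan is to establish the three assertions of Theorem~\ref{degThm} in sequence, with the purity theorem (Theorem~\ref{purityThm}) and its framed consequence (Theorem~\ref{purityWithStab}, Corollary~\ref{withSt}) doing the heavy lifting for the degeneration of the Leray spectral sequences constructed above. First I would prove the isomorphism (\ref{degIso}) at the level of underlying mixed Hodge structures by analysing the family of Leray spectral sequences $E^{\bullet,\bullet}_{\upsilon,\dd,N,\bullet}$ in the limit $N\mapsto\infty$. The key input is that, after passing to the double limit, the abutment $\Coha^{\vee}_{\tau,\tilde Q,\tilde W,\dd}$ is pure: indeed by Theorem~\ref{dimRedThm} the dimensional reduction isomorphism identifies $\HO_c(Y_{\tau',\dd,N},\phi_{\Tr(\tilde W)_{\tau',\dd,N}}\ICS(\mathbb{Q}))$ with the compactly supported cohomology of (an approximation of) $\Mst(\Pi_Q)_{\dd}\times_{\Gl_{\dd}} (\hbox{extra torus})$, which is pure of Tate type by Theorem~\ref{purityThm}. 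Likewise the $E_2$ terms in (\ref{E2Def}) are tensor products of $\HO^\bullet(\mathbb{C}\mathbb{P}^{N-1},\mathbb{Q})$ with such pure Tate objects, hence pure; and since a spectral sequence of pure mixed Hodge structures with pure abutment degenerates at $E_2$ (the differentials are morphisms of mixed Hodge structures between objects of different weights), we deduce $E_{\upsilon,\dd,\infty,2}=E_{\upsilon,\dd,\infty,\infty}$. Reading off the $E_2$ sheet and using $\HO^\bullet(\mathbb{C}\mathbb{P}^{N-1},\mathbb{Q})\otimes\LL^{\otimes -N}\to \mathbb{Q}[u]$ with $u$ of weight $2$ in the limit gives the identification of $\Coha_{\tau,\tilde Q,\tilde W}$ with $\Coha_{\tau',\tilde Q,\tilde W}\otimes \mathbb{Q}[\mathfrak{t}^\chi]$ as graded mixed Hodge structures, where $\mathbb{Q}[\mathfrak{t}^\chi]=\Sym((\mathfrak{t}^\chi_{\mathbb{Q}})^\vee\otimes\LL)$.

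The second step is to upgrade this to the statement that $s_\upsilon$ of (\ref{specCo}) is an \emph{algebra} isomorphism. The map $s_\upsilon$ was constructed from the morphism of spectral sequences (\ref{specMult}) together with the edge maps to the degenerate spectral sequences on the top row; by the commutative diagram of double limits just above the theorem statement, $s_\upsilon$ is a morphism of $\mathbb{Q}[\mathfrak{t}']$-algebras. To see it is an isomorphism, I would argue that after the degeneration established in step one, the edge morphism $E^{\bullet,\bullet}_{\upsilon,\dd,\infty,2}\to E^{0,\bullet}_{\dd,\infty,2}$ becomes, dimension-vector by dimension-vector, the projection $\Coha_{\tau',\tilde Q,\tilde W,\dd}^\vee\otimes\mathbb{Q}[\mathfrak{t}^\chi]^\vee\to\Coha_{\tau',\tilde Q,\tilde W,\dd}^\vee$, which after dualising and tensoring back over $\mathbb{Q}[\mathfrak{t}]$ with $\mathbb{Q}[\mathfrak{t}']$ is precisely the statement that $s_\upsilon$ identifies $\Coha_{\tau',\tilde Q,\tilde W}$ with $\Coha_{\tau,\tilde Q,\tilde W}\otimes_{\mathbb{Q}[\mathfrak t]}\mathbb{Q}[\mathfrak t']$. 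Compatibility with multiplication is automatic because every arrow in (\ref{specMult}) and in the subsequent diagram is induced by the same geometric correspondences (the affine fibrations $q_1,q_2$ and the proper maps $r_N,s_N$ of (\ref{propcomp})) that define the CoHA products, so the diagram of products commutes at the level of spectral sequences and passes to the limit.

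Finally, purity of both sides of (\ref{degIso}) follows: the right-hand side is pure since $\Coha_{\tau',\tilde Q,\tilde W}$ is pure (again by Theorem~\ref{dimRedThm} plus Theorem~\ref{purityThm}, applied with the weighting $\tau'$ in place of $\tau$, noting that the torus $T^{\tau'}$ contributes only a smooth $BT^{\tau'}$-factor whose cohomology is pure Tate) and $\mathbb{Q}[\mathfrak t^\chi]$ is pure, and purity of the left-hand side is transported across the isomorphism. I expect the main obstacle to be the bookkeeping of Tate twists in the double limit $\lim_{N}\lim_{s}$ versus $\lim_{s}\lim_{N}$ of (\ref{commss}) — one must check that the spectral-sequence differentials really are zero not just rationally but compatibly with the mixed Hodge structure and with the CoHA multiplication, i.e. that the degeneration is \emph{functorial} in the three spectral sequences simultaneously so that the product survives. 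This is where the full strength of "pure abutment forces $E_2$-degeneration of a spectral sequence of mixed Hodge structures" must be invoked carefully, uniformly in $\dd$, and compatibly with the maps in (\ref{specMult}); once that is in place, the rest is the routine identification of edge maps with the structural projections.
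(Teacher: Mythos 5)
Your approach is the same as the paper's in outline — degenerate the Leray spectral sequences $E^{\bullet,\bullet}_{\upsilon,\dd,N,\bullet}$ at $E_2$ using purity of the $E_2$ page and read off the isomorphism — but there is a genuine circularity in how you establish purity of that $E_2$ page, and the paper's proof is structured precisely to avoid it.

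You claim that $\HO_c\bigl(Y_{\tau',\dd,N},\phi_{\Tr(\tilde W)_{\tau',\dd,N}}\ICS(\mathbb{Q})\bigr)$ is pure ``by Theorem~\ref{dimRedThm} plus Theorem~\ref{purityThm}, $\ldots$ noting that the torus $T^{\tau'}$ contributes only a smooth $BT^{\tau'}$-factor whose cohomology is pure Tate.'' But Theorem~\ref{purityThm} asserts purity only of $\HO_c(\Mst(\Pi_Q)_\dd,\mathbb{Q})$, i.e.\ of the $T^{\tau'}$-equivariant cohomology in the \emph{trivial} case $\tau'=0$. For $\tau'\neq 0$, dimensional reduction identifies the $E_2$ term with (an approximation of) $\HO_{c,\,T^{\tau'}\times\Gl_\dd}(\mu^{-1}_{Q,\dd}(0),\mathbb{Q})$, and the assertion that this decomposes as the tensor product of $\HO_c(\Mst(\Pi_Q)_\dd,\mathbb{Q})$ with $\HO(BT^{\tau'},\mathbb{Q})$ is \emph{exactly} the content of the degeneration being proved — if the $T^{\tau'}$-factor were a priori decoupled, there would be no theorem here, and no Conjecture~4.4 of Schiffmann--Vasserot to prove. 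So your step~1 is assuming purity of $\Coha_{\tau',\tilde Q,\tilde W}$, which is not available yet.

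The fix — and this is what the paper does — is a two-step bootstrap. First treat the special case $s'=0$, $\tau'=0$: there the $E_2$ page of (\ref{E2Def}) involves only $\HO^q_c$ of the ordinary ($\tau'=0$) Totaro approximation, which \emph{is} pure by Theorem~\ref{purityThm} via dimensional reduction. Degeneration follows from strictness of morphisms of mixed Hodge structures, since the differentials connect summands pure of distinct weights. This establishes (\ref{degIso}) when $\tau'=0$, whence as a corollary $\Coha_{\tau,\tilde Q,\tilde W}$ is pure for \emph{every} $\tilde W$-admissible weighting $\tau$. Only now can you return to a general specialization $\upsilon\colon\mathbb{Z}^s\to\mathbb{Z}^{s'}$, observe that the right-hand side of (\ref{E2Def}) is pure because $\Coha_{\tau',\tilde Q,\tilde W}$ is pure, and run the identical degeneration argument. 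Your second and third paragraphs (algebra compatibility of $s_\upsilon$ and the identification of edge maps) are fine and elaborate the part the paper's proof leaves implicit; it is only the base of the induction that is missing.
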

\begin{proof}
First we consider the special case $s'=0$, $\tau'=0$.  Then by Theorem \ref{purityThm} and Lemma \ref{Sgoo}, the right hand side of (\ref{E2Def}) is a pure Hodge structure, and so the spectral sequence $E^{\bullet,\bullet}_{\upsilon,d,\infty,\bullet}$ degenerates at the second sheet, and the existence of the isomorphism (\ref{degIso}) follows, along with the fact that \eqref{specCo} is an isomorphism.

As a consequence, $A_{\tau,\tilde{Q},\tilde{W}}$ is pure for all $\tau$.  So it follows that for general $\upsilon$, the right hand side of (\ref{E2Def}) is pure, and the general case follows via the same argument as the special case.
\end{proof}

\subsubsection{}
Let the torus $T=(\mathbb{C}^*)^s$ act on $\Mst(\tilde{Q})_{\dd}$ via the weight function $\tau\colon\tilde{Q}_1\rightarrow \mathbb{Z}^s$.  Then for each $\dd\in\mathbb{N}^{Q_0}$, ignoring the overall Tate twist, via Theorem \ref{dimRedThm} there is an isomorphism in Borel--Moore homology
\[
\Psi_{\tau,Q,\dd}\colon\HO^{\BM}\!\left({}^{\tau}\Mst(\tilde{Q})_{\dd},\phi_{\Tr(\tilde{W})}\mathbb{Q}\right)\cong \HO_{T\times \Gl_{\dd}}^{\BM}(\mu^{-1}_{Q,\dd}(0),\mathbb{Q})=:\Coha_{\tau,\Pi_Q,\dd}.
\]
The domain of $\bigoplus_{\dd\in\mathbb{N}^{Q_0}}\Psi_{\tau,Q,\dd}$ carries the Kontsevich--Soibelman cohomological Hall algebra product recalled above, while the target carries the Schiffmann--Vasserot product \cite[Sec.4]{ScVa12}.  By \cite[Cor.4.5]{DRS15} or \cite{YaZh16}, the modified morphism
\begin{equation}
\label{signedIso}
\Psi'_{\tau,Q}:=\bigoplus_{\dd\in\mathbb{N}^{Q_0}} (-1)^{\sum_{i\in Q_0}\binom{\dd_i}{2}}\Psi_{\tau,Q,\dd}\colon \Coha_{\tau,\tilde{Q},\tilde{W}}\rightarrow \Coha_{\tau,\Pi_Q}
\end{equation}
is an isomorphism of algebras.  Since $\Psi_{\tau,Q,\dd}$ is a morphism of $\HO_T$-modules, we deduce the following corollary of Theorem \ref{degThm}.
\begin{corollary}
\label{specCor}
Let $\mathfrak{m}$ be the maximal homogeneous ideal in $\HO_T$.  Then $\Coha_{\tau,\Pi_Q}$ is free as a $\HO_T$-module, and the natural morphism of algebras 
\[
\Xi\colon \Coha_{\tau,\Pi_Q}\otimes_{\HO_T}(\HO_T/\mathfrak{m})\rightarrow \bigoplus_{\dd\in\mathbb{N}^{Q_0}}\HO_{\Gl_{\dd}}^{\BM}(\mu^{-1}_{Q,\dd}(0),\mathbb{Q})
\]
is an isomorphism.
\end{corollary}
\begin{proof}
While the vertical dimension reduction isomorphisms in the following commutative diagram are not morphisms of algebras (due to sign issues), the horizontal morphisms are, and the top one is an isomorphism since the bottom one is by Theorem \ref{degThm}:
\[
\xymatrix{
\bigoplus_{\dd\in\mathbb{N}^{Q_0}}\HO_{\Gl_{\dd}}^{\BM}(\mu^{-1}_{Q,\dd}(0),\mathbb{Q})&\Coha_{\Pi_Q}\otimes_{\HO_T}(\HO_T/\mathfrak{m})\ar[l]_-{\Xi}
\\
\Coha_{\tau',\tilde{Q},\tilde{W}}\ar[u]^{\Psi_{\tau',Q}}&\Coha_{\tau,\tilde{Q},\tilde{W}}\otimes_{\HO_T}(\HO_T/\mathfrak{m})
\ar[l]\ar[u]_{\Psi_{\tau,Q}\otimes_{\HO_T}(\HO_T/\mathfrak{m})}.
}
\]
\end{proof}
\section{Shuffle algebras, torsion-freeness and noncommutativity}
\label{CoHASec2}
\subsection{Definition of the shuffle algebra}
Fix a weight function $\tau\colon \tilde{Q}_1\rightarrow \mathbb{Z}^s$.  As above we write $T=\Hom(\mathbb{Z}^s,\mathbb{C}^*)$.  We recall the shuffle algebra description of the cohomological Hall algebra 
\[
\Coha_{\tau,\tilde{Q}}=\bigoplus_{\dd\in\mathbb{N}^{Q_0}}\HO({}^{\tau}\Mst(\tilde{Q})_{\dd},\mathbb{Q})\otimes\LL^{(\dd,\dd)_{\tilde{Q}}/2}.
\]
Set $\Bbbk\coloneqq \HO_T$.  Since $X(\tilde{Q})_{\dd}$ is equivariantly contractible, there is an isomorphism in cohomology
\[
\HO_{T\times \Gl_{\dd}}(X(\tilde{Q})_{\dd},\mathbb{Q})\cong \Bbbk[x_{i,n} \;\lvert\; i\in Q_0,1\leq n\leq \dd_i]^{\mathfrak{S}_{\dd}}.
\]
Here $\mathfrak{S}_{\dd}=\prod_{i\in Q_0}\mathfrak{S}_{\dd_i}$ is the product of symmetric groups, with $\mathfrak{S}_{\dd_i}$ acting by permuting the variables $x_{i,1},\ldots,x_{i,\dd_i}$.  For $\dd'+\dd''=\dd$, we define $\Sh_{\dd',\dd''}\subset \mathfrak{S}_{\dd}$ to be the subset of permutations $(\sigma_i)_{i\in Q_0}$ such that for each $i\in Q_0$ we have inequalities
\begin{align*}
\sigma_i(1)<\sigma_i(2)<\ldots<\sigma_i(\dd'_i)&&\textrm{    and    }&&\sigma_i(\dd'_i+1)<\ldots <\sigma_i(\dd_i).
\end{align*}
We fix generators $t_1,\ldots, t_s$ of $\HO_T$, with $t_i$ corresponding to the generator of the equivariant cohomology of $\Hom(\mathbb{Z}_i,\mathbb{C}^*)$, where $\mathbb{Z}_i$ is the $i$th copy of $\mathbb{Z}$ inside $\mathbb{Z}^s$.
\smallbreak
For $a\in\tilde{Q}_1$ define 
\[
E_a(z)=z+\sum_{i\leq s}\tau(a)_it_i.
\]

We use $\star$ to denote the multiplication in the CoHA $\Coha_{\tau,\tilde{Q}}$.  Then it is shown as in \cite[Sec.1]{COHA}:
\begin{align}\label{SAP}
&f(x_{1,1},\ldots,x_{r,\dd'_r})\star g(x_{1,1},\ldots,g_{r,\dd''_r})=\\
\nonumber
&\sum_{\sigma\in\Sh_{\dd',\dd''}}\sigma\BIG{(}f(x_{1,1},\ldots,x_{r,\dd'_r})g(x_{1,\dd'_1+1},x_{1,\dd'_1+2},\ldots ,x_{1,\dd_1},x_{2,\dd'_2+1},\ldots,x_{r,\dd_r})\cdot \\
\nonumber
&\prod_{a\in \tilde{Q}_1} \left(\prod_{\substack{1\leq m\leq \dd'_{s(a)} \\\dd'_{t(a)}<n\leq \dd_{t(a)}}}E_a(x_{t(a),n}-x_{s(a),m})\right)\prod_{i\in Q_0}\left(\prod_{\substack{1\leq m\leq \dd'_i\\ \dd'_i<n\leq \dd_i}}(x_{i,n}-x_{i,m})^{-1}\right)\BIG{)}.
\end{align}
\subsubsection{}
Let $z\colon Z\hookrightarrow X(\tilde{Q})_{\dd}\eqqcolon X$ be the subvariety cut out by the matrix valued equation $\sum_{a\in Q_1}[a,a^*]=0$.  Then since $Z\subset \Tr(\tilde{W})^{-1}(0)$ there is a (dual) restriction map $z_*z^!\mathbb{Q}_X\rightarrow \phin{\Tr(\tilde{W})}\mathbb{Q}_X $, inducing the morphism $\alpha$ in the following diagram
\begin{equation}
\label{shuffDiag}
\xymatrix{
\HO^{T\times \Gl_{\dd}}(X(\tilde{Q})_{\dd},\phin{\Tr(\tilde{W})}\mathbb{Q})\otimes\LL^{(\dd,\dd)_{\tilde{Q}}/2}\ar@{.>}[dr]^{\Phi_{\dd}}
\\
\HO^{\BM}_{T\times \Gl_{\dd}}(Z,\mathbb{Q})\otimes\LL^{s-(\dd,\dd)_{\tilde{Q}}/2}\ar[r]^{\beta}\ar[d]\ar[u]^-{\alpha}&\HO^{T\times\Gl_{\dd}}(X(\tilde{Q})_{\dd},\mathbb{Q})\otimes \LL^{(\dd,\dd)_{\tilde{Q}}/2}\ar[d]^{\epsilon}
\\
\HO^{\BM}_{T\times \Gl_{\dd}}(\mu_{Q,\dd}^{-1}(0),\mathbb{Q})\otimes\LL^{s-(\dd,\dd)_Q}\ar[r]^{\gamma}&\HO^{\BM}_{T\times\Gl_{\dd}}(X(\overline{Q})_{\dd},\mathbb{Q})\otimes \LL^{s-(\dd,\dd)_{Q}}.
}
\end{equation}
The unlabelled vertical morphisms are isomorphisms because they are induced by affine fibrations, while $\alpha$ is an isomorphism by Theorem \ref{dimRedThm}.  Then we define $\Phi_{\dd}=\beta\alpha^{-1}$.  In particular, $\Phi_{\dd}$ is injective if and only if $\gamma$ is.  
\begin{proposition}
The morphism
\[
\Phi\colon \bigoplus_{\dd\in\mathbb{N}^{Q_0}}\HO^{T\times \Gl_{\dd}}(X(\tilde{Q})_{\dd},\phin{\Tr(\tilde{W})}\mathbb{Q})\otimes\LL^{(\dd,\dd)_{\tilde{Q}}/2}\rightarrow \bigoplus_{\dd\in\mathbb{N}^{Q_0}}\HO^{T\times \Gl_{\dd}}(X(\tilde{Q})_{\dd},\mathbb{Q})\otimes\LL^{( \dd,\dd)_{\tilde{Q}}/2}
\]
is an algebra morphism, where the domain and target are given the KS cohomological Hall algebra structure.
\end{proposition}
\begin{proof}
For $a\in \tilde{Q}_1$ we define
\[
E^{\tw}_a(z)=\begin{cases} E_a(z)&\textrm{if }a\neq \omega_i\;\textrm{ for }i\in Q_0\\
-E_a(z)&\textrm{if }a=\omega_i\;\textrm{ for }i\in Q_0.
\end{cases}
\]
The shuffle algebra $\Coha^{\tw}_{\tau,\tilde{Q}}$ is defined to have the same underlying graded vector space as $\Coha_{\tau,\tilde{Q}}$, with multiplication defined by \eqref{SAP}, but with all instances of $E_a(\bullet)$ replaced by $E'_a(\bullet)$.  Let 
\[
F\colon \bigoplus_{\dd\in\mathbb{N}^{Q_0}}\HO^{\BM}_{T\times \Gl_{\dd}}(\mu_{Q,\dd}^{-1}(0),\mathbb{Q})\otimes\LL^{s-(\dd,\dd)_Q}\rightarrow \Coha^{\tw}_{\tau,\tilde{Q}}
\]
be the morphism defined by taking the sum of $\epsilon^{-1}\gamma$ over all $\dd$.  Then by \cite{ScVa12}, $F$ is a homomorphism of algebras.  We define an isomorphism
\[
\Gamma\colon \Coha^{\tw}_{\tau,Q}\rightarrow \Coha_{\tau,\tilde{Q}}\\
\]
by setting $\Gamma_{\dd}=(-1)^{\sum_{i\in Q_0}\binom{\dd_i}{2}}\cdot \id_{\Coha_{\tau,\tilde{Q},\dd}}$.  We define $\Psi'_{\tau,Q}$ as in \eqref{signedIso}.  Then $\Phi=\Gamma\circ F\circ\Psi'_{\tau,Q}$ is a composition of morphisms of algebras.
\end{proof}

\subsection{Torsion freeness}
Due to the complicated geometry of preprojective stacks, the algebra $\Coha_{\tau,\Pi_Q}$ is still not wholly understood, despite intensive study.  On the other hand, in their work on the AGT conjectures \cite[Sec.4.3]{ScVa12} Schiffmann and Vasserot conjectured that for $\tau$ as defined in Example \ref{SVT}, the morphism $\Phi$ (or equivalently, the morphism $F$) is in fact an embedding of algebras, making the cohomological Hall algebra $\Coha_{\tau,\Pi_{Q_{\Jor}}}$ much more manageable.  We prove this conjecture in the case of a \textit{general} quiver $Q$, for any sufficiently large $T$.

\begin{theorem}
\label{TFtheorem}
Let $Q$ be a finite quiver, and let $\dd\in\mathbb{N}^{Q_0}$ be a dimension vector.  Let $\tau\colon \tilde{Q}_1\rightarrow \mathbb{Z}^s$ be a $\tilde{W}$-invariant grading, such that the grading of Example \ref{SVT} is a specialization of $\tau$.  The $\HO_{T\times \Gl_{\dd}}$-module $\HO^{\BM}_{T\times \Gl_{\dd}}(\mu^{-1}_{Q,\dd}(0),\mathbb{Q})$ is torsion free, and the natural map $F$ to the shuffle algebra $\Coha^{\tw}_{\tau,\tilde{Q}}$ is an inclusion of algebras.
\end{theorem}
\begin{remark}
Note that the $\HO_T$-action on $\HO^{\BM}_{T\times \Gl_{\dd}}(\mu_{Q,\dd}^{-1}(0),\mathbb{Q})$ is determined by the restriction of $\tau$ to $\overline{Q}_1$.  Thus, the $\HO_T$ action on all of the cohomology groups in \eqref{shuffDiag} are determined by this restriction.
\end{remark}
\renewcommand*{\proofname}{Proof of Theorem \ref{TFtheorem}}
\begin{proof}
The passage from torsion freeness to all of the other statements of the theorem is as explained in \cite{ScVa12}, so we focus on torsion-freeness.  The proof for this is a modification of \cite[Prop.4.6]{SV17}; the original statement of this result in \cite{SV17}, and its proof, require fixing, we indicate how.
\smallbreak
Firstly, by assumption, $T$ contains two one-dimensional tori $\mathbb{C}^*_1$ and $\mathbb{C}^*_2$, where $\mathbb{C}^*_1$ acts on arrows $a,a^*,\omega_i$, for $a\in Q_1$ and $i\in Q_0$ with weights $1,-1,0$ respectively, and $\mathbb{C}^*_2$ acts with weights $1,0,-1$ respectively.
\sbrk
Let $\Bbbk_i=\HO_{\mathbb{C}^*_i}$, let $I_i\subset \HO_{T\times \Gl_{\dd}}$ be the ideal of functions vanishing on $\mathrm{Lie}(\mathbb{C}^*_i)\subset \mathrm{Lie}(T\times \Gl_{\dd})$, and let $K_i$ be the fraction field of $\Bbbk_i$.  Considering $X(Q)_{\dd}$ as a subvariety of $\mu^{-1}_{Q,\dd}(0)$ via the extension by zero map, $X(Q)_{\dd}$ contains the fixed locus of the $\mathbb{C}^*_1$-action on $\mu^{-1}_{Q,\dd}(0)$, and so the pushforward
\[
(\HO_{T\times \Gl_{\dd}})_{I_1}\cong\HO^{\BM}_{T\times \Gl_{\dd}}(X(Q)_{\dd},\mathbb{Q})_{I_1}\rightarrow \HO^{\BM}_{T\times \Gl_{\dd}}(\mu_{Q,\dd}^{-1}(0),\mathbb{Q})_{I_1}
\]
is an isomorphism by \cite[Thm.6.2]{GKM97}.  It is thus enough to prove that 
\[
\HO^{\BM}_{T\times \Gl_{\dd}}(\mu^{-1}_{Q,\dd}(0),\mathbb{Q})
\]
is $S_1$-torsion-free for $S_1=\HO_{T\times\Gl_{\dd}}\setminus I_1$.  The $\Bbbk_2$-module $\HO^{\BM}_{T\times \Gl_{\dd}}(\mu^{-1}_{Q,\dd}(0),\mathbb{Q})$ is free by Theorem \ref{degThm}, and so the morphism
\begin{equation}
\label{twop}
\HO^{\BM}_{T\times \Gl_{\dd}}(\mu^{-1}_{Q,\dd}(0),\mathbb{Q})\rightarrow \HO^{\BM}_{T\times \Gl_{\dd}}(\mu^{-1}_{Q,\dd}(0),\mathbb{Q})\otimes_{\Bbbk_2} K_2
\end{equation}
is an embedding.  Therefore it is sufficient to show that the right hand side of \eqref{twop} has no $I_1$-torsion.  By two applications of dimensional reduction (see Theorem \ref{dimRedThm}, and the discussion before the proof of Lemma \ref{lem3}), we have the $\Bbbk_2$-linear isomorphisms (leaving out Tate twists/shifts in cohomological degree):
\begin{align*}
\HO^{\BM}_{T\times \Gl_{\dd}}(\mu^{-1}_{Q,\dd}(0),\mathbb{Q})\cong &\HO^{\BM}\!\left({}^{\tau}\Mst(\tilde{Q})_{\dd},\phi_{\Tr(\tilde{W})}\mathbb{Q}\right)\\
\cong &\HO^{\BM}_{T\times \Gl_{\dd}}(\mathcal{C}_{\dd},\mathbb{Q})
\end{align*}
where $\mathcal{C}_{\dd}\subset X(Q^+)_{\dd}$ is the subspace of $Q^+$-modules such that the linear maps assigned to the loops $\omega_i$ define a $\mathbb{C}Q$-module endomorphism.  We define
\[
\mathcal{N}_{\dd}=\mathcal{C}_{\dd}\cap X(Q^+)^{\omega \nnilp}_{\dd}.
\]
Since the torus $\mathbb{C}^*_2$ acts by scaling the loops $\omega_i$, the natural map
\[
\HO^{\BM}_{T\times \Gl_{\dd}}(\mathcal{N}_{\dd},\mathbb{Q})\otimes_{\Bbbk_2} K_2\rightarrow \HO^{\BM}_{T\times \Gl_{\dd}}(\mathcal{C}_{\dd},\mathbb{Q})\otimes_{\Bbbk_2} K_2
\]
is an isomorphism.  So it is sufficient to show that $\HO^{\BM}_{T\times \Gl_{\dd}}(\mathcal{N}_{\dd},\mathbb{Q})$ has no $I_1$-torsion.  
\sbrk
Consider the stratification of the space $\mathcal{N}_{\dd}$ by Jordan types, introduced in the proof of Lemma \ref{lem3}: if $\overline{\pi}=(\pi^{(i)})_{i\in Q_0}$ is a tuple of partitions, with each $\pi^{(i)}$ a partition of $\dd_i$, the stratum $\mathcal{N}_{\overline{\pi}}\subset \mathcal{N}_{\dd}$ is the space for which the Jordan normal form of the operator assigned to $\omega_i$ has blocks with sizes given by $\pi^{(i)}$.  Then it is easy to check that each $\HO^{\BM}(\mathcal{N}_{\overline{\pi}},\mathbb{Q})$ has no $I_1$-torsion, and the claim that $\HO^{\BM}(\mathcal{N}_{\dd},\mathbb{Q})$ has no $I_1$-torsion follows from the long exact sequences in compactly supported cohomology induced by the stratification of $\mathcal{N}_{\dd}$.
\end{proof}
\renewcommand*{\proofname}{Proof}
\begin{remark}
The same proof works with $\mu^{-1}_{Q,\dd}(0)$ replaced by $\mu^{-1}_{Q,\dd}(0)^{\sharp}$ for $\sharp$ any out of $\SN,\SSN,\N$.
\end{remark}
\begin{remark}
\label{cat1}
It is possible for the strata $\mathcal{N}_{\overline{\pi}}$ to have $S_2$-torsion, so we cannot substitute $I_2$ for $I_1$ in the above proof, and merely insist on the inclusion $\mathbb{C}^*_2\subset T$.  Indeed we show in \S\ref{ttyb2} that this (stronger) version of the statement of Theorem \ref{TFtheorem} with (weaker) assumptions is false.
\end{remark}
\begin{remark}
\label{cat2}
For example, let $Q$ be the Jordan quiver, and consider the partition $\pi=(2)$ of the dimension vector $2\in\mathbb{N}^{Q_0}$.  We set $T=\mathbb{C}^*_1\times\mathbb{C}^*_2$, and write $\HO_T=\mathbb{Q}[t_1,t_2]$.  The space $\mathcal{N}_{\pi}$ is smooth, and we claim that $\HO_{T\times\Gl_2(\mathbb{C})}(\mathcal{N}_{\pi},\mathbb{Q})$ has $S_2$-torsion.  There is an equivariant homotopy equivalence $\mathcal{N}_{\pi}\simeq \mathcal{N}'_{\pi}$, where $\mathcal{N}'_{\pi}\subset \mathrm{Mat}_{2\times 2}(\mathbb{C})$ is the subspace of nonzero nilpotent matrices.  We denote by $T'\subset \Gl_2(\mathbb{C})$ the usual maximal torus, and write 
\begin{align*}
\HO_{T'}=&\mathbb{Q}[z_1,z_2]\\
\HO_{\Gl_2(\mathbb{C})}=&\mathbb{Q}[z_1,z_2]^{\mathfrak{S}_2}.
\end{align*}
There is precisely one $\Gl_2(\mathbb{C})$-orbit inside $\mathcal{N}'_{\pi}$.  So there is an isomorphism
\[
\HO_{T\times \Gl_2(\mathbb{C})}(\mathcal{N}'_{\pi},\mathbb{Q})\cong \HO_{A}
\]
where $A$ is the stabiliser subgroup of the nilpotent matrix 
\[
M=\left(\begin{array}{cc}0&1\\0&0\end{array}\right).
\]
We calculate 
\[
\HO_A=\mathbb{Q}[z_1,z_2,t_1,t_2]/(z_1-z_2-t_2)
\]
with the morphism $\HO_{T\times \Gl_2(\mathbb{C})}\rightarrow \HO_A$ given by the natural surjection defined by the composition
\[
\mathbb{Q}[z_1,z_2,t_1,t_2]^{\mathfrak{S}_2}\hookrightarrow\mathbb{Q}[z_1,z_2,t_1,t_2] \twoheadrightarrow \mathbb{Q}[z_1,z_2,t_1,t_2]/(z_1-z_2-t_2),
\]
where $\mathfrak{S}_2$ fixes the variables $t_1,t_2$.  The above factorization corresponds to the composition of inclusions
\[
T''\hookrightarrow T\times T'\hookrightarrow T\times \Gl_2(\mathbb{C})
\]
where $T''\subset T\times T'$ is the 3-torus stabilising $M$, and we are using that $T''$ is homotopic to $A$.  In particular, the action of $\gamma=t_2^2-(z_1-z_2)^2$ on $\HO_A$ is not injective, although $\gamma\in S_2$.  On the other hand, clearly $H_A$ has no $S_1$-torsion.
\end{remark}

\subsection{Noncommutativity}
\label{ECsec}
Theorem \ref{TFtheorem} enables explicit calculations inside $\Coha_{\tau,\Pi_Q}$.  Furthermore, although (as we have seen in Remarks \ref{cat1} and \ref{cat2}, and will see further, with Proposition \ref{TTYB3}) it is important that we work equivariantly with respect to a sufficiently large torus $T$ in Theorem \ref{TFtheorem}, we will demonstrate in this section how Theorem \ref{TFtheorem} enables us to perform concrete calculations for \textit{trivial} $T$, i.e. in the undeformed preprojective CoHA $\Coha_{\Pi_Q}$.
\subsubsection{}
We use explicit calculations in the algebra $\Coha_{\tau,\widetilde{Q_{\Jor}}}$ to show that $\Coha_{\widetilde{Q_{\Jor}},\tilde{W}}\cong\Coha_{\Pi_{Q_{\Jor}}}$ is noncommutative\footnote{Equivalently, since the entire algebra lives in even cohomological degrees, we show that it is not \textit{super}commutative.}.  Recall that by Theorem \ref{PBW3d}, for an arbitrary (symmetric) quiver $Q$ with potential $W$, there is a PBW isomorphism
\[
\Sym(\DT_{Q,W}\otimes\HO(\B\mathbb{C}^*,\mathbb{Q})_{\vir})\cong \Coha_{Q,W}.
\]
By Theorem \ref{dzBPS} there is an isomorphism of cohomologically graded vector spaces
\[
\DT_{\widetilde{Q_{\Jor}},\tilde{W},d}\cong \mathbb{Q}[3]
\]
so that for each $d\geq 1$ and $e\geq 0$ there is an element $\alpha^{(e)}_d$, of cohomological degree $2e-2$, well defined up to scalar, defined to be the image of $1\otimes u^e$ under the embedding
\[
\DT_{\widetilde{Q_{\Jor}},\tilde{W},d}\otimes \HO(\B\mathbb{C}^*,\mathbb{Q})_{\vir}\hookrightarrow \Coha_{\widetilde{Q_{\Jor}},\tilde{W}}.
\]
\begin{lemma}
\label{TTYB}
The commutator $[\alpha^{(1)}_1,\alpha^{(0)}_1]$ does not vanish.
\end{lemma}
In particular, the algebra $\Coha_{\Pi_{Q_{\Jor}}}$ is noncommutative.
\begin{proof}
We set $Q=Q_{\Jor}$.  Pick $\tau$ as in Example \ref{SVT}, with associated torus $T\cong\mathbb{C}_1^*\times\mathbb{C}_2^*$ in the notation of the proof of Theorem \ref{TFtheorem}.  By Theorem \ref{TFtheorem} the morphism 
\[
\iota \colon\Coha_{\tau,\Pi_Q}\rightarrow \Coha_{\tau,\tilde{Q}}
\]
is an inclusion of algebras.  Write $\Coha'\subset \Coha_{\tau,\tilde{Q}}$ for the image of this inclusion.  Then by Corollary \ref{specCor} there is an isomorphism of algebras
\[
\Coha_{\Pi_Q}\cong \Coha'/(t_1,t_2)\cdot\Coha'.
\]
We write $\Coha_{\tau,\Pi_Q,1}\cong \Coha_{\Pi_Q,1}\otimes \HO_T$, and define $\tilde{\alpha}_1^{(e)}=\alpha_1^{(e)}\otimes 1$.  Then $\iota(\tilde{\alpha}_1^{(e)})=x_1^e\in\mathbb{Q}[x_1,t_1,t_2]$.
\sbrk
First we calculate the commutator in $\Coha_{\tau,\tilde{Q}}$:
\begin{align*}
[x_1,x^0_1]=&(x_1-x_2)(x_2-x_1+t_1)(x_2-x_1+t_2)(x_2-x_1-t_1-t_2)/(x_2-x_1)+\\&(x_2-x_1)(x_1-x_2+t_1)(x_1-x_2+t_2)(x_1-x_2-t_1-t_2)/(x_1-x_2)\\
=&-2t_1t_2(t_1+t_2).
\end{align*}
This element has cohomological degree $-2$.  We claim that the unique nonzero element of cohomological degree less than $-2$ in $\iota(\Coha')$ is $x^0_1\star x^0_1$ (up to scalar).  Firstly, $x^0_1\star x^0_1$ has cohomological degree $-4$, since $x^0_1$ has cohomological degree $-2$.  Secondly, it is indeed nonzero, as we calculate below.  Finally, it follows from e.g. Corollary \ref{go2d} that the stack $\mathcal{C}_2\cong \mathfrak{C}oh_d(\mathbb{A}^2)$ of pairs of commuting $2\times 2$ matrices has a unique irreducible component of (complex) dimension greater than 1, and that component has dimension 2.
\smallbreak
Now we calculate 
\begin{align*}
x^0_1\star x^0_1=&(x_2-x_1+t_1)(x_2-x_1+t_2)(x_2-x_1-t_1-t_2)/(x_2-x_1)+\\&(x_1-x_2+t_1)(x_1-x_2+t_2)(x_1-x_2-t_1-t_2)/(x_1-x_2)\\
=&2(x_1-x_2)^2-2(t_1^2+t_1t_2+t_2^2).
\end{align*}
Thus $[x_1,x^0_1]\notin (t_1,t_2) \cdot (x^0_1\star x^0_1)$ and so $[x_1,x^0_1]\notin (t_1,t_2)\cdot\Coha'$.  It follows that $[\alpha_1^{(0)},\alpha_1^{(1)}]\neq 0$.
\end{proof}
\subsubsection{}
We can in fact be a little more specific regarding the commutator $[\alpha_1^{(1)},\alpha_1^{(0)}]$.
\begin{corollary}
\label{msc}
There exists a nonzero scalar $\lambda\in\mathbb{Q}$ such that
\[
[\alpha_1^{(1)},\alpha_1^{(0)}]=\lambda\alpha_2^{(0)}.
\]
\end{corollary}
\begin{proof}
In \cite{DaMe15b} it is shown that for a general (symmetric) QP $(Q',W')$, the CoHA $\Coha_{\tilde{Q},\tilde{W}}$ is a filtered algebra, for the perverse filtration defined by setting
\[
P^n\!\Coha_{Q',W'}=\HO\!\left(\Msp(Q'),\tau^{\leq n} \p_{*}^{\zeta}\phin{\mathfrak{T}r(W')}\ICS_{\Mst(Q')}(\mathbb{Q})\right).
\]
and the associated graded algebra is supercommutative.  In particular, $[\alpha_1^{(1)},\alpha_1^{(0)}]\in P^3\Coha_{\tilde{Q},\tilde{W}}$, since $\alpha_d^{(i)}\in P^{2i+1}\Coha_{\tilde{Q},\tilde{W}}$; here we have used commutativity of the associated graded object, along with the calculation $3+1-1=3$.

Inspecting \eqref{starInt}, for general (symmetric) quiver $Q'$ with potential $W'$, $P^3\!\Coha_{Q',W'}$ is spanned by 
\begin{enumerate}
\item $(\DT_{Q',W'}[-1])\star (\DT_{Q',W'}[-1])$
\item $\DT_{Q',W'}[-1]$
\item $\DT_{Q',W'}[-3]$.  
\end{enumerate}
So $P^3\!\Coha_{\widetilde{Q_{\Jor}},\tilde{W},2}$ is spanned by $\alpha_1^{(0)}\star \alpha_1^{(0)},\alpha_2^{(0)}, \alpha_2^{(1)}$, which have cohomological degrees $-4,-2,0$ respectively.  The cohomological degree of $[\alpha_1^{(1)},\alpha_1^{(0)}]$ is $-2$.  By Lemma \ref{TTYB}, $[\alpha_1^{(1)},\alpha_1^{(0)}]\neq 0$ and the result follows.
\end{proof}
\subsubsection{} We finish with a positive result regarding $\Coha_{\Pi_{Q_{\Jor}}}$.  A version of this result is to be found in \cite{KaVa19}; the proposition can be proved by observing that the elements $\alpha_{i}^{(n)}$ are primitive for the cocommutative coproduct considered in \cite{KaVa19}.  Using ideas from the proof of Corollary \ref{msc} we give an alternative proof.  Note that, in view of \ref{TTYB}, we do not claim that the Lie bracket on $\hat{\mathfrak{g}}$ is trivial.
\begin{proposition}
The $\mathbb{Q}$-vector space $\hat{\mathfrak{g}}\subset \Coha_{\Pi_{Q_{\Jor}}}$ spanned by the elements $\alpha_{i}^{(n)}$ is closed under the commutator Lie bracket, and there is an isomorphism
\[
\UEA(\hat{\mathfrak{g}})\cong \Coha_{\Pi_{Q_{\Jor}}}.
\]
\end{proposition}
\begin{proof}
Set $Q=Q_{\Jor}$.  The final statement follows from the first statement and the PBW theorem (Theorem \ref{2dint}) for $\Coha_{\Pi_{Q}}$.  For the first statement, we consider the perverse filtration from the proof of Corollary \ref{msc}.  Then $P^l\!\Coha_{\tilde{Q},\tilde{W}}$ has a basis given by monomials $\overline{\alpha}=\alpha_{i_1}^{(n_1)}\cdots\alpha_{i_s}^{(n_s)}$ with 
\[
\sum_{a=1}^s (1+2n_a)\leq l.
\]
On the other hand, the cohomological degree of such an element is given by
\[
\lvert \overline{\alpha}\lvert=\sum_{a=1}^s(2n_a-2).
\]
Set
\[
\beta=[\alpha_{i}^{(e)},\alpha_j^{(f)}].
\]
As in the proof of Corollary \ref{msc} we have
\[
\beta \in P^{2e+2f+1}\Coha_{\Pi_{Q}},\quad\quad\quad \lvert\beta\lvert=2e+2f-4.
\]
So $\beta$ can be written as a linear combination of elements $\overline{\alpha}$ with $\lvert\beta\lvert=\lvert\overline{\alpha}\lvert$ and
\begin{align*}
\sum_{a=1}^s (1+2n_a)&\leq 2e+2f+1\\
3s&\leq 5 &\textrm{since }\lvert\beta\lvert=\lvert\overline{\alpha}\lvert.
\end{align*}
So we find $s= 1$ as required.
\end{proof}
\begin{remark}
The same proof(s) demonstrate that $\Coha^{\mathcal{SN}}_{\Pi_{Q_{\Jor}}}$ is a universal enveloping algebra for some $\hat{\mathfrak{g}}^{\mathcal{SN}}$ satisfying $\hat{\mathfrak{g}}^{\mathcal{SN}}\cong \hat{\mathfrak{g}}[-2]$ (as graded vector spaces).
\end{remark}

\subsection{Torsion}
\label{ttyb2}
Finally we show that torsion-freeness of $\Coha_{\tau,\Pi_Q}$ can fail if we relax the conditions on $\tau$ in Theorem \ref{TFtheorem}.   
\begin{proposition}
\label{TTYB3}
Let $T=\mathbb{C}^*$ be any one of the tori $\mathbb{C}^*_1$, $\mathbb{C}^*_2$, $\mathbb{C}^*_3$ acting with weights $(1,-1,0)$ or $(1,0,-1)$ or $(0,1,-1)$ on the three arrows $a,a^*,\omega$ of $\widetilde{Q_{\Jor}}$.  Let $\sharp$ be one of $\emptyset,\N,\SN,\SSN$, chosen so that the pushforward morphism $\Phi_1\colon \Coha^{\sharp}_{\tau,\Pi_{Q_{\Jor}},1}\rightarrow \Coha_{\tau,\Pi_{Q_{\Jor}},1}$ is injective.  Then the $\HO_{T\times \Gl_2(\mathbb{C})}$-module $\Coha^{\sharp}_{\tau,\Pi_{Q_{\Jor}},2}=\HO^{\BM}_{T\times \Gl_2(\mathbb{C})}(\mu^{-1}_{Q_{\Jor},2}(0)^{\sharp},\mathbb{Q})$ is not torsion free.  The natural map
\begin{equation}
\label{HWG}
\Phi\colon \Coha^{\sharp}_{\tau,\Pi_{Q_{\Jor}}}\rightarrow\Coha_{\tau,\widetilde{Q_{\Jor}}}
\end{equation}
from the cohomological Hall algebra to the shuffle algebra is not injective.
\end{proposition}
As in Theorem \ref{TFtheorem}, the $\HO_{T\times \Gl_d(\mathbb{C})}$-action on $\Coha^{\sharp}_{\tau,\Pi_{Q_{\Jor}},2}$ is determined by the weights assigned to the arrows $a$ and $a^*$, i.e. the relevant weights are given by $(1,-1)$, $(1,0)$ and $(0,1)$ respectively.
\smallbreak
For $\sharp=\emptyset,\N,\SN,\SSN$ we have that $\mathcal{M}(\widetilde{Q_{\Jor}})_1^{\sharp}=\mathbb{A}^i$ for $i=3,1,2,2$ respectively, and the condition on $\Phi_1$ is just that the equivariant Euler class $E$ of the normal bundle of the inclusion $\mathbb{A}^i\hookrightarrow \mathbb{A}^3=\mathcal{M}(\widetilde{Q_{\Jor}})_1$ is nonzero.
\renewcommand*{\proofname}{Proof of Proposition \ref{TTYB3}}
\begin{proof}
Set $Q=Q_{\Jor}$.  Torsion-freeness is equivalent to the statement that the morphism 
\[
\HO^{\BM}_{T\times \Gl_2(\mathbb{C})}(\mu^{-1}_{Q,2}(0)^{\sharp},\mathbb{Q})\rightarrow \HO^{\BM}_{T\times \Gl_2(\mathbb{C})}(X(\overline{Q})_2,\mathbb{Q})
\]
is injective.  We start with the $\sharp=\emptyset$ case.
\sbrk
The commutator map
\[
[\cdot,\cdot]\colon \Coha_{\tau,\Pi_{Q},1}\otimes \Coha_{\tau,\Pi_{Q},1}\rightarrow \Coha_{\tau,\Pi_{Q},2}
\]
is nonzero, since by Lemma \ref{TTYB} it is nonzero after tensoring with $(\HO_T/\HO_T^{\geq 2})$.  Explicitly, writing \begin{align*}
\Coha_{\tau,\Pi_{Q},1}\cong&\HO(\mathbb{A}^3,\mathbb{Q})\otimes \HO(\B\mathbb{C}^*,\mathbb{Q})\otimes\HO(\B T,\mathbb{Q})\\
\cong& \mathbb{Q}\otimes \mathbb{Q}[u]\otimes\mathbb{Q}[t]
\end{align*}
we have seen that $[1,u]\neq 0$ (where the commutator is taken in $\Coha_{\tau,\Pi_{Q}}$ and lands in $\Coha_{\tau,\Pi_{Q},2}$).  For $T=\mathbb{C}^*_l$ with $l=1,2,3$, the shuffle algebra $\Coha_{\tau,\widetilde{Q}}$ is commutative; e.g. for the $\mathbb{C}^*_2$ case, by inspection of \eqref{SAP} and the equations
\begin{align*}
E_{a^*}(z)=&z\\
E_{a}(z)=&-E_{\omega}(-z)
\end{align*}
for $a$ the unique arrow in $Q$ it follows that $\Coha_{\tau,\widetilde{Q}}$ is commutative.  So 
\[
\Phi_2(\textrm{Im}([\cdot,\cdot]))=0
\]
and $\Phi_2$ is not injective, proving both parts of the proposition.
\smallbreak
Now let $\sharp\neq \emptyset$, and $E\neq 0$.  Then the image of the algebra homomorphism $\Coha^{\sharp}_{\tau,\Pi_{Q_{\Jor}}}\rightarrow \Coha_{\tau,\Pi_{Q_{\Jor}}}$, restricted to $\Coha^{\sharp}_{\tau,\Pi_{Q_{\Jor}},1}$, contains the elements $E,Eu$.  Then (in $\Coha_{\tau,\Pi_{Q_{\Jor}},2}$) we have
\begin{align*}
[E,Eu]=&E^2[1,u]\\
\neq &0
\end{align*}
since $\Coha_{\tau,\Pi_{Q_{\Jor}},2}$ is free as a $\mathbb{Q}[t]$-module.  In particular,
\[
[\cdot,\cdot]\colon \Coha_{\tau,\Pi_{Q},1}^{\sharp}\otimes \Coha^{\sharp}_{\tau,\Pi_{Q},1}\rightarrow \Coha_{\tau,\Pi_{Q},2}^{\sharp}
\]
is nonzero, and the proof continues as in the $\sharp=\emptyset$ case.
\end{proof}
\renewcommand*{\proofname}{Proof}
The calculation of the commutator $[x_1,x^0_1]$ in the proof of Lemma \ref{TTYB} offers hope that $\mathbb{C}^*_1,\mathbb{C}^*_2,\mathbb{C}^*_3$ are the only three ``bad'' 1-dimensional tori, i.e. those for which Proposition \ref{TTYB3} holds.
\bibliographystyle{amsplain}
\bibliography{SPP}

\vfill

\textsc{\small B. Davison: School of Mathematics, University of Edinburgh}\\
\textit{\small E-mail address:} \texttt{\small ben.davison@ed.ac.uk}\\
\\

\end{document}